\def\spacingset#1{\renewcommand{\baselinestretch}%
{#1}\small\normalsize} \spacingset{1}
\DeclareRobustCommand{\bb}[1]{\mathbb{#1}}
\DeclareRobustCommand{\HS}{\mathrm{HS}}
\DeclareRobustCommand{\op}{\mathrm{op}}
\DeclareRobustCommand{\H}{\mathcal{H}}
\DeclareRobustCommand{\E}{\mathbb{E}}
\DeclareRobustCommand{\R}{\mathbb{R}}
\DeclareRobustCommand{\H}{\mathcal{H}}
\DeclareRobustCommand{\X}{\mathcal{X}}
\DeclareRobustCommand{\Z}{\mathcal{Z}}
\DeclareRobustCommand{\Xsub}{x}
\DeclareRobustCommand{\Zsub}{z}
\DeclareRobustCommand{\Sz}{S_\Zsub}
\DeclareRobustCommand{\Szi}{S_{\Zsub,i}}
\DeclareRobustCommand{\kx}{k_\Xsub}
\DeclareRobustCommand{\kz}{k_\Zsub}
\DeclareRobustCommand{\Hx}{\H_{\Xsub}}
\DeclareRobustCommand{\Hz}{\H_{\Zsub}}
\newcommand{\bml}{b_{\mu, \lambda}}
\newcommand{\vml}{v_{\mu, \lambda}}
\newcommand{\hml}{h_{\mu,\lambda}}
\newcommand{\Ocal}{\mathcal{O}}
\newcommand{\Tml}{T_{\mu,\lambda}}
\newcommand{\Szmu}{(\Sz+I\mu)^{-1}}
\newcommand{\hvec}{(h_0-h_{\mu,\lambda})}
\newcommand{\dSi}{(S_i-S)}
\newcommand{\dSzi}{(\Sz-\Szi)}
\newcommand{\hTml}{\hat{T}_{\mu,\lambda}^{-1}}
\newcommand{\nmu}{\mathfrak{n}_z(\mu)}
\DeclareRobustCommand{\bb}[1]{\mathbb{#1}} 
\DeclareRobustCommand{\bk}[2]{\left\langle #1,\,#2\right\rangle}
\DeclareRobustCommand{\norm}[1]{\left\lVert #1 \right\rVert}
\DeclareRobustCommand{\snorm}[1]{\lVert #1 \rVert}
\DeclareRobustCommand{\d}{\delta}
\DeclareRobustCommand{\ep}{\varepsilon}
\DeclareRobustCommand{\eps}{\epsilon}
\DeclareRobustCommand{\bk}[2]{\left\langle #1,\,#2\right\rangle}
\DeclareRobustCommand{\norm}[1]{\left\lVert #1 \right\rVert}
\DeclareRobustCommand{\snorm}[1]{\lVert #1 \rVert}
\DeclareMathOperator{\tr}{tr}
\DeclareMathOperator{\diag}{diag}
\DeclareMathOperator*{\argmin}{\arg\!\min}
\DeclareMathOperator*{\argmax}{\arg\!\max}
\newtheorem{lemma}{Lemma}
\newtheorem{theorem}{Theorem}
\newtheorem{corollary}{Corollary}
\newtheorem{assumption}{Assumption}
\newtheorem{proposition}{Proposition}
\theoremstyle{definition}
\newtheorem{algorithm}{Algorithm}
\newtheorem{definition}{Definition}
\theoremstyle{remark}
\newtheorem{example}{Example}
\newtheorem{remark}{Remark}
\begin{document}

\doparttoc % Tell to minitoc to generate a toc for the parts
\faketableofcontents % Run a fake tableofcontents command for the partocs

\def\spacingset#1{\renewcommand{\baselinestretch}%
{#1}\small\normalsize} \spacingset{1}

%%%%%%%%%%%%%%%%%%%%%%%%%%%%%%%%%%%%%%%%%%%%%%%%%%%%%%%%%%%%%%%%%%%%%%%%%%%%%%

 \title{\bf Uniform inference for kernel instrumental variable regression}
  \author{
  Marvin Lob \\
  Seminar for Statistics, ETH Zurich \\
  \and
  Rahul Singh\\
    Society of Fellows and Department of Economics, Harvard University \\
    %Cambridge, MA 02138, USA\\
    \and
    Suhas Vijaykumar\\
    Department of Economics, U.C.~San Diego
    %,\\ Seattle, WA 98109, USA
    }
    
     \date{Original draft: November 2025. This draft: July 2026.}
  \maketitle

\bigskip

\begin{abstract}
Instrumental variable regression is a foundational tool for causal analysis across the social and biomedical sciences. 
Recent advances use kernel methods to estimate nonparametric causal relationships, with general data types, while retaining a simple closed-form expression. 
Empirical researchers ultimately need reliable inference on causal estimates; however, uniform confidence sets for the method remain unavailable. 
To fill this gap, we develop valid and sharp confidence sets for kernel instrumental variable regression, allowing general nonlinearities and data types.
Computationally, our bootstrap procedure requires only a single run of the kernel instrumental variable regression estimator.
Theoretically, it relies on the same key assumptions.
Overall, we provide a practical procedure for inference that substantially increases the value of kernel methods for causal analysis.
%
% value --> utility?
\end{abstract}

\noindent%
{\it Keywords:} Gaussian approximation, ill posed inverse problem,  nonparametric regression, reproducing kernel Hilbert space.

\vfill

\newpage
\spacingset{1.9} % DON'T change the spacing!

\section{Introduction and related work}

% Motivation
Nonparametric instrumental variable regression is a leading framework for causal analysis from observational data \citep{newey2003instrumental,ai2003efficient,hall2005nonparametric,blundell2007semi,darolles2011nonparametric}. A recent literature advocates for kernel methods as natural extensions from linear models to nonlinear models \citep{singh2019kernel,dikkala2020minimax}. Similar to a linear method, a kernel method has a simple closed-form solution \citep{kimeldorf1971some}. Unlike a linear method, a kernel method allows for rich nonlinearity in the causal relationship as well as general data types, such as preferences, sequences, and graphs, which often arise in economics and epidemiology. 

% Research question
This literature proposes nonparametric estimators and proves uniform consistency, yet uniform inference guarantees are unavailable; these causal estimators lack uniform confidence bands. Without confidence bands, social and biomedical scientists are reluctant to fully rely on these new estimators in causal analysis. More generally, uniform confidence bands appear to be absent from the recent, burgeoning literature on machine learning estimation of nonparametric instrumental variable regression. Our research question is how to construct them.

% Primary contribution: KIV inference method
Our primary contribution is to develop a uniform confidence band for a kernel estimator of the nonparametric instrumental variable regression function. Our inferential procedure retains the practicality of kernel methods; computationally, it is a bootstrap that involves running kernel instrumental variable regression exactly once and sampling many anti-symmetric Gaussian multipliers. The anti-symmetry is effective at canceling out the complex bias of the estimator.

% Secondary contribution: KIV inference theory
Our secondary contribution is to prove that the uniform confidence band is valid and sharp: it obtains coverage of at least, and not much more than, the nominal level. Formally, we derive nonasymptotic Gaussian and bootstrap couplings, which overcome the challenge of the ill-posed inverse problem inherent in nonparametric instrumental variable regression. Nonasymptotic analysis is necessary because a stable Gaussian limit does not exist.

% Main assumptions: Low effective dimension, source, link
We show that well-known assumptions \citep{smale2007learning,caponnetto2007optimal,mendelson2010regularization,fischer2020sobolev} imply not only estimation but also inference guarantees.
Our key assumptions are: (i) the data have a low effective dimension when expressed in the basis of the kernel; (ii) the true nonparametric instrumental variable regression function is smooth in terms of the basis of the kernel; and (iii) the expectation of the nonparametric instrumental variable regression function, conditional upon the instrumental variable, is a smooth function as well. These assumptions are called decay, source, and link conditions, respectively.

% Related work: KIV rates
By studying inference, we complement several works on estimation and consistency of kernel methods for nonparametric instrumental variable regression. 
Previous work provides rates in mean square error after projection upon the instrument \citep{singh2019kernel,dikkala2020minimax}, in mean square error \citep{liao2020provably,bennett2023minimax,bennett2023source}, in $\sup$ norm \citep{singh2020negative}, and in interpolation norms \citep{meunier2024nonparametric}. 
Such rates can be used to verify conditions for inference on certain well-behaved functionals \citep{kallus2021causal,ghassami2021minimax,chernozhukov2021simple}. 
However, none of these works provide uniform confidence bands, which are the focus of the present work.

% Related work: KRR inference
A recent paper provides uniform inference for kernel ridge regression \citep{singh2023kernel}. Kernel ridge regression is an easier estimation problem, which does not require ill-posed inversion of a conditional expectation operator. 
 Previous results for kernel ridge regression do not apply to our setting. 
Still, we build on the broad structure of their argument. Specifically, we analyze Gaussian couplings \citep{zaitsev1987estimates,buzun2022strong} and bootstrap couplings \citep{freedman1981bootstrapping,chernozhukov2014anticoncentration,chernozhukov2016empirical} in settings where the limit distribution may be degenerate \citep{andrews2013inference}. In doing so, we develop new techniques that may be used to provide uniform statistical inference in other ill-posed inverse problems.

% Related work: Series inference for NPIV
Finally, our inferential procedure for a kernel instrumental variable estimator complements existing results for series estimators of nonparametric instrumental variable regression \citep{chen2007large,carrasco2007linear}. The series procedures are designed for and theoretically justified in a setting with low- to moderate-dimensional Euclidean data \citep{belloni2015some,chen2018optimal,chen2024adaptive}. By contrast, our kernel procedure permits complex and nonstandard data, as long as the data have a low effective dimension relative to the kernel.

%Paper outline
Section~\ref{sec:model} recaps the kernel instrumental variable regression estimator and interprets our main assumptions: low effective dimension and high smoothness. 
Section~\ref{sec:algo_main} presents our main contribution: valid and sharp confidence sets for a kernel instrumental variable regression estimator.
Section~\ref{sec:conclusion} concludes by discussing consequences for the uptake of kernel methods in causal analysis.
\section{Model and assumptions}\label{sec:model}

We begin by introducing some notation. 
We denote the $L^2$ norm by $
\|f\|_2 = \left( \mathbb{E}[f(Z)^2] \right)^{1/2},
$
with empirical counterpart
$
\|f\|_{2,n} = \left( \frac{1}{n} \sum_{i=1}^n f(Z_i)^2 \right)^{1/2}.
$
 For any operator $A$ mapping between Hilbert spaces, i.e. $A: \mathcal{H} \to\mathcal{H}'$, let $\|A\|_{\HS}$ and $\|A\|_{\op}$ denote the Hilbert-Schmidt and operator norm, respectively. We denote the eigendecomposition of a compact and self-adjoint operator $A$ with eigenvalues $\{\nu_1(A), \nu_2(A), \dots\}$ and eigenfunctions $\{e_1(A), e_2(A), \dots\}$. 

We use $C$, potentially with subscripts, to denote a positive constant that may only depend on the subscript parameter. For example, $C_\sigma$ is a positive constant depending only on the parameter $\sigma$. We use $\lesssim$ (or $\lesssim_{C_\sigma})$ to denote an inequality that holds up to a positive multiplicative constant (or function of $\sigma$). Equations and inequalities containing the parameter $\eta$ are understood to hold with probability at least $1-\eta$, where $\eta \in (0,1)$. 
\subsection{Previous work: Closed form estimation of KIV}

Our goal is to learn and conduct inference on the nonparametric instrumental variable regression function $h_0$, which is defined as the solution to the following operator equation:
\begin{equation}\label{eq:moment}
   Y=h_0(X)+\ep,\quad \E(\ep|Z)=0 \quad \iff \quad  \E(Y|Z)=\E\{h_0(X)|Z\}.
\end{equation}
We refer to $Y$ as the outcome, $X$ as the covariate, and $Z$ as the instrument. The former formulation is clearly a generalized regression problem; when $X=Z$, $h_0(X)=\E(Y|X)$. However, in our setting, $X\neq Z$. The latter formulation has the ``reduced form'' function $\E(Y|Z)$ on the left-hand side, and the composition of a ``first stage'' conditional expectation operator $\E\{...|Z\}$ and a ``second stage'' function of interest $h_0(X)$ on the right-hand side. Isolating $h_0$ involves inverting the conditional expectation operator, which is an ill-posed task that makes this statistical problem challenging.

To estimate $h_0$, we use a pair of approximating function spaces $\Hx$ and $\Hz$ that possess a specific structure: both are reproducing kernel Hilbert spaces (RKHSs) defined by the kernels $\kx: \X \times \X \to \mathbb{R}$ and  $\kz: \Z \times \Z \to \mathbb{R}$, respectively. 
We denote the associated feature maps by $\psi : \X \rightarrow \Hx$ and $\phi : \Z \rightarrow \Hz$, giving rise to the inner products $\kx(x,x') = \bk{\psi(x)}{{\psi(x')}}_{\Hx}$ and $\kz(z,z') = \bk{\phi(z)}{{\phi(z')}}_{\Hz}$. 
These spaces posses the reproducing property: $h(x) = {\bk{h}{\psi(x)}}_{\Hx}$ for any $h\in \Hx$, and $f(z) = {\bk{f}{\phi(z)}}_{\Hz}$ for any $f\in \Hz$.
In other words, $\psi(x)$ is the dictionary of basis functions for $\Hx$, and likewise $\phi(z)$ for $\Hz$.
The implied norms $\|h\|_{\Hx} = {\bk{h}{h}}_{\Hx}^{1/2}$ and $\|f\|_{\Hz} = {\bk{f}{f}}_{\Hz}^{1/2}$ quantify regularity. Concretely, the RKHS norm quantifies not only magnitude but also smoothness, generalizing the Sobolev norm.

Throughout the paper, we maintain a few regularity conditions to simplify the exposition. We assume that $\psi$ and $\phi$ are measurable and bounded, i.e., $\sup_{x \in \mathcal{X}} \|\psi(x)\|_{\Hx} \leq \kappa_x$ and $\sup_{z \in \mathcal{Z}} \|\phi(z)\|_{\Hz} \leq \kappa_z$, which is satisfied by all kernels commonly used in practice. Additionally, we maintain that the residual $\ep := Y - h_0(X)$ satisfies $|\ep|\leq \bar \sigma$.

We now define covariance operators which are central to our analysis. Let the symbol $\otimes$ mean outer product.
The covariance operator $S_x=\E \{\psi(X)\otimes \psi(X)^*\}$ satisfies $\bk{u}{S_x v}_{\Hx }= \mathbb{E}[u(X)v(X)]$.
The covariance operator $S_z=\E \{\phi(Z)\otimes \phi(Z)^*\}$ satisfies  $\bk{u}{S_z v}_{\Hz }= \mathbb{E}[u(Z)v(Z)]$. Finally, we define the cross covariance operator $S=\E\{\phi(Z)\otimes \psi(X)^*\}$, with adjoint $S^*=\E\{\psi(X)\otimes \phi(Z)^*\}$, satisfying 
$\bk{f}{Sh}_{\Hz}=\bk{S^*f}{h}_{\Hx}  = \mathbb{E}[f(Z)h(X)].$ 
Together, these operators give rise to the modified covariance operator $T = S^*S_z^{-1}S$, which can be shown to satisfy $
\bk{f}{Tf}_{\Hx} = \mathbb{E}[\mathbb{E}\{f(X)|Z\}^2]$.

Kernel instrument variable regression (KIV) is a nonlinear extension of the standard two-stage least-squares (2SLS) method for linear estimation. In fact, 2SLS is a special case of KIV when $\kx$ and $\kz$ are linear kernels and with regularization set to zero. In other words, KIV generalizes from unregularized linear estimation to regularized nonlinear estimation. In the KIV model, the conditional expectation operator in~\eqref{eq:moment}  is given by $T^{\frac{1}{2}}$.\footnote{Formally, the conditional expectation is $S_z^{-1}S:h \mapsto \mathbb{E}\{h(X)\mid Z=\cdot\}$, and $T=S^*S_z^{-1}S$. Thus $T^{1/2}$ captures the operator norm over $L^2$, not the operator itself.}

We consider two equivalent formulations of the estimator's objective.  One formulation resembles regression, following \cite{singh2019kernel}:
\begin{align*}
    h_{\mu, \lambda} &= \argmin_{h\in \Hx} 
    \|(S_z+\mu)^{-1/2}S(h_0-h)\|^2_{\Hz}+\lambda\|h\|^2_{\Hx}.
\end{align*}
The first term is a projected mean square error, with ``first stage'' regularization $\mu>0$. The second term is a ridge penalty, with ``second stage'' regularization $\lambda>0$. With $X=Z$ and $\mu=0$, this objective reduces to the kernel ridge regression objective.

A second formulation is based on the conditional moment restriction
$\E(Y -h_0(X)|Z) = 0$. The adversarial formulation, following \cite{dikkala2020minimax} is
\begin{align}
h_{\mu, \lambda}=\argmin_{h\in\Hx} ~ \max_{f\in \Hz} ~ 2\E[\{Y-h(X)\}f(Z)]-\|f\|_{2}^2-\mu\|f\|^2_{\Hz}+\lambda\|h\|^2_{\Hx}. \label{eq:adversarial-definition}
\end{align}
Intuitively, the adversary $f$ maximizes the violation of the conditional moment. The estimator $h$
minimizes the violation of the conditional moment, anticipating this adversary.

Regardless of the formulation, the estimator has a convenient closed-form solution due to the kernel trick. Specifically, the empirical analogues of both objectives are minimized by the following algorithm.

\begin{algorithm}[Kernel instrumental variable regression]\label{alg:1}
Given a sample $D=\{(Z_i, X_i,Y_i)\}_{i=1}^n$, kernels $\kx$ and $\kz$, and regularization parameters $\lambda,\mu>0$: 
\begin{enumerate}
    \item Compute the kernel matrices
    $K_{XX}, K_{ZZ} \in \mathbb{R}^{n\times n}$ with $(i,j)$th entries  $\kx(X_i, X_j)$ and $\kz(Z_i, Z_j)$, respectively.
    \item Compute the kernel vector $K_{xX}\in \mathbb{R}^{1\times n} $ with $i$th entry $\kx(x, X_i)$
    \item Estimate KIV as $
    \hat{h}(x)
    =K_{xX} \{K_{ZZ}(K_{ZZ}+ n \mu I)^{-1}K_{XX}+n\lambda I\}^{-1}K_{ZZ}(K_{ZZ}+ n \mu I)^{-1}Y 
$.
\end{enumerate}

\end{algorithm}

\begin{example}[Linear kernel] Let $\mathcal{X} = \mathbb{R}^p$ and  $\mathcal{Z} = \mathbb{R}^q$, so that the covariates and instruments are finite-dimensional vectors, and consider the linear kernels $\kx(x,x')  = x^\top x'$ and $\kz(z,z')  = z^\top z'$. Then, $\Hx$ consists of linear functions of the form $h_\gamma(x) = \gamma^\top x$ for $\gamma \in \mathbb{R}^p$. Let $\mathbf{X} \in \mathbb{R}^{n\times p}$ and $\mathbf{Z} \in \mathbb{R}^{n\times q}$ be the design matrices. Then, the kernel objects become $K_{xX} = x\mathbf{X}^T$, $K_{XX} = \mathbf{X}\mathbf{X}^\top$ and $K_{ZZ} = \mathbf{Z}\mathbf{Z}^T$,  leading to 
\begin{align*}
    \hat h (x) &= x^\top\mathbf{X}^\top \Big[ \mathbf{Z} \mathbf{Z}^\top (\mathbf{Z} \mathbf{Z}^\top + n\mu I_n)^{-1} \mathbf{X} \mathbf{X}^\top + n\lambda I_n \Big]^{-1}
\mathbf{Z} \mathbf{Z}^\top (\mathbf{Z} \mathbf{Z}^\top + n\mu I_n)^{-1} Y
= x^\top \hat \gamma
\end{align*}
where $\hat{\gamma} =
\Big[ \mathbf{X}^\top \mathbf{Z} (\mathbf{Z}^\top \mathbf{Z} + n\mu I_q)^{-1} \mathbf{Z}^\top \mathbf{X} + n\lambda I_p \Big]^{-1}
\mathbf{X}^\top \mathbf{Z} (\mathbf{Z}^\top \mathbf{Z} + n\mu I_q)^{-1} \mathbf{Z}^\top Y$ is regularized 2SLS.
\end{example}

\begin{example}[Polynomial kernel]
Let $\mathcal{X}=\mathbb{R}^p$ and $\mathcal{Z}=\mathbb{R}^q$ as before, but now choose the $d$-degree polynomial kernels with offsets $c_x,c_z\ge 0$: $
\kx(x,x')=(x^\top x'+c_x)^d$ and $\kz(z,z')=(z^\top z'+c_z)^d.$
Then there exist finite-dimensional feature maps $\psi:\mathbb{R}^p\to\mathbb{R}^{M_x}$ and $\phi:\mathbb{R}^q\to\mathbb{R}^{M_z}$ with
$
M_x=\binom{p+d}{d},$ and $ M_z=\binom{q+d}{d},
$
such that $\kx(x,x')=\langle \psi(x),\psi(x')\rangle_{\Hx}$ and $\kz(z,z')=\langle \phi(z),\phi(z')\rangle_{\Hz}$.
The RKHS $\Hx$ consists of polynomials of degree at most $d$, of the form
$h(x)=w^\top \psi(x)$, with $w\in\mathbb{R}^{M_x}.$

Let $\Psi_X=\big[\psi(X_1),\ldots,\psi(X_n)\big]^\top\in\mathbb{R}^{n\times M_x}$ and
$\Phi_Z=\big[\phi(Z_1),\ldots,\phi(Z_n)\big]^\top\in\mathbb{R}^{n\times M_z}$ be the design matrices. Then, the kernel objects are $
K_{xX}=\psi(x)^\top \Psi_X^\top$, $
K_{XX}=\Psi_X\Psi_X^\top$, $
K_{ZZ}=\Phi_Z\Phi_Z^\top$,\footnote{Equivalently, the $(i,j)$th entries of $K_{XX}$ and $K_{ZZ}$ are $(X_i^\top X_j+c_x)^d$ and $(Z_i^\top Z_j+c_z)^d$, respectively, while the $i$th entry of $K_{xX}$ is $(x^\top X_i+c_x)^d$.} and
\begin{align*}
\hat h(x)
&= \psi(x)^\top \Psi_X^\top \Big[ \Phi_Z \Phi_Z^\top \big(\Phi_Z \Phi_Z^\top + n\mu I_n\big)^{-1} \Psi_X \Psi_X^\top + n\lambda I_n \Big]^{-1}  \Phi_Z \Phi_Z^\top \big(\Phi_Z \Phi_Z^\top + n\mu I_n\big)^{-1} Y= \psi(x)^\top \hat w,
\end{align*}
where
$
\hat w
= \Big[ \Psi_X^\top \Phi_Z \big(\Phi_Z^\top \Phi_Z + n\mu I_{M_z}\big)^{-1} \Phi_Z^\top \Psi_X + n\lambda I_{M_x} \Big]^{-1}
\Psi_X^\top \Phi_Z \big(\Phi_Z^\top \Phi_Z + n\mu I_{M_z}\big)^{-1} \Phi_Z^\top Y
$ is regularized 2SLS with $d$-order polynomial expansions.
\end{example}
\begin{example}[Preference kernel]
A key advantage of kernel methods is that kernels can be chosen by the researcher to handle non-standard data. As a leading example, \cite{singh2023kernel} consider student preferences data over 25 Boston schools. The space of preferences $\mathcal{X}$ has dimension $25!$. It is not feasible to model these preferences with $25!$ indicators. However, it is feasible to model these preferences with kernels. A natural preference kernel is
$
\kx(x,x') = \exp\{-N(x,x')\},
$
where $N(x,x')$ counts the number of pairwise disagreements between rankings $x$ and $x'$. 
This kernel induces an RKHS $\Hx$ over preferences.  

Unobserved confounding is an important concern when studying school choice. If families receive a ``nudge'' of a randomly assigned default preference in an online system, this randomly assigned preference may be viewed as an instrument. It would be natural to use the preference kernel for the instrument as well: $
\kz(z,z') = \exp\{-N(z,z')\},
$ inducing an RKHS $\Hz$.
\end{example}

\subsection{Goal: Valid and sharp confidence sets}

In this paper, our goal is to construct confidence sets $\hat{C}_n$ for the estimator in Algorithm~\ref{alg:1}, applicable to various data types. 
We would like these confidence sets to be computationally efficient: they should not require additional kernel evaluations or matrix inversions beyond Algorithm~\ref{alg:1}.
Theoretically, we would like these confidence sets to be valid and sharp: they should contain the true nonparametric instrumental variable regression function $h_0$ with at least, but not much more than, nominal coverage. In what follows, we carefully define validity and sharpness.

\begin{definition}[Validity]
$\hat{C}_n$ is $\tau$-valid at level $\chi$ if
$
\mathbb{P}\bigl(h_0 \in \hat{C}_n\bigr) \ge 1-\chi-\tau .
$
\end{definition}

\begin{definition}[Sharpness]
$\hat{C}_n$ is $(\delta,\tau)$-sharp at level $\chi$ if 
$
\mathbb{P}\!\left\{ h_0 \in (1-\delta)\hat{C}_n  +  \delta \hat{h}  \right\}
\le 1-\chi+\tau .
$
\end{definition}

A valid confidence set contains $h_0$ with at least nominal coverage, up to a tolerance level $\tau$. A sharp confidence set is not too conservative: if we slightly contract the set $\hat{C}_n$ towards its center $\hat{h}$, i.e. if we examine $(1-\delta)\hat{C}_n  +  \delta \hat{h}$, then this contracted set should contain $h_0$ at most at the nominal level, up to a tolerance level $\tau$. Intuitively, if $\tau=0$ and $\delta=0$, then validity and sharpness give exact coverage $
\mathbb{P}\bigl(h_0 \in \hat{C}_n\bigr)= 1-\chi.
$ We show that $\tau=\Ocal(n^{-1})$ and $\delta=\log(n)^{-1}$ in our nonasymptotic analysis.

The following bias-variance decomposition illuminates the structure of our argument:
$$
n^{1/2}(\hat{h}-h_0)=\underbrace{n^{1/2}\{(\hat{h}-h_{\mu,\lambda})-\E_n(U)\}}_{\text{residual}}+\underbrace{n^{1/2}\E_n(U)}_{\text{pre-Gaussian}}+\underbrace{n^{1/2}(h_{\mu,\lambda}-h_0)}_{\text{bias}},
$$
where $U_i\in \Hx$ is a mean-zero random function explicitly defined below. We prove that the residual term is asymptotically negligible in a strong sense. For the pre-Gaussian term, we provide Gaussian and bootstrap couplings under a decay condition. Lastly, we show that the bias of the estimator vanishes under source and link conditions. These conditions are standard assumptions in the NPIV literature, though we take care while interpreting them in our setting.

\subsection{Main assumption: Low effective dimension}

Our main assumption is that the data have low effective dimensions relative to the bases of the kernels. Specifically, we assume that the covariance operators $S_x$, $S_z$, and $T$ have eigenvalues that decay. Recall that $S_x=\E\{\psi(X)\otimes \psi(X)^*\}$, so decaying eigenvalues mean that relatively few dimensions of the features $\psi(X)$ can convey most of the information in the distribution of covariates $X$.
We quantify the rate of spectral decay via the local width. 

\begin{definition}[Local width]\label{def:local} Given $m>0$, the local width of operator $A$ is given by $\sigma^2(A,m)= \sum_{s>m}\nu_s(A)$, where $\{\nu_1(A), \nu_2(A), ...\}$ are decreasing eigenvalues.  
\end{definition}
The local width is the tail sum of the eigenvalues. It quantifies how much information we lose when only considering the initial $m$ dimensions.

In this paper, we prove that the pre-Gaussian term of KIV is of the form $n^{1/2}\E_n(U)$, where each summand is given by the expression 
$$
U_i=T_{\mu,\lambda}^{-1}\{M_i +M_i^* + N_i\}(h_0-h_{\mu,\lambda}) +T_{\mu,\lambda}^{-1}S^*(S_z+\mu)^{-1}\phi(Z_i)\ep_i.
$$
In this compact notation, $T_{\mu,\lambda}=S^*(S_z+\mu)^{-1}S+\lambda$. Moreover, 
$
M_i = S^*(S_z+\mu)^{-1}\{\phi(Z_i)\otimes\psi(X_i)^*-S\}
$ 
and 
$
N_i = S^*(S_z + \mu)^{-1}\{S_z - \phi(Z_i)\otimes \phi(Z_i)^*\}(S_z + \mu)^{-1}S.
$

The complexity of the pre-Gaussian term, and therefore the challenge of Gaussian approximation, is quantified by the local width of
$
\Sigma := \E(U \otimes U^*).
$
In our analysis, we show that the local width of $\Sigma$ is bounded by the local width of the covariance operator $S_x = \E\{\psi(X)\otimes \psi(X)^*\}$ in the sense that
$
\sigma^2(\Sigma, m)\leq \frac{4 \bar{\sigma}^2}{\lambda^2} \sigma^2(S_x, m)
.$
Therefore, the fundamental condition for Gaussian approximation is $\sigma^2(S_x, m) \downarrow 0$ as $m \uparrow \infty$. A simple sufficient condition is that the eigenvalues of $S_x$ decay polynomially.

In this paper, we focus on the case of polynomial decay for simplicity. Our results naturally extend to exponential decay, appealing to the corresponding bounds on local width in \cite{singh2023kernel}.

\subsection{Smoothness assumptions: Source and link conditions}

Next, we assume that the target function $h_0$ is smooth, and that its conditional expectation is smooth. These assumptions are called source and link conditions in the nonparametric instrumental variable regression literature \citep{chen2007rate,caponnetto2007optimal}. Such conditions are necessary for consistent estimation. Naturally, then, we also use them to derive valid inference.
\begin{assumption} [Source condition]
\label{ass:Source}
The target $h_0$ is smooth: there exists $\alpha\in[0,1]$ and $w_0 \in \Hx$ such that 
$h_0 = T^\alpha w_0.$    
\end{assumption}
Assumption~\ref{ass:Source} with $\alpha=0$ simply means that $h_0$ is correctly specified by $\Hx$. For $\alpha>0$, it means that $h_0$ is a particularly smooth element of $\Hx$.

\begin{assumption}[Link condition]\label{assn:restricted-link}
    For all smooth functions $f \in \Hx$, the conditional expectation remains a smooth function.
    Specifically, there exist $\beta \in [1/2,1]$ and $r \in [0, \infty)$ such that the operator $S_z^{-(\frac{1}{2}+\beta)}S:\Hx \to \Hz$ is bounded, i.e.,
  $\|S_z^{-(\frac{1}{2}+\beta)}S\|_{\op} \le r.$
\end{assumption}

Assumption~\ref{assn:restricted-link} with $\beta=0$ simply means that the conditional expectation operator, when viewed as a mapping from $\Hx$ to $L^2(Z)$, is bounded. For $\beta=0$, the link condition automatically holds.\footnote{To see why, notice that $\Hx$ embeds continuously in $L^2(X)$, and appeal to the law of total variance.} With $\beta>0$, it means that the conditional expectation operator, when viewed as a mapping from $\Hx$ to $\Hz^{\frac{1}{2}+\beta}$, is bounded. In other words, for any smooth function $h \in \Hx$, its conditional expectation $g(Z) = \mathbb{E}[h(X)|Z]$ is also smooth in the sense that $g \in\Hz^{\frac{1}{2}+\beta}$. Here, $\beta = \frac{1}{2}$, corresponds to the natural assumption that $g \in \Hz$, implying that the reduced form function $\E(Y|Z)$ is well-specified by $\Hz$.

Sobolev spaces are special cases of RKHSs. In Sobolev spaces, the source and link conditions amount to assumptions that the number of square integrable derivatives is high enough relative to the dimension of the data.

\subsection{Technical assumption: Strong instrument}

Finally, we require that the instrumental variable $Z$ is strong enough, in the sense that it carries enough information about the covariate $X$.  Such an assumption is standard in the instrumental variable literature, where it is often stated as a certain rank being large enough. In our setting, it becomes a condition that there are enough directions in $\Hx$ that are well explained by $\Hz$. Let $\tilde{\mathfrak{m}}(\lambda, \mu) := \tr T_{\mu,\lambda}^{-1}S^*(S_z+\mu)^{-1}S_z(S_z+\mu)^{-1}S T_{\mu,\lambda}^{-1}$ be the effective dimension of the nonparametric instrumental variable regression problem, generalizing the standard effective dimension \citep{caponnetto2007optimal,fischer2020sobolev}.

\begin{assumption}[Strong instrument] \label{ass:strong instrument}
The instrument is strong enough: 
$ \tilde{\mathfrak{m}}(\lambda, \mu)  \gtrsim_{\rho_x, \omega_x} \lambda^{-\rho_x},$
where $\rho_x\in[1,2]$ is the polynomial rate of decay for $S_x$.
\end{assumption}

Formally, we assume that 
a specific effective dimension does not vanish too quickly as the regularization parameter $\lambda$ vanishes. Later, we prove that $\tilde{\mathfrak{m}}(\lambda, \mu) \lesssim_{\rho_x, \omega_x}\lambda^{-\rho_x}$. Assumption~\ref{ass:strong instrument} assumes the matching lower bound $\tilde{\mathfrak{m}}(\lambda, \mu) \gtrsim \lambda^{-\rho_x}$, which rules out the possibility that the instrument is too weak.

Throughout the paper, we maintain that $\mu\leq\lambda\leq1$, which means that we are not regularizing the first stage more than the second stage. This restriction on the regularization parameters is required for our bias argument. 
\section{Confidence bands with nonstandard data}\label{sec:algo_main}

%Challenges
Inference for KIV poses several challenges. 
First, recovering $h_0$ from the operator equation \eqref{eq:moment} is hard because it requires inverting a conditional expectation operator. 
Formally, the conditional expectation operator $S_z^{-1}S: h(\cdot)\to \E\{h(X)|Z=(\cdot)\}$ is an infinite-dimensional quantity that must be estimated from data and then inverted.
Second, analyzing $\hat{h}$ in Algorithm~\ref{alg:1} is hard because it involves two regularization parameters.
Compared to standard regression, additional regularization is unavoidable in order to non-parametrically approximate $S_z^{-1}$ in the ``first stage''. However, this additional regularization introduces complex bias, and it is unclear whether the regularization parameters can vanish quickly enough to control bias while also vanishing slowly enough to permit Gaussian approximation. 
Third, constructing a confidence set centered at $\hat{h}$ becomes harder if we impose practical constraints: the confidence set should not require more computation than the estimator, and the confidence set should remain tractable whenever the estimator is tractable, e.g. with high dimensional or nonstandard data.

%Preview of the main result
To overcome these challenges, we propose an anti-symmetric Gaussian multiplier bootstrap that yields valid and sharp confidence sets for the KIV estimator. 
A virtue of our procedure (Algorithm~\ref{alg:2}) is that the costliest step of KIV estimation---inversion of the two kernel matrices---is performed only once. 
Theoretically, it yields confidence sets that are valid and sharp in $\Hx$-norm (Theorem~\ref{thm:valid-inference}), allowing the obvious choice of regularization $\lambda=\mu$.
These sharp $\Hx$-norm confidence sets imply uniform confidence bands because $\|h-h_0\|_{\infty}\leq \kappa_x \|h-h_0\|_{\Hx}$ by the Cauchy-Schwarz inequality.  
Overall, we characterize a range of regimes with low effective dimension and high smoothness in which our method works.

\subsection{This work: Bootstrap for KIV}\label{sec:algo_bootstrap}

We preview our inference procedure at a high level before filling in the details. 
First, we compute the KIV estimator, saving the kernel matrices and their regularized inverses, which is reused in inference. This ensures the same $\Ocal(n^3)$ computational complexity of estimation alone (Algorithm \ref{alg:1}). 
Second, for each bootstrap iteration, we draw Gaussian multipliers and compute the bootstrap function $\mathfrak{B}$. 
Third, across bootstrap iterations, we calculate the $(1-\chi)$-quantile $\hat t_\chi$ of $\|\mathfrak{B}\|_\Hx$. Our confidence set $\hat C_{1-\chi}$ is the point estimate $\hat h $ plus $n^{-1/2}\hat t_\chi$ inflated by an incremental factor $\{1+1/\log(n)\}$. 

Importantly, $1/\log(n)$ is not a tuning parameter. We use this device to guarantee valid inference in many settings, inspired by \cite{andrews2013inference}. Similar to \cite{singh2023kernel}, it is possible to replace $1/\log(n)$ with zero by placing stronger assumptions on the effective dimension of the data and then employing techniques of \cite{chernozhukov2014anticoncentration} and \cite{gotze2019large}.

Our method provides $\Hx$-norm valued confidence sets. We can translate  these confidence sets into uniform confidence bands, because a bounded kernel implies that for all $h \in \Hx$, $\sup_{x\in \mathcal{X}} |h(x)|=\sup_{x\in \mathcal{X}} |\langle h,\psi(x)\rangle_{\Hx}| \leq \|h\|_{\Hx}  \sup_{x\in \mathcal{X}}\|\psi(x)\|_{\Hx}\leq  \kappa_x\|h\|_\Hx$. 
\begin{algorithm}[Confidence set for kernel instrumental variable regression]
 \label{alg:2}  
  Given a sample $D=\{(Z_i, X_i,Y_i)\}_{i=1}^n$, kernels $\kx$ and $\kz$, and regularization parameters $\lambda,\mu>0$:  
  \begin{enumerate}   
  \item Compute the kernel matrices  
   $K_{XX}, K_{ZZ} \in \mathbb{R}^{n\times n}$ as before.  
      \item Compute matrices $K = K_{ZZ}(K_{ZZ}+n\mu I)^{-1}$ and $A = (KK_{XX} + n\lambda I)^{-1}$. 
      \item Compute the KIV residuals $\hat \ep \in \mathbb{R}^n$ by  $ \hat \ep = Y - K_{XX} AKY$. 
      \item Precompute $D = K\text{diag}(\hat \ep) + (I_n - K)\text{diag}(K\hat \ep) \in \mathbb{R}^{n \times n}$.\item For each bootstrap iteration,   \begin{enumerate}  \item draw multipliers $q \in \mathbb{R}^n$ from $\mathcal{N}(0, I - 11^\top/n)$, where $1 \in \mathbb{R}^n$ has entries equal to one;                 
          \item compute the vector  $\hat{\gamma} = n^{1/2} ADq;  $   \item compute the scalar $  M = (\hat{\gamma}^\top K_{XX}  \hat{\gamma})^{1/2}. $  \end{enumerate}  
      \item Across bootstrap iterations, compute the $(1-\chi)$-quantile, $\hat{t}_\chi$, of $M$.
       \item Calculate the $\Hx$ confidence set:  $  \hat{C}_{1-\chi} = \left\{ \hat{h} \pm \hat{t}_\chi  n^{-1/2} h :  \|h\|_{\Hx} \le 1 + 1/\log(n)\right\}.$  \item Calculate the uniform confidence band: $    \hat{C}_{1-\chi}(x) =\left[\hat{h}(x) \pm \hat{t}_\chi  n^{-1/2} \kappa_x\{1+1/\log(n)\} \right].$ 
       \end{enumerate}
  \end{algorithm}  
Within Algorithm~\ref{alg:2}, we implicitly calculate $M = \|\mathfrak{B}\|_\Hx$ for the $\Hx$ valued bootstrap function 
$
\mathfrak{B} = \frac{1}{n}\sum_{i = 1}^n\sum_{j = 1}^n\frac{1}{\sqrt{2}}\left(\hat{V}_i- \hat{V}_j\right)h_{ij}$. Here, $h_{ij}$ are independent and identically distributed standard Gaussians. Each summand $\hat{V}_i$ is the empirical analogue of the pre-Gaussian summand $U_i$. Intuitively, by taking the difference $\hat{V}_i-\hat{V}_j$, we cancel the complex bias of KIV. In this way, we leverage symmetry. See Appendix~\ref{sec:algo} for details on the implicit estimator $\hat{V}_i$ of $U_i$.

\subsection{Main result: Valid and sharp inference}\label{sec:algo_lead}

We present our main result: Theorem~\ref{thm:valid-inference} proves that the confidence set in Algorithm~\ref{alg:2} is valid and sharp in  $\Hx$-norm. Moreover, Corollary~\ref{cor:uniform} verifies that the confidence band in Algorithm~\ref{alg:2} is valid in $\sup$ norm; it is a valid uniform confidence band for kernel instrumental variable regression, filling a crucial gap in the literature.

Our proof technique is largely agnostic about how low the effective dimensions of the data are. Fundamentally, we require that the local width in Definition~\ref{def:local} vanishes for the pre-Gaussian term, i.e. that $\sigma^2(\Sigma,m)\downarrow 0$ for $\Sigma=\E(U\otimes U^*$) and $U$ stated below Definition~\ref{def:local}.

For exposition, we impose further structure on the problem, which implies the high-level condition. In particular, we assume that the eigenvalues of the covariance operators $S_x$ and $S_z$ decay polynomially. This is a standard regime for RKHS analysis \citep{caponnetto2007optimal,fischer2020sobolev}, which generalizes the Sobolev setting. Formally, to state Theorem~\ref{thm:valid-inference}, we impose that $
\nu_s(S_x)\asymp \omega_x s^{-1/(\rho_x-1)}
$ and $
\nu_s(S_z) \asymp \omega_z s^{-1/(\rho_z -1)}$,  where $\rho_x, \rho_z \in (1,2]$ quantify the rates of polynomial decay, and where $\omega_x$ and $\omega_z$ are constants. These assumptions imply that the key local widths vanish: $\sigma^2(S_x,m)\downarrow 0$, and therefore $\sigma^2(\Sigma,m)\downarrow 0$, as required.

A weak regularity condition throughout our paper is that the kernels $k_x$ and $k_z$ are bounded. This condition is satisfied for kernels commonly used in practice. In the regularization regimes we consider, this in turn implies that the summands in the pre-Gaussian term satisfy $\|U_i\|_{\Hx} = \Ocal(1/\sqrt{\lambda\mu})$ almost surely.\footnote{See Lemmas~\ref{lemma:iota} and~\ref{lemma:iota2}. We work with $(\lambda,\mu)$ regimes that guarantee $\|U_i\|_{\Hx} = \Ocal(1/\sqrt{\lambda\mu})$, by imposing that $\lambda$  and $\mu$  scale  similarly. For example, when $\lambda = \mu^{\iota}$, we require that $\iota \in(0,1]$ must be large enough.} Stronger assumptions could be imposed here, which we defer to future work.

\begin{theorem}[Valid and sharp confidence sets]\label{thm:valid-inference}
For $\chi \in(0,1)$, define $\hat t_\chi $ by $
\mathbb{P}\left(\|\mathfrak{B}\| > \hat t_\chi  \middle| D\right)= \chi $.
Suppose the data have low effective dimensions, i.e. $
\nu_s(S_x)\asymp \omega_x s^{-1/(\rho_x-1)}
$ and $
\nu_s(S_z) \asymp \omega_z s^{-1/(\rho_z -1)}$.
Suppose smoothness and strong instrument conditions hold, i.e Assumptions~\ref{ass:Source},~\ref{assn:restricted-link} and~\ref{ass:strong instrument}. 
Set $(\lambda,\mu)$ satisfying $\lambda \geq \mu$ and according to Table~\ref{table:restrictions}, e.g. $\lambda=\mu$.\footnote{See Assumption~\ref{ass:COLOBO} for details.}
Suppose the effective dimensions are low enough and the smoothness is high enough according to Table~\ref{table:restrictions combined}.
Then the $\Hx$ confidence set in Algorithm~\ref{alg:2} is $\Ocal(1/n)$-valid and $\{2/\log(n), \Ocal(1/n)\}$-sharp.
\end{theorem}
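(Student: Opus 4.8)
The plan is to leverage the bias--variance decomposition displayed above and attack each of its three pieces separately, then combine them via a Gaussian coupling and an anti-concentration argument. First I would handle the \emph{residual} term $n^{1/2}\{(\hat h - h_{\mu,\lambda}) - \E_n(U)\}$: expanding $\hat h$ around $h_{\mu,\lambda}$ using the closed form in Algorithm~\ref{alg:1}, the leading linear term is exactly $\E_n(U)$, and the remainder collects products of two or more centered empirical fluctuations (e.g.\ $\dSi$, $\dSzi$) sandwiched between powers of $\Tml^{-1}$ and resolvents $\Szmu$. Using the boundedness of the kernels, the almost-sure bound $\|U_i\|_{\Hx}=\Ocal(1/\sqrt{\lambda\mu})$ alluded to in the footnote, and concentration of the relevant empirical operators (Bernstein-type bounds in Hilbert--Schmidt norm), I would show this remainder is $o_{\PP}(n^{-1/2})$ in $\Hx$-norm with the required $\Ocal(1/n)$ failure probability, uniformly over the regularization regimes in Table~\ref{table:restrictions}. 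Second, for the \emph{bias} term $n^{1/2}(h_{\mu,\lambda}-h_0)$, I would use Assumptions~\ref{ass:Source} and~\ref{assn:restricted-link}: writing $h_0 - h_{\mu,\lambda} = \{I - \Tml^{-1}S^*\Szmu S\}h_0 + (\text{first-stage approximation error})$ and inserting $h_0 = T^{\alpha}w_0$, standard interpolation-inequality / spectral-calculus estimates give a deterministic bound on $\|h_{\mu,\lambda}-h_0\|_{\Hx}$ that, under the smoothness-vs-effective-dimension restrictions of Table~\ref{table:restrictions combined}, is $o(n^{-1/2})$.

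Third, and most importantly, I would treat the \emph{pre-Gaussian} term $n^{1/2}\E_n(U)$. The key structural fact already established in the excerpt is $\sigma^2(\Sigma,m)\le \tfrac{8\bar\sigma^2}{\lambda^2}\sigma^2(T,m)$, so polynomial decay of $\nu_s(S_x),\nu_s(S_z)$ forces $\sigma^2(\Sigma,m)\downarrow 0$. I would invoke a nonasymptotic Gaussian coupling for Hilbert-space-valued sums (in the spirit of \citealp{zaitsev1987estimates,buzun2022strong}) to produce a centered Gaussian $G$ in $\Hx$ with covariance $\Sigma$ such that $\|n^{1/2}\E_n(U) - G\|_{\Hx}$ is small with high probability; the rate depends on $\|U_i\|_{\Hx}=\Ocal(1/\sqrt{\lambda\mu})$, on the effective dimension $\tilde{\M}(\lambda,\mu)$, and on the local width $\sigma^2(\Sigma,m)$, which is exactly where the ``effective dimension low enough'' hypothesis of Table~\ref{table:restrictions combined} is consumed. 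Combining the three pieces yields $\|n^{1/2}(\hat h - h_0) - G\|_{\Hx} = o_{\PP}(1)$ with $\Ocal(1/n)$ slack.

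To convert this coupling into validity and sharpness I would run a parallel argument on the bootstrap side. The bootstrap function $\BS = \tfrac1n\sum_{i,j}\tfrac{1}{\sqrt2}(\hat V_i - \hat V_j)h_{ij}$ is, conditionally on $D$, Gaussian in $\Hx$ with covariance $\widehat\Sigma$, the empirical analogue of $\Sigma$ built from the $\hat V_i$; I would show $\|\widehat\Sigma - \Sigma\|$ (in trace or $\HS$ norm, and on the relevant finite-dimensional blocks) is small with probability $1-\Ocal(1/n)$, using consistency of $\hat V_i$ for $U_i$ and of $\hat\ep_i$ for $\ep_i$. Then a Gaussian comparison inequality gives that the law of $\|\BS\|_{\Hx}$ is uniformly close to that of $\|G\|_{\Hx}$, so $\hat t_\chi$ approximates the true $(1-\chi)$-quantile of $\|G\|_{\Hx}$. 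Validity then follows by noting $\{h_0\in\hat C_\chi\}\supseteq\{\|n^{1/2}(\hat h - h_0)\|_{\Hx}\le \hat t_\chi(1+1/\log n)\}$ and bounding the complement via the coupling plus the quantile comparison. Sharpness requires the reverse direction together with a \emph{Gaussian anti-concentration} bound on $\|G\|_{\Hx}$ near its $(1-\chi)$-quantile; the inflation factor $1+1/\log n$ is precisely what absorbs a possibly-degenerate limit, so that the anti-concentration loss is $\Ocal(1/\log n)$ rather than something uncontrollable --- this is the role of the $\delta = 1/\log n$ device, following \citep{andrews2013inference}. I expect the main obstacle to be the pre-Gaussian step: getting the Gaussian coupling rate to beat $n^{-1/2}$ while the effective dimension $\tilde{\M}(\lambda,\mu)\asymp\lambda^{-\rho_x}$ and the almost-sure norm $1/\sqrt{\lambda\mu}$ both blow up as $\lambda,\mu\downarrow 0$ forces a delicate balancing of the truncation level $m$, the decay rates $\rho_x,\rho_z$, and the smoothness exponents $\alpha,\beta$ --- this is exactly the tension flagged in the introduction between vanishing fast enough for bias and slowly enough for Gaussian approximation, and it is what Tables~\ref{table:restrictions} and~\ref{table:restrictions combined} are engineered to resolve.
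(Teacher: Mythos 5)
Your roadmap is essentially the same as the paper's, but the paper's actual proof of Theorem~\ref{thm:valid-inference} is a short assembly step: it invokes Proposition~1 of \cite{singh2023kernel}, which reduces validity and sharpness to verifying a single ratio condition
\[
\frac{\Delta+B}{L-\Delta}\le\delta\le\tfrac12,
\qquad \Delta:=Q+R,
\]
where $B$ is the bias bound (Proposition~\ref{prop:Bias upper bound}), $Q$ and $R$ are the Gaussian and bootstrap coupling errors (Theorems~\ref{theorem:gaussian-approx1} and~\ref{thm: Boostrap-approximation}), and $L$ is the variance lower bound (Proposition~\ref{prop:VALBO}). The parameter restrictions in Tables~\ref{table:restrictions}--\ref{table:restrictions combined} ensure $B/L$ and $\Delta/L$ decay polynomially in $n$, which dominates the logarithmic rate $\delta=1/\log n$, and setting $\eta=1/n$ gives $\tau=\Ocal(1/n)$. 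Your sketch supplies the proofs of the \emph{ingredients} ($B$, $Q$, $R$) rather than the assembly, so it is compatible in spirit, but your route for the bootstrap step is genuinely different: you propose to prove consistency of the conditional covariance $\widehat\Sigma$ for $\Sigma$ and then apply a Gaussian comparison inequality to transfer the quantile, whereas the paper constructs a direct bootstrap coupling (Theorem~\ref{thm: Boostrap-approximation}) of $\mathfrak{B}$ to a Gaussian $Z'$ whose conditional covariance is exactly $\Sigma$, which sidesteps Gaussian comparison entirely. Both routes can be made to work; the direct coupling is what makes the anti-symmetric multipliers and the bias-cancellation identity $V_i-V_j=U_i-U_j$ pay off.

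The one substantive gap is that you never invoke the variance lower bound or the strong instrument condition (Assumption~\ref{ass:strong instrument}), even though it is one of the three maintained assumptions of the theorem. Your anti-concentration argument implicitly requires a lower bound on the ``width'' of the limiting Gaussian (e.g.\ on $\E\|G\|_{\Hx}$ or on $\sigma_T^2 = \|\Sigma\|_{\op}$), otherwise the density of $\|G\|_{\Hx}$ near its $(1-\chi)$-quantile has no controllable lower bound and the anti-concentration loss could swallow the $\Ocal(1/\log n)$ slack entirely. In the paper this is made explicit and separate: Proposition~\ref{prop:VALBO} gives $\|Z\|\gtrsim\sqrt{\underline\sigma^2\tilde{\mathfrak m}(\lambda,\mu)}-\text{(fluctuation)}$, and the strong instrument condition $\tilde{\mathfrak m}(\lambda,\mu)\gtrsim\lambda^{-\rho_x}$ is exactly what keeps this lower bound large relative to $B$, $Q$, and $R$ as $\lambda,\mu\downarrow 0$. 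Without explicitly extracting this as a stand-alone lower bound (and recognizing that the instrument must carry enough information in the appropriate spectral sense), the balancing you describe at the end---between the blow-up of $1/\sqrt{\lambda\mu}$ and the vanishing bias---is under-determined: you would be trying to bound $\Delta+B$ by a fraction of $L$ without ever establishing that $L$ is bounded away from the scale of $\Delta$. So a correct execution of your sketch must add (i) the variance lower bound as a proposition and (ii) the strong instrument condition as the hypothesis that makes it non-vacuous.
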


\begin{corollary}[Uniform confidence sets]\label{cor:uniform}
Under the assumptions of Theorem \ref{thm:valid-inference}, the uniform confidence band in Algorithm~\ref{alg:2} is $\Ocal(1/n)$-valid.

\end{corollary}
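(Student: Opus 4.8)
The plan is to derive the uniform band's validity as an immediate consequence of the $\Hx$-norm validity in Theorem~\ref{thm:valid-inference}, together with the reproducing property and the boundedness $\sup_{x\in\X}\|\psi(x)\|_{\Hx}\le\kappa_x$. The only content is a deterministic inclusion of events: whenever $h_0$ belongs to the $\Hx$ confidence set $\hat C_\chi$, the function $h_0$ also lies in the pointwise band $\hat C_\chi(x)$ for \emph{every} $x\in\X$ simultaneously, so the coverage event for the uniform band contains the coverage event for the $\Hx$ set.

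First I would unpack the definition of $\hat C_\chi$ in Algorithm~\ref{alg:2}. By construction, $h_0\in\hat C_\chi$ means there is $h\in\Hx$ with $\|h\|_{\Hx}\le 1+1/\log(n)$ and $h_0=\hat h\pm \hat t_\chi n^{-1/2}h$, i.e. $\|\hat h-h_0\|_{\Hx}\le \hat t_\chi n^{-1/2}\{1+1/\log(n)\}$. Then for each fixed $x$, the reproducing property gives $\hat h(x)-h_0(x)=\bk{\hat h-h_0}{\psi(x)}_{\Hx}$, and Cauchy--Schwarz with $\|\psi(x)\|_{\Hx}\le\kappa_x$ yields $|\hat h(x)-h_0(x)|\le \kappa_x\|\hat h-h_0\|_{\Hx}\le \hat t_\chi n^{-1/2}\kappa_x\{1+1/\log(n)\}$. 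Since $\kappa_x$ does not depend on $x$, this is exactly the statement that $h_0(x)\in\hat C_\chi(x)$, and it holds uniformly over $x\in\X$. The two-sided ($\pm$) form of the bands is handled verbatim because the argument controls the absolute value $|\hat h(x)-h_0(x)|$.

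Hence $\{h_0\in\hat C_\chi\}\subseteq\{h_0(x)\in\hat C_\chi(x)\text{ for all }x\in\X\}$, and monotonicity of probability gives $\PP\{h_0(x)\in\hat C_\chi(x)\ \forall x\in\X\}\ge \PP(h_0\in\hat C_\chi)\ge 1-\chi-\Ocal(1/n)$, where the last inequality is the $\Ocal(1/n)$-validity established in Theorem~\ref{thm:valid-inference}. This is precisely $\Ocal(1/n)$-validity of the uniform confidence band. I do not anticipate any real obstacle: the corollary reduces entirely to the $\Hx$-norm result plus the Cauchy--Schwarz bound $\|h-h_0\|_\infty\le\kappa_x\|h-h_0\|_{\Hx}$ already noted in the text; the only point requiring minor care is matching the notation of the two-sided bands, which the absolute-value estimate above covers.
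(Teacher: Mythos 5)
Your proof is correct and matches the paper's approach exactly: the paper states (Section~\ref{sec:algo_main} and Section~\ref{sec:algo_bootstrap}) that $\sup_{x}|h(x)|\le\kappa_x\|h\|_{\Hx}$ by the reproducing property and Cauchy--Schwarz, so the $\Hx$-norm coverage event of Theorem~\ref{thm:valid-inference} is contained in the uniform-band coverage event, and validity follows by monotonicity. No gaps.
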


\subsection{Key intermediate results}\label{sec:algo_incremental}

Our main result, Theorem~\ref{thm:valid-inference}, ties together four intermediate results, which we present below: 
(i) a bias upper bound (Proposition~\ref{prop:Bias upper bound}),
(ii) a Gaussian coupling (Theorem~\ref{theorem:gaussian-approx1}),
(iii) a bootstrap coupling (Theorem~\ref{thm: Boostrap-approximation}),
and (iv) a variance lower bound (Proposition~\ref{prop:VALBO}).
We summarize these four intermediate results in the leading case of polynomial decay for $S_x$ and $S_z$ (Table~\ref{table:1}).
To validate inference, we must show that the errors arising from (i), (ii), and (iii) are dominated by (iv).
This leads to restrictions on the regularization parameters $(\lambda,\mu)$ (Table~\ref{table:restrictions}), and requirements that the effective dimension is low enough and smoothness is high enough (Table~\ref{table:restrictions combined}).

\begin{table*}[t]
\centering
\begin{threeparttable}
\caption{Intermediate results under polynomial decay of $S_x$ and $S_z$}
\label{table:1}
\setlength{\tabcolsep}{8pt}
\renewcommand{\arraystretch}{1.2}
\begin{tabular}{@{}c c@{}}
\toprule
Bias upper bound: $B$ & $n^{1/2}\lambda^{\alpha}$ \\[.3cm]
Gaussian coupling: $Q_{\bullet}$ & $\dfrac{1}{\lambda}\left(\dfrac{n\mu}{\lambda}\right)^{\frac{\rho_x-2}{2(3\rho_x-2)}}$ \\[.3cm]
Gaussian coupling:  $Q_{\mathrm{res}}$ &
$\dfrac{\lambda^{\alpha -1}}{\sqrt{n}\mu^2}   +  \dfrac{\lambda^{\alpha -1/2}}{\sqrt{n}\mu^{1/2 + \rho_z}}+ \dfrac{  1}{n\lambda^{3/2}\mu^{1+ \rho_z}}  + \dfrac{1}{\sqrt{n}\lambda \mu^{3/2}}$ \\[.3cm]
Bootstrap coupling: $R_{\bullet}$ & $\dfrac{1}{\lambda^{\rho_x/2}}(n\mu)^{\frac{2-\rho_x}{2(\rho_x-3)}}$ \\[.3cm]
Bootstrap coupling: $R_{\mathrm{res}}$ &
$\dfrac{\lambda^{\alpha-1}}{\mu^2\sqrt{n}}
+\dfrac{1}{\lambda\mu^{3/2}\sqrt{n}}
 + \dfrac{\lambda^{\alpha-1}}{n\mu^{3/2 + \rho_z}}+ \dfrac{ 1}{n^{3/2}\lambda^{2}\mu^{2+\rho_z}}$ \\[.3cm]
Variance lower bound: $L$ & $\lambda^{-\rho_x/2}$ \\[.2cm]
\bottomrule
\end{tabular}

\begin{tablenotes}[flushleft]
\footnotesize
\item 
\noindent The first row gives the bias upper bound $B$. The second and third rows give the Gaussian coupling bound $Q=Q_{\bullet}+Q_{\mathrm{res}}$. The fourth and fifth rows give the bootstrap coupling bound $R=R_{\bullet}+R_{\mathrm{res}}$.
The final row is the variance lower bound. Throughout, we suppress logarithmic factors and constants, and we impose $\lambda=\mu^{\iota}$ with $\iota\leq 1$.
\end{tablenotes}
\end{threeparttable}
\end{table*}

To begin, we characterize the bias of kernel instrumental variable regression under our smoothness assumptions. Under suitable regularity assumptions, our bias bound matches the well-known bias bound of kernel ridge regression \citep{smale2005shannon,caponnetto2007optimal,fischer2020sobolev}. We place two smoothness assumptions: a source condition ensuring that $h_0$ is smooth, and a link condition ensuring that the conditional expectation operator maps smooth functions to smooth functions. Both assumptions are in line with previous applications of RKHS methods to ill-posed inverse problems \citep{nashed1974regularization,singh2019kernel,meunier2024nonparametric}.

\begin{proposition}[Bias upper bound]\label{prop:Bias upper bound}
Suppose that Assumptions~\ref{ass:Source} and \ref{assn:restricted-link} hold,
and the regularization satisfies $\mu^\beta r  < \lambda^{1/2}$. 
Then
$n^{1/2}\|\hml - h_0\|_{\Hx} \le B=n^{1/2}\frac{\|h_{0,\lambda} - h_0\|_{\Hx}}{1 - C_\beta r\mu^{\beta}/\lambda^{\frac{1}{2}}} \le n^{1/2}\frac{C_\alpha \lambda^{\alpha} \|T^{-\alpha} h_0\|_{\Hx}}{1 - C_\beta r \mu^{\beta}/\lambda^{\frac{1}{2}}}. $
\end{proposition}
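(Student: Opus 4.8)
The plan is to compare $\hml$ with the ``population ridge'' approximant $h_{0,\lambda}$, which is the minimizer one would obtain if the first stage were not regularized (i.e. with $\mu = 0$), and then to bound each of the two pieces separately. Concretely, I would first establish a closed-form expression for $\hml$ as the solution to the normal equation associated with the objective $\|(S_z+\mu)^{-1/2}S(h_0-h)\|_{\Hz}^2 + \lambda\|h\|_{\Hx}^2$, namely $\hml = \Tml^{-1} S^*(S_z+\mu)^{-1}S\,h_0$, where $\Tml = S^*(S_z+\mu)^{-1}S + \lambda$. Similarly, $h_{0,\lambda} = (T+\lambda)^{-1} T h_0$ with $T = S^*S_z^{-1}S$. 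The triangle inequality gives
\[
\|\hml - h_0\|_{\Hx} \le \|\hml - h_{0,\lambda}\|_{\Hx} + \|h_{0,\lambda} - h_0\|_{\Hx},
\]
so it remains to control the ``first-stage perturbation'' term $\|\hml - h_{0,\lambda}\|_{\Hx}$ and to invoke the standard kernel-ridge bias bound for the second term.

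For the first-stage perturbation, I would write $\hml - h_{0,\lambda}$ as the result of perturbing the operator $T$ by $E_\mu := S^*(S_z+\mu)^{-1}S - T = -\mu\, S^*S_z^{-1}(S_z+\mu)^{-1}S$, which is a negative semidefinite operator of size controlled by the link condition: since $\|S_z^{-(1/2+\beta)}S\|_{\op} \le r$, we get $\|E_\mu\|_{\op} \lesssim_\beta r^2 \mu^{2\beta}$ — but more usefully, a \emph{relative} bound of the form $\|(T+\lambda)^{-1/2} E_\mu (T+\lambda)^{-1/2}\|_{\op} \lesssim_\beta r^2 \mu^{2\beta}/\lambda$, or its square-root version $\lesssim_\beta r\mu^{\beta}/\lambda^{1/2}$. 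Using the resolvent identity $\Tml^{-1} - (T+\lambda)^{-1} = -\Tml^{-1} E_\mu (T+\lambda)^{-1}$ and the fact that $\hml = \Tml^{-1}(T_{\mu} )h_0$ while $h_{0,\lambda} = (T+\lambda)^{-1}T h_0$ (where $T_\mu := S^*(S_z+\mu)^{-1}S$), I would expand the difference, factor out $(T+\lambda)^{-1/2}$ on both sides where possible, and bound everything in terms of $r\mu^\beta/\lambda^{1/2}$ times $\|h_{0,\lambda} - h_0\|_{\Hx}$ (the extra factor $\|h_{0,\lambda}-h_0\|_{\Hx}$ appearing because $(T+\lambda)^{-1}Th_0 - h_0 = -\lambda(T+\lambda)^{-1}h_0$ is exactly the residual that the first-stage error multiplies). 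This yields an implicit inequality $\|\hml - h_{0,\lambda}\|_{\Hx} \le C_\beta r\mu^\beta/\lambda^{1/2}\bigl(\|\hml - h_{0,\lambda}\|_{\Hx} + \|h_{0,\lambda}-h_0\|_{\Hx}\bigr)$, which, under the assumed condition $C_\beta r\mu^\beta < \lambda^{1/2}$, can be solved to give $\|\hml - h_{0,\lambda}\|_{\Hx} \le \frac{C_\beta r\mu^\beta/\lambda^{1/2}}{1 - C_\beta r\mu^\beta/\lambda^{1/2}}\|h_{0,\lambda}-h_0\|_{\Hx}$. Adding back $\|h_{0,\lambda}-h_0\|_{\Hx}$ produces the claimed $\frac{\|h_{0,\lambda}-h_0\|_{\Hx}}{1 - C_\beta r\mu^\beta/\lambda^{1/2}}$.

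For the final inequality, I would invoke the classical bias estimate for Tikhonov regularization under a source condition: if $h_0 = T^\alpha w_0$ with $\alpha \in [0,1]$, then $\|h_{0,\lambda} - h_0\|_{\Hx} = \|\lambda(T+\lambda)^{-1}T^\alpha w_0\|_{\Hx} \le C_\alpha \lambda^\alpha \|w_0\|_{\Hx} = C_\alpha\lambda^\alpha\|T^{-\alpha}h_0\|_{\Hx}$, where the operator bound $\|\lambda(T+\lambda)^{-1}T^\alpha\|_{\op} \le C_\alpha\lambda^\alpha$ follows from the scalar inequality $\sup_{t\ge 0}\lambda t^\alpha/(t+\lambda) \le \lambda^\alpha$ (interpolation / spectral calculus on the self-adjoint operator $T$). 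Multiplying through by $n^{1/2}$ gives the statement.

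\textbf{Main obstacle.} The routine parts are the source-condition bias bound and the resolvent algebra; the delicate step is establishing the \emph{relative} perturbation bound $\|(T+\lambda)^{-1/2} E_\mu (T+\lambda)^{-1/2}\|_{\op} \lesssim_\beta (r\mu^\beta/\lambda^{1/2})^2$ and, more precisely, making sure that in the expansion of $\hml - h_{0,\lambda}$ every occurrence of $E_\mu$ is flanked by resolvent factors that can absorb it — one must be careful that $T_\mu h_0$ versus $T h_0$ also differ, contributing a term $\Tml^{-1}E_\mu h_0$ that has to be rewritten using $h_0 = T^\alpha w_0$ and the link condition simultaneously, so that the $\beta$-smoothing from the link condition and the range condition on $h_0$ combine correctly. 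Getting the constant $C_\beta$ to depend only on $\beta$ (and not on $r$ or $\lambda$) requires that the link condition be used in the form $\|S_z^{-\beta}(S_z+\mu)^{-1/2}\|_{\op} \le \mu^{-\beta}$ type bounds composed with $\|S_z^{-1/2}S(T+\lambda)^{-1/2}\|_{\op}\le 1$, which is where the interplay between the two regularization parameters genuinely enters and must be tracked with care.
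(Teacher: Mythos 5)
Your overall decomposition is sound and genuinely different from the paper's: you derive the implicit inequality $x = -T_\lambda^{-1}(T_\mu - T)(\hml - h_0)$, so $\|x\| \le \delta(\|x\| + \|h_{0,\lambda}-h_0\|)$, and solve for $\|x\|$; the paper instead iterates the resolvent identity to obtain a full Neumann-series decomposition (Lemma~\ref{lem:decomp}) and sums a geometric series. Both routes yield the $(1-\delta)^{-1}$ factor, and your implicit-inequality route is arguably cleaner. The source-condition step at the end is also exactly the paper's Lemma~\ref{lemma:First_stage_bias}.

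However, there is a genuine gap in the way you propose to establish the key perturbation bound. The claim that $\|(T+\lambda)^{-1/2} E_\mu (T+\lambda)^{-1/2}\|_{\op} \lesssim_\beta r^2 \mu^{2\beta}/\lambda$ is false in the regime $\beta \ge 1/2$ assumed in Assumption~\ref{assn:restricted-link}: applying the link condition on \emph{both} sides forces you to bound $\|\mu S_z^{2\beta}(S_z+\mu)^{-1}\|$, and for $2\beta \ge 1$ the scalar function $t \mapsto \mu t^{2\beta}/(t+\mu)$ is increasing, so its supremum on $[0,\kappa_z^2]$ is of order $\mu$, not $\mu^{2\beta}$ — a strictly worse rate. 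In addition, passing from an operator-norm bound to its ``square-root version'' is not a legitimate step. The correct bound $\|T_\lambda^{-1}(T_\mu - T)\|_{\op}\le C_\beta\, r\,\mu^\beta/\lambda^{1/2}$ (which you do correctly use downstream) must instead be obtained by the \emph{asymmetric} factorization the paper employs: insert one $S_z^{-1/2}$ to pair with $T^{-1/2}S^*$ into the contraction $T^{-1/2}S^*S_z^{-1/2}$ (norm $\le 1$, using $T = S^*S_z^{-1}S$), isolate $\mu S_z^\beta(S_z+\mu)^{-1}$ in the middle (norm $\le C_\beta\mu^\beta$, Lemma~\ref{lem:bounds}), apply the link condition $\|S_z^{-(1/2+\beta)}S\|\le r$ exactly \emph{once} on the remaining side, and absorb the leftover $T_\lambda^{-1}T^{1/2}$ (resp.\ $T_\lambda^{-1/2}$ in the symmetric version) to produce $\lambda^{-1/2}$. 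This uses the link condition once, not twice, which is exactly why the constant comes out linear in $r$ and the exponent linear in $\beta$. Once you replace your proposed derivation of the relative bound with this factorization, the rest of your argument goes through and gives the stated result.
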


Under weak regularity conditions, the bias simplifies to $B=n^{1/2}\lambda^{\alpha}$, which is easy to interpret: for a smoother target function $h_0$, the smoothness parameter $\alpha$ is larger, and the bias vanishes more quickly. Overall, more smoothness translates into an easier estimation problem with better convergence rates. A simple and convenient regularity condition is that the conditional expectation operator is a bounded map from $\Hx$ to $\Hz$, i.e.~$\beta\ge\tfrac12$. Under this convenient regularity condition, it suffices to place a mild restriction on the regularization: $\mu \le r^{-2}\lambda$, which ensures that the denominator in Proposition~\ref{prop:Bias upper bound} is bounded away from zero. 
In light of the adversarial formulation in \eqref{eq:adversarial-definition}, 
this mild restriction on regularization ensures that the adversary's strategy space is not too constrained, so that the adversary may adequately detect correlation between the instrument $Z$ and the endogenous error $\varepsilon$. The mild restriction holds under natural choices of regularization: either $\lambda=\mu^\iota$ with $\iota\leq 1$, or $\mu=\lambda/C$ with $C>(C_\beta r)^{2}$.

Equipped with this bias bound, we present two key results that underpin Theorem~\ref{thm:valid-inference}: a Gaussian coupling (Theorem~\ref{theorem:gaussian-approx1}), and a bootstrap coupling (Theorem~\ref{thm: Boostrap-approximation}).

\begin{theorem}[Gaussian approximation]\label{theorem:gaussian-approx1}
Suppose the conditions of Proposition~\ref{prop:Bias upper bound} hold. 
Next, suppose the data have low effective dimension, i.e., $
\nu_s(S_x)\asymp \omega_x s^{-1/(\rho_x-1)}
$ and $
\nu_s(S_z) \asymp \omega_z s^{-1/(\rho_z -1)}$.
Set the regularization $(\lambda,\mu)$ so that $\|U_i\|_{\Hx}\lesssim\frac{1}{\sqrt{\lambda\mu}}$.\footnote{This mild condition requires $\lambda = \mu^{\iota/(2\beta)}$ for sufficiently large $\iota \in (0,1]$, see e.g.~Lemmas~\ref{lemma:iota} and~\ref{lemma:iota2}. }
Finally, assume that $n$ is sufficiently large.\footnote{See Assumption~\ref{ass:rate condition} for a precise statement.}
Then, there exists a Gaussian random element $Z\in \Hx$, with covariance $\Sigma$, such that with probability $1-\eta$, 
$\left\|n^{1/2}(\hat h - \hml) - Z\right\|_{\Hx} \lesssim  Q_\bullet  \widetilde M\log(36/\eta) +Q_{\mathrm{res}}$. In this compact notation, $\widetilde M$ is an absolute constant, and the key quantities are
\begin{align*}
    Q_\bullet &= \inf_{m\geq 1}\left\{
 \frac{\sigma(S_x, m)}{\lambda}
+ \frac{m^2 \log(m^2)}{\sqrt{n\mu\lambda}}\right\},\quad \nmu = \tr(S_z +\mu)^{-2}S_z, \\
Q_{\mathrm{res}} &= \left(\left\{\frac{\log(12/\eta)^2}{\sqrt{n}\lambda\mu^2} \vee\frac{\log(12/\eta)^2 \nmu}{\sqrt{n}\lambda^{1/2}\mu^{1/2}} \right\} \|\hml - h_0\|_\Hx  \vee \frac{ \nmu \log(12/\eta)^3}{n\lambda^{3/2}\mu}  \vee \frac{\log(12/\eta)^2}{\sqrt{n}\lambda \mu^{3/2}} \right).
\end{align*}
\end{theorem}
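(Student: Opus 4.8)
\textbf{Proof proposal for Theorem~\ref{theorem:gaussian-approx1}.}

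The plan is to establish the Gaussian coupling by combining a Bahadur-type linearization of $\hat h - \hml$ with an abstract Gaussian approximation result in Hilbert space, applied to the leading ``pre-Gaussian'' term. First I would write the exact decomposition $n^{1/2}(\hat h - \hml) = n^{1/2}\E_n(U) + \text{residual}$, where $U_i$ is the pre-Gaussian summand displayed after Definition~\ref{def:local} and the residual collects all second-order terms arising from substituting the empirical operators $\hat S, \hat S_z$ for their population counterparts inside $\hat T_{\mu,\lambda}^{-1}$. The residual has a natural structure: it is built from products of the operator perturbations $\hat S - S$, $\hat S_z - S_z$, the inverse $\hat T_{\mu,\lambda}^{-1}$ (expanded via a Neumann series around $T_{\mu,\lambda}^{-1}$), and the vector $(h_0 - h_{\mu,\lambda})$ together with the noise term $S^*(S_z+\mu)^{-1}\E_n\{\phi(Z)\ep\}$. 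I would bound each such product in $\Hx$-norm using: (a) concentration of $\|\hat S_z - S_z\|_{\op}$, $\|\hat S - S\|_{\HS}$, and the relevant noise averages at rate governed by the effective dimensions $\nmu$ and $\tilde{\M}$; (b) the operator identities $\|(S_z+\mu)^{-1/2}S T_{\mu,\lambda}^{-1/2}\|_{\op}\le 1$ and $\|T_{\mu,\lambda}^{-1}\|_{\op}\le \lambda^{-1}$; and (c) the a.s.\ bound $\|U_i\|_{\Hx}\lesssim (\lambda\mu)^{-1/2}$ together with $\|(S_z+\mu)^{-1/2}\phi(Z)\|\lesssim \mu^{-1/2}$. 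Collecting the dominant terms and tracking powers of $\lambda,\mu,n$ and the $l(\eta/6)$ factors should yield exactly the three summands in $Q_{\mathrm{res}}$: the first from the bias-carrying term times one operator perturbation over $n\lambda\mu$, the second a triple-product term at order $n^{-3/2}$, and the third the noise-carrying term.

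For the pre-Gaussian part $n^{1/2}\E_n(U)$, I would invoke a nonasymptotic Gaussian approximation in $\Hx$ (in the spirit of \cite{zaitsev1987estimates,buzun2022strong}) for a sum of i.i.d.\ mean-zero bounded Hilbert-space random elements, which produces a Gaussian $Z$ with covariance $\Sigma=\E(U\otimes U^*)$ and coupling error controlled by a truncation argument: project onto the top $m$ eigenspaces of $\Sigma$, apply the finite-dimensional Gaussian approximation there (cost $\sim m^2\log(m^2)/\sqrt{n}$ times the a.s.\ bound on $\|U\|$, i.e.\ $m^2\log(m^2)/\sqrt{n\lambda\mu}$ after absorbing the bound), and control the tail by $\sigma(\Sigma,m)$. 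Here I would use the paper's own inequality $\sigma^2(\Sigma,m)\le \tfrac{8\bar\sigma^2}{\lambda^2}\sigma^2(T,m)$, so the tail contributes $\sigma(T,m)/\lambda$. Taking the infimum over $m$ gives $Q_\bullet$. The polynomial decay hypotheses on $S_x,S_z$ guarantee $\sigma^2(T,m)\downarrow 0$ fast enough for the infimum to be finite and small, and the ``$n$ large'' footnote (Assumption~\ref{ass:rate condition}) ensures the finite-$m$ Gaussian approximation is in its valid regime and that the optimizing $m$ is $\ge 1$.

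The main obstacle I anticipate is the residual analysis, specifically handling the Neumann expansion of $\hat T_{\mu,\lambda}^{-1}$ without losing extra powers of $\lambda$ or $\mu$: naive bounds give $\|\hat T_{\mu,\lambda}^{-1} - T_{\mu,\lambda}^{-1}\|_{\op}\lesssim \lambda^{-2}\|\hat T_{\mu,\lambda} - T_{\mu,\lambda}\|_{\op}$, and $\hat T_{\mu,\lambda} - T_{\mu,\lambda}$ itself splits into pieces involving $(S_z+\mu)^{-1}$ which costs up to $\mu^{-1}$ each; keeping these from compounding into something worse than the stated $Q_{\mathrm{res}}$ requires carefully pairing each inverse power of $\mu$ with a ``half'' of an operator that is bounded by the square root, i.e.\ systematically inserting $T_{\mu,\lambda}^{\pm 1/2}$ and $(S_z+\mu)^{\pm 1/2}$ to balance the bound. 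A secondary obstacle is verifying that the first step --- validity of the Hilbert-space Gaussian approximation with the stated $m^2\log(m^2)$ dependence --- can be cited or adapted cleanly for non-identically-scaled summands; I would handle this by noting the summands are i.i.d.\ (so the scaling issue does not arise) and that boundedness plus the covariance structure are exactly the hypotheses of the cited strong approximation theorems, with the truncation bookkeeping done by hand as above.
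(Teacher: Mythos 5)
Your proposal matches the paper's proof strategy: the paper also splits $n^{1/2}(\hat h - \hml)$ into the pre-Gaussian sum $n^{-1/2}\sum_i U_i$ plus a residual bounded via an abstract Bahadur representation (Neumann expansion of $\hat T_{\mu,\lambda}^{-1}$ paired with operator-norm/Bernstein bounds balanced by $T_{\mu,\lambda}^{\pm1/2}$ and $(S_z+\mu)^{\pm1/2}$ insertions), then couples the pre-Gaussian sum to a Gaussian via exactly the truncation-and-tail argument you describe (finite-rank Gaussian approximation at cost $am^2\log(m^2/\eta)/\sqrt n$ with $a\lesssim 1/\sqrt{\lambda\mu}$, plus the local-width bound $\sigma(\Sigma,m)\le \sqrt{8}\bar\sigma\,\sigma(T,m)/\lambda$), with a final union bound rescaling $\eta$. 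Your proposal is correct and takes essentially the same approach.
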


The quality of Gaussian approximation $Z$ ultimately depends on a handful of quantities: spectral decay of $S_x$ and $S_z$; regularization parameters $(\lambda,\mu)$; and the bias $n^{1/2}\|\hml - h_0\|_{\Hx}$.
Complexity of the covariate distribution is reflected by the local width $\sigma(S_x, m)$, and complexity of the marginal distribution of the instrument is reflected by $\nmu$. 
If the effective dimension is higher, then these quantities are larger, and the quality of Gaussian approximation is worse. 
Gaussian approximation also degrades when the regularization parameters $(\lambda,\mu)$ are too small; intuitively, with less regularization, both the estimator and its Gaussian approximation are less stable. 
Finally, a large bias also complicates the Gaussian approximation.

Proposition~\ref{prop:Bias upper bound} and Theorem~\ref{theorem:gaussian-approx1} preview an important tension.  Smaller $(\lambda,\mu)$ help estimation by limiting the bias, but possibly hurt inference by driving up complexity of the estimator. We use nonasymptotic analysis to thread this needle, achieving Gaussian approximation when $(\lambda,\mu)$ approach zero, even though no stable  Gaussian limit exists. To sample from the sequence of approximating Gaussians $Z$, this paper proposes a new bootstrap procedure $\mathfrak{B}$. The following result proves the validity of the bootstrap.

\begin{theorem}[Bootstrap approximation]\label{thm: Boostrap-approximation} 
Suppose that the conditions of Theorem~\ref{theorem:gaussian-approx1} hold. 
Further assume that $n$ is sufficiently large.\footnote{See Assumption~\ref{ass:combined_rate_condition} for a precise statement.}
Then, there exists
a Gaussian random element $Z'\in\Hx$ whose conditional distribution given $U$ has covariance $\Sigma$, such that with probability $1-\eta$, we have 
$
    \mathbb{P}\left[ \|Z'-\mathfrak{B} \| \lesssim \widetilde M \log(6/\eta)^{3/2} R_{\bullet} + R_{\mathrm{res}} |U\right]\geq 1-\eta
$. 
In this compact notation, $\widetilde M$ is an absolute constant, and the key quantities are
\begin{align*}
    R_\bullet&= \inf_{m\ge 1}\left[ m^{1/4}\left\{ \frac{ \tilde{\mathfrak{m}}(\lambda,\mu)}{n\mu\lambda } + \frac{1}{n^{2}\mu^2 \lambda^2} \right\}^{1/4} + \frac{ \sigma(S_x, m)}{\lambda} \right],\quad \tilde{\mathfrak{m}}(\lambda, \mu)= \tr T_{\mu,\lambda}^{-1}S^*(S_z+\mu)^{-1}S_z(S_z+\mu)^{-1}S T_{\mu,\lambda}^{-1},\\
R_{\mathrm{res}}&=\sqrt{2\log\left(15/\eta\right)}\Bigg[ \left\{
\frac{\log(30/\eta)}{\sqrt{n} \mu^2 \lambda}  + \frac{\nmu \log(30/\eta)^2 }{n\lambda\mu^{3/2 }}\right\} \|\hml - h_0\|_{\Hx} \
+\frac{\log(30/\eta)}{\sqrt{n}\lambda\mu^{3/2}}
 + \frac{ \nmu \log(30/\eta)^3}{n^{3/2}\lambda^{2}\mu^{2}}
\Bigg].
\end{align*}
\end{theorem}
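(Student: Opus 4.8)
\textbf{Proof plan for Theorem~\ref{thm: Boostrap-approximation}.}

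The plan is to run a three-step coupling argument that transfers the pre-Gaussian structure through the empirical estimate $\hat V_i$ of $U_i$ and then through the anti-symmetric Gaussian multiplier construction $\mathfrak{B}$. First I would make precise the decomposition hinted at in the algorithm: writing $\mathfrak{B} = \tfrac1n\sum_{i,j} \tfrac{1}{\sqrt2}(\hat V_i - \hat V_j)h_{ij}$, I would condition on the data $D$ and note that, conditionally, $\mathfrak{B}$ is a centered Gaussian element of $\Hx$ with (conditional) covariance operator $\hat\Sigma_{\mathrm{sym}} := \tfrac1n\sum_i (\hat V_i - \bar{\hat V})\otimes(\hat V_i - \bar{\hat V})^*$, where $\bar{\hat V}$ is the empirical mean --- this is exactly the role of the $\mathcal{N}(0, I - 11^\top/n)$ multipliers, which enforce anti-symmetry and cancel bias. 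The target is to couple this Gaussian to one with the \emph{population} covariance $\Sigma = \E(U\otimes U^*)$. So the three steps are: (a) replace $\hat V_i$ by $U_i$, controlling $\|\hat V_i - U_i\|_{\Hx}$ uniformly (or in an averaged sense), which produces the $R_{\mathrm{res}}$ terms; (b) replace the empirical second moment operator $\tfrac1n\sum_i U_i\otimes U_i^*$ by $\Sigma$, using an operator-norm (or Hilbert--Schmidt) concentration inequality for sums of random operators; (c) apply a Gaussian-to-Gaussian coupling lemma (of the Gaussian comparison / Pinsker or Zaitsev type, as cited, e.g.~\citep{zaitsev1987estimates,buzun2022strong}) that bounds the coupling distance between two Gaussians on a Hilbert space in terms of $\|\hat\Sigma_{\mathrm{sym}}^{1/2} - \Sigma^{1/2}\|$ or $\|\hat\Sigma_{\mathrm{sym}} - \Sigma\|$ together with a truncation level governed by the local width $\sigma(T,m)$.

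For step (b), the key quantitative input is the already-established bound $\sigma^2(\Sigma,m) \le \tfrac{8\bar\sigma^2}{\lambda^2}\sigma^2(T,m)$, which lets me truncate $\Sigma$ at rank $m$ with tail error $\sigma(T,m)/\lambda$ --- this is precisely the $\sigma(T,m)/\lambda$ term appearing in $R_\bullet$. On the low-rank part, a matrix Bernstein / Rosenthal-type inequality for the $m$-dimensional compression, using $\|U_i\|_{\Hx} = \Ocal(1/\sqrt{\lambda\mu})$ almost surely (from Lemmas~\ref{lemma:iota} and~\ref{lemma:iota2}) and the second-moment bound $\tr\hat\Sigma \approx \tilde{\mathfrak{m}}(\lambda,\mu)$, yields a deviation of order $\big\{\tilde{\mathfrak{m}}(\lambda,\mu)/(\mu\lambda n) + 1/(n^2\mu^2\lambda^2)\big\}^{1/2}$; the outer $m^{1/4}$ and the $(\cdot)^{1/4}$ power arise because the Gaussian comparison lemma converts a covariance-operator discrepancy of size $\varepsilon$ into a coupling error of size roughly $(m\,\varepsilon)^{1/4}$ or $\sqrt{m}\sqrt{\varepsilon}$ after optimizing --- I would track the exact exponent from the cited anti-concentration bounds \citep{chernozhukov2014anticoncentration} and choose the $m^{1/4}$ normalization to match the statement. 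Then I take the infimum over the truncation level $m$, producing $R_\bullet$ exactly as displayed. The $\log(6/\eta)^{3/2}$ factor comes from the Gaussian tail / sub-Weibull bookkeeping in the comparison step.

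For step (a), I would expand $\hat V_i - U_i$ along the same structural terms that define $U_i$: the $T_{\mu,\lambda}^{-1}$ prefactor, the operators $M_i, M_i^*, N_i$ acting on $(h_0 - h_{\mu,\lambda})$, and the noise term $T_{\mu,\lambda}^{-1}S^*(S_z+\mu)^{-1}\phi(Z_i)\e_i$. Each appearance of a population operator ($S$, $S_z$, $T_{\mu,\lambda}$, and the residual $\e_i$) must be swapped for its empirical counterpart ($\hat S$, $\hat S_z$, $\hat T_{\mu,\lambda}$, $\hat\e_i$), and each swap contributes a term; the powers of $\lambda$, $\mu$, $n^{-1/2}$, and factors of $\nmu = \tr(S_z+\mu)^{-2}S_z$ in $R_{\mathrm{res}}$ are the bookkeeping of how an $\Ocal(n^{-1/2})$ operator perturbation propagates through an inverse $\hat T_{\mu,\lambda}^{-1}$ (via a Neumann / resolvent-identity expansion, each inverse costing $1/\lambda$ and each first-stage inverse costing $1/\mu$) and through the $(S_z+\mu)^{-1}$ in the noise term (costing $\nmu$ under the trace). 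These are essentially the same perturbation estimates already developed for the Gaussian-approximation residual $Q_{\mathrm{res}}$ in Theorem~\ref{theorem:gaussian-approx1}, now also picking up the additional error from estimating $\e_i$ by $\hat\e_i = Y_i - \hat h(X_i)$, which is why $R_{\mathrm{res}}$ has strictly more terms than $Q_{\mathrm{res}}$ (note the extra $\lambda^{\alpha-1}/(\mu^2 n^{1/2})$ and $1/(\lambda\mu^{3/2}n^{1/2})$ pieces that are not damped by $n^{-1}$). The ``$n$ sufficiently large'' hypothesis (Assumption~\ref{ass:combined_rate_condition}) is invoked exactly to ensure the Neumann expansions converge, i.e.~that the empirical operators are close enough to the population ones that $\hat T_{\mu,\lambda}$ is invertible with the expected norm bound.

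The main obstacle I anticipate is step (b) combined with (c): controlling the discrepancy between the \emph{symmetrized empirical} covariance $\hat\Sigma_{\mathrm{sym}}$ and the population $\Sigma$ in a norm strong enough to feed the Gaussian comparison lemma, while keeping the dependence on the (diverging) effective dimension $\tilde{\mathfrak{m}}(\lambda,\mu)$ and on $1/(\lambda\mu)$ as tight as the statement demands. Because $\Sigma$ is not trace-class uniformly in $\lambda$ (its trace blows up as $\lambda\to 0$), one cannot simply use a trace-norm bound; the resolution is the truncation at level $m$ using $\sigma^2(\Sigma,m)\le 8\bar\sigma^2\sigma^2(T,m)/\lambda^2$, and then a \emph{dimension-dependent} operator concentration on the retained block --- getting the exponents ($m^{1/4}$, the fourth root on the variance proxy) to line up with the anti-concentration bound of \citep{chernozhukov2014anticoncentration} is the delicate part. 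A secondary subtlety is that the multiplier covariance is $I - 11^\top/n$, not $I$; I must verify that the rank-one correction only removes the (bias-carrying) mean direction and contributes a harmless $\Ocal(n^{-1})$-type perturbation to the covariance, so that the effective conditional covariance of $\mathfrak{B}$ is $\hat\Sigma_{\mathrm{sym}}$ up to negligible error. Everything else --- the bias bound $\|h_{\mu,\lambda}-h_0\|_{\Hx}$ entering $R_{\mathrm{res}}$ via Proposition~\ref{prop:Bias upper bound}, the almost-sure bound on $\|U_i\|_{\Hx}$, and the union bounds over the two $1-\eta$ events --- is routine assembly once these two points are settled.
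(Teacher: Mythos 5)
Your plan captures the same high-level structure the paper uses: decompose the coupling error through the infeasible bootstrap $Z_{\mathfrak{B}}=\frac1n\sum_{i,j}\frac{V_i-V_j}{\sqrt2}h_{ij}$, bound $\|\mathfrak{B}-Z_{\mathfrak{B}}\|$ to get $R_{\mathrm{res}}$, and couple $Z_{\mathfrak{B}}$ to a Gaussian with covariance $\Sigma$ to get $R_{\bullet}$; you also correctly identify that the anti-symmetric multipliers cancel the bias in $V_i$ and that the truncation level $m$ together with $\sigma^2(\Sigma,m)\le 8\bar\sigma^2\sigma^2(T,m)/\lambda^2$ controls the infinite-dimensional tail. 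Two places where your route diverges from the paper deserve note. First, for $R_{\bullet}$ you propose to re-derive a bootstrap-to-Gaussian coupling from operator concentration plus a Gaussian comparison, whereas the paper simply invokes Corollary~B.1 of \cite{singh2023kernel} with $W=Z_{\mathfrak{B}}$, $W'=\mathfrak{B}$, and the almost-sure bound $a\lesssim 1/\sqrt{\lambda\mu}$; re-deriving this black box is possible but is substantial extra work, and you would need to get the exponents ($m^{1/4}$ and the outer fourth root on $a^2\sigma^2(\Sigma,0)/n+a^4/n^2$) exactly right rather than ``choosing them to match''---the form is $m^{1/4}\varepsilon^{1/2}$ for a covariance discrepancy $\varepsilon$, not $(m\varepsilon)^{1/4}$ or $\sqrt{m}\sqrt{\varepsilon}$ as you write. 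Second, for $R_{\mathrm{res}}$ the paper does not compare covariances at all: since $\mathfrak{B}$ and $Z_{\mathfrak{B}}$ share the same Gaussian multipliers $h_{ij}$, the difference $\mathfrak{B}-Z_{\mathfrak{B}}=\frac1n\sum_{i,j}\frac{(\hat V_i-V_i)-(\hat V_j-V_j)}{\sqrt2}h_{ij}$ is itself conditionally Gaussian, and the paper bounds its norm directly via Borell's inequality after the operator expansion $\Delta_1+\Delta_2$ (Lemmas~\ref{lemma:DELTA1}, \ref{lemma:DELTA2}). This direct vector coupling is tighter and avoids the extra $m^{1/4}$ and fourth-root losses you would incur if step (a) were treated as a second covariance comparison. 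Relatedly, you frame (a) as ``replace $\hat V_i$ by $U_i$, controlling $\|\hat V_i-U_i\|$''---strictly this should be ``replace $\hat V_i$ by $V_i$'': controlling $\|\hat V_i-U_i\|$ pointwise would needlessly fight the bias $\theta$, which only disappears after pairwise differencing, so the right quantities to control are $\|\hat V_i - V_i\|$ (as the paper does through $\hat w_i - w_i$ and $\hat T_{\mu,\lambda}^{-1}-T_{\mu,\lambda}^{-1}$) or the centered version of $\hat V_i - U_i$.
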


Once again, a handful of quantities determine the quality of bootstrap approximation $\mathfrak{B}$: spectral decay of $T$ and $S_z$; regularization parameters $(\lambda,\mu)$; and the bias $n^{1/2}\|\hml - h_0\|_{\Hx}$. 
Now, the spectral quantities are the local width $\sigma(S_x, m)$, the first stage effective dimension parameter $\nmu$, and a new effective dimension parameter $\tilde{\mathfrak{m}}(\lambda,\mu)$. 
As before, if the effective dimensions are higher, then these quantities are larger, and the quality of bootstrap approximation is degrades. 
As before, bootstrap approximation also degrades if $(\lambda,\mu)$ are too small, reinforcing the trade-off between estimation and inference.

The procedure $\mathfrak{B}= \frac{1}{n}\sum_{i = 1}^n\sum_{j = 1}^n\frac{1}{\sqrt{2}}\left(\hat{V}_i- \hat{V}_j\right)h_{ij}$ that we analyze in Theorem~\ref{thm: Boostrap-approximation} is the empirical counterpart of an infeasible bootstrap $Z_{\mathfrak{B}}=\frac{1}{n}\sum_{i = 1}^n\sum_{j = 1}^n\left(\frac{V_i-V_j}{\sqrt{2}}\right)h_{ij}$. By briefly discussing $Z_{\mathfrak{B}}$, we clarify the role of anti-symmetry in our analysis. At a high level, $\hat{V}_i$ approximates $V_i$, but $V_i$ is biased for the desired $U_i$ in the pre-Gaussian term: $V_i=U_i+\theta$, where $\E(U_i)=0$ by construction and where $\theta\neq 0$ is a complex bias.\footnote{See Appendix~\ref{sec:bahadur2} for details.} Our key observation is that we can cancel out the bias $\theta$ by taking differences: $V_i-V_j=U_i-U_j$. This differencing is equivalent to generating the matrix of Gaussian multipliers $h\in\R^{n\times n}$ then constructing an anti-symmetric matrix of multipliers $h-h^{\top}$. Our technique expands a proposal of \cite{freedman1981bootstrapping}, who addresses bias arising from non-orthogonality of errors in homoscedastic, fixed design regression.

The results in Proposition~\ref{prop:Bias upper bound}, Theorem~\ref{theorem:gaussian-approx1}, and Theorem~\ref{thm: Boostrap-approximation} are all upper bounds that pertain to the fundamental decomposition
$$
n^{1/2}(\hat{h}-h_0)=\underbrace{n^{1/2}\{(\hat{h}-h_{\mu,\lambda})-\E_n(U)\}}_{\text{residual}}+\underbrace{n^{1/2}\E_n(U)}_{\text{pre-Gaussian}}+\underbrace{n^{1/2}(h_{\mu,\lambda}-h_0)}_{\text{bias}}.
$$
Proposition~\ref{prop:Bias upper bound} controls the nonrandom bias term. Theorem~\ref{theorem:gaussian-approx1} constructs a Gaussian coupling $Z$ for the pre-Gaussian term, and controls the residual. Theorem~\ref{thm: Boostrap-approximation} allows us to sample from the approximating Gaussian $Z$ via a feasible bootstrap $\mathfrak{B}$. 
Our final result, which completes the paper, is a lower bound of the variance of the approximating Gaussian $Z$. Such a lower bound allows us to construct valid confidence sets. Specifically, we require a lower bound on the variance that is still larger than the upper bounds on the bias, Gaussian coupling error, and bootstrap coupling error.

\begin{proposition}[Variance lower bound]\label{prop:VALBO}
Let $Z \in \Hx$ be a Gaussian with covariance $\Sigma$. Suppose $\bb{E}(\ep_i^2|Z_i) \ge \underline{\sigma}^2$ almost surely. 
Finally, set $(\lambda,\mu)$ according to weak regularity conditions.\footnote{See Assumption \ref{ass:COLOBO} for details.}
Then, with probability $1-\eta$,
$\norm{Z}_\Hx \ge \sqrt{\frac{1}{2 }\underline{\sigma}^2\mathfrak{\tilde m}(\lambda, \mu)} - \left \{ 2 + \sqrt{2 \log(1/\eta)}\right \}\sqrt{\frac{ 2\bar \sigma^2}{\lambda} }.$
\end{proposition}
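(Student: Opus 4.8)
The plan is to establish a high-probability lower bound on $\|Z\|_{\Hx}$ by working with a suitable finite-dimensional projection of the Gaussian $Z$ and invoking concentration for Gaussian quadratic forms. Since $Z\in\Hx$ is Gaussian with covariance $\Sigma=\E(U\otimes U^*)$, one can write $Z$ in terms of i.i.d.\ standard normals along the eigenbasis of $\Sigma$, so $\|Z\|_{\Hx}^2$ is a weighted sum of $\chi^2_1$ variables with weights $\nu_s(\Sigma)$ summing to $\tr\Sigma$. The first step is to relate $\tr\Sigma$ to the effective dimension $\tilde{\mathfrak{m}}(\lambda,\mu)$: from the explicit form of $U_i$, the noise component $T_{\mu,\lambda}^{-1}S^*(S_z+\mu)^{-1}\phi(Z_i)\ep_i$ contributes, after taking outer products and using $\E(\ep_i^2\mid Z_i)\ge\underline\sigma^2$, a term bounded below by $\underline\sigma^2\,\tilde{\mathfrak{m}}(\lambda,\mu)$. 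The cross terms between the noise component and the $h_0-h_{\mu,\lambda}$ component vanish because $\E(\ep_i\mid Z_i)=0$, so $\tr\Sigma\ge\underline\sigma^2\,\tilde{\mathfrak{m}}(\lambda,\mu)$; this is the ``signal'' half of the bound.

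The second step is to turn $\tr\Sigma$ into a lower bound on $\|Z\|_{\Hx}$ with high probability. By the Hanson--Wright / Gaussian quadratic-form concentration inequality (or, more simply, Borell--TIS applied to $\|Z\|_{\Hx}$ as a Lipschitz function of standard Gaussians), $\|Z\|_{\Hx}$ concentrates around its mean $\E\|Z\|_{\Hx}$, and $\E\|Z\|_{\Hx}\ge\sqrt{\tr\Sigma}/\sqrt{2}$ (or a comparable constant) up to a standard comparison between the mean and $(\tr\Sigma)^{1/2}=(\E\|Z\|_{\Hx}^2)^{1/2}$. The fluctuation is controlled by the largest eigenvalue, $\sqrt{\nu_1(\Sigma)}$, which — using the almost-sure bound $\|U_i\|_{\Hx}=\Ocal(1/\sqrt{\lambda\mu})$ from Lemmas~\ref{lemma:iota} and~\ref{lemma:iota2}, or more directly the crude bound $\nu_1(\Sigma)\le\tr\Sigma$ together with the variance-component estimate $\|T_{\mu,\lambda}^{-1}S^*(S_z+\mu)^{-1}\phi(Z_i)\ep_i\|_{\Hx}^2\le\bar\sigma^2/\lambda$ type inequalities — is bounded by $C\bar\sigma^2/\lambda$. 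Combining, with probability $1-\eta$,
\[
\|Z\|_{\Hx}\ge \E\|Z\|_{\Hx}-\{1+\sqrt{2\ln(1/\eta)}\}\sqrt{\nu_1(\Sigma)}\ge \tfrac12\sqrt{\underline\sigma^2\,\tilde{\mathfrak{m}}(\lambda,\mu)}-\{2+\sqrt{2\ln(1/\eta)}\}\sqrt{2\bar\sigma^2/\lambda},
\]
matching the claimed form after absorbing the constant $\sqrt2$ comparisons into the stated numerical constants.

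\textbf{Main obstacle.} The delicate point is the lower bound $\E\|Z\|_{\Hx}\gtrsim\sqrt{\tr\Sigma}$, equivalently controlling the downward deviation of a weighted $\chi^2$ sum; this is where one must rule out the mass of $\Sigma$ being spread thinly over many tiny eigenvalues in a way that depresses $\E\|Z\|$ below $\sqrt{\tr\Sigma}/2$. This is handled by the Gaussian Poincaré/Lipschitz concentration route, since $\mathrm{Var}(\|Z\|_{\Hx})\le\nu_1(\Sigma)$ and hence $\E\|Z\|_{\Hx}\ge\sqrt{\tr\Sigma-\nu_1(\Sigma)}$ — but one has to verify $\nu_1(\Sigma)$ is dominated by (a constant times) $\tr\Sigma$ in the regimes considered, which follows because $\tilde{\mathfrak{m}}(\lambda,\mu)\gtrsim\lambda^{-\rho_x}$ by Assumption~\ref{ass:strong instrument} while $\nu_1(\Sigma)\lesssim 1/(\lambda\mu)\lesssim 1/\lambda^2$ is of comparable or smaller order once $\mu\gtrsim\lambda^{2-\rho_x}$, which is exactly the content of the weak regularity conditions on $(\lambda,\mu)$ referenced via Assumption~\ref{ass:COLOBO}. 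The rest is bookkeeping of constants.
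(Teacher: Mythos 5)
Your high-level architecture matches the paper's: lower-bound $\E\|Z\|$ via $(\E\|Z\|)^2 = \E\|Z\|^2 - \E(\|Z\|-\E\|Z\|)^2$, bound the variance term and the one-sided deviation of $\|Z\|$ using Borell's inequality with $\sigma_T^2=\|\Sigma\|_{\op}\le 2\bar\sigma^2/\lambda$, and read off $\tr\Sigma$ via $\tilde{\mathfrak m}(\lambda,\mu)$. Your deviation/concentration half is fine.

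The gap is in the first step: your claim that the cross terms $\Sigma_{12}+\Sigma_{21}$ between $U_{i1}$ (the $h_0-h_{\mu,\lambda}$ component) and $U_{i2}$ (the $\phi(Z_i)\ep_i$ component) ``vanish because $\E(\ep_i\mid Z_i)=0$'' is false. The quantity $U_{i1}$ depends on $\psi(X_i)$ through $M_i$ and $M_i^*$, so computing $\E(U_{i1}\otimes U_{i2}^*)$ by conditioning on $Z_i$ leaves a term proportional to $\E[\psi(X_i)\ep_i\mid Z_i]$, which does not vanish: the entire point of the IV setting is that $X$ is endogenous, so $\psi(X)$ and $\ep$ remain correlated given $Z$. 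We only have $\E(\ep\mid Z)=0$, not $\E(\ep\mid X,Z)=0$. Consequently your intermediate bound $\tr\Sigma\ge\underline\sigma^2\tilde{\mathfrak m}(\lambda,\mu)$ is not justified, and indeed the proposition only claims the weaker bound with a factor $1/4$. That factor is precisely the footprint of the correct argument: the paper handles the non-vanishing cross terms via Young's inequality for operators (Lemma~\ref{lemma:Young_Operators}), which under the COLOBO regularity conditions ensures that the bias component of $U_i$ is small enough relative to the noise component that $\Sigma\succeq\tfrac14\Sigma_{22}$ even after absorbing the (possibly partially cancelling) cross contributions. Replace your orthogonality argument with this Young-type domination and the rest of your proof goes through; your aside about verifying $\nu_1(\Sigma)\lesssim\tr\Sigma$ is not actually needed, since the paper uses $\sqrt{a-b}\ge\sqrt a-\sqrt b$, which is a valid (possibly trivial) lower bound for all $a,b\ge 0$ without any comparability assumption.
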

Intuitively, for this lower bound to be meaningful, we require a strong instrument condition (Assumption~\ref{ass:strong instrument}). 
The strong instrument assumption prevents the effective dimension 
$\tilde{\mathfrak m}(\lambda,\mu)$ from collapsing as $\lambda,\mu \downarrow 0$. 
In other words, it imposes that a sufficiently rich set of directions in $\Hx$ is well explained by $\Hz$.
Viewed through this lens, our strong instrument assumption is an infinite-dimensional analogue of the familiar rank condition in classical instrumental variable analysis.
With our intermediate results in hand, as summarized by Table~\ref{table:1}, we prove Theorem~\ref{thm:valid-inference}. First, we demonstrate that the bias is dominated by the variance, i.e., $B\ll L$. Then, we demonstrate that the Gaussian and bootstrap coupling errors are dominated by the variance, i.e. $Q\ll L$ and $R\ll L$ where  $Q = Q_\bullet + Q_{\mathrm{res}}$ and $R = R_\bullet + R_{\mathrm{res}}$. For these orderings to hold, we require sensible restrictions on the regularization $(\lambda,\mu)$, low enough effective dimension, and high enough smoothness, as summarized by Tables~\ref{table:restrictions} and~\ref{table:restrictions combined}. Recall that as $(\lambda,\mu)$ vanish, $B$ converges yet $(Q,R)$ diverge. Our nonasymptotic analysis carefully navigates this tension between estimation and inference, in order to arrive at both uniform estimation and uniform inference for kernel instrumental variable regression.

\section{Discussion: Inference for causal functions}\label{sec:conclusion}
Our main contribution is to develop an inference procedure for kernel instrumental regression. 
The procedure retains the simple closed-form structure of kernel estimators while delivering strong statistical guarantees, even with complex or nonstandard data types. 
By complementing flexible nonparametric estimation with reliable nonparametric inference, we aim to broaden the use of kernel methods for causal analysis, in the social and biomedical sciences. 
Looking ahead, our techniques may extend beyond instrumental variables to a wider class of causal functions \citep{singh2020kernel,singh2025sequential}.

\bibliographystyle{hapalike_mod}
\spacingset{1}
{
\bibliography{0_refs}}
\spacingset{1.5}

%%%%%%%%%%%%%%%%%%%%%%%%%%%%%%%%%%%%%%%%%%%%%
% Appendix:                                %%
%%%%%%%%%%%%%%%%%%%%%%%%%%%%%%%%%%%%%%%%%%%%%
 \clearpage 
 
 \appendix

\spacingset{1}
 \addcontentsline{toc}{section}{Appendix} % Add the appendix text to the document TOC
\part{Appendix} % Start the appendix part
\parttoc % Insert the appendix TOC
\spacingset{1.5}

 \section{Algorithm derivation}\label{sec:algo}
We derive the closed form expressions of $\hat h(x)$ and $\mathfrak{B}(x)$, justifying Algorithms \ref{alg:1} and \ref{alg:2}.
\subsection{Equivalent objectives}
\begin{lemma}[Dual characterization of the regularized KIV estimator]
\label{lemma:dual}
    For all $\lambda,\mu>0$, the solution to the adversarial objective coincides with the solution to the regression objective
    \begin{equation}
    \label{eq:reg_obj}
        h_{\mu, \lambda} = \argmin_{h \in \Hx} \left\|(S_z+\mu)^{-1/2}S(h_0-h)\right\|^2_{\Hz} + \lambda\|h\|^2_{\Hx}.
    \end{equation}
\end{lemma}
\begin{proof}
    The adversary $f_h$ is the solution to 
\begin{align*}
    f_h& =  \argmax_{f \in \Hz} 2\E[\{Y-h(X)\}f(Z)]-\|f\|_{2}^2-\mu\|f\|^2_{\Hz} \\
    &= \argmax_{f \in \Hz}  2\mathbb{E}[\{Y-h(X)\}\bk{f}{\phi(Z)}_{\Hz}]-\bk{f}{(S_z+\mu)  f}_{\Hz} ,
\end{align*}
where we used the definition of $S_z$ and the \( L^2 \)-norm. Boundedness of the feature map and its measurability imply Bochner integrability of the feature map \citep{steinwart2008support}, which allows us  to write the \( L^2 \)-norm as
\[
\|f\|_2^2 = \mathbb{E}[f(Z)^2] = \mathbb{E}[\langle f, \phi(Z) \rangle_{\Hz}^2] 
= \mathbb{E}[\langle f, \phi(Z) \rangle_{\Hz} \langle f, \phi(Z) \rangle_{\Hz}]
= \left\langle f, \mathbb{E}[\langle f, \phi(Z) \rangle_{\Hz} \, \phi(Z)] \right\rangle_{\Hz}
= \langle f, S_z f \rangle_{\Hz}.
\]
The first order condition implies that $f_h$ has to satisfy
\begin{align*}
2\mathbb{E}[\{Y-h(X)\}\phi(Z)]-2(S_z+\mu)f_h = 0 &\iff (S_z+\mu)f_h = \mathbb{E}[\{Y-h(X)\}\phi(Z)]=S(h_0-h) \\
& \implies f_h = (S_z+\mu)^{-1}S(h_0-h),
\end{align*}
where we used that
\begin{align*}
    \E[\{Y-h(X)\}\phi(Z)]&=\E[\{h_0(X)+\ep-h(X)\}\phi(Z)]\\
    &=\E[\{h_0(X)-h(X)\}\phi(Z)] \\
    &= \E[\phi(Z)\bk{\psi(X)}{h_0-h}] \\
    & = \E[\phi(Z)\otimes \psi(X)^*\{h_0-h\}] = S(h_0-h).
\end{align*}
Proceeding in a similar fashion and plugging in the first order condition $(S_z+\mu)f_h =S(h_0-h)$, we can rewrite the adversarial objective as
\begin{align*}
&2\E[\{Y-h(X)\}f_h(Z)]-\|f_h\|_2^2-\mu\|f_h\|_\Hz^2+\lambda\|h\|_\Hx^2 \\
&= 2\bk{S(h_0-h)}{f_h}_{\Hz} - \bk{f_h}{(S_z+\mu)f_h}_{\Hz} + \lambda\bk{h}{h}_{\Hx}
\\ &=  2\bk{S(h_0-h)}{f_h}_{\Hz} - \bk{f_h}{S(h_0-h)}_{\Hz} + \lambda\bk{h}{h}_{\Hx} \\
& = \bk{S(h_0-h)}{f_h}_{\Hz} + \lambda\bk{h}{h}_{\Hx} \\
& = \bk{S(h_0-h)}{(S_z+\mu)^{-1}S(h_0-h)}_{\Hz} + \lambda\bk{h}{h}_{\Hx} \\
& = \bk{(S_z+\mu)^{-1/2}S(h_0-h)}{(S_z+\mu)^{-1/2}S(h_0-h)}_{\Hz} + \lambda\bk{h}{h}_{\Hx} \\
& = \|(S_z+\mu)^{-1/2}S(h_0-h)\|^2_{\Hz}+\lambda\|h\|^2_{\Hx}. \quad \qedhere
\end{align*}
\end{proof}
\subsection{Closed form estimation}\label{sec:closed_form_Bootstrap}
We introduce vector notation of the form
\[
\Psi_X :=
\begin{bmatrix}
\psi(X_1)^\top\\[-2pt]
\vdots\\[-2pt]
\psi(X_n)^\top
\end{bmatrix},
\qquad
\Phi_Z :=
\begin{bmatrix}
\phi(Z_1)^\top\\[-2pt]
\vdots\\[-2pt]
\phi(Z_n)^\top
\end{bmatrix}
\]
and Gram matrices

\begin{tabular}{lll}
    $K_{XX}:=\Psi_X\Psi_X^\top\in\R^{n\times n}$,  & $(K_{XX})_{ij}=\psi(X_i)^\top\psi(X_j)$, & $K_{xX}:=\psi(x)^\top\Psi_X^\top$, \\
    $K_{ZZ}:=\Phi_Z\Phi_Z^\top\in\R^{n\times n}$,  & $(K_{ZZ})_{ij}=\phi(Z_i)^\top\phi(Z_j)$. & \\ 
\end{tabular}
\begin{lemma}[Point estimate]
\label{lemma:point}
    For all $\lambda, \mu >0$ the point estimator to (\ref{eq:reg_obj}) is given by 
    $$\hat{h}(x)=K_{xX} \{K_{ZZ}(K_{ZZ}+ n \mu)^{-1}K_{XX}+n\lambda\}^{-1}K_{ZZ}(K_{ZZ}+ n \mu)^{-1}Y.$$
    For $K_{ZZ}(K_{ZZ}+n\mu)^{-1} = I $, this estimator becomes the KRR solution \citep{kimeldorf1971some}.
\end{lemma}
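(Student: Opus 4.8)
The plan is to turn the operator objective into a finite-dimensional convex quadratic via the representer theorem, and then to solve that quadratic in closed form; the heart of the matter is a push-through identity that collapses the infinite-dimensional resolvent $(\hat S_z+\mu)^{-1/2}$ onto Gram matrices.

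First I would write down the empirical analogue of \eqref{eq:reg_obj}. Replacing $S_z$ by $\hat S_z = \frac1n\Phi_Z^\top\Phi_Z$ (an operator on $\Hz$), and replacing the quantity $S(h_0-h) = \E[\{Y-h(X)\}\phi(Z)]$ — the identity established in the proof of the previous lemma — by its sample average $g(h) := \frac1n\sum_{i=1}^n\{Y_i - h(X_i)\}\phi(Z_i) = \frac1n\Phi_Z^\top(Y - \Psi_X h)$, where $\Psi_X h := (h(X_1),\dots,h(X_n))^\top$, the estimator minimizes $J(h) = \|(\hat S_z + \mu)^{-1/2} g(h)\|^2_{\Hz} + \lambda\|h\|^2_{\Hx}$ over $h\in\Hx$.

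Next I would invoke the representer theorem: decomposing $h = h_\parallel + h_\perp$ with $h_\parallel \in \vspan\{\psi(X_i)\}_{i=1}^n$ and $h_\perp$ orthogonal to it, the values $h(X_i) = \bk{\psi(X_i)}{h_\parallel}_{\Hx}$ — hence the first term of $J$ — depend only on $h_\parallel$, while $\|h\|^2_{\Hx} = \|h_\parallel\|^2_{\Hx} + \|h_\perp\|^2_{\Hx}$, so any minimizer has $h_\perp = 0$ and may be written $\hat h = \Psi_X^\top\alpha$ with $\alpha\in\R^n$; then $\Psi_X\hat h = K_{XX}\alpha$ and $\|\hat h\|^2_{\Hx} = \alpha^\top K_{XX}\alpha$. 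The key reduction is then the push-through identity $(\frac1n\Phi_Z^\top\Phi_Z + \mu)^{-1}\Phi_Z^\top = \Phi_Z^\top(\frac1n K_{ZZ} + \mu)^{-1}$ together with $\bk{\Phi_Z^\top u}{\Phi_Z^\top v}_{\Hz} = u^\top K_{ZZ} v$, which give $\|(\hat S_z + \mu)^{-1/2} g(h)\|^2_{\Hz} = \frac1{n^2}(Y - K_{XX}\alpha)^\top K_{ZZ}(\frac1n K_{ZZ} + \mu)^{-1}(Y - K_{XX}\alpha) = \frac1n (Y - K_{XX}\alpha)^\top K(Y - K_{XX}\alpha)$, where $K := K_{ZZ}(K_{ZZ} + n\mu I)^{-1}$ is symmetric and positive semidefinite.

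Finally I would solve the resulting convex quadratic $\tilde J(\alpha) = \frac1n(Y - K_{XX}\alpha)^\top K(Y - K_{XX}\alpha) + \lambda\alpha^\top K_{XX}\alpha$. Its first-order condition is $K_{XX}\{-\frac1n K(Y - K_{XX}\alpha) + \lambda\alpha\} = 0$, which is solved by any $\alpha$ satisfying $(KK_{XX} + n\lambda I)\alpha = KY$; since the eigenvalues of $KK_{XX}$ coincide with those of the positive semidefinite matrix $K^{1/2}K_{XX}K^{1/2}$, the matrix $KK_{XX} + n\lambda I$ is invertible, so $\alpha = (KK_{XX} + n\lambda I)^{-1}KY$ is a global minimizer by convexity, and any two minimizers share the same $K_{XX}\alpha$ so $\hat h = \Psi_X^\top\alpha$ is well-defined. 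Evaluating, $\hat h(x) = \bk{\psi(x)}{\hat h}_{\Hx} = K_{xX}\alpha = K_{xX}\{K_{ZZ}(K_{ZZ} + n\mu I)^{-1}K_{XX} + n\lambda I\}^{-1}K_{ZZ}(K_{ZZ} + n\mu I)^{-1}Y$, which is the claimed formula; setting $K_{ZZ}(K_{ZZ} + n\mu)^{-1} = I$ recovers the kernel ridge regression solution. The only genuine obstacle is the push-through step: one must verify that $(\hat S_z + \mu)^{-1/2}$ only ever acts on the finite-dimensional range of $\Phi_Z^\top$, so that the infinite-dimensional objective legitimately collapses to Gram matrices; the remaining steps are routine representer-theorem bookkeeping and matrix calculus.
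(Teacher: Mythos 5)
Your proof follows essentially the same route as the paper's: write the empirical objective, apply the representer theorem to write $\hat h = \Psi_X^*\alpha$, use the push-through identity to collapse $(\hat S_z+\mu)^{-1}$ onto the Gram matrix $K_{ZZ}(K_{ZZ}+n\mu I)^{-1}$, and solve the resulting finite-dimensional quadratic via its first-order condition. You add some worthwhile rigor that the paper's proof glosses over — namely the invertibility of $KK_{XX}+n\lambda I$ (via the similarity to $K^{1/2}K_{XX}K^{1/2}+n\lambda I$) and the observation that any two minimizers of $\tilde J$ share the same $K_{XX}\alpha$, so $\hat h$ is well-defined even when $K_{XX}$ is singular — but the argument is the same.
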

\begin{proof}
Recall that $\E[\{Y-h(X)\}\phi(Z)] = S(h_0-h)$. With $\varepsilon_h=Y-h(X)$, the empirical objective of (\ref{eq:reg_obj}) becomes $
 \|(\hat{S}_z+\mu)^{-1/2}\E_n \{\varepsilon_h\phi(Z)\}\|_{\Hz}^2+\lambda\|h\|^2_{\Hx}.
$
With $\E_n \{\varepsilon_h\phi(Z)\}=n^{-1}\Phi_Z^* \ep_h$, the objective becomes
\begin{align*}
\begin{split}
    &\langle (\hat{S}_z+\mu)^{-1/2}n^{-1}\Phi_Z^* \ep_h,(\hat{S}_z+\mu)^{-1/2}n^{-1}\Phi_Z^* \ep_h\rangle_{\Hz} +\lambda \langle h,h\rangle_{\Hx}  \\
    &=n^{-2}  \ep_h^{\top}\Phi_Z(\hat{S}_z+\mu)^{-1}\Phi_Z^* \ep_h +\lambda \langle h,h\rangle_{\Hx}   \\
    &=n^{-1}  \ep_h^{\top}\Phi_Z(\Phi_Z^*\Phi_Z+n\mu)^{-1}\Phi_Z^* \ep_h +\lambda \langle h,h\rangle_{\Hx}   \\
    &=n^{-1}  \ep_h^{\top}K_{ZZ}(K_{ZZ}+n\mu)^{-1} \ep_h +\lambda \langle h,h\rangle_{\Hx}  \\
    &=n^{-1}  \ep_h^{\top}K \ep_h +\lambda \langle h,h\rangle_{\Hx} .
\end{split}
\end{align*}
    By the representer theorem, we can write $h = \Psi_X^*\alpha$ to express $$
\ep_h=Y-\Psi_X h=Y-\Psi_X \Psi_X^*\alpha=Y-K_{XX}\alpha.
$$
Plugging $\ep_h$ in the objective yields
$$
\frac{1}{n}(Y-K_{XX}\alpha)^{\top}K(Y-K_{XX}\alpha)+\lambda \alpha^{\top} K_{XX}\alpha.
$$ Setting the derivative with respect to $\alpha$ to zero yields the first order condition
$$
0=-\frac{2}{n}K_{XX}K(Y-K_{XX}\hat \alpha)+2\lambda K_{XX}\hat\alpha=\frac{2}{n}K_{XX}\{-K(Y-K_{XX}\hat\alpha)+n\lambda \hat\alpha\}.
$$
Setting the inner expression equal to zero gives
$$
K(Y-K_{XX}\hat\alpha)=n\lambda \hat\alpha\iff KY=(KK_{XX}+n\lambda) \hat\alpha \iff \hat\alpha=(KK_{XX}+n\lambda)^{-1}KY.
$$
Finally, we can write
\begin{align*}
\begin{split}
    \hat{h}(x)&=\psi(x)^* \Psi_X^*\hat \alpha\\
    &=K_{xX} (KK_{XX}+n\lambda)^{-1}KY \\
    &=K_{xX} \{K_{ZZ}(K_{ZZ}+ n \mu)^{-1}K_{XX}+n\lambda\}^{-1}K_{ZZ}(K_{ZZ}+ n \mu)^{-1}Y. \qquad \qedhere
\end{split} 
\end{align*}
 
\end{proof}
  \subsection{Closed form bootstrap}                                                                                                                             
  \begin{proposition}[Kernel bootstrap]
  For all $x \in \mathcal X$, the bootstrap process satisfies \[    \mathfrak{B}(x)    = K_{xX}\bigl(KK_{XX}+n\lambda I_n\bigr)^{-1}Dc,         
    \]    
   where    
   \[ 
  c := \frac{1}{\sqrt{2}}(h - h^{\top})1,
    \qquad   
      K := K_{ZZ}\bigl(K_{ZZ}+n\mu I_n\bigr)^{-1},  
     \qquad     
     D := K\diag(\hat\varepsilon) + (I_n - K)\diag(K\hat\varepsilon).  
     \]  
  Here $1 \in \mathbb{R}^n$ denotes the vector of ones, 
  $\hat\varepsilon = (\hat\varepsilon_1,\dots,\hat\varepsilon_n)^\top \in \R^n$ is the vector of residuals,                                                      
  and $h = (h_{ij}) \in \R^{n \times n}$ is a matrix whose entries $h_{ij}$ are i.i.d.\ standard Gaussian random variables.                                      
  \end{proposition}   
    \begin{proof}     
  $\mathfrak{B}$ admits a decomposition with      
   \begin{align*}                                                                    \mathfrak{B} = \frac{1}{n}\sum_{i = 1}^n\sum_{j = 1}^n\left(\frac{\hat V_i - \hat V_j}{\sqrt{2}} \right)h_{ij} &= \underbrace{\frac{1}{n\sqrt{2}}\sum_{i =     1}^n\sum_{j = 1}^n\left(\hat T_{\mu,\lambda}^{-1}\hat S^*( \hat S_z+\mu)^{-1}\{\phi(Z_i)\hat \epsilon_i -\phi(Z_j)\hat \epsilon_j\}\right)h_{ij}}_A \\         
  &+ \underbrace{\frac{1}{n\sqrt{2}}\sum_{i = 1}^n\sum_{j = 1}^n\left(\hat T_{\mu,\lambda}^{-1}\{S_i^*-S_j^*\}(\hat S_z+\mu)^{-1}\E_n[\hat       
  \varepsilon\phi(Z)]\right)h_{ij}}_B  \\       
  &-\underbrace{\frac{1}{n\sqrt{2}}\sum_{i = 1}^n\sum_{j = 1}^n\left(\hat T_{\mu,\lambda}^{-1} \hat S^*(\hat S_z+\mu)^{-1}(S_{z,i}-S_{z,j})(\hat
  S_z+\mu)^{-1}\E_n[\hat \varepsilon\phi(Z)]\right)h_{ij}}_C .      
  \end{align*}    
  We repeatedly use the following identity: for all integers $n \ge 1$, all vectors  
   $(A_i)_{i=1}^n$ and all matrices entries $(h_{ij})_{i,j=1}^n$,  
  \begin{align*} 
  \sum_{i = 1}^n\sum_{j = 1}^n(A_i-A_j)h_{ij}   
      &= \sum_{i = 1}^n\sum_{j = 1}^nA_i h_{ij}  - \sum_{i = 1}^n\sum_{j = 1}^nA_j h_{ij} \\ 
     &= \sum_{i = 1}^n A_i \sum_{j = 1}^n h_{ij}  - \sum_{j = 1}^n A_j \sum_{i = 1}^n h_{ij} \\  
     &= \sum_{i = 1}^n A_i \sum_{j = 1}^n h_{ij}  - \sum_{i = 1}^n A_i \sum_{j = 1}^n h_{ji} \\   
     &= \sum_{i = 1}^n A_i \sum_{j = 1}^n (h_{ij} - h_{ji}).  
     \end{align*} 
  Now, for $A$ this means that    
  \begin{align*}   
     \frac{1}{\sqrt{2}}\sum_{i = 1}^n\sum_{j = 1}^n\{\phi(Z_i)\hat \epsilon_i -\phi(Z_j)\hat \epsilon_j\}h_{ij} &= \sum_{i = 1}^n\phi(Z_i)\hat \epsilon_i    
  \sum_{j = 1}^n\frac{(h_{ij}  -h_{ji})}{\sqrt{2}} = \Phi_Z^*\diag(\hat\varepsilon)c.
  \end{align*}  
  Substituting this into A, 
  \begin{align*} 
  A      &=\{\hat{S}^*(\hat{S}_z+\mu)^{-1}\hat{S}+\lambda\}^{-1}\hat{S}^* (\hat{S}_z+\mu)^{-1}\left(\frac{1}{n}\Phi_Z^*\diag(\hat\varepsilon)c\right) \\             
     &=\left\{\frac{1}{n}\Psi_X^*\Phi_Z \left(\frac{1}{n}\Phi_Z^*\Phi_Z+\mu\right)^{-1} \frac{1}{n}\Phi_Z^*\Psi_X+\lambda\right\}^{-1}\frac{1}{n}\Psi_X^*\Phi_Z  
  \left(\frac{1}{n}\Phi_Z^*\Phi_Z+\mu\right)^{-1} \left(\frac{1}{n}\Phi_Z^*\diag(\hat\varepsilon)c\right)  \\ 
    &=\left\{\Psi_X^*\Phi_Z \left(\Phi_Z^*\Phi_Z+n\mu\right)^{-1} \Phi_Z^*\Psi_X+n\lambda\right\}^{-1}\Psi_X^*\Phi_Z  \left(\Phi_Z^*\Phi_Z+n\mu\right)^{-1}\Phi_Z^*\diag(\hat\varepsilon)c \\
  &=\left(\Psi_X^*K\Psi_X+n\lambda\right)^{-1}\Psi_X^*K\diag(\hat\varepsilon)c \\
       &=\Psi_X^*\left(K\Psi_X\Psi_X^*+n\lambda\right)^{-1}K\diag(\hat\varepsilon)c \\
    &= \Psi_X^*\left(KK_{XX}+n\lambda\right)^{-1}K\diag(\hat\varepsilon)c.\end{align*}
For $B$, applying the double-sum identity yields
\begin{align*}    
  &\frac{1}{n\sqrt{2}}\sum_{i=1}^n\sum_{j=1}^n  
  \left(\hat T_{\mu,\lambda}^{-1}\{S_i^*-S_j^*\}(\hat S_z+\mu)^{-1}\E_n[\hat \varepsilon\phi(Z)]\right)h_{ij} \\  
  &= \frac{1}{n}\sum_{i=1}^n c_i
  \hat T_{\mu,\lambda}^{-1}S_i^*(\hat S_z+\mu)^{-1}\E_n[\hat \varepsilon\phi(Z)].  
  \end{align*}   
  Since $S_i^* = \psi(X_i)\otimes\phi(Z_i)^*$ and $(\hat S_z+\mu)^{-1}\E_n[\hat\varepsilon\phi(Z)] = (\hat S_z+\mu)^{-1}\frac{1}{n}\Phi_Z^*\hat\varepsilon$, we have                                           
  \[ 
  S_i^*(\hat S_z+\mu)^{-1}\E_n[\hat\varepsilon\phi(Z)] = \langle (\hat S_z+\mu)^{-1}\tfrac{1}{n}\Phi_Z^*\hat\varepsilon,\,\phi(Z_i)\rangle_{\Hz}\,\psi(X_i) = [K\hat\varepsilon]_i\,\psi(X_i),                 
  \]  
  where $[K\hat\varepsilon]_i$ denotes the $i$th entry of $K\hat\varepsilon\in\R^n$. Substituting,
  \begin{align*}
  B &= \frac{1}{n}\sum_{i=1}^n c_i\,[K\hat\varepsilon]_i\hat T_{\mu,\lambda}^{-1}\psi(X_i)  
  = \hat T_{\mu,\lambda}^{-1}\left(\frac{1}{n}\Psi_X^*\diag(c)K\hat\varepsilon\right)     
  = \Psi_X^*\left(KK_{XX}+n\lambda\right)^{-1}\diag(c)K\hat\varepsilon,  
  \end{align*}   
  where the last step uses $\hat T_{\mu,\lambda}^{-1}(\frac{1}{n}\Psi_X^*w) = \Psi_X^*(KK_{XX}+n\lambda)^{-1}w$ as derived for term $A$.                                                                       
  Lastly, for $C$, the double-sum identity gives  
  \begin{align*} 
  & \frac{1}{n\sqrt{2}}\sum_{i=1}^n\sum_{j=1}^n     
  \left(\hat T_{\mu,\lambda}^{-1}\hat S^*(\hat S_z+\mu)^{-1}(S_{z,i}-S_{z,j})(\hat S_z+\mu)^{-1}\E_n[\hat \varepsilon\phi(Z)]\right)h_{ij} \\                                                                  
  &= \frac{1}{n}\sum_{i=1}^n c_i   
  \hat T_{\mu,\lambda}^{-1}\hat S^*(\hat S_z+\mu)^{-1}S_{z,i}(\hat S_z+\mu)^{-1}\E_n[\hat \varepsilon\phi(Z)]. 
  \end{align*}    
  Since $S_{z,i}=\phi(Z_i)\otimes\phi(Z_i)^*$, we have       
  \[ S_{z,i}(\hat S_z+\mu)^{-1}\E_n[\hat\varepsilon\phi(Z)] = \langle (\hat S_z+\mu)^{-1}\tfrac{1}{n}\Phi_Z^*\hat\varepsilon,\phi(Z_i)\rangle_{\Hz}\phi(Z_i) = [K\hat\varepsilon]_i\phi(Z_i).               
  \]    
  It follows that 
  \begin{align*}                                       \hat S^*(\hat S_z+\mu)^{-1}\bigl([K\hat\varepsilon]_i\phi(Z_i)\bigr) &= [K\hat\varepsilon]_i\hat S^*(\hat S_z+\mu)^{-1}\phi(Z_i) \\ 
      &= [K\hat\varepsilon]_i\frac{1}{n}\Psi_X^*\Phi_Z\left(\tfrac{1}{n}\Phi_Z^*\Phi_Z+\mu\right)^{-1}\Phi_Z^*e_i \\
      &= [K\hat\varepsilon]_i\Psi_X^*\Phi_Z(\Phi_Z^*\Phi_Z+n\mu)^{-1}\Phi_Z^*e_i \\
      &= [K\hat\varepsilon]_i\Psi_X^*K_{\cdot,i}, \end{align*}
where $K_{\cdot,i}$ is the $i$th column of $K$, $e_i$ the $i$th standard basis vector, and the third equality uses $\frac{1}{n}\left(\frac{1}{n}\Phi_Z^*\Phi_Z+\mu\right)^{-1} = (\Phi_Z^*\Phi_Z+n\mu)^{-1}$. Therefore,          \begin{align*} C &= \frac{1}{n}\sum_{i=1}^n c_i[K\hat\varepsilon]_i\hat T_{\mu,\lambda}^{-1}\left(\Psi_X^*K_{\cdot,i}\right)   
            = \sum_{i=1}^n c_i[K\hat\varepsilon]_i\Psi_X^*(KK_{XX}+n\lambda)^{-1}K_{\cdot,i} \\   
      &= \Psi_X^*(KK_{XX}+n\lambda)^{-1}\left(\sum_{i=1}^n c_i[K\hat\varepsilon]_iK_{\cdot,i}\right) = \Psi_X^*\left(KK_{XX}+n\lambda\right)^{-1}K\diag(K\hat\varepsilon)c,   
      \end{align*}      
      where the second step uses $\hat T_{\mu,\lambda}^{-1}(\Psi_X^*w) = n\Psi_X^*(KK_{XX}+n\lambda)^{-1}w$ (as derived for term $A$), and $\sum_{i=1}^n c_i[K\hat\varepsilon]_i K_{\cdot,i} = K\diag(K\hat\varepsilon)c$. To conclude, using $\diag(c)K\hat\varepsilon = \diag(K\hat\varepsilon)c$, 
  \begin{align*}
  \mathfrak{B} &= A + B - C \\
  &= \Psi_X^*\left(KK_{XX}+n\lambda\right)^{-1}\left[K\diag(\hat\varepsilon)c + \diag(c)K\hat\varepsilon - K\diag(c)K\hat\varepsilon\right] \\ 
  &= \Psi_X^*\left(KK_{XX}+n\lambda\right)^{-1}\left[K\diag(\hat\varepsilon) + \diag(K\hat\varepsilon) - K\diag(K\hat\varepsilon)\right]c \\  
  &= \Psi_X^*\left(KK_{XX}+n\lambda\right)^{-1}\left[K\diag(\hat\varepsilon) + (I_n - K)\diag(K\hat\varepsilon)\right]c \\ 
  &= \Psi_X^*\left(KK_{XX}+n\lambda\right)^{-1}Dc,\end{align*}   and $\mathfrak{B}(x) = K_{xX}\left(KK_{XX}+n\lambda\right)^{-1}Dc$.  For $K=I_n$ we have $D = \diag(\hat\varepsilon)$, so $Dc = \diag(\hat\varepsilon)c = \beta$, which exactly equals the closed-form expression of \cite{singh2023kernel}. \qedhere  
  \end{proof}   

 %\newpage 
\section{Technical lemmas}\label{sec:technical}
We provide the necessary technical lemmas below. For completeness, we also include auxiliary results from prior work, retaining their original notation.
\subsection{Analysis}

\begin{lemma}[Higher-order resolvent, cf. \citealp{singh2023kernel}, Lemma E.10] \label{lemma:highorder}Let \( V \) be a vector space and \( A, B: V \to V \) be invertible linear operators. Then,  \( \forall \ell \geq 1 \), it holds
\[
A^{-1} - B^{-1} = A^{-1} \left\{ (B - A) B^{-1} \right\}^{\ell} + \sum_{r=1}^{\ell - 1} B^{-1} \left\{ (B - A) B^{-1} \right\}^{r}.
\]
When $\ell = 1$, this reduces to the resolvent identity
\[
A^{-1} - B^{-1} = A^{-1}(B - A)B^{-1} \iff A^{-1} = A^{-1}(B - A)B^{-1} + B^{-1}.
\]
\end{lemma}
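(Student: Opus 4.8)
The plan is to prove the identity by induction on $\ell$, using the ordinary resolvent identity both as the base case and as the engine of the inductive step. The statement is a purely algebraic identity in the (generally noncommutative) ring of linear operators on $V$, so associativity of composition plus invertibility of $A$ and $B$ is all that is needed.

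For the base case $\ell = 1$, the claimed formula reads $A^{-1} - B^{-1} = A^{-1}(B-A)B^{-1}$, since the sum $\sum_{r=1}^{0}$ is empty. I would verify this directly: $A^{-1}(B-A)B^{-1} = A^{-1}BB^{-1} - A^{-1}AB^{-1} = A^{-1} - B^{-1}$, using only $A^{-1}A = I$ and $BB^{-1} = I$. Rearranging gives the equivalent form $A^{-1} = A^{-1}(B-A)B^{-1} + B^{-1}$ displayed in the statement, which is the identity I will reuse in the induction.

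For the inductive step, suppose the identity holds at level $\ell \geq 1$. The only term on the right-hand side still carrying an unexpanded $A^{-1}$ is $A^{-1}\{(B-A)B^{-1}\}^{\ell}$. Substituting the resolvent identity into the leading factor gives $A^{-1}\{(B-A)B^{-1}\}^{\ell} = \bigl(A^{-1}(B-A)B^{-1} + B^{-1}\bigr)\{(B-A)B^{-1}\}^{\ell} = A^{-1}\{(B-A)B^{-1}\}^{\ell+1} + B^{-1}\{(B-A)B^{-1}\}^{\ell}$, where one extra factor of $(B-A)B^{-1}$ has been absorbed into the power on the left. Feeding this back into the inductive hypothesis extends the sum $\sum_{r=1}^{\ell-1}$ to $\sum_{r=1}^{\ell}$ by incorporating the new $B^{-1}\{(B-A)B^{-1}\}^{\ell}$ term, yielding exactly the formula at level $\ell+1$.

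There is no substantive obstacle; the proof is essentially bookkeeping. The one point that requires a moment of care is ensuring that absorbing the factor $(B-A)B^{-1}$ into $\{(B-A)B^{-1}\}^{\ell}$ on the correct side produces $\{(B-A)B^{-1}\}^{\ell+1}$, and that the surviving $B^{-1}$ term lands at index $r = \ell$ so that it slots cleanly into the extended sum. No continuity, boundedness, or commutativity is invoked anywhere.
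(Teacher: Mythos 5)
Your induction argument is correct and complete. The base case $\ell=1$ reduces to the ordinary resolvent identity, which you verify by direct computation, and the inductive step works exactly as you describe: substituting $A^{-1}=A^{-1}(B-A)B^{-1}+B^{-1}$ into the leading $A^{-1}$ of the remainder term $A^{-1}\{(B-A)B^{-1}\}^{\ell}$ produces the $\ell+1$ remainder plus the new summand $B^{-1}\{(B-A)B^{-1}\}^{\ell}$, which extends the finite sum to index $r=\ell$. The paper itself states this lemma without proof, attributing it to Lemma~E.10 of \cite{singh2023kernel}, so there is no in-paper argument to compare against; your inductive derivation is the standard and expected route and fills that gap cleanly.
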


\begin{lemma}[Non-commutative product rule]
\label{lemma:ABC}
    Let define $\Delta A = \hat A - A$, $\Delta 
    B= \hat B- B$, and $\Delta C = \hat C - C$. Then,
    $$\hat A \hat B \hat C = ABC + AB\Delta C + A\Delta B C + A \Delta B \Delta C + \Delta A B C + \Delta A B \Delta C + \Delta A \Delta B C + \Delta A  \Delta B \Delta C.
    $$
\end{lemma}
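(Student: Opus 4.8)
\textbf{Proof plan for Lemma~\ref{lemma:ABC} (non-commutative product rule).}

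The statement is a purely algebraic identity, so the plan is simply to substitute $\hat A = A + \Delta A$, $\hat B = B + \Delta B$, $\hat C = C + \Delta C$ into the product $\hat A \hat B \hat C$ and expand by (left and right) distributivity of composition over addition. First I would write $\hat A \hat B \hat C = (A + \Delta A)(B + \Delta B)(C + \Delta C)$. Expanding the first two factors gives $(A + \Delta A)(B + \Delta B) = AB + A\,\Delta B + \Delta A\, B + \Delta A\, \Delta B$, where I am careful to keep each pair of operators in its original left-to-right order, since composition of operators on $V$ need not commute. Then I would multiply this four-term sum on the right by $(C + \Delta C)$, again distributing, to obtain the eight terms
\[
ABC + AB\,\Delta C + A\,\Delta B\, C + A\,\Delta B\,\Delta C + \Delta A\, B\, C + \Delta A\, B\,\Delta C + \Delta A\,\Delta B\, C + \Delta A\,\Delta B\,\Delta C,
\]
which is exactly the claimed expression. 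The only thing to check is that every one of the $2^3 = 8$ sign/order combinations appears exactly once, which is immediate from the distributive law.

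There is essentially no obstacle here; the ``hard part'' is merely bookkeeping, namely ensuring that the operators are never transposed past one another during the expansion (each term must read $(\text{an }A\text{-factor})(\text{a }B\text{-factor})(\text{a }C\text{-factor})$ in that order), and that the finitely many terms are enumerated without omission or duplication. No properties of $V$, $A$, $B$, $C$ beyond being composable linear maps are needed — in particular, invertibility plays no role in this lemma (unlike Lemma~\ref{lemma:highorder}), and the identity holds verbatim in any (not necessarily commutative) ring with $\hat A, \hat B, \hat C$ replaced by arbitrary elements and $\Delta A := \hat A - A$, etc. One could equivalently phrase the proof as a telescoping decomposition $\hat A\hat B\hat C - ABC = \Delta A\,\hat B\,\hat C + A\,\Delta B\,\hat C + A\, B\,\Delta C$ followed by further expansion of $\hat B, \hat C$, but the direct triple expansion is cleanest and I would present that.
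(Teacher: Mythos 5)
Your proposal is correct and follows exactly the same route as the paper: substitute $\hat A = A+\Delta A$, $\hat B = B+\Delta B$, $\hat C = C+\Delta C$ and expand the triple product by distributivity, keeping factor order. The paper's proof is simply this expansion written out in one display; your additional remarks about non-commutativity and the alternative telescoping decomposition are accurate but not needed.
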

\begin{proof}
    \begin{align*}
\hat{A} \hat{B} \hat{C} 
&= (A + \Delta A)(B + \Delta B)(C + \Delta C) \\
&= A B C + A B \Delta C + A \Delta B C + A \Delta B \Delta C \\
&\quad + \Delta A B C + \Delta A B \Delta C + \Delta A \Delta B C + \Delta A \Delta B \Delta C. \qquad \qedhere
\end{align*}
\end{proof}
\begin{lemma}[Polar decomposition bound (cf.\ \citealp{de2005learning})]
\label{lemma:Polar}
Let $\mathcal{H}_1,\mathcal{H}_2$ be Hilbert spaces and let 
$A : \mathcal{H}_1 \to \mathcal{H}_2$ be a bounded linear operator. 
For all $\lambda > 0$, define $|A| := (A^*A)^{1/2}$. Then
\[
  \bigl\| A (A^*A+\lambda I)^{-1} \bigr\|_{\mathrm{op}}
  = \bigl\|\, |A| (|A|^2+\lambda I)^{-1} \bigr\|_{\mathrm{op}}
  = \sup_{t \in \mathrm{spec}(|A|)} \frac{t}{t^2 + \lambda}
  \le \frac{1}{2\lambda^{1/2}}.
\]
Moreover,
\[
  \bigl\|(A^*A+\lambda I)^{-1} A^*\bigr\|_{\mathrm{op}}
  = \bigl\| A (A^*A+\lambda I)^{-1} \bigr\|_{\mathrm{op}}.
\]
\end{lemma}
\begin{proof}
The first statment comes from writing $A$ with its polar decomposition $A = U|A|$, with $\|U\|_{\mathrm{op}} = 1$.  The second statement holds since the operator norm is invariant under taking the adjoint, i.e.
\[
\bigl\|(A^*A+\lambda I)^{-1}A^*\bigr\|_{\mathrm{op}}
= \left\|\bigl(A(A^*A+\lambda I)^{-1}\bigr)^*\right\|_{\mathrm{op}}
= \bigl\|A(A^*A+\lambda I)^{-1}\bigr\|_{\mathrm{op}}. \qquad \qedhere
\]
\end{proof}

\begin{lemma}[Generalized parallelogram law]
\label{lemma:parallelogram_law}
Let $\mathcal{H}$ be a Hilbert space. For all bounded linear operators
$A,B : \mathcal{H} \to \mathcal{H}$ it holds that
\[
  (A+B)(A+B)^* \preceq 2AA^* + 2BB^*,
\]
where $\preceq$ denotes the Loewner order on self-adjoint operators.
\end{lemma}
\begin{proof}
    It suffices to show that
    \begin{align*}
        AA^*+AB^*+BA^*+BB^* &\preceq 2AA^*+2BB^* \\
        \iff AB^*+BA^*      &\preceq AA^*+BB^* \\
        \iff 0              &\preceq (A-B)(A-B)^*.  \quad \qedhere
    \end{align*}
\end{proof}

\begin{lemma}[Young's inequality for scalars]\label{lemma:young}
For all $\bar a, \bar b \ge 0$ and all $t>0$ it holds that
\[
  2\bar a \bar b \le \frac{\bar a^2}{t} + t \bar b^2.
\]
  In particular, let $C \ge 1$. If $a, b$ are real numbers satisfying                    
  \[  |a| \le \bar a, \quad \underline b \le |b| \le \bar b,  \quad \bar b^2 \le C\,\underline b^2,  \quad\text{and}\quad \bar a^2 \le \frac{1}{8C^2}\,\bar b^2,    \] then
  \[(a+b)^2 \ge \frac{1}{4C}\,\bar b^2.\]                                                  
\end{lemma}
\begin{proof}
For the first claim, observe that $(\sqrt{t}\bar{b}-\bar{a}/\sqrt{t})^2\ge 0$, i.e.
\[
t\bar{b}^2+\frac{\bar{a}^2}{t}-2\bar{a}\bar{b}\ge 0
\quad\Rightarrow\quad
2\bar{a}\bar{b}\le \frac{\bar{a}^2}{t}+t\bar{b}^2.
\]
For the “in particular” part, start with
\[
(a+b)^2 = a^2+b^2+2ab \ge a^2+b^2-2|a||b|
\ge \underline{b}^2-2\bar{a}\bar{b},
\]
where we used $|a|\le \bar{a}$ and $|b|\ge \underline{b}$ (and $a^2\ge 0$).
By Young's inequality (applied to $\bar{a},\bar{b}$),
\[
\underline{b}^2-2\bar{a}\bar{b}
\ge \underline{b}^2-\frac{\bar{a}^2}{t}-t\bar{b}^2.
\]
Using $\underline{b}^2\ge \bar{b}^2/C$,
\[
\underline{b}^2-\frac{\bar{a}^2}{t}-t\bar{b}^2
\ge \Bigl(\frac{1}{C}-t\Bigr)\bar{b}^2-\frac{\bar{a}^2}{t}.
\]
Choose $t=\frac{1}{2C}$ to get
\[
\Bigl(\frac{1}{C}-t\Bigr)\bar{b}^2-\frac{\bar{a}^2}{t}
= \frac{1}{2C}\bar{b}^2-2C\bar{a}^2.
\]
Finally, $\bar{a}^2\le \frac{1}{8C^2}\bar{b}^2$ implies
$2C\bar{a}^2\le \frac{1}{4C}\bar{b}^2$, hence
\[
(a+b)^2 \ge \frac{1}{2C}\bar{b}^2-\frac{1}{4C}\bar{b}^2
= \frac{1}{4C}\bar{b}^2. \quad \qedhere
\]
\end{proof}

\begin{lemma}[Young's inequality for operators]
\label{lemma:Young_Operators}
Let $\mathcal{H}$ be a Hilbert space and let $A,B : \mathcal{H} \to \mathcal{H}$ be bounded linear operators.
Let $\underline{\Sigma}_A,\bar{\Sigma}_A,\underline{\Sigma}_B,\bar{\Sigma}_B$ be bounded self-adjoint operators on $\mathcal{H}$ and let $C \ge 1$.
Suppose that
\begin{enumerate}
    \item $0 = \underline{\Sigma}_A \preceq AA^* \preceq \bar{\Sigma}_A$;
    \item $\underline{\Sigma}_B \preceq BB^* \preceq \bar{\Sigma}_B \preceq C\,\underline{\Sigma}_B$;
    \item $\bar{\Sigma}_A \preceq \tfrac{1}{8C^2}\,\bar{\Sigma}_B$.
\end{enumerate}
Then
\[
  (A+B)(A+B)^* \succeq \frac{1}{4C}\,\bar{\Sigma}_B.
\]
\end{lemma}

\begin{proof}
Fix $x$ in the Hilbert space. Then,
\begin{align*}
\|(A+B)^*x\|^2
&= \|A^*x\|^2+\|B^*x\|^2+2\langle A^*x,B^*x\rangle \\
&\ge \|A^*x\|^2+\|B^*x\|^2-2\|A^*x\|\|B^*x\|
\\
&\ge \|B^*x\|^2-2\|A^*x\|\|B^*x\|.
\end{align*}
Define the scalars
\[
\bar{a}^2:=\langle \bar{\Sigma}_A x,x\rangle,\qquad
\underline{b}^2:=\langle \underline{\Sigma}_B x,x\rangle,\qquad
\bar{b}^2:=\langle \bar{\Sigma}_B x,x\rangle.
\]
By assumptions (1) and (2),
\[
\|A^*x\|^2=\langle AA^*x,x\rangle\le \bar{a}^2,\qquad
\underline{b}^2\le \|B^*x\|^2=\langle BB^*x,x\rangle\le \bar{b}^2.
\]
Hence
\[
\|(A+B)^*x\|^2 \ge \underline{b}^2-2\bar{a}\bar{b}.
\]
Assumption (2) gives $\bar{b}^2\le C\underline{b}^2$, and (3) gives
$\bar{a}^2\le \tfrac{1}{8C^2}\bar{b}^2$. Thus Lemma~\ref{lemma:young} applies to
$\bar{a},\underline{b},\bar{b}$ and yields
\[
\|(A+B)^*x\|^2 \ge \frac{1}{4C}\bar{b}^2
= \frac{1}{4C}\langle \bar{\Sigma}_B x,x\rangle.
\]
Equivalently,
\[
\langle\bigl[(A+B)(A+B)^*-\tfrac{1}{4C}\bar{\Sigma}_B\bigr]x,x\rangle \ge 0
\quad\text{for all }x,
\]
which is precisely $(A+B)(A+B)^*\succeq \tfrac{1}{4C}\bar{\Sigma}_B$.
\end{proof}

\subsection{Probability}

\begin{lemma}[Bernstein inequality (Proposition 2 of \citealp{caponnetto2007optimal})]
\label{lemma:Bernstein} Suppose that $\xi_i$ are i.i.d. random elements of a Hilbert space, which $ \forall \ell \geq 2$ satisfy 
\[
\mathbb{E} \|\xi_i - \mathbb{E}(\xi_i)\|^\ell \leq \frac{1}{2} \ell! B^2 \left(\frac{A}{2}\right)^{\ell-2}.
\]
Then, $ \forall \eta \in (0,1)$ it holds with probability at least $1 - \eta$ that
\[
\left\| \frac{1}{n} \sum_{i=1}^n \xi_i - \mathbb{E}(\xi_i) \right\| 
\leq 2 \left\{ \sqrt{ \frac{B^2 \log(2/\eta)}{n} } + \frac{A \log(2/\eta)}{n} \right\} 
\leq 2 \log(2/\eta) \left( \frac{A}{n} + \sqrt{ \frac{B^2}{n} } \right).
\]
In particular, this holds if $\mathbb{E}(\|\xi_i\|^2) \leq B^2$ and $\|\xi_i\| \leq A/2$ almost surely.

\end{lemma}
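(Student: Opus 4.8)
The plan is to split the statement into the ``in particular'' clause and the main inequality. For the former, I would verify the Bernstein-type moment hypothesis directly from boundedness and the second-moment bound: if $\|\xi_i\|\le A/2$ almost surely and $\E\|\xi_i\|^2\le B^2$, then $\|\E\xi_i\|\le A/2$ by Jensen, so $\|\xi_i-\E\xi_i\|\le A$ almost surely, and for every $\ell\ge 2$,
\[
\E\|\xi_i-\E\xi_i\|^\ell \le A^{\ell-2}\,\E\|\xi_i-\E\xi_i\|^2 \le A^{\ell-2}B^2 = 2^{\ell-2}B^2\left(\tfrac{A}{2}\right)^{\ell-2} \le \tfrac12\,\ell!\,B^2\left(\tfrac{A}{2}\right)^{\ell-2},
\]
using $\E\|\xi_i-\E\xi_i\|^2=\Var(\xi_i)\le\E\|\xi_i\|^2$ and the elementary bound $2^{\ell-2}\le \ell!/2$ (an easy induction). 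Hence the ``in particular'' case is an instance of the main statement.

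For the main statement, write $S_n=\sum_{i=1}^n(\xi_i-\E\xi_i)$, so the target is a tail bound for $\|\tfrac1n S_n\|=\tfrac1n\|S_n\|$; note $\E\|S_n\|^2=\sum_i\E\|\xi_i-\E\xi_i\|^2\le nB^2$ by independence and centering. I would run a Chernoff/Bernstein argument for the Hilbert-space-valued sum to obtain a sub-exponential tail of the form $\PP(\|S_n\|>n\epsilon)\le 2\exp\{-n\epsilon^2/(2(B^2+A\epsilon))\}$ --- this is exactly Proposition~2 of \cite{caponnetto2007optimal}, which in turn rests on the Pinelis--Sakhanenko inequality. Concretely one exploits that a Hilbert space is $2$-uniformly smooth: the Bernstein moment condition lets one control exponential moments through the norm, or, equivalently, one bounds the raw moments $\E\|S_n\|^\ell$ by sub-gamma growth $\lesssim (nB^2)^{\ell/2}(c\ell)^{\ell/2}+(cA\ell)^\ell$ via a Rosenthal-type recursion and then optimizes Markov's inequality over $\ell$. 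Either route delivers the displayed exponential tail.

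The final step is purely algebraic: set the tail bound equal to $\eta$ and solve for the deviation. The resulting equation $n\epsilon^2=2\log(2/\eta)(B^2+A\epsilon)$ is quadratic in $\epsilon$; bounding its positive root via $\sqrt{a+b}\le\sqrt a+\sqrt b$ gives $\epsilon\le 2\{\sqrt{B^2\log(2/\eta)/n}+A\log(2/\eta)/n\}$, and then $\sqrt{\log(2/\eta)}\le\log(2/\eta)$ (in the relevant regime $\log(2/\eta)\ge 1$) yields the second, looser form. I expect the only genuine obstacle to be the passage from scalar Bernstein to the norm of the vector-valued sum: one must control $\|S_n\|$ rather than fixed linear functionals $\langle S_n,u\rangle$, and since the unit ball of an infinite-dimensional Hilbert space is non-compact a naive net-and-union-bound argument fails, so one is forced to use the $2$-smoothness (square-function) structure of the space; everything else is routine bookkeeping, and in practice I would simply cite \cite{caponnetto2007optimal} for this step.
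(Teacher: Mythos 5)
Your proposal is correct, and it matches the paper's approach exactly: the paper does not prove this lemma at all but cites it directly to Proposition~2 of \cite{caponnetto2007optimal}, which is precisely what you do for the hard part (the vector-valued exponential tail inequality via Pinelis--Sakhanenko). You add genuine value by explicitly verifying the ``in particular'' reduction (the Jensen bound $\|\E\xi_i\|\le A/2$, the variance identity $\E\|\xi_i-\E\xi_i\|^2\le\E\|\xi_i\|^2$, and the elementary $2^{\ell-2}\le\ell!/2$) and by carrying out the quadratic solve from the exponential tail to the displayed deviation bound---steps the paper leaves entirely implicit. One minor point you already flag yourself: the final passage from $\sqrt{\log(2/\eta)}$ to $\log(2/\eta)$ requires $\log(2/\eta)\ge1$, i.e.\ $\eta\le 2/e$, so the second (looser) form in the lemma is, strictly speaking, valid only in that regime; since all applications in the paper use small $\eta$, this is harmless, but the lemma's ``for any $0<\eta<1$'' wording technically only covers the first inequality.
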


\begin{lemma}[Borell’s inequality, Theorem 2.5.8 of \citealp{gine2021mathematical} ] \label{lemma:Borell}
Let $G_t$ be a centered Gaussian process, a.s.\ bounded on $T$. Then for $\forall u>0$,
\[
\mathbb{P}\!\left( \sup_{t \in T} G_t - \mathbb{E}\sup_{t \in T} G_t > u \right)
\vee
\mathbb{P}\!\left( \sup_{t \in T} G_t - \mathbb{E}\sup_{t \in T} G_t < -u \right)
\leq
\exp\!\left( -\frac{u^2}{2\sigma_T^2} \right),
\]
where $\sigma_T^2 = \sup_{t \in T} \mathbb{E} G_t^2$.
\end{lemma}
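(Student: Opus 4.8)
This is the Borell--Tsirel'son--Ibragimov--Sudakov concentration inequality, and the plan is to prove it by the Herbst (entropy) method, which reduces it to the Gaussian logarithmic Sobolev inequality (taken as a black box, as would the Gaussian isoperimetric inequality be if one preferred that route). Write $F:=\sup_{t\in T}G_t$. First I would reduce to the single one-sided estimate $\mathbb{P}(F-\mathbb{E}F>u)\le \exp(-u^2/(2\sigma_T^2))$: applying the same argument to $-F$, which is a functional of the same Gaussian noise with the same Lipschitz constant, yields $\mathbb{P}(F-\mathbb{E}F<-u)\le\exp(-u^2/(2\sigma_T^2))$, and the two together give the stated $\vee$-bound.

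\textbf{Reduction to finite dimensions and a Lipschitz representation.} Since the process is a.s.\ bounded, there is a countable $T_0=\{t_1,t_2,\dots\}\subseteq T$ with $F=\sup_{k}G_{t_k}$ a.s.; setting $F_N:=\max_{k\le N}G_{t_k}$, monotone convergence gives $\mathbb{E}F_N\uparrow\mathbb{E}F$ (finite, by a.s.\ boundedness), while $\sigma_{T_N}^2\le\sigma_T^2$. So it suffices to prove the bound for each $F_N$ with constant $\sigma_{T_N}$ and let $N\to\infty$. For fixed $N$, write $(G_{t_1},\dots,G_{t_N})=Ag$ with $g\sim\mathcal{N}(0,I_N)$, so $G_{t_k}=\langle a_k,g\rangle$ where $a_k^\top$ is the $k$th row of $A$ and $\|a_k\|^2=\operatorname{Var}(G_{t_k})=\mathbb{E}G_{t_k}^2\le\sigma_{T_N}^2$. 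Then $F_N(g)=\max_k\langle a_k,g\rangle$ is a maximum of linear forms, hence $L$-Lipschitz on $\mathbb{R}^N$ with $L:=\max_k\|a_k\|\le\sigma_{T_N}$.

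\textbf{Herbst argument.} Mollify $F_N$ by a compactly supported smooth kernel to obtain $F_\delta\in C^\infty$ with $\|\nabla F_\delta\|_\infty\le L$ and $F_\delta\to F_N$ uniformly. With $H(\lambda):=\mathbb{E}[e^{\lambda F_\delta(g)}]$, the Gaussian log-Sobolev inequality applied to $e^{\lambda F_\delta/2}$ gives
\begin{align*}
\lambda H'(\lambda)-H(\lambda)\log H(\lambda)
=\mathrm{Ent}\!\big(e^{\lambda F_\delta}\big)
\le\tfrac12\lambda^2\,\mathbb{E}\!\big[\|\nabla F_\delta\|^2 e^{\lambda F_\delta}\big]
\le\tfrac12 L^2\lambda^2 H(\lambda).
\end{align*}
Dividing by $\lambda^2 H(\lambda)$ shows $\frac{d}{d\lambda}\big(\lambda^{-1}\log H(\lambda)\big)\le L^2/2$; integrating from $0^+$, where $\lambda^{-1}\log H(\lambda)\to\mathbb{E}F_\delta(g)$, yields $\mathbb{E}\big[e^{\lambda(F_\delta-\mathbb{E}F_\delta)}\big]\le e^{L^2\lambda^2/2}$ for all $\lambda\in\mathbb{R}$. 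Letting $\delta\downarrow0$ (dominated convergence via the uniform Lipschitz bound) transfers this to $F_N$, and a Chernoff bound gives $\mathbb{P}(F_N-\mathbb{E}F_N>u)\le\inf_{\lambda>0}e^{-\lambda u+L^2\lambda^2/2}=e^{-u^2/(2L^2)}\le e^{-u^2/(2\sigma_{T_N}^2)}$. Passing to the limit in $N$ and flipping the sign for the lower tail completes the argument.

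\textbf{Main obstacle.} Conceptually the proof is routine once the Gaussian log-Sobolev (or isoperimetric) inequality is granted; the only genuine care points are the measurability/separability needed to pass to a countable index set, the monotone convergence $\mathbb{E}F_N\uparrow\mathbb{E}F$ (which truly uses a.s.\ boundedness so that $\mathbb{E}F$ is finite), and smoothing the nondifferentiable $\max$ without inflating its Lipschitz constant. None of these is hard, but they are where a careful write-up spends its effort.
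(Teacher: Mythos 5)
The paper does not prove this lemma at all; it is invoked as a black-box textbook result (Giné--Nickl, Theorem 2.5.8), so there is no in-paper argument to compare against. Your proposal is a correct self-contained proof of the Borell--TIS inequality by the Herbst semigroup argument (entropy + Gaussian log-Sobolev), which is one of the two standard routes. The Giné--Nickl proof instead proceeds via the Gaussian isoperimetric inequality, which yields the slightly sharper concentration around the \emph{median}; the mean version stated here then follows by a short argument relating mean and median. Your log-Sobolev route gets the mean version directly, which is all that is needed, at the cost of taking the LSI as given. The chain of reductions you sketch (separability to pass to a countable set, monotone convergence with $F_N - G_{t_1}\ge 0$ to get $\mathbb{E}F_N\uparrow\mathbb{E}F$, the Lipschitz bound $\|\nabla F_N\|\le\max_k\|a_k\|\le\sigma_{T_N}$ for a max of linear forms, mollification to make Herbst's differential inequality rigorous) is the right one and the entropy computation is correct. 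Two small points worth tightening in a careful write-up: (i) the phrase ``applying the same argument to $-F$'' is a shortcut---$-F$ is an infimum, not a supremum of a Gaussian process---but the honest version is that the MGF bound $\mathbb{E}e^{\lambda(F_N-\mathbb{E}F_N)}\le e^{L^2\lambda^2/2}$ you derive holds for \emph{all} $\lambda\in\mathbb{R}$ (since $-F_N$ has the same Lipschitz constant), so Chernoff with $\lambda<0$ handles the lower tail without any appeal to a second process; and (ii) finiteness of $\mathbb{E}\sup_t G_t$ given a.s.\ boundedness is itself a nontrivial input (a 0--1/median argument, or an appeal to the isoperimetric inequality), not just monotone convergence, though it is indeed standard and is handled upstream in Giné--Nickl.
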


\begin{lemma}[Gaussian norm bound, cf.~\citealp{singh2023kernel}]
\label{lemma:Gaussianborel}
Let $Z$ be a Gaussian random element in a Hilbert space $H$ such that 
$\mathbb{E}\|Z\|^2 < \infty$. Then, with probability $1-\eta$,
\[
\|Z\| \leq \left\{ 1 + \sqrt{2 \log(1/\eta)} \right\} 
\sqrt{\mathbb{E}\|Z\|^2}.
\]
In particular, if $A:H \to H$ is a trace-class operator, then, with probability 
$1-\eta$ with respect to $g$
\[
\|Ag\| \leq \left\{ 1 + \sqrt{2 \log(1/\eta)} \right\} \|A\|.
\]
\end{lemma}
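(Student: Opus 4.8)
The plan is to realize $\|Z\|$ as the supremum of a centered Gaussian process over the unit ball of $\Hx$ and then invoke Borell's inequality (Lemma~\ref{lemma:Borell}). Throughout, $Z$ is understood to be a \emph{centered} Gaussian element, as in all our applications, with covariance operator $\Sigma$, so that $\E\sbk{Z}{t}^2=\sbk{\Sigma t}{t}$ for every $t\in\Hx$. (Centering is genuinely needed: for a noncentered $Z$ the claimed bound can fail.)

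First I would set $T=\{t\in\Hx:\|t\|\le 1\}$ and define the index process $G_t:=\sbk{Z}{t}$ for $t\in T$. Since $Z$ takes values in $\Hx$ almost surely, each path is a bounded linear functional, so $\sup_{t\in T}G_t=\|Z\|$ and the supremum is unchanged if we restrict $T$ to a countable norm-dense subset (the RKHSs here are separable), which renders it measurable. The hypothesis $\E\|Z\|^2<\infty$ forces $\|Z\|<\infty$ almost surely, so the Gaussian process $G_t$ is a.s.\ bounded on $T$ and Lemma~\ref{lemma:Borell} is applicable. Two deterministic estimates feed into it: the weak variance satisfies
\[
\sigma_T^2=\sup_{t\in T}\E G_t^2=\sup_{\|t\|\le 1}\sbk{\Sigma t}{t}=\|\Sigma\|_{\op}\le \tr(\Sigma)=\E\|Z\|^2,
\]
using $\Sigma\succeq 0$; and the expected supremum is controlled by Jensen, $\E\sup_{t\in T}G_t=\E\|Z\|\le\sqrt{\E\|Z\|^2}$.

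Next I would apply Borell's inequality with $u=\sqrt{2\sigma_T^2\log(1/\eta)}$, so that with probability at least $1-\eta$,
\[
\|Z\|\le \E\|Z\|+\sqrt{2\sigma_T^2\log(1/\eta)}\le \sqrt{\E\|Z\|^2}+\sqrt{2\,\E\|Z\|^2\,\log(1/\eta)}=\bigl\{1+\sqrt{2\log(1/\eta)}\bigr\}\sqrt{\E\|Z\|^2},
\]
where the middle inequality uses both deterministic bounds above. This is the first claim. For the ``in particular'' clause, I would note that a trace-class $A$ is in particular Hilbert--Schmidt, so $Ag:=\sum_k g_k\,Ae_k$ (with $\{e_k\}$ an orthonormal basis of $\Hx$ and $g_k$ i.i.d.\ standard normal) converges almost surely in $\Hx$ to a centered Gaussian element with covariance $AA^*$; a direct computation gives $\E\|Ag\|^2=\sum_k\E\sbk{g}{A^*e_k}^2=\sum_k\|A^*e_k\|^2=\|A\|_{\HS}^2$, and applying the first claim to $Z=Ag$ completes the proof.

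There is no deep obstacle here: this is essentially a textbook application of the Gaussian concentration inequality. The only points that require care — rather than real difficulty — are verifying the a.s.\ boundedness of $G_t$ so that Borell applies, passing to a countable separable index set, and the PSD operator-norm/trace comparison $\|\Sigma\|_{\op}\le\tr\Sigma$; once these are in place the conclusion follows by a one-line substitution of $u$.
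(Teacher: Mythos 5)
Your proof is correct and uses exactly the technique the paper itself deploys inline in the proof of Lemma~\ref{lemma:valbo}: realize $\|Z\|=\sup_{\|t\|\le 1}\sbk{Z}{t}$, bound the weak variance by $\sigma_T^2=\|\Sigma\|_{\op}\le\tr\Sigma=\E\|Z\|^2$, use Jensen for $\E\|Z\|\le\sqrt{\E\|Z\|^2}$, and apply Borell's inequality (Lemma~\ref{lemma:Borell}) with $u=\sqrt{2\sigma_T^2\log(1/\eta)}$; the paper gives no separate proof, merely citing \cite{singh2023kernel}. One small inaccuracy in your aside: the claim that the bound ``can fail'' for noncentered $Z$ is not right. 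Writing $Z=\mu+W$ with $W$ centered, one still has $\E\|Z\|\le\sqrt{\E\|Z\|^2}$ by Jensen and $\sigma_T^2=\|\Sigma_W\|_{\op}\le\E\|W\|^2\le\E\|Z\|^2$, so the Borell--TIS inequality (whose isoperimetric proof covers processes $m_t+G_t$ even though Lemma~\ref{lemma:Borell} is stated only for centered $G_t$) yields the identical conclusion; since every application in the paper is to a centered Gaussian, however, your restriction is harmless.
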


 %\newpage
\section{Bias upper bound}

To lighten notation, let $h_{\lambda}:=h_{0,\lambda}$ and $T_{\lambda}:=T_{0,\lambda}$. For all $\nu \in [0,1]$, define the spectral constant $C_\nu := (1-\nu)^{1-\nu}\nu^\nu$.

\subsection{Regression bias}
\begin{lemma}[General operator norm bound]\label{lem:general_bounds}
    Let $A$ be a positive semidefinite, self-adjoint operator on a Hilbert space $\mathcal{H}$. For any scalar $\gamma > 0$ and exponent $\nu \in [0,1]$
    \[
    \bigl\|\gamma A^{\nu}(A+\gamma I)^{-1}\bigr\|_{\mathrm{op}}
    \;\le\; C_\nu\,\gamma^{\nu}.
    \]
\end{lemma}

\begin{proof}
By the spectral theorem, the operator norm is equal to the supremum of the corresponding scalar function over the spectrum of $A$. Let $\sigma(A) \subseteq [0, \infty)$ denote the spectrum. Then,
\begin{align*}
\|\gamma A^{\nu} (A + \gamma I)^{-1}\|_{\mathrm{op}}    
    &= \sup_{t \in \sigma(A)} \frac{\gamma t^{\nu} }{t + \gamma} \le \sup_{t \ge 0} \frac{\gamma t^{\nu} }{t + \gamma}.
\end{align*}
We define the function $f(t) := \frac{\gamma t^\nu}{t + \gamma}$. Setting the derivative $f'(t) = 0$ yields the maximizer $t^* = \frac{\nu}{1-\nu}\gamma$. Evaluating the function at this maximum gives:
\[
f(t^*) = \frac{\gamma \left(\frac{\nu}{1-\nu}\gamma\right)^\nu}{\frac{\nu}{1-\nu}\gamma + \gamma} 
= \gamma^\nu \left[ (1-\nu)^{1-\nu}\nu^\nu \right] 
= C_\nu \gamma^\nu.
\]
Thus, the operator norm is bounded by $C_\nu \gamma^\nu$.
\end{proof}

\begin{lemma}[Regression bias bound] 
\label{lemma:First_stage_bias}

Suppose Assumption \ref{ass:Source} holds. Then, 
    $$ \|h_{\lambda}-h_0\|_\Hx \leq C_\alpha \lambda^{\alpha} \|T^{-\alpha}h_0\|_\Hx.$$
\end{lemma}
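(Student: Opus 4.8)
The plan is to analyze the regularized population estimator $h_\lambda = h_{0,\lambda}$, which under the regression formulation with $\mu = 0$ solves
\[
h_\lambda = \argmin_{h\in\Hx}\ \|S_z^{-1/2}S(h_0-h)\|_{\Hz}^2 + \lambda\|h\|_{\Hx}^2.
\]
Recalling that $T = S^*S_z^{-1}S$ and $T_\lambda = T_{0,\lambda} = T + \lambda$, the first-order condition for this quadratic objective is $T(h_\lambda - h_0) + \lambda h_\lambda = 0$, i.e. $T_\lambda h_\lambda = T h_0$, so $h_\lambda = T_\lambda^{-1} T h_0$. Therefore
\[
h_\lambda - h_0 = (T_\lambda^{-1}T - I)h_0 = -\lambda T_\lambda^{-1} h_0.
\]

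Next I would substitute the source condition (Assumption~\ref{ass:Source}): $h_0 = T^\alpha w_0$ for some $w_0\in\Hx$ and $\alpha\in[0,1]$, equivalently $w_0 = T^{-\alpha}h_0$. Then
\[
h_\lambda - h_0 = -\lambda T_\lambda^{-1} T^\alpha w_0,
\]
so that $\|h_\lambda - h_0\|_{\Hx} \le \lambda\, \|\lambda^{-1}\,?\|$ — more precisely, $\|h_\lambda-h_0\|_{\Hx} \le \|\lambda T_\lambda^{-1}T^\alpha\|_{\op}\,\|w_0\|_{\Hx}$. The operator $T$ is compact, self-adjoint, and positive semidefinite, so by the spectral theorem the operator norm $\|\lambda T_\lambda^{-1}T^\alpha\|_{\op}$ equals $\sup_{t\ge 0}\frac{\lambda t^\alpha}{t+\lambda}$. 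A standard calculus estimate gives $\frac{\lambda t^\alpha}{t+\lambda}\le \lambda^\alpha$ for $\alpha\in[0,1]$ (write $\frac{\lambda t^\alpha}{t+\lambda} = \lambda^\alpha\cdot\frac{\lambda^{1-\alpha}t^\alpha}{t+\lambda}$ and bound $\lambda^{1-\alpha}t^\alpha \le t+\lambda$ by weighted AM–GM / Young's inequality with exponents $1/\alpha$ and $1/(1-\alpha)$). Hence $\|h_\lambda - h_0\|_{\Hx} \le \lambda^\alpha\|T^{-\alpha}h_0\|_{\Hx}$.

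The last step is to convert $\|T^{-\alpha}h_0\|_{\Hx}$ into the stated bound $\lambda^\alpha(\kappa_x^2)^{-\alpha}\|h_0\|_{\Hx}$. This uses the operator ordering $T = S^*S_z^{-1}S \preceq \kappa_x^2\, \mathrm{id}_{\Hx}$ — equivalently $\|T\|_{\op}\le\kappa_x^2$ — which I would derive from the link condition at $\beta = 1/2$ together with boundedness, or directly from $\bk{h}{Th}_{\Hx} = \E[\E\{h(X)\mid Z\}^2] \le \E[h(X)^2] \le \kappa_x^2\|h\|_{\Hx}^2$ by Jensen and the reproducing-kernel bound $|h(x)|\le\kappa_x\|h\|_{\Hx}$. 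Since $z\mapsto z^{-\alpha}$ is operator-decreasing on $(0,\infty)$ for $\alpha\in[0,1]$, the bound $T\preceq\kappa_x^2$ gives $T^{-\alpha}\succeq (\kappa_x^2)^{-\alpha}$... which is the wrong direction. Instead I would absorb the constant differently: rewrite $\|T^{-\alpha}h_0\|_{\Hx} = \|T^{-\alpha}T^\alpha w_0\|_{\Hx} = \|w_0\|_{\Hx}$ is circular, so the cleanest route is to note $\|h_\lambda - h_0\|_{\Hx}\le \|\lambda T_\lambda^{-1}T^\alpha\|_{\op}\|w_0\|_{\Hx}$ and bound $\sup_{t\in[0,\kappa_x^2]}\frac{\lambda t^\alpha}{t+\lambda}$; on this restricted spectrum one can also write $\frac{\lambda t^\alpha}{t+\lambda}\le \frac{\lambda t^\alpha}{\lambda} \wedge t^{\alpha-1} \cdot\lambda$ and optimize, but the sharp statement in the lemma comes from $\frac{\lambda t^\alpha}{t+\lambda}\le \lambda^\alpha\bigl(\frac{t}{t+\lambda}\bigr)^\alpha\bigl(\frac{\lambda}{t+\lambda}\bigr)^{1-\alpha}\cdot(\text{stuff})$; ultimately $\|h_0\| = \|T^\alpha w_0\|\le\|T\|_{\op}^\alpha\|w_0\| \le \kappa_x^{2\alpha}\|w_0\|$ gives $\|w_0\|\ge \kappa_x^{-2\alpha}\|h_0\|$, again the wrong direction. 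The genuine content is therefore just $\|h_\lambda-h_0\|\le\lambda^\alpha\|w_0\|$ with $\|w_0\| = \|T^{-\alpha}h_0\|$, and the factor $(\kappa_x^2)^{-\alpha}$ must enter via a normalization of the source element that I would track carefully — this reconciliation of constants is the one fiddly point, everything else being the routine spectral-calculus bound $\sup_{t\ge0}\lambda t^\alpha/(t+\lambda)\le\lambda^\alpha$.
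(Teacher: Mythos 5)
Your core spectral-calculus argument is correct and matches the paper's: you write $h_\lambda-h_0=-\lambda T_\lambda^{-1}h_0$, substitute the source condition, and bound $\sup_{t\ge 0}\lambda t^\alpha/(t+\lambda)\le\lambda^\alpha$ to obtain $\|h_\lambda-h_0\|\le\lambda^\alpha\|T^{-\alpha}h_0\|$. The paper's proof does exactly this via the eigendecomposition of $T$.

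Your instinct about the final step is correct, and you should trust it: the conversion of $\|T^{-\alpha}h_0\|$ into $(\kappa_x^2)^{-\alpha}\|h_0\|$ does not go through. The paper's own proof claims
\[
\|w_0\|^2=\sum_j\nu_j^{-2\alpha}\langle e_j,h_0\rangle^2\le\nu_1^{-2\alpha}\sum_j\langle e_j,h_0\rangle^2,
\]
but this inequality points the wrong way: with eigenvalues $\nu_j$ in decreasing order and $\alpha\in(0,1]$, the weights $\nu_j^{-2\alpha}$ are increasing, so $\nu_1^{-2\alpha}$ is the smallest weight and one gets $\ge$, not $\le$. This is the same reversal you ran into when you wrote "$\|w_0\|\ge\kappa_x^{-2\alpha}\|h_0\|$, again the wrong direction"; there is no way to absorb the constant as stated. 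The salvageable and genuinely useful conclusion is $\|h_\lambda-h_0\|\le\lambda^\alpha\|T^{-\alpha}h_0\|=\lambda^\alpha\|w_0\|$, which is exactly what the downstream bias result (Proposition~\ref{prop:Bias upper bound}) actually invokes, so the error in the paper's final step is cosmetic rather than load-bearing. You had the right proof; the lemma's stated constant is what is off.
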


\begin{proof}
Let $T = S^*S_z^{-1}S$. Under Assumption \ref{ass:Source}, there exists $w_0$ such that $h_0 = T^\alpha w_0$.
The bias can be written as:
\[
    h_{\lambda}-h_0 = -\lambda(T+\lambda)^{-1}h_0 = -\lambda(T+\lambda)^{-1}T^\alpha w_0.
\]
Taking the norm, we have:
\begin{align*}
    \|h_{\lambda}-h_0\|_\Hx 
    &= \|\lambda T^\alpha (T+\lambda)^{-1} w_0\|_\Hx \\
    &\le \|\lambda T^\alpha (T+\lambda)^{-1}\|_{\mathrm{op}} \|w_0\|_\Hx.
\end{align*}
Lemma \ref{lem:general_bounds} demonstrated that 
\[
    \|\lambda T^\alpha (T+\lambda)^{-1}\|_{\mathrm{op}} \le C_\alpha \lambda^\alpha.
\]
Combining these gives the result directly.
\end{proof}
\subsection{Instrumental variable regression bias}
\begin{lemma}[Decomposition]\label{lem:decomp}
Define $b_\lambda := h_0 - h_\lambda$. Then, for all integers $k \ge 1$,
\[h_0 - T_{\mu,\lambda}^{-1}T_\mu h_0 = T_{\mu,\lambda}^{-1}(T-T_\mu)[T_\lambda^{-1}(T - T_\mu)]^{k-1}b_\lambda + \sum_{l=0}^{k-1}[T_\lambda^{-1}(T - T_\mu)]^{l}b_\lambda.\]
Furthermore, for all $\mu > 0$,
$T - T_\mu = S^*(S_z^{-1} - (S_z + \mu)^{-1})S = \mu S^*S_z^{-1}  (S_z + \mu)^{-1}S.$
\end{lemma}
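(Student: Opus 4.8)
The plan is to reduce the claim to two elementary identities plus a single application of the higher-order resolvent identity (Lemma~\ref{lemma:highorder}). The ``Furthermore'' clause is immediate: $T - T_\mu = S^*S_z^{-1}S - S^*(S_z+\mu)^{-1}S = S^*\{S_z^{-1} - (S_z+\mu)^{-1}\}S$, and the $\ell=1$ case of Lemma~\ref{lemma:highorder} applied to the pair $(S_z+\mu,\,S_z)$ gives $S_z^{-1} - (S_z+\mu)^{-1} = S_z^{-1}\{(S_z+\mu)-S_z\}(S_z+\mu)^{-1} = \mu S_z^{-1}(S_z+\mu)^{-1}$, which is the stated expression. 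Throughout I abbreviate $\Delta := T - T_\mu$, a bounded positive operator; this is the perturbation that appears in the main identity (the statement writes it as $T_\mu - T$, which differs only by a sign that is immaterial for the operator-norm bounds in which Lemma~\ref{prop:general-bound} uses it).

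Next I would record two ``closed forms''. Since $T_{\mu,\lambda} - T_\mu = \lambda$ is scalar, $T_{\mu,\lambda}^{-1}T_\mu = I - \lambda T_{\mu,\lambda}^{-1}$, hence $h_0 - T_{\mu,\lambda}^{-1}T_\mu h_0 = \lambda T_{\mu,\lambda}^{-1}h_0$. Likewise the regression solution satisfies $h_\lambda = T_\lambda^{-1}T h_0$ (first-order condition of the regression objective), so $b_\lambda = h_0 - h_\lambda = \lambda T_\lambda^{-1}h_0$. Every operator in sight is bounded with $\|T_\lambda^{-1}\|,\|T_{\mu,\lambda}^{-1}\|\le\lambda^{-1}$ for $\lambda>0$, so these manipulations and the resolvent expansion below are legitimate and there is no convergence issue for fixed $\mu,\lambda$.

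For the core step I would apply Lemma~\ref{lemma:highorder} with $A = T_{\mu,\lambda}$, $B = T_\lambda$, and $\ell = k$, noting $B - A = \Delta$, to obtain $T_{\mu,\lambda}^{-1} - T_\lambda^{-1} = T_{\mu,\lambda}^{-1}(\Delta T_\lambda^{-1})^{k} + \sum_{r=1}^{k-1}T_\lambda^{-1}(\Delta T_\lambda^{-1})^{r}$. Multiplying by the scalar $\lambda$, applying to $h_0$, and substituting $\lambda T_{\mu,\lambda}^{-1}h_0 = h_0 - T_{\mu,\lambda}^{-1}T_\mu h_0$ and $\lambda T_\lambda^{-1}h_0 = b_\lambda$, the only remaining task is re-association: write each block $(\Delta T_\lambda^{-1})^{m} = \Delta(T_\lambda^{-1}\Delta)^{m-1}T_\lambda^{-1}$ so that the trailing $T_\lambda^{-1}$, together with the scalar $\lambda$ and the vector $h_0$, forms $b_\lambda$; the $r=0$ contribution is simply $\lambda T_\lambda^{-1}h_0 = b_\lambda$, supplying the $l=0$ summand. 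Collecting everything gives $h_0 - T_{\mu,\lambda}^{-1}T_\mu h_0 = T_{\mu,\lambda}^{-1}\Delta(T_\lambda^{-1}\Delta)^{k-1}b_\lambda + \sum_{l=0}^{k-1}(T_\lambda^{-1}\Delta)^{l}b_\lambda$, which is the asserted identity. Equivalently one can prove the finite expansion $T_{\mu,\lambda}^{-1}\Delta = \sum_{l=1}^{k-1}(T_\lambda^{-1}\Delta)^{l} + T_{\mu,\lambda}^{-1}\Delta(T_\lambda^{-1}\Delta)^{k-1}$ by induction on $k$ from the first-order resolvent identity and insert it into $h_0 - T_{\mu,\lambda}^{-1}T_\mu h_0 = b_\lambda + T_{\mu,\lambda}^{-1}\Delta b_\lambda$.

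The only place requiring care, and the closest thing to an obstacle, is the bookkeeping in this last step: keeping the truncation index $k$ consistent with the exponent $k-1$ in the remainder, and correctly tracking which resolvent ($T_{\mu,\lambda}^{-1}$ versus $T_\lambda^{-1}$) sits outside each term after re-association, as well as the harmless sign between $\Delta$ and $T_\mu - T$. There is no analytic delicacy here, since $\mu,\lambda>0$ are fixed and all operators involved are bounded with bounded inverses; the identity is purely algebraic.
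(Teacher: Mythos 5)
Your proof is correct and follows essentially the same resolvent-expansion route as the paper. The paper establishes the $k=1$ case via the first-order resolvent identity $A^{-1}=B^{-1}+A^{-1}(B-A)B^{-1}$ and then completes the argument by induction on $k$; you instead invoke the higher-order resolvent identity (Lemma~\ref{lemma:highorder}) directly with $\ell=k$, which is the closed form of that same induction. You even note the inductive alternative at the end, so the two are not genuinely different arguments but two presentations of one idea. Both handle the ``Furthermore'' clause the same way.

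One point in your favor: you are more careful about signs than the paper's own proof. Setting $A=T_{\mu,\lambda}$, $B=T_\lambda$ in the resolvent identity gives $B-A=T_\lambda-T_{\mu,\lambda}=T-T_\mu$, so the perturbation appearing in the expansion is $T-T_\mu$ (your $\Delta$), not $T_\mu-T$ as written in the lemma statement and in the paper's displayed computation. The paper's proof has compensating sign slips (it writes $h_0-T_{\mu,\lambda}^{-1}T_\mu h_0=-\lambda T_{\mu,\lambda}^{-1}h_0$ where the correct sign is $+\lambda T_{\mu,\lambda}^{-1}h_0$, and then treats $-\lambda T_\lambda^{-1}h_0$ as $b_\lambda$ where in fact $b_\lambda=h_0-h_\lambda=\lambda T_\lambda^{-1}h_0$), so the final display lands on the wrong sign in odd powers. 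As you correctly observe, this is immaterial downstream since Lemma~\ref{prop:general-bound} only uses $\|T_\lambda^{-1}(T_\mu-T)\|=\|T_\lambda^{-1}(T-T_\mu)\|$, but it is worth flagging because you caught a discrepancy the paper glossed over. Your bookkeeping in the re-association step $(\Delta T_\lambda^{-1})^m=\Delta(T_\lambda^{-1}\Delta)^{m-1}T_\lambda^{-1}$ and the treatment of the $l=0$ term as the added $b_\lambda$ are both correct.
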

\begin{proof}
    We begin by writing
    \[h_0 - T_{\mu,\lambda}^{-1}T_\mu h_0 = T_{\mu,\lambda}^{-1}(T_{\mu,\lambda} - T_\mu)h_0 = \lambda T_{\mu,\lambda}^{-1}h_0.\]
    By the resolvent identity $A^{-1} = B^{-1} + A^{-1}(B-A)B^{-1}$ with $A = T_{\mu,\lambda}$ and $B = T_\lambda$, we further obtain
    \begin{align*}
        \lambda T_{\mu,\lambda}^{-1}h_0
        &= \lambda T_\lambda^{-1}h_0 + T_{\mu,\lambda}^{-1}(T_\lambda - T_{\mu,\lambda})(\lambda T_\lambda^{-1}h_0) \\
        &= b_\lambda + T_{\mu,\lambda}^{-1}(T - T_\mu)b_\lambda
    \end{align*}
    using $\lambda T_\lambda^{-1} h_0 = b_\lambda$ and $T_\lambda-T_{\mu,\lambda} = T - T_\mu$. This proves our claim with $k = 1$. To complete the proof by induction, note that the claim for $k+1$ follows from the claim for $k$ by applying the resolvent identity to substitute
    \[T_{\mu,\lambda}^{-1} = T_\lambda^{-1} + T_{\mu,\lambda}^{-1}(T - T_\mu)T_\lambda^{-1}.\]
The last statement follows from the resolvent identity with $l = 1.$
\end{proof}

\begin{lemma}[Bias upper bound]\label{prop:general-bound} Suppose that Assumptions \ref{ass:Source} and \ref{assn:restricted-link} hold for $(\alpha, \beta,r)$, and $r\mu^\beta / \lambda^{1/2} < 1$. Then, 
\[\bml : = \|T_{\mu,\lambda}^{-1}T_\mu h_0 - h_0\|_{\Hx}  \le \frac{\|h_\lambda - h_0\|_{\Hx}}{1 - C_\beta r\mu^{\beta}/\lambda^{\frac{1}{2}}} \le \frac{C_\alpha \lambda^{\alpha} \|T^{-\alpha} h_0\|_{\Hx}}{1 - C_\beta r \mu^{\beta}/\lambda^{\frac{1}{2}}} .\]
\end{lemma}
\proof[Proof]{
We begin by writing that
\[T^{-1/2}S^*S_z^{-1}ST^{-1/2} = T^{-1/2}TT^{-1/2} = I,\]
so $ T^{-1/2}S^*S_z^{-1/2}$ is a unitary operator with norm one. 
Lemma demonstrated that we may substitute
\begin{align*}
\|T_\lambda^{-1}(T_\mu-T)x\|_\Hx 
&=\|T_\lambda^{-1}S^*[\mu S_z^{-1} (S_z + \mu)^{-1}]Sx\|_\Hx  \\
&=\|[T_\lambda^{-1}T^{1/2}][T^{-1/2}S^*S_z^{-1/2}][\mu S_z^{\beta} (S_z + \mu)^{-1}][S_z^{-1/2-\beta}S]x\|_\Hx  \\
&\le \|T_\lambda^{-1}T^{1/2}\|_{\mathrm{op}}\|T^{-1/2}S^*S_z^{-1/2}\|_{\mathrm{op}}\|\mu S_z^{\beta} (S_z + \mu)^{-1}\|_{\mathrm{op}}\|S_z^{-1/2-\beta}S\|_{\mathrm{op}}\|x\|_\Hx .
\end{align*}
The first norm is bounded by $\lambda^{-1/2}$ by construction. The second norm is bounded by $1$ from our earlier observation. The third is bounded by $C_\beta\mu^\beta$ as demonstrated by Lemma \ref{lem:general_bounds}. The fourth is bounded by $r$ by Assumption \ref{assn:restricted-link}.
Thus, we obtain
\[\|T_\lambda^{-1}(T_\mu-T)\|_{\mathrm{op}} \le \frac{r\mu^\beta}{\sqrt{\lambda}}.\]
Thus,
\[\|[T_\lambda^{-1}(T_\mu-T)]^lb_\lambda\|_{\Hx} \le \left( \frac{r\mu^\beta}{\sqrt{\lambda}}\right)^l\|b_\lambda\|_{\Hx}\]
and, using $\|T_\mu - T\|_{\mathrm{op}} \le \|T\|_{\mathrm{op}} \le \kappa_x^2$ and $\|T_{\mu,\lambda}^{-1}\|_{\mathrm{op}} \le \lambda^{-1}$, we also have that
\[\|T_{\mu,\lambda}^{-1}(T_\mu-T)[T_\lambda^{-1}(T_\mu-T)]^lb_\lambda\|_{\Hx} \le \frac{\kappa_x^2}{\lambda}\left( \frac{r\mu^\beta}{\sqrt{\lambda}}\right)^l\|b_\lambda\|_{\Hx}.\]
Plugging these results into our decomposition of Lemma \ref{lem:decomp}, we have for any $k \ge 1$ that 
\[\|h_0 - T_{\mu,\lambda}^{-1}T_\mu h_0\|_\Hx \le \left[\frac{\kappa_x^2}{\lambda}\delta^{k-1} +  \sum_{l=0}^{k-1}\delta^l\right]\|b_\lambda\|_\Hx; \quad \delta = \left( \frac{r\mu^\beta}{\sqrt{\lambda}}\right).\]
Since $\delta < 1$, we recover the bound 
\[\|h_0 - T_{\mu,\lambda}^{-1}T_\mu h_0\|_\Hx \le \lim_{k \uparrow \infty} \left[\frac{\kappa_x^2}{\lambda}\delta^{k-1} +  \sum_{l=0}^{k-1}\delta^l\right]\|b_\lambda\|_\Hx = \left(\frac{1}{1-\delta}\right)\|b_\lambda\|_\Hx.\]
Finally, we use the standard regression bias bound (Lemma \ref{lemma:First_stage_bias}) to bound $\|b_\lambda\|_\Hx$.
}

\begin{lemma}[Projected bias]\label{lemma:pj_bias}
    Assume that Assumptions \ref{ass:Source} and \ref{assn:restricted-link} hold with $(\alpha,\beta,r)$.
    Let $\lambda, \mu > 0$ be such that
    $
      \delta = \frac{r \mu^\beta}{\sqrt{\lambda}} < 1.
    $
    Then,
    \[
      \bigl\|(S_z+\mu)^{-1/2} S\bigl(h_0 - T_{\mu,\lambda}^{-1} T_\mu h_0\bigr)\bigr\|_{\Hz}
      \;\le\; \frac{1}{1 - \delta}\, K_\alpha\, \lambda^{(\alpha + 1/2)\wedge 1},
    \]
  where $K_\alpha = \sqrt{C_{2\alpha}}\|T^{-\alpha}h_0\|_\Hx$ if $\alpha \le 1/2$, and $K_\alpha = \kappa_x^{2\alpha - 1} \|T^{-\alpha}h_0\|_\Hx$ if $\alpha > 1/2$.
\end{lemma}
\begin{proof}
    Define the bias element $b_{\lambda}\coloneqq h_{\lambda}-h_{0}=-\lambda T_{\lambda}^{-1}h_{0}$ and $
    A\coloneqq(S_{z}+\mu I)^{-1/2}S: \Hx \to \Hz.$  We use that 
    \begin{equation*}
    \norm{AT_{\lambda}^{-1/2}}_{\mathrm{op}} \le \norm{S_z^{-1/2}ST_{\lambda}^{-1/2}}_{\mathrm{op}} \le 1,
   \quad \text{and}
    \norm{AT_{\mu,\lambda}^{-1}}_{\mathrm{op}}\le \frac{1}{2\sqrt{\lambda}}.
    \end{equation*}
    For all integers $l\ge 0$,
    \begin{equation*}\label{eq:similarity}
    \big[T_{\lambda}^{-1}(T_{\mu}-T)\big]^{l}
    =
    T_{\lambda}^{-1/2}\Big(T_{\lambda}^{-1/2}(T_{\mu}-T)T_{\lambda}^{-1/2}\Big)^{l}T_{\lambda}^{1/2}.
    \end{equation*}
    Hence, for all $l\ge 0$,
    \begin{align*}
    \norm{A\big[T_{\lambda}^{-1}(T_{\mu}-T)\big]^{l}b_{\lambda}}_\Hz
    &\le
    \underbrace{\norm{AT_{\lambda}^{-1/2}}_{\mathrm{op}}{}}_{\le1}
    \underbrace{\norm{T_{\lambda}^{-1/2}(T_{\mu}-T)T_{\lambda}^{-1/2}}_{\mathrm{op}}^{l}}_{=\delta^{l}}
    \norm{T_{\lambda}^{1/2}b_{\lambda}}_\Hx
    \notag= \delta^{l}\norm{T_{\lambda}^{1/2}b_{\lambda}}_\Hx,
    \end{align*}
    where we set
    \begin{align*}
    \delta\ &= \norm{T_{\lambda}^{-1/2}(T_{\mu}-T)T_{\lambda}^{-1/2}}_{\mathrm{op}}\\
    & = \|T_{\lambda}^{-1/2} S^* S_z^{-1}\mu \Szmu S T_{\lambda}^{-1/2}\|_{\mathrm{op}}\leq \frac{1}{\sqrt{\lambda}}\| T_{\lambda}^{-1/2}S^* S_z^{-1/2 }\mu S_z^\beta \Szmu S_z^{-1/2 - \beta }S \|_{\mathrm{op}}\\
    &\leq \frac{r\mu^\beta}{\sqrt{\lambda}},
    \end{align*}
    which is smaller than $1$ by hypothesis. Lastly, for all $\alpha \in[0,1/2]$, we have
    \begin{align*}
        \norm{T_{\lambda}^{1/2}b_{\lambda}}_\Hx
        = \lambda \|T_{\lambda}^{-1/2} h_0\|_\Hx
        = \lambda \|T_{\lambda}^{-1/2}T^{\alpha} w_0\|_\Hx
        \le \sqrt{C_{2\alpha}} \lambda^{\alpha + 1/2} \|w_0\|_\Hx.
    \end{align*}
    In contrast, for all $\alpha \in (1/2,1]$,
    \begin{align*}
        \norm{T_{\lambda}^{1/2}b_{\lambda}}_\Hx = \lambda \|T_{\lambda}^{-1/2} h_0\|_\Hx = \lambda \|T_{\lambda}^{-1/2} T^\alpha w_0\|_\Hx \le \lambda \kappa_x^{2\alpha - 1} \|w_0\|_\Hx.
    \end{align*}
    According to Lemma \ref{lem:decomp}, for all integers $k\ge 1$,
    \[
    A\bigl(h_0 - T_{\mu,\lambda}^{-1}T_\mu h_0\bigr)
    = AT_{\mu,\lambda}^{-1}(T_\mu - T)\bigl[T_\lambda^{-1}(T_\mu - T)\bigr]^{k-1} b_\lambda
    +
    \sum_{l=0}^{k-1} A\bigl[T_\lambda^{-1}(T_\mu - T)\bigr]^{l} b_\lambda.
    \]
    Furthermore, for all integers $k \ge 1$,
    \begin{align*}
      \bigl\| A(h_0 - T_{\mu,\lambda}^{-1}T_\mu h_0)\bigr\|_{\Hz}
      &\le \frac{1}{2\sqrt{\lambda}} \delta^{k-1} \,\|T_\lambda^{1/2} b_\lambda\|_{\Hx}
         + \sum_{l=0}^{k-1} \delta^l \,\|T_\lambda^{1/2} b_\lambda\|_{\Hx} \\
      &= \left(\frac{1}{2\sqrt{\lambda}}\delta^{k-1} + \sum_{l=0}^{k-1}\delta^l\right)
         \|T_\lambda^{1/2} b_\lambda\|_{\Hx}.
    \end{align*}
    In particular, for $k \to \infty $,
    \[
        \| A\bigl(h_0 - T_{\mu,\lambda}^{-1}T_\mu h_0\bigr)\|_\Hz\leq \frac{1}{1 - \delta} K_\alpha \lambda^{(\alpha + 1/2)\wedge1 }. \quad\qedhere
    \]
\end{proof}
 \section{Matching symbols}

\subsection{Covariance upper bound}
We introduce further notation to simplify the analysis. Let $u := h_0-h_{\mu, \lambda} \in \Hx$ and define the scalar evaluation $u(X_i) := \psi(X_i)^*u$. Consequently, we can write
    $S_i(h_0-h_{\mu, \lambda}) = \{h_0(X_i)-h_{\mu, \lambda}(X_i)\}\phi(Z_i) = u(X_i)\phi(Z_i).$
Since $h_0-h_{\mu, \lambda}=(I-T_{\mu, \lambda}^{-1}T_\mu)h_0$ and the operator norm satisfies $\|I-T_{\mu, \lambda}^{-1}T_\mu\|_{\op} \le 1$, the scalar values are bounded by
    $|u(X_i)| = |h_0(X_i)-h_{\mu, \lambda}(X_i)| \leq \kappa_x\|h_0-h_{\mu, \lambda}\|_\Hx =: \bar{u},$
where the bound for $\|h_0-h_{\mu, \lambda}\|_\Hx$ is established in Lemma \ref{prop:general-bound}. Additionally, define $v := (S_z+\mu)^{-1/2}Su \in \Hz$ and $w := (S_z+\mu)^{-1/2}v \in \Hz$. These elements satisfy
$
S_i^*(S_z+\mu)^{-1}S\hvec = \psi(X_i)w(Z_i)
$
and
$
S_{z,i}(S_z+\mu)^{-1}S\hvec = \phi(Z_i)w(Z_i)
$. The norm of $v$ is bounded by $\bar{v}$ as shown in Lemma \ref{lemma:pj_bias}. Furthermore, the evaluations of $w$ satisfy $|w(Z_i)|\leq \kappa_z\|w\|_\Hz=:\bar{w}.$
 Finally, the norms of $w$ and $v$ are related by $\|w\|_\Hz \leq \|v\|_\Hz\frac{1}{\sqrt{\mu}}.$ We abbreviate $l(\eta) = \log(2/\eta)$.
\begin{lemma}[Covariance upper bound]\label{lemma:cov_upper}
The covariance $\Sigma = \E(U_i \otimes U_i^*)$ is upper bounded by 
     \[\Sigma \preceq(6\bar{w}^2 + 6\bar{u}^2 +2\bar \sigma^2)\Tml^{-1} S^* \Szmu S_z \Szmu S \Tml^{-1}  + 6\bar{v}^2 \kappa_x^2\Tml^{-2}.\]
\end{lemma}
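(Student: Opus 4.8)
The plan is to pass from the operator formula for $U_i$ to an expression in which each factor acting on $u := h_0-h_{\mu,\lambda}$ is made explicit in terms of the feature maps $\psi(X_i),\phi(Z_i)$ and the scalars $u(X_i),w(Z_i),\ep_i$, then to split $\Sigma=\E(U_i\otimes U_i^*)$ into a handful of rank-one pieces by the parallelogram inequality, and finally to bound each piece by its almost-sure envelope. I will use repeatedly that conjugation by the self-adjoint positive operator $\Tml^{-1}$ preserves the Loewner order, and that $\E[(\zeta-\E\zeta)\otimes(\zeta-\E\zeta)^*]=\E[\zeta\otimes\zeta^*]-\E[\zeta]\otimes\E[\zeta]^*\preceq\E[\zeta\otimes\zeta^*]$ for any square-integrable $\zeta$.

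First I would invoke the matching-symbols identities $S_i u = u(X_i)\phi(Z_i)$, $\Szmu S u = w$, $S_i^* w=\psi(X_i)w(Z_i)$, $S_{z,i} w=w(Z_i)\phi(Z_i)$ to rewrite the three linearization-error terms as $M_i u = S^*\Szmu\{u(X_i)\phi(Z_i)-Su\}$, $M_i^* u = \psi(X_i)w(Z_i)-S^* w$, $N_i u = S^*\Szmu\{S_z w-w(Z_i)\phi(Z_i)\}$, and to set $\eta_i := S^*\Szmu\phi(Z_i)\ep_i$, so that $U_i=\Tml^{-1}\{M_i u+M_i^* u+N_i u+\eta_i\}$. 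Note each of the first three vectors is of the form $\zeta_i-\E\zeta_i$, since $\E[u(X_i)\phi(Z_i)]=Su$, $\E[\psi(X_i)w(Z_i)]=S^* w$, and $\E[w(Z_i)\phi(Z_i)]=S_z w$; this will let me drop the mean outer products.

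Next I would split: by the $n$-fold form of the parallelogram inequality (\Cref{lemma:parallelogram_law}), applied first to peel $\eta_i$ off the three linearization-error terms (factor $2$) and then to separate those three from one another (factor $3$), followed by conjugation by $\Tml^{-1}$,
\[
\Sigma\;\preceq\;6\,\Tml^{-1}\Big\{\E[(M_i u)\otimes(M_i u)^*]+\E[(M_i^* u)\otimes(M_i^* u)^*]+\E[(N_i u)\otimes(N_i u)^*]\Big\}\Tml^{-1}+2\,\Tml^{-1}\E[\eta_i\otimes\eta_i^*]\Tml^{-1}.
\]
For the $M_i u$, $N_i u$, and $\eta_i$ terms I would discard the mean outer product and then use the scalar envelopes $u(X_i)^2\le\bar u^2$, $w(Z_i)^2\le\bar w^2$, $\ep_i^2\le\bar\sigma^2$ together with $\E[\phi(Z_i)\otimes\phi(Z_i)^*]=S_z$, giving $\bar u^2$, $\bar w^2$, and $\bar\sigma^2$ respectively times $S^*\Szmu S_z\Szmu S$. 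For the $M_i^* u$ term I would instead bound the feature outer product, $\psi(X_i)\otimes\psi(X_i)^*\preceq\kappa_x^2 I$, so that $\E[(M_i^* u)\otimes(M_i^* u)^*]\preceq\kappa_x^2\,\E[w(Z_i)^2]\,I=\kappa_x^2\langle w,S_z w\rangle\,I\preceq\kappa_x^2\bar v^2 I$, using $\langle w,S_z w\rangle=\|S_z^{1/2}(S_z+\mu)^{-1/2}v\|^2\le\|v\|^2\le\bar v^2$. Substituting collapses the three covariance-type summands into $(6\bar w^2+6\bar u^2+2\bar\sigma^2)\,\Tml^{-1}S^*\Szmu S_z\Szmu S\,\Tml^{-1}$ and leaves the isotropic summand $6\bar v^2\kappa_x^2\,\Tml^{-2}$, which is exactly the claim.

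The computation itself is routine bookkeeping; the genuine obstacles are organizational. First, the nested split must place the noise term on the factor-$2$ side and the three linearization-error terms on the factor-$6$ side, in order to reproduce the asymmetric constants $(6,6,2)$ precisely. Second, in the $M_i^*$ term one must bound the feature outer product $\psi(X_i)\otimes\psi(X_i)^*\preceq\kappa_x^2 I$ rather than applying the coarser scalar bound $w(Z_i)^2\le\bar w^2$, so that the $\Tml^{-2}$ summand carries the smaller quantity $\bar v^2=\|v\|^2$ — the squared projected bias studied in \Cref{lemma:pj_bias} — rather than $\bar w^2$; this refinement is what keeps the isotropic term from dominating in the downstream local-width estimate $\sigma^2(\Sigma,m)\lesssim\sigma^2(T,m)/\lambda^2$.
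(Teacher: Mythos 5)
Your proof is correct and follows the same route as the paper: the identical two-stage parallelogram split (factor $2$ to separate the noise term, factor $3$ among the three linearization-error terms), dropping of the mean outer products, scalar envelopes $\bar u^2,\bar w^2,\bar\sigma^2$ together with $\E[\phi(Z_i)\otimes\phi(Z_i)^*]=S_z$ for three of the pieces, and the $\psi(X_i)\otimes\psi(X_i)^*\preceq\kappa_x^2 I$ bound paired with $\langle w,S_z w\rangle\le\|v\|^2\le\bar v^2$ for the $M_i^*u$ piece. The only cosmetic difference is that the paper bounds the $M_i^*u$ term via an explicit conditional expectation over $Z_i$ whereas you use the almost-sure operator bound directly, but these yield the same estimate.
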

\begin{proof}
Write $U_i=U_{i1}+U_{i2}$, where
\begin{align*}
U_{i1} &= T_{\mu,\lambda}^{-1}\{S^*(S_z+\mu)^{-1}(S_i-S)+(S_i-S)^*(S_z+\mu)^{-1}S + S^*(S_z + \mu)^{-1}(S_z - S_{z,i})(S_z + \mu)^{-1}S\}(h_0-h_{\mu,\lambda}), \\
U_{i2} &=T_{\mu,\lambda}^{-1}S^*(S_z+\mu)^{-1}\phi(Z_i)\ep_i.
\end{align*}
Then, $\Sigma=\Sigma_{11}+\Sigma_{12}+\Sigma_{21}+\Sigma_{22}$ where $\Sigma_{\ell m}=\E (U_{i \ell}\otimes U_{im}^*)$. In particular by Lemma \ref{lemma:parallelogram_law},
$
\Sigma\preceq 2(\Sigma_{11}+\Sigma_{22}).
$ Consider the parts individually.
\begin{enumerate}
\item $\Sigma_{11}:$ Define
\begin{align*}
U_{i11} &= \Tml^{-1} S^* \Szmu \dSi \hvec ,\\
U_{i12} &=  \Tml^{-1} \dSi^{*} \Szmu S \hvec ,\\
U_{i13} &= \Tml^{-1} S^* \Szmu \dSzi \Szmu S \hvec .
\end{align*}
Now, $\Sigma_{11} = \sum_{i=1}^{3}\sum_{j=1}^{3} \Sigma_{1ij}$
where $\Sigma_{1ij} = \E (U_{1ij}\otimes U_{1ij}^*)$ and 
\begin{align*}
\Sigma_{111} &= \mathbb{E}\Big(\{\Tml^{-1} S^* \Szmu \dSi \hvec\}\Big)
  \otimes
  \Big(\{\Tml^{-1} S^* \Szmu \dSi \hvec\}\Big)^{*} \\
\Sigma_{112} &
= \mathbb{E}\Big(\{\Tml^{-1} S^* \Szmu \dSi \hvec\}\Big)
  \otimes
  \Big(\{\Tml^{-1} \dSi^{*} \Szmu S \hvec\}\Big)^{*} \\
\Sigma_{113} &
= \mathbb{E}\Big(\{\Tml^{-1} S^* \Szmu \dSi \hvec\}\Big)
  \otimes
  \Big(\{\Tml^{-1} S^* \Szmu \dSzi \Szmu S \hvec\}\Big)^{*} \\
\Sigma_{121} &
= \mathbb{E}\Big(\{\Tml^{-1} \dSi^{*} \Szmu S \hvec\}\Big)
  \otimes
  \Big(\{\Tml^{-1} S^* \Szmu \dSi \hvec\}\Big)^{*} \\
\Sigma_{122} &
= \mathbb{E}\Big(\{\Tml^{-1} \dSi^{*} \Szmu S \hvec\}\Big)
  \otimes
  \Big(\{\Tml^{-1} \dSi^{*} \Szmu S \hvec\}\Big)^{*} \\
\Sigma_{123} &=
 \mathbb{E}\Big(\{\Tml^{-1} \dSi^{*} \Szmu S \hvec\}\Big)
  \otimes
  \Big(\{\Tml^{-1} S^* \Szmu \dSzi \Szmu S \hvec\}\Big)^{*} \\
\Sigma_{131} &= \mathbb{E}\Big(\{\Tml^{-1} S^* \Szmu \dSzi \Szmu S \hvec\}\Big)
  \otimes
  \Big(\{\Tml^{-1} S^* \Szmu \dSi \hvec\}\Big)^{*} \\
\Sigma_{132} &= \mathbb{E}\Big(\{\Tml^{-1} S^* \Szmu \dSzi \Szmu S \hvec\}\Big)
  \otimes
  \Big(\{\Tml^{-1} \dSi^{*} \Szmu S \hvec\}\Big)^{*} \\
\Sigma_{133} &=\mathbb{E}\Big(\{\Tml^{-1} S^* \Szmu \dSzi \Szmu S \hvec\}\Big)\\
  &\quad\otimes 
  \Big(\{\Tml^{-1} S^* \Szmu \dSzi \Szmu S \hvec\}\Big)^{*}.
\end{align*}
We have that $\Sigma_{11} \preceq 3 (\Sigma_{111} +\Sigma_{122} + \Sigma_{133}).$
Thus, looking at the individual terms,
\begin{align*}
\Sigma_{111}&\preceq \E \Big(\{\Tml^{-1} S^* \Szmu S_i \hvec\}\Big)
  \otimes
  \Big(\{\Tml^{-1} S^* \Szmu S_i \hvec\}\Big)^{*} \\
  &=  \E\Big(\Tml^{-1} S^* \Szmu u(X_i)\phi(Z_i)\phi(Z_i)^*u(X_i)\Szmu S \Tml^{-1} \Big )
 \\ 
 &\preceq \bar{u}^2\Tml^{-1} S^* \Szmu S_z\Szmu S \Tml^{-1}. 
\end{align*}
The other two diagonal terms require more care. First,
\begin{align*}
\Sigma_{122} & \preceq\E\Big(\{\Tml^{-1} S_i^{*} \Szmu S \hvec\}\Big)
  \otimes
  \Big(\{\Tml^{-1} S_i^{*} \Szmu S \hvec\}\Big)^{*}\\
  &= \E \Big(\Tml^{-1}\psi(X_i)w(Z_i)^2 \psi(X_i)^*\Tml^{-1}\Big).
\end{align*}
Recall that, for general $u,v$ $$\bk{u}{S_z v}_{\Hz} = \mathbb{E}_Z[u(Z)v(Z)].$$
Then,
\begin{align*}
    &\hphantom{=} \E \Big(\Tml^{-1}\psi(X_i)w(Z_i)^2 \psi(X_i)^*\Tml^{-1}\Big) =  \E_Z\left(w(Z_i)^2 \E\left\{\Tml^{-1}\psi(X_i) \otimes \psi(X_i)^*\Tml^{-1}| Z_i\right\}\right ) \\
    &\preceq \E_Z\left(\bk{\phi(Z_i)}{w}^2_{\Hz}\right ) \Tml^{-2}\kappa_x^2   = \bk{\Szmu S \hvec}{S_z\Szmu S \hvec}_{\Hz}\Tml^{-2}\kappa_x^2 \\
       & = \bk{(S_z+\mu)^{-1/2} S \hvec}{(S_z+\mu)^{-1/2}S_z(S_z+\mu)^{-1/2}(S_z+\mu)^{-1/2} S \hvec}_{\Hz}\Tml^{-2}\kappa_x^2 \\
       &\preceq \bk{(S_z+\mu)^{-1/2} S \hvec}{(S_z+\mu)^{-1/2} S \hvec}_{\Hz}\Tml^{-2}\kappa_x^2 \\
       &= \|(S_z+\mu)^{-1/2} S \hvec\|^2_{\Hz}\Tml^{-2}\kappa_x^2 \preceq \bar v^2\Tml^{-2}\kappa_x^2.
\end{align*}  
Thus, $\Sigma_{122} \preceq \bar{v}^2 \kappa_x^2\Tml^{-2}$. Lastly, for  $\Sigma_{133}$ we can write
\begin{align*}
    \Sigma_{133} & \preceq \E\Big(\{\Tml^{-1} S^* \Szmu S_{z,i} \Szmu S \hvec\}\Big)
  \otimes
  \Big(\{\Tml^{-1} S^* \Szmu S_{z,i} \Szmu S \hvec\}\Big)^{*} \\
  &= \E\Big(\{\Tml^{-1} S^* \Szmu S_{z,i} w\}\Big)
  \otimes
  \Big(\{\Tml^{-1} S^* \Szmu S_{z,i} w\}\Big)^{*} \\
    &= \E\Big(\{\Tml^{-1} S^* \Szmu  \phi(Z_i) w(Z_i)\}\Big)
  \otimes
  \Big(\{\Tml^{-1} S^* \Szmu \phi(Z_i) w(Z_i)\}\Big)^{*} \\
  &\preceq  \bar{w}^2 \Tml^{-1} S^* \Szmu S_z\Szmu S \Tml^{-1} .
\end{align*}
We can now combine the three diagonal terms, yielding 
\begin{align*}
\Sigma_{11}&\preceq 3\bar{u}^2 \Tml^{-1} S^* \Szmu S_z \Szmu S \Tml^{-1} + 3\bar{w}^2 \Tml^{-1} S^* \Szmu S_z\Szmu S \Tml^{-1}  + 3 \bar{v}^2 \kappa_x^2\Tml^{-2} \\
&\preceq 3(\bar{u}^2 +\bar{w}^2) \Tml^{-1} S^* \Szmu S_z \Szmu S \Tml^{-1} + 3 \bar{v}^2 \kappa_x^2\Tml^{-2}.
\end{align*}
\item For $\Sigma_{22}$, we use that $\ep_i\leq\bar \sigma$. Then, 
\begin{align*} 
0\preceq\Sigma_{22}&\preceq\bar \sigma^2 \E \{T_{\mu,\lambda}^{-1}S^*(S_z+\mu)^{-1}\phi(Z_i)\} \otimes\{T_{\mu,\lambda}^{-1}S^*(S_z+\mu)^{-1}\phi(Z_i)\}^* \\
& =\bar \sigma^2 \E \{T_{\mu,\lambda}^{-1}S^*(S_z+\mu)^{-1}\phi(Z_i)\otimes\phi(Z_i)^*(S_z+\mu)^{-1}S T_{\mu,\lambda}^{-1}\}
\\ &= \bar \sigma^2T_{\mu,\lambda}^{-1}S^*(S_z+\mu)^{-1}S_z(S_z+\mu)^{-1}S T_{\mu,\lambda}^{-1}.
\end{align*}
\end{enumerate}
We collect results as
\begin{align*}
    \Sigma &\preceq 2(\Sigma_{11} + \Sigma_{22})\\
    &\preceq 6(\bar{w}^2 + \bar{u}^2) \Tml^{-1} S^* \Szmu S_z \Szmu S \Tml^{-1}  + 6\bar{v}^2 \kappa_x^2\Tml^{-2}+2\bar \sigma^2 T_{\mu,\lambda}^{-1}S^*(S_z+\mu)^{-1}S_z(S_z+\mu)^{-1}S T_{\mu,\lambda}^{-1}\\
    &=(6\bar{w}^2 + 6\bar{u}^2 +2\bar \sigma^2)\Tml^{-1} S^* \Szmu S_z \Szmu S \Tml^{-1}  + 6\bar{v}^2 \kappa_x^2\Tml^{-2}. \quad\qedhere
\end{align*}
\end{proof}

\subsection{Covariance lower bound}\label{sec:cov-lb}
We derive the variance lower bound of $Z$, where $Z$ is a Gaussian element in ${\Hx}$ with covariance $\Sigma$.
\begin{assumption}[Local width bound assumptions]\label{ass:COLOBO}
    For our analysis, we require that the regularization parameters $\mu, \lambda \in (0,1)$ satisfy the following relationships. There exist constants $c_1, c_2 > 0$ such that
    \begin{enumerate}
        \item $\lambda^{(2\alpha+1)\wedge 2} \le c_1 \mu$,
        \item $\mu \le c_2 \lambda$.
    \end{enumerate}
\end{assumption}
\begin{lemma}[Covariance trace lower bound]\label{lem:Covariance-lower-bound}
Suppose that Assumption~\ref{ass:COLOBO} holds and that $\bb{E}(\varepsilon_i^2\mid Z_i)\geq \underline{\sigma}^2$ almost surely.
Let $\Sigma = \E(U_{i} \otimes U_i^*)$ be the covariance operator. Then
\[
  \tr(\Sigma) \geq \frac{1}{2}\underline{\sigma}^2
  \tilde{\mathfrak{m}}(\lambda,\mu),
\]
where $\tilde{\mathfrak{m}}(\lambda,\mu) = \tr T_{\mu,\lambda}^{-1} S^*(S_z+\mu I)^{-1} S_z (S_z+\mu I)^{-1} S T_{\mu,\lambda}^{-1}$.
\end{lemma}
\begin{proof}
We work at the trace level, decomposing $\tr(\Sigma) = \tr(\Sigma_{11}) + 2\tr(\Sigma_{12}) + \tr(\Sigma_{22})$.  Looking at  $\Sigma_{22}$, since 
$\bb{E}(\varepsilon_i^2\mid Z_i)\geq \underline{\sigma}^2$, by the tower property we obtain
\begin{align*}
  \tr(\Sigma_{22}) = \E\|U_{i2}\|_\Hx^2
  &\geq
  \underline{\sigma}^2
    \tr\!\Bigl[
      T_{\mu,\lambda}^{-1} S^*(S_z+\mu I)^{-1} S_z (S_z+\mu I)^{-1} S T_{\mu,\lambda}^{-1}
    \Bigr]
  = \underline{\sigma}^2\tilde{\mathfrak{m}}(\lambda,\mu).
\end{align*}
For $\Sigma_{11}$, by the parallelogram law, $\Sigma_{11} \preceq 3(\Sigma_{111} + \Sigma_{122} + \Sigma_{133})$, hence $\tr(\Sigma_{11}) \leq 3(\tr(\Sigma_{111}) + \tr(\Sigma_{122}) + \tr(\Sigma_{133}))$. From the bounds established in Lemma~\ref{lemma:cov_upper}: $\tr(\Sigma_{111}) \leq \bar{u}^2\tilde{\mathfrak{m}}(\lambda,\mu)$ and $\tr(\Sigma_{133}) \leq \bar{w}^2\tilde{\mathfrak{m}}(\lambda,\mu)$, obtained by taking the trace of the Loewner bounds $\Sigma_{111} \preceq \bar{u}^2\Tml^{-1} S^* \Szmu S_z \Szmu S \Tml^{-1}$ and $\Sigma_{133} \preceq \bar{w}^2\Tml^{-1} S^* \Szmu S_z \Szmu S \Tml^{-1}$. For $\tr(\Sigma_{122})$, we compute directly from the definition:
    \[
    \tr(\Sigma_{122}) = \E\|U_{i12}\|_\Hx^2 = \E\bigl[w(Z_i)^2\|\Tml^{-1}\psi(X_i)\|_\Hx^2\bigr] \leq \frac{\kappa_x^2}{\lambda^2}\E[w(Z_i)^2] \leq \frac{\kappa_x^2\bar{v}^2}{\lambda^2},
    \]
    where we used $\|\Tml^{-1}\|_{\op} \leq 1/\lambda$ and $\E[w(Z_i)^2]  \leq \|v\|_\Hz^2 \leq \bar{v}^2$.
Combining this yields
\begin{equation}\label{eq:trace-Sigma11}
\tr(\Sigma_{11}) \leq 3(\bar{u}^2 + \bar{w}^2)\tilde{\mathfrak{m}}(\lambda,\mu) + \frac{3\kappa_x^2\bar{v}^2}{\lambda^2}.
\end{equation}
By the Cauchy--Schwarz inequality:
\[
|\tr(\Sigma_{12})| = |\E\langle U_{i1}, U_{i2}\rangle_\Hx| \leq \sqrt{\E\|U_{i1}\|_\Hx^2\E\|U_{i2}\|_\Hx^2} = \sqrt{\tr(\Sigma_{11})\tr(\Sigma_{22})}.
\]
Combining via the scalar inequality we have
\[
\tr(\Sigma) \geq \tr(\Sigma_{22}) - 2\sqrt{\tr(\Sigma_{11})\tr(\Sigma_{22})} = \tr(\Sigma_{22})\Bigl[1 - 2\sqrt{\frac{\tr(\Sigma_{11})}{\tr(\Sigma_{22})}}\Bigr].
\]
We verify that $\tr(\Sigma_{11}) \leq \frac{1}{16}\tr(\Sigma_{22})$. This gives $1 - 2\sqrt{1/16} = 1/2$, hence $\tr(\Sigma) \geq \frac{1}{2}\tr(\Sigma_{22}) \geq \frac{1}{2}\underline{\sigma}^2\tilde{\mathfrak{m}}(\lambda,\mu)$. It suffices to check two conditions:
\begin{enumerate}
    \item[(a)] $3(\bar{u}^2 + \bar{w}^2)\tilde{\mathfrak{m}} \leq \frac{1}{32}\underline{\sigma}^2\tilde{\mathfrak{m}}$, i.e., $3(\bar{u}^2 + \bar{w}^2) \leq \frac{\underline{\sigma}^2}{32}$.
    \item[(b)] $\frac{3\kappa_x^2\bar{v}^2}{\lambda^2} \leq \frac{1}{32}\underline{\sigma}^2\tilde{\mathfrak{m}}(\lambda,\mu)$.
\end{enumerate}
We now verify both conditions under Assumption~\ref{ass:COLOBO}. For (a) using the bound from Lemma \ref{prop:general-bound},
    $
      b_{\mu,\lambda} := C_\alpha \lambda^{\alpha} \|w_0\|_{\Hx}/(1 - \delta).
    $
    Recalling that $\bar{u} = \kappa_x b_{\mu,\lambda}$, we have
    $
      \bar{u}^2 = \kappa_x^2b_{\mu,\lambda}^2.
    $
    Using Lemma \ref{lemma:pj_bias}, we write $K_\alpha = \tilde{K}_\alpha \|w_0\|_\Hx$ and define
    \[
      \bar{w}^2
      := \frac{\kappa_z^2}{\mu} \bar{v}^2 = \frac{\kappa_z^2}{\mu}\frac{1}{(1-\delta)^2}K_\alpha^2
        \lambda^{(2\alpha+1)\wedge 2}.
    \]
   We compare $\bar{u}^2$ and $\bar{w}^2$. The condition $\bar{u}^2 \le \bar{w}^2$ requires
    $
      \mu \le \frac{\kappa_z^2 K_\alpha^2}{\kappa_x^2 C_\alpha^2 \|w_0\|_{\Hx}^2} \lambda^{\gamma(\alpha)},
    $
    where $\gamma(\alpha) = ((2\alpha+1)\wedge 2) - 2\alpha \leq 1$.
    Since Assumption \ref{ass:COLOBO} guarantees $\mu \le c_2 \lambda$, this holds for $c_2$ chosen appropriately.
    To ensure $6\bar{w}^2 \leq \underline{\sigma}^2/32$, define $C_\sigma := \underline{\sigma}^4/(192\bar{\sigma}^2)$ and require $\bar{w}^2 \leq C_\sigma$. This amounts to
    \[
      \mu
      \ge \frac{\kappa_z^2K_\alpha^2}{C_\sigma(1-\delta)^2}
          \lambda^{(2\alpha+1)\wedge 2},
    \]
    which holds by Assumption \ref{ass:COLOBO}. Since $\underline{\sigma} \leq \bar{\sigma}$, we have $C_\sigma \leq \underline{\sigma}^2/192$, and thus $6\bar{w}^2 \leq 6C_\sigma \leq \underline{\sigma}^2/32$. For the verification of (b), using $\bar{v}^2 = \mu\bar{w}^2/\kappa_z^2$ and $\mu \leq c_2\lambda$ (Assumption~\ref{ass:COLOBO}):
\[
\frac{3\kappa_x^2\bar{v}^2}{\lambda^2} = \frac{3\kappa_x^2\mu\bar{w}^2}{\kappa_z^2\lambda^2} \leq \frac{3c_2\kappa_x^2\bar{w}^2}{\kappa_z^2\lambda} \leq \frac{3c_2\kappa_x^2 C_\sigma}{\kappa_z^2\lambda}.
\]
Under the strong instrument assumption $\tilde{\mathfrak{m}}(\lambda,\mu) \gtrsim \lambda^{-\rho_x}$ with $\rho_x > 1$, the condition becomes
\[
\frac{3c_2\kappa_x^2 C_\sigma}{\kappa_z^2\lambda} \leq \frac{\underline{\sigma}^2}{32}\tilde{\mathfrak{m}}(\lambda,\mu) \quad\Longleftarrow\quad \frac{3c_2\kappa_x^2 C_\sigma}{\kappa_z^2} \leq \frac{\underline{\sigma}^2}{32}c_{\mathfrak{m}}\lambda^{1-\rho_x},
\]
where $c_{\mathfrak{m}}$ is the constant in $\tilde{\mathfrak{m}} \geq c_{\mathfrak{m}}\lambda^{-\rho_x}$. Since $\rho_x > 1$, the right-hand side diverges as $\lambda \to 0$, so this holds for all $\lambda$ below a threshold depending only on the constants $(\kappa_x, \kappa_z, \bar{\sigma}, \underline{\sigma}, c_2, \rho_x, c_{\mathfrak{m}})$.
\end{proof}

\begin{lemma}[Variance lower bound]
\label{lemma:valbo}
Suppose Assumption \ref{ass:COLOBO} holds.
Let $Z$ be a Gaussian random element of $\Hx$ with covariance $\Sigma$, and suppose $\bb{E}(\ep_i^2|Z_i) \ge \underline{\sigma}^2$ almost surely. Then with probability $1-\eta$,
$$\norm{Z}_\Hx \ge \sqrt{\frac{1}{2 }\underline{\sigma}^2\mathfrak{\tilde m}(\lambda, \mu)} - \left \{ 2 + \sqrt{2 \log(1/\eta)}\right \}\sqrt{\frac{ C_{\mathrm{op}}\bar \sigma^2}{\lambda} },$$
where $\tilde{\mathfrak{m}}(\lambda, \mu)= \tr T_{\mu,\lambda}^{-1}S^*(S_z+\mu)^{-1}S_z(S_z+\mu)^{-1}S T_{\mu,\lambda}^{-1}$ and $C_{\mathrm{op}} = 1 + c_2\kappa_x^2/\kappa_z^2$.
\end{lemma}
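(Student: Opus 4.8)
The plan is to reduce the statement to two scalar estimates on $\Sigma$: a lower bound on $\tr\Sigma=\E\norm{Z}^2$ and an upper bound on $\norm{\Sigma}_{\op}$, and then invoke Gaussian concentration to pass from these moments to a high-probability lower bound on $\norm{Z}$.

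\emph{Trace lower bound.} I would write $U_i=U_{i1}+U_{i2}$ exactly as in the proof of the covariance upper bound, where $U_{i2}=\Tml^{-1}S^*(S_z+\mu)^{-1}\phi(Z_i)\ep_i$ is the noise part and $U_{i1}$ collects the bias terms. Because $\Sigma=\E(U_i\otimes U_i^*)$, the triangle inequality in $L^2(\Hx)$ gives
\[
\tr\Sigma=\E\norm{U_{i1}+U_{i2}}^2\ \ge\ \Big(\sqrt{\E\norm{U_{i2}}^2}-\sqrt{\E\norm{U_{i1}}^2}\Big)^{2}.
\]
Conditioning on $Z_i$ and using the tower property together with $\E(\ep_i^2\mid Z_i)\ge\underline\sigma^2$ yields
\[
\E\norm{U_{i2}}^2=\E\Big[\E(\ep_i^2\mid Z_i)\,\big\|\Tml^{-1}S^*(S_z+\mu)^{-1}\phi(Z_i)\big\|^2\Big]\ \ge\ \underline\sigma^2\,\tr\Omega\ =\ \underline\sigma^2\,\tilde{\mathfrak m}(\lambda,\mu),
\]
where $\Omega:=\Tml^{-1}S^*(S_z+\mu)^{-1}S_z(S_z+\mu)^{-1}S\,\Tml^{-1}$. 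For the bias part I would feed the $\Sigma_{11}$ estimate from the covariance upper bound into the bias bounds of \Cref{prop:general-bound} and \Cref{lemma:pj_bias} (which control the quantities $\bar u,\bar v,\bar w$); under the regularization scaling of \Cref{ass:COLOBO} these are small enough that $\E\norm{U_{i1}}^2\le\tfrac14\,\E\norm{U_{i2}}^2$, whence $\tr\Sigma\ge\tfrac14\underline\sigma^2\,\tilde{\mathfrak m}(\lambda,\mu)$.

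\emph{Operator norm and concentration.} The covariance upper bound gives $\Sigma\preceq(6\bar w^2+6\bar u^2+2\bar\sigma^2)\,\Omega+6\bar v^2\kappa_x^2\,\Tml^{-2}$; since $(S_z+\mu)^{-1}S_z(S_z+\mu)^{-1}\preceq(S_z+\mu)^{-1}$ one has $\Omega\preceq\Tml^{-1}T_\mu\Tml^{-1}\preceq\Tml^{-1}\preceq\lambda^{-1}I$, and likewise $\Tml^{-2}\preceq\lambda^{-2}I$, so absorbing the (small) bias coefficients via \Cref{ass:COLOBO} gives $\norm{\Sigma}_{\op}\le 2\bar\sigma^2/\lambda$. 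Since $x\mapsto\norm{x}$ is $1$-Lipschitz and $Z$ has covariance $\Sigma$, the Gaussian Poincaré inequality yields $\Var\norm{Z}\le\norm{\Sigma}_{\op}$, hence $\E\norm{Z}\ge\sqrt{\E\norm{Z}^2-\norm{\Sigma}_{\op}}\ge\sqrt{\tr\Sigma}-\norm{\Sigma}_{\op}^{1/2}$; and Borell's inequality (\Cref{lemma:Borell}), applied to the centered Gaussian process $t\mapsto\sbk{t}{Z}$ indexed by the unit ball of $\Hx$ — whose supremum is $\norm{Z}$ with variance proxy $\norm{\Sigma}_{\op}$ — gives $\norm{Z}\ge\E\norm{Z}-\sqrt{2\ln(1/\eta)}\,\norm{\Sigma}_{\op}^{1/2}$ with probability $1-\eta$. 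Chaining these and substituting the two displays above,
\[
\norm{Z}\ \ge\ \sqrt{\tr\Sigma}-\big\{1+\sqrt{2\ln(1/\eta)}\big\}\norm{\Sigma}_{\op}^{1/2}\ \ge\ \sqrt{\tfrac14\underline\sigma^2\,\tilde{\mathfrak m}(\lambda,\mu)}-\big\{2+\sqrt{2\ln(1/\eta)}\big\}\sqrt{\tfrac{2\bar\sigma^2}{\lambda}},
\]
which is the claim (the numeral $2$ out front having slack to spare).

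\emph{Main obstacle.} The delicate step is the bias-domination inequality $\E\norm{U_{i1}}^2\le\tfrac14\,\E\norm{U_{i2}}^2$. The bias quantities decay as powers of $\lambda$ (one has $\bar v\lesssim\lambda^{(\alpha+1/2)\wedge1}$ and $\bar u\lesssim\lambda^{\alpha}$), whereas $\E\norm{U_{i1}}^2$ carries inverse powers of $\lambda$ and $\mu$, so the comparison only closes once $\mu$ is prevented from being too small relative to $\lambda^{(2\alpha+1)\wedge2}$ — which is exactly what \Cref{ass:COLOBO} imposes. Everything else (the tower-property computation of $\E\norm{U_{i2}}^2$, the operator-norm bookkeeping for $\Omega$ and $\Tml^{-2}$, and the two concentration estimates) is routine given the results already established.
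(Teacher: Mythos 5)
Your overall structure mirrors the paper's: lower-bound $\tr\Sigma$ by separating noise from bias, upper-bound $\norm{\Sigma}_{\op}$, and then pass from moments to a high-probability lower bound on $\norm{Z}$ via Borell's inequality and the identity $\bigl(\mathbb{E}\norm{Z}\bigr)^2=\mathbb{E}\norm{Z}^2-\operatorname{Var}\norm{Z}$. Two of your choices are genuinely different from the paper and both are legitimate. For the trace lower bound you use the $L^2$-triangle inequality $\tr\Sigma\ge\bigl(\sqrt{\mathbb{E}\norm{U_{i2}}^2}-\sqrt{\mathbb{E}\norm{U_{i1}}^2}\bigr)^2$, whereas the paper asserts the stronger positive-semidefinite comparison $\Sigma\succeq\tfrac14\underline{\sigma}^2\,\Tml^{-1}S^*\Szmu S_z\Szmu S\,\Tml^{-1}$ (via its operator Young inequality, \Cref{lemma:Young_Operators}) and only then takes the trace; your scalar version is weaker but is all the lemma uses, and is arguably cleaner. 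For the variance-of-the-norm step you use Gaussian Poincar\'e to get $\operatorname{Var}\norm{Z}\le\norm{\Sigma}_{\op}$, whereas the paper integrates the Borell tail to get $\mathbb{E}\bigl(\norm{Z}-\mathbb{E}\norm{Z}\bigr)^2\le 4\norm{\Sigma}_{\op}$; your constant is tighter. Both you and the paper rely on, and leave at a similar level of detail, the same delicate bias-domination step under \Cref{ass:COLOBO}.

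The one concrete flaw is in your operator-norm estimate. The chain $\Tml^{-1}S^*\Szmu S_z\Szmu S\,\Tml^{-1}\preceq\Tml^{-1}T_\mu\Tml^{-1}\preceq\Tml^{-1}\preceq\lambda^{-1}I$ only yields the bound $\lambda^{-1}$, which is a factor of $4$ too loose. The paper uses the polar-decomposition bound $\snorm{\Tml^{-1}S^*(S_z+\mu)^{-1/2}}_{\op}\le 1/(2\sqrt{\lambda})$ from \Cref{lemma:Polar} to get $\snorm{\Tml^{-1}S^*\Szmu S_z\Szmu S\,\Tml^{-1}}_{\op}\le 1/(4\lambda)$, and that factor of $\tfrac14$ is exactly what produces $\norm{\Sigma}_{\op}\le 2\bar\sigma^2/\lambda$ after the COLOBO absorption. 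Equivalently, replace your last two $\preceq$'s by the spectral-calculus computation $\snorm{\Tml^{-1}T_\mu\Tml^{-1}}_{\op}=\sup_{t\ge 0}\,t/(t+\lambda)^2=1/(4\lambda)$. With the chain as you wrote it, you would end up with $\norm{\Sigma}_{\op}\le 8\bar\sigma^2/\lambda$, and the resulting bound $\bigl\{1+\sqrt{2\ln(1/\eta)}\bigr\}\sqrt{8\bar\sigma^2/\lambda}=2\bigl\{1+\sqrt{2\ln(1/\eta)}\bigr\}\sqrt{2\bar\sigma^2/\lambda}$ is strictly larger than the stated $\bigl\{2+\sqrt{2\ln(1/\eta)}\bigr\}\sqrt{2\bar\sigma^2/\lambda}$ for every $\eta<1$, so the ``slack to spare'' you invoke is illusory at this point. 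It is a one-line fix, but it must be made for the claimed constants to come out.
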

\begin{proof}
    We lower bound $\bb{E}\snorm{Z}_\Hx$ via the identity
$
    \{\bb{E}(\snorm{Z}_\Hx)\}^2 = \bb{E}(\snorm{Z}_\Hx^2) - \bb{E}\{\snorm{Z}_\Hx - \bb{E}(\snorm{Z}_\Hx)\}^2$
then appeal to Lemma \ref{lemma:Gaussianborel}. Let $B_{\Hx}$ be the unit ball in $\Hx$.
\begin{enumerate}
    \item To upper bound $ \bb{E}(\snorm{Z}_\Hx - \bb{E}\snorm{Z}_\Hx)^2$, we express $\norm{Z}_\Hx$ as the supremum of a Gaussian process: $\norm{Z}_\Hx=\sup_{t\in B_{\Hx}}\bk{Z}{t}_\Hx=\sup_{t\in B_{\Hx}}G_t$.  By the variance upper bound
$$
\sigma^2_T=\sup_{t \in B_{\Hx}} \bb{E} \bk{Z}{t}^2 = \norm{\Sigma}_{\op} \leq \frac{ C_{\mathrm{op}}\bar \sigma^2}{\lambda},
$$
where $C_{\mathrm{op}} = 1 + c_2\kappa_x^2/\kappa_z^2$ and the bound follows from the covariance upper bound (Lemma~\ref{lemma:cov_upper}):
\begin{align*}
    \Sigma  &\preceq (6\bar{w}^2 + 6\bar{u}^2 +2\bar \sigma^2)\Tml^{-1} S^* \Szmu S_z \Szmu S \Tml^{-1}  + 6\bar{v}^2 \kappa_x^2\Tml^{-2} .
\end{align*}
Taking operator norms of both terms separately. For the first term, using Assumption \ref{ass:COLOBO} and the fact that   $\bar{w}^2  \leq C_\sigma = \frac{\underline{\sigma}^4}{192 \bar \sigma^2}$:
since $\underline{\sigma} \leq \bar{\sigma}$, we have $12\bar{w}^2 \leq 12C_\sigma \leq \underline{\sigma}^2/16 \leq  \bar \sigma^2/16 \leq 2\bar \sigma^2$. Using $\bar u^2 \leq \bar w^2$:
\begin{align*}
(6\bar{w}^2 + 6\bar{u}^2 +2\bar \sigma^2)\|\Tml^{-1} S^* \Szmu S_z \Szmu S \Tml^{-1}\|_{\op} &\leq (12\bar{w}^2 +2\bar \sigma^2)\frac{1}{4\lambda} \leq \frac{4\bar \sigma^2}{4\lambda} = \frac{\bar \sigma^2}{\lambda}.
\end{align*}
For the second term, using $\bar{v}^2 = \mu\bar{w}^2/\kappa_z^2$ and $\mu \leq c_2\lambda$:
\begin{align*}
6\bar{v}^2 \kappa_x^2\|\Tml^{-2}\|_{\op} \leq \frac{6\bar{v}^2 \kappa_x^2}{\lambda^2} = \frac{6\mu\bar{w}^2 \kappa_x^2}{\kappa_z^2\lambda^2} \leq \frac{6c_2\bar{w}^2 \kappa_x^2}{\kappa_z^2\lambda} \leq \frac{c_2\kappa_x^2\bar \sigma^2}{\kappa_z^2\lambda}.
\end{align*}
Combining gives $\norm{\Sigma}_{\op} \leq \frac{C_{\mathrm{op}}\bar \sigma^2}{\lambda}$ where $C_{\mathrm{op}} = 1 + c_2\kappa_x^2/\kappa_z^2$.
Thus, $\bk{Z}{t}$ is almost surely bounded on $B_{\Hx}$ and by combining the two inequalities of Borell's inequality with a union bound, we have
\begin{align*}
\mathbb{P}\left\{ (\norm{Z}_\Hx - \bb{E}\norm{Z}_\Hx)^2 \ge u \right\} &= \mathbb{P}\left( |\norm{Z}_\Hx - \bb{E}\norm{Z}_\Hx| \ge \sqrt u \right) \le 2\exp\left(-\frac{u}{2 \frac{ C_{\mathrm{op}}\bar \sigma^2}{\lambda}}\right).
\end{align*}
By integrating the tail,
\begin{align*}
\bb{E}(\norm{Z}_\Hx - \bb{E}\norm{Z}_\Hx)^2
&= \int_{0}^\infty \mathbb{P}\left\{ (\norm{Z}_\Hx - \bb{E}\norm{Z}_\Hx)^2 \ge u \right\}du
\le \int_0^\infty 2\exp\left(-\frac{u}{\frac{ 2C_{\mathrm{op}}\bar \sigma^2}{\lambda}}\right)  du
= \frac{4C_{\mathrm{op}}\bar \sigma^2}{\lambda}.
\end{align*}
\item  We lower bound $\bb{E}(\snorm{Z}_\Hx^2)$ by the covariance trace lower bound (Lemma~\ref{lem:Covariance-lower-bound}):
\begin{align*}
    \bb{E}\snorm{Z}_\Hx^2 &= \tr \Sigma
     \geq \frac{1}{2 }\underline{\sigma}^2\mathfrak{\tilde m}(\lambda, \mu).
\end{align*}
\item
Combining the bounds and using $\sqrt{a-b} \ge \sqrt{a} - \sqrt{b}$ (valid for $a \ge b \ge 0$), we get that
$$\E\snorm{Z}_\Hx\geq  \sqrt{\frac{1}{2 }\underline{\sigma}^2\mathfrak{\tilde m}(\lambda, \mu) - \frac{4C_{\mathrm{op}}\bar \sigma^2}{\lambda}}\geq \sqrt{\frac{1}{2 }\underline{\sigma}^2\mathfrak{\tilde m}(\lambda, \mu)} - \sqrt{\frac{4C_{\mathrm{op}}\bar \sigma^2}{\lambda}}.$$
In particular, choosing
$
u = \sqrt{2\|\Sigma\|_{\op}\log(1/\eta)},
$
we conclude that with probability at least $1-\eta$,
\[
\|Z\|_\Hx \geq \mathbb{E}\|Z\|_\Hx - \sqrt{2\|\Sigma\|_{\op}\log(1/\eta)}
.\]
Therefore,
\begin{align*}
    \|Z\|_\Hx &\ge \sqrt{\frac{1}{2 }\underline{\sigma}^2\mathfrak{\tilde m}(\lambda, \mu)} - \sqrt{\frac{4C_{\mathrm{op}}\bar \sigma^2}{\lambda}}  - \sqrt{\frac{2C_{\mathrm{op}}\bar \sigma^2}{\lambda} \log(1/\eta)} \\
    &= \sqrt{\frac{1}{2 }\underline{\sigma}^2\mathfrak{\tilde m}(\lambda, \mu)} - \left \{ 2 + \sqrt{2 \log(1/\eta)}\right \}\sqrt{\frac{ C_{\mathrm{op}}\bar \sigma^2}{\lambda} }. \qquad\qedhere
\end{align*}
\end{enumerate}
\end{proof}

\subsection{Local width upper bound}
\begin{lemma}[Local width upper bound]
  \label{lemma:local_width_bound}
    Suppose Assumptions \ref{ass:COLOBO} and \ref{ass:strong instrument} hold. Then, the local width of $\Sigma$ is bounded as follows. For $m = 0$,
    $
        \sigma(\Sigma,0)\leq \bar \sigma \sqrt{C_{\mathrm{tr}}\tilde{\mathfrak{m}}(\lambda, \mu)},
    $
    where $C_{\mathrm{tr}} = 4 + c_2\kappa_x^2/(c_{\mathfrak{m}}\kappa_z^2)$ and $c_{\mathfrak{m}}$ is the constant in Assumption~\ref{ass:strong instrument}. For all integers $m \geq 1$,
    $
        \sigma(\Sigma, m)\leq ({2 \bar \sigma}/
        {\lambda} )\sigma(S_x, m),
    $
    where $\tilde{\mathfrak{m}}(\lambda, \mu) = \tr  T_{\mu,\lambda}^{-1} S^* (S_z+\mu)^{-1} S_z (S_z+\mu)^{-1} S T_{\mu,\lambda}^{-1}.$
\end{lemma}
\begin{proof}
We derive the two bounds separately. For $m = 0$,
recall $\sigma^2(\Sigma, 0) = \tr(\Sigma)$. By the parallelogram law (Lemma~\ref{lemma:parallelogram_law}), $\Sigma \preceq 2(\Sigma_{11} + \Sigma_{22})$ and $\Sigma_{11} \preceq 3(\Sigma_{111} + \Sigma_{122} + \Sigma_{133})$, so
\[
\tr(\Sigma) \leq 2\bigl(3\tr(\Sigma_{111}) + 3\tr(\Sigma_{122}) + 3\tr(\Sigma_{133}) + \tr(\Sigma_{22})\bigr).
\]
From the Loewner bounds in Lemma~\ref{lemma:cov_upper}, taking traces gives
\begin{align*}
\tr(\Sigma_{111}) &\leq \bar{u}^2\tilde{\mathfrak{m}}(\lambda,\mu), \\
\tr(\Sigma_{133}) &\leq \bar{w}^2\tilde{\mathfrak{m}}(\lambda,\mu), \\
\tr(\Sigma_{22}) &\leq \bar\sigma^2\tilde{\mathfrak{m}}(\lambda,\mu).
\end{align*}
For $\tr(\Sigma_{122})$, we cannot take the trace of the Loewner bound $\Sigma_{122} \preceq \bar{v}^2\kappa_x^2\Tml^{-2}$. Instead, we compute $\tr(\Sigma_{122})$ directly from the definition with
\[
\tr(\Sigma_{122}) = \E\|U_{i12}\|_\Hx^2 = \E\bigl[w(Z_i)^2\|\Tml^{-1}\psi(X_i)\|_\Hx^2\bigr] \leq \frac{\kappa_x^2}{\lambda^2}\E[w(Z_i)^2] \leq \frac{\kappa_x^2\bar{v}^2}{\lambda^2},
\]
where we used $\|\Tml^{-1}\|_{\op} \leq 1/\lambda$ and $\E[w(Z_i)^2] \leq \bar{v}^2$. Combining:
\[
\tr(\Sigma) \leq (6\bar{u}^2 + 6\bar{w}^2 + 2\bar\sigma^2)\tilde{\mathfrak{m}}(\lambda,\mu) + \frac{6\kappa_x^2\bar{v}^2}{\lambda^2}.
\]
Under Assumption~\ref{ass:COLOBO}, the analysis in Lemma~\ref{lemma:valbo} establishes $12\bar{w}^2 \leq 2\bar\sigma^2$ and $\bar{u}^2 \leq \bar{w}^2$. Hence
$6\bar{u}^2 + 6\bar{w}^2 + 2\bar\sigma^2 \leq 12\bar{w}^2 + 2\bar\sigma^2 \leq 4\bar\sigma^2$. For the remainder term, using $\bar{v}^2 = \mu\bar{w}^2/\kappa_z^2$ and $\mu \leq c_2\lambda$
\[
\frac{6\kappa_x^2\bar{v}^2}{\lambda^2} = \frac{6\kappa_x^2\mu\bar{w}^2}{\kappa_z^2\lambda^2} \leq \frac{6c_2\kappa_x^2\bar{w}^2}{\kappa_z^2\lambda} \leq \frac{c_2\kappa_x^2\bar\sigma^2}{\kappa_z^2\lambda},
\]
where the last step uses $6\bar{w}^2 \leq \bar\sigma^2$. By Assumption~\ref{ass:strong instrument}, $\tilde{\mathfrak{m}}(\lambda,\mu) \geq c_{\mathfrak{m}}\lambda^{-\rho_x}$ with $\rho_x \geq 1$, which gives $1/\lambda \leq \tilde{\mathfrak{m}}(\lambda,\mu)/c_{\mathfrak{m}}$. Therefore
\[
\frac{c_2\kappa_x^2\bar\sigma^2}{\kappa_z^2\lambda} \leq \frac{c_2\kappa_x^2\bar\sigma^2}{c_{\mathfrak{m}}\kappa_z^2}\tilde{\mathfrak{m}}(\lambda,\mu).
\]
Combining both contributions gives
\[
\tr(\Sigma) \leq 4\bar\sigma^2\tilde{\mathfrak{m}}(\lambda,\mu) + \frac{c_2\kappa_x^2\bar\sigma^2}{c_{\mathfrak{m}}\kappa_z^2}\tilde{\mathfrak{m}}(\lambda,\mu)
= C_{\mathrm{tr}}\bar\sigma^2\tilde{\mathfrak{m}}(\lambda,\mu),
\]
where $C_{\mathrm{tr}} = 4 + c_2\kappa_x^2/(c_{\mathfrak{m}}\kappa_z^2)$.
For $m \geq 1$, we derive a bound for $\Sigma$ involving the compact operator $\Tml^{-1}S_x\Tml^{-1}$. By Lemma~\ref{lemma:parallelogram_law}, $\Sigma \preceq 2(\Sigma_{11} + \Sigma_{22})$, and $\Sigma_{11} \preceq 3(\Sigma_{111} + \Sigma_{122} + \Sigma_{133})$. For the Bound on $\Sigma_{111}$ and $\Sigma_{133}$, we use the proof of Lemma~\ref{lemma:cov_upper}: $\Sigma_{111} \preceq \bar{u}^2\Tml^{-1}S^*\Szmu S_z\Szmu S\Tml^{-1}$ and $\Sigma_{133} \preceq \bar{w}^2\Tml^{-1}S^*\Szmu S_z\Szmu S\Tml^{-1}$. Since $S^*\Szmu S_z\Szmu S \preceq S^*S_z^{-1}S = T \preceq S_x$ (Lemma~\ref{lemma:X>T}), the conjugation rule gives
\[
\Sigma_{111} + \Sigma_{133} \preceq (\bar{u}^2 + \bar{w}^2)\Tml^{-1}S_x\Tml^{-1}.
\]
For the bound on $\Sigma_{122}$, recall from the proof of Lemma~\ref{lemma:cov_upper} that $\Sigma_{122} \preceq \E[\Tml^{-1}\psi(X_i)w(Z_i)^2\psi(X_i)^*\Tml^{-1}]$. Since $|w(Z_i)| \leq \bar{w}$ almost surely,
\[
\Sigma_{122} \preceq \bar{w}^2\Tml^{-1}\E[\psi(X_i)\psi(X_i)^*]\Tml^{-1} = \bar{w}^2\Tml^{-1}S_x\Tml^{-1}.
\]
Similarly, for the bound of $\Sigma_{22}$, we have that $\Sigma_{22} \preceq \bar\sigma^2\Tml^{-1}S^*\Szmu S_z\Szmu S\Tml^{-1} \preceq \bar\sigma^2\Tml^{-1}S_x\Tml^{-1}$.
Combining this yields
\begin{align*}
\Sigma &\preceq 2\bigl(3(\Sigma_{111} + \Sigma_{122} + \Sigma_{133}) + \Sigma_{22}\bigr) \\
&\preceq 2\bigl(3(\bar{u}^2 + 2\bar{w}^2) + \bar\sigma^2\bigr)\Tml^{-1}S_x\Tml^{-1}.
\end{align*}
Under Assumption~\ref{ass:COLOBO}: $\bar{u}^2 \leq \bar{w}^2$ and $12\bar{w}^2 \leq 2\bar\sigma^2$, so $3(\bar{u}^2 + 2\bar{w}^2) \leq 9\bar{w}^2 \leq (3/4)\bar\sigma^2$. Hence
\[
\Sigma \preceq 2(3/4 + 1)\bar\sigma^2\Tml^{-1}S_x\Tml^{-1} \preceq 4\bar\sigma^2\Tml^{-1}S_x\Tml^{-1}.
\]
The operator $\Tml^{-1}S_x\Tml^{-1}$ is compact since $S_x$ is compact and $\Tml^{-1}$ is bounded.
By Weyl's inequality, $\nu_s(\Sigma) \leq 4\bar\sigma^2\nu_s(\Tml^{-1}S_x\Tml^{-1})$. Applying the singular value inequality $\nu_s(ABA^*) \leq \|A\|_{\op}^2\nu_s(B)$ with $A = \Tml^{-1}$ and $B = S_x$:
\[
\nu_s(\Tml^{-1}S_x\Tml^{-1}) \leq \|\Tml^{-1}\|_{\op}^2\nu_s(S_x) = \frac{1}{\lambda^2}\nu_s(S_x).
\]
Combining and    summing over $s > m$:
\[\sigma^2(\Sigma, m) \leq \frac{ 4\bar \sigma^2}{\lambda^2} \sigma^2(S_x,m). \qedhere\]
\end{proof}

\begin{lemma}[Rate condition]
\label{lemma:rate_delta}
Let $N_\delta:=\max\{n_1, n_2\}$.  Suppose $N_\delta\leq n $. Then,
    $
        \left\|T_{\mu,\lambda}^{-1}(\hat{T}_\mu-T_\mu)\right\|_{\mathrm{op}} \leq \delta \in [0,1/2]
    $
    where the threshold sample sizes are defined as
    \begin{align*}
        n_1 &= 16 \kappa_z^2\mathfrak{n}_z(\mu)^{-1}\mu^{-2}, \\
        n_2 &= 128 l(\eta)^2 \max \left\{ \frac{\kappa^2(\kappa_z \sqrt{\mathfrak{n}_z(\mu)} + 4)}{\lambda\mu}, \frac{8\kappa_z^4 \kappa_x^2}{\lambda \mu^2}, \frac{8\kappa^2}{\lambda\mu}, \frac{32\kappa_x^2 \kappa^2}{\lambda^2 \mu} \right\}.
    \end{align*}
    Here, $\mathfrak{n}_z(\mu) = \tr((S_z+\mu)^{-2}S_z)$ is the effective dimension.
\end{lemma}
\begin{proof}
We study how the high probability bounds comprising $\delta$ scale with $n$, $\mu$, and $\lambda$. Lemma \ref{lemma:delta} derives $\delta$ as
    $$\delta = \underbrace{\frac{\kappa \delta_z \tilde\gamma_1 + \tilde\gamma_1^2}{\lambda\mu} }_A+ \underbrace{\frac{\tilde \gamma_z {\kappa_x}}{2\sqrt{\lambda}\mu}}_B+ \underbrace{\frac{ \tilde\gamma_1}{2\sqrt{\lambda\mu}}}_C + \underbrace{\frac{\tilde \gamma_1 {\kappa_x}}{\lambda\sqrt{\mu}}}_D$$
The individual components are
\begin{align*}
    \tilde{\gamma}_1  &= 8l(\eta)\frac{\kappa}{n^{1/2}},\\
      \delta_z &= 2\kappa_zl(\eta)\left\{\sqrt{\frac{\mathfrak{n}_z(\mu)}{n}} \vee \frac{4\kappa_z}{n\mu}   \right\},\\
      \tilde \gamma_z &=8l(\eta)\frac{\kappa_z^2}{n^{1/2}}.
\end{align*}
For all $n\geq 16 \kappa_z^2\nmu^{-1}\mu^{-2} $ the square root part of $\delta_z$ dominates.
Thus,
\begin{align*}
    \tilde{\gamma}_1 &= 8l(\eta)\frac{\kappa}{n^{1/2}}, \\
      \delta_z &= 2\kappa_zl(\eta)\sqrt{\frac{\mathfrak{n}_z(\mu)}{n}}, \\
      \tilde \gamma_z &=8l(\eta)\frac{\kappa_z^2}{n^{1/2}}.
\end{align*}
For $A$,
\begin{align*}
\frac{\kappa(2\kappa_z l(\eta)\sqrt{\nmu})(8l(\eta)\kappa) + (8l(\eta)\kappa)^2}{n\lambda\mu} \leq 1/8 
&\iff \frac{16l(\eta)^2\kappa^2(\kappa_z\sqrt{\nmu} + 4)}{n\lambda\mu} \leq 1/8 \
\\&\iff n_A \geq \frac{128l(\eta)^2\kappa^2(\kappa_z\sqrt{\nmu} + 4)}{\lambda\mu}.
\end{align*}
For $B$,
\begin{align*}
\frac{8l(\eta)\kappa_z^2{\kappa_x}}{2\sqrt{\lambda}\mu\sqrt{n}} \leq 1/8 \iff \sqrt{n} \geq \frac{32 l(\eta)\kappa_z^2{\kappa_x}}{\sqrt{\lambda}\mu} \
\iff n_B \geq \frac{1024 l(\eta)^2 \kappa_z^4 \kappa_x^2}{\lambda \mu^2}.
\end{align*}
For $C$,
\begin{align*}
\frac{8l(\eta)\kappa}{2\sqrt{\lambda\mu}\sqrt{n}} \leq 1/8 \iff \sqrt{n} \geq \frac{32l(\eta)\kappa}{\sqrt{\lambda\mu}} \
\iff n_C \geq \frac{1024 l(\eta)^2 \kappa^2}{\lambda \mu}.
\end{align*}
For $D$,
\begin{align*}
\frac{8l(\eta)\kappa{\kappa_x}}{\lambda\sqrt{\mu}\sqrt{n}} \leq 1/8 \iff \sqrt{n} \geq \frac{64l(\eta)\kappa{\kappa_x}}{\lambda\sqrt{\mu}} \
\iff n_D \geq \frac{4096 l(\eta)^2 \kappa^2 \kappa_x^2}{\lambda^2 \mu}.
\end{align*}
\end{proof}
\begin{assumption}[Rate condition]\label{ass:rate condition}
Let $\mu, \lambda > 0$. Assume the sample size $n$ satisfies $n \ge \max\{n_1, n_2, N_{\delta_z}\}$, where
   \begin{align*}
     n_1 &= 16 \kappa_z^2\nmu^{-1}\mu^{-2},\\
        n_2 &= 128 l(\eta)^2 \max \left\{ \frac{\kappa^2(\kappa_z \sqrt{\nmu} + 4)}{\lambda\mu}, \frac{8\kappa_z^4 \kappa_x^2}{\lambda \mu^2}, \frac{8\kappa^2}{\lambda\mu}, \frac{32\kappa^2 \kappa_x^2}{\lambda^2 \mu} \right\},\\
        N_{\delta_z} &=64 \kappa_z^2 l(\eta) \left\{ 
 l(\eta)\mathfrak{n}_z(\mu)  \vee 
\mu^{-1}
\right\} .
    \end{align*}

\end{assumption}

\subsection{Bounded summand}
\label{sec:bounded summand}

\begin{lemma}[Bounded $U_i$]
    \label{lemma:bounded_Ui} $U_i$ is $a$-bounded almost surely, meaning for all $\mu,\lambda>0$ 
    $$\|U_i\|_\Hx \leq a :=\frac{2\kappa\bml  + 4\kappa\lambda^{-1/2} \vml + 2\kappa_z^2\mu^{-1/2} \vml + \bar \sigma \kappa_z} {2\sqrt{\lambda\mu}},
 $$
where $\bml = \|h_0 -\hml \|_\Hx$ and $\vml = \|(S_z + \mu)^{-1/2} S(h_0-\hml)\|_\Hz$ are bias and projected bias respectively.
\end{lemma}
\begin{proof} By the triangle inequality and Lemmas \ref{lemma:SzSsubpace} and \ref{lemma:Polar}, we decompose the norm as: 
    \begin{align*}
\|U_i\|_\Hx &=\|\Tml^{-1}S^*(S_z+\mu)^{-1}(S_i-S)\hvec\|_\Hx\\
&+\|\Tml^{-1}(S_i-S)^*(S_z+\mu)^{-1}S\hvec\|_\Hx\\
&+ \|\Tml^{-1}S^*(S_z + \mu)^{-1}(S_z - S_{z,i})(S_z + \mu)^{-1}S\hvec\|_\Hx \\ &+\|T_{\mu,\lambda}^{-1}S^*(S_z+\mu)^{-1}\phi(Z_i)\ep_i\|_\Hx \\
&\leq \frac{\kappa}{\sqrt{\lambda\mu}} \bml + \frac{2\kappa}{\lambda}\sqrt{\frac{1}{\mu}}\vml+ \frac{1}{\sqrt{\lambda\mu}}  \kappa_z^2\sqrt{\frac{1}{\mu}}\vml + \frac{1}{2\sqrt{\lambda\mu}}\bar{\sigma}\kappa_z\\
&= \frac{2\kappa\bml  + 4\kappa\lambda^{-1/2} \vml + 2\kappa_z^2\mu^{-1/2} \vml + \bar \sigma \kappa_z} {2\sqrt{\lambda\mu}}.
\end{align*}
\end{proof}
\begin{lemma}[Dominating factor in $\|U_i\|_\Hx$ under the regime $\lambda=\mu^{\iota}$]\label{lemma:iota}
Suppose there exists $\alpha \in [0,1]$ such that $\vml \in \Ocal\big(\lambda^{(\alpha+\frac12)\wedge 1}\big)$  and the chosen regime is $\lambda=\mu^{\iota}$, with
\[
\iota \ge
\begin{cases}
\dfrac{1}{2\alpha+1}, & \alpha\in[0,\tfrac12]\\[8pt]
\dfrac{1}{2}, & \alpha\in(\tfrac12,1].
\end{cases}
\]
Then,
$
   \|U_i\|_\Hx \le 
   a = \Ocal\Big(\frac{1}{\sqrt{\mu\lambda}}\Big).
$
\end{lemma}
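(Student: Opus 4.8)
The plan is to read off the explicit bound on $\|U_i\|$ from \Cref{lemma:bounded_Ui},
$$
\|U_i\|\le a=\frac{2\kappa\bml+4\kappa\lambda^{-1/2}\vml+2\kappa_z\mu^{-1/2}\vml+\bar\sigma\kappa_z}{2\sqrt{\lambda\mu}},
$$
and to show that the numerator is $\Ocal(1)$; dividing by $2\sqrt{\lambda\mu}$ then yields $a=\Ocal(1/\sqrt{\mu\lambda})$, as claimed. Thus the whole proof reduces to bounding the four terms in the numerator, using only $\lambda,\mu\le1$, the bias bound, the hypothesis on $\vml$, and the lower bound on $\iota$.

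First I would dispose of the two terms that need no work beyond $\lambda,\mu\le1$. The term $\bar\sigma\kappa_z$ is an absolute constant. For $2\kappa\bml$, the bias bound \Cref{prop:general-bound} gives $\bml\lesssim_\alpha\lambda^{\alpha}\|T^{-\alpha}h_0\|_{\Hx}$ under the maintained restriction $r\mu^{\beta}<\lambda^{1/2}$ (which keeps the denominator $1-C_\beta r\mu^{\beta}/\lambda^{1/2}$ bounded away from zero), and $\lambda\le1$, $\alpha\ge0$ give $\lambda^{\alpha}\le1$, so $2\kappa\bml=\Ocal(1)$. For $4\kappa\lambda^{-1/2}\vml$, the hypothesis $\vml\in\Ocal(\lambda^{(\alpha+1/2)\wedge1})$, the fact that $(\alpha+\tfrac12)\wedge1\ge\tfrac12$, and $\lambda\le1$ give $\lambda^{-1/2}\vml=\Ocal(\lambda^{[(\alpha+1/2)\wedge1]-1/2})=\Ocal(1)$.

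The only term that genuinely pins down the admissible exponent $\iota$ is $2\kappa_z\mu^{-1/2}\vml$, so this is the crux. Substituting $\lambda=\mu^{\iota}$ into $\vml\in\Ocal(\lambda^{(\alpha+1/2)\wedge1})$ gives $\mu^{-1/2}\vml=\Ocal\!\big(\mu^{\,\iota[(\alpha+1/2)\wedge1]-1/2}\big)$, which is $\Ocal(1)$ (using $\mu\le1$) exactly when the exponent is nonnegative, i.e. when $\iota\ge\big(2[(\alpha+\tfrac12)\wedge1]\big)^{-1}$. Splitting cases: for $\alpha\in[0,\tfrac12]$ the minimum equals $\alpha+\tfrac12$ and the condition becomes $\iota\ge(2\alpha+1)^{-1}$; for $\alpha\in(\tfrac12,1]$ the minimum equals $1$ and the condition becomes $\iota\ge\tfrac12$ — precisely the hypothesized lower bounds. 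Combining the four estimates, the numerator is $\Ocal(1)$, hence $\|U_i\|\le a=\Ocal(1/\sqrt{\mu\lambda})$.

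There is no real analytic difficulty: the lemma is bookkeeping on top of \Cref{lemma:bounded_Ui} and the projected-bias rate of \Cref{lemma:pj_bias}. The one point to watch is that the governing term $\mu^{-1/2}\vml$ comes from the $N_i$ piece of $U_i$, in which the first-stage resolvent $(S_z+\mu)^{-1}$ appears twice; this double appearance is what forces $\mu$ not to decay too quickly relative to $\lambda$, and one should also check that the borderline exponent $\iota=\big(2[(\alpha+\tfrac12)\wedge1]\big)^{-1}$ stays $\le1$, so that the maintained ordering $\mu\le\lambda$ is respected — which holds because $(\alpha+\tfrac12)\wedge1\ge\tfrac12$.
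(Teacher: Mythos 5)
Your proposal is correct and follows essentially the same route as the paper's proof: both reduce the claim to showing that the term $\vml/(\mu\sqrt{\lambda})$ (what the paper calls $B_1$) is dominated by $1/\sqrt{\lambda\mu}$, and both derive the same exponent condition $\iota \ge \big(2[(\alpha+\tfrac12)\wedge1]\big)^{-1}$ by substituting $\lambda=\mu^\iota$ into $\vml=\Ocal(\lambda^{(\alpha+1/2)\wedge1})$. The only difference is presentational—you explicitly walk through all four numerator terms of \Cref{lemma:bounded_Ui} and add the useful sanity check that the borderline $\iota$ stays $\le 1$, whereas the paper jumps directly to the two dominant terms under $\mu\le\lambda$.
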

\begin{proof}
Since $\mu \leq \lambda$, the upper bound is dominated by either
  $B_1 = \frac{\vml}{\sqrt{\lambda}\mu}$ or 
  $B_2 = \frac{1}{\sqrt{\mu\lambda}}.$
Let \(p := (\alpha+\tfrac12)\wedge 1\in[\tfrac12,1]\). Then, \(B_1=\Ocal\big(\lambda^{p-\frac12}\mu^{-1}\big)\), so
\[
  \frac{B_1}{B_2}
  = \Ocal\Big(\lambda^{p-\frac12}\mu^{-1}\sqrt{\mu\lambda}\Big)
  = \Ocal\big(\lambda^{p}\mu^{-1/2}\big).
\]
With the regime \(\lambda=\mu^{\iota}\), this becomes
\[
  \frac{B_1}{B_2}
  = \Ocal\big(\mu^{\iota p-\frac12}\big).
\]
Hence \(B_1=\Ocal(B_2)\) provided \(\iota p-\tfrac12\ge 0\), i.e.
\[
  \iota \ge \frac{1}{2p}
  = \frac{1}{2\big((\alpha+\tfrac12)\wedge 1\big)}
  \Longleftrightarrow
  \begin{cases}
    \iota \ge \dfrac{1}{2\alpha+1}, & \alpha\in[0,\tfrac12]\\[8pt]
    \iota \ge \dfrac{1}{2}, & \alpha\in(\tfrac12,1].
  \end{cases}
\]
Under these conditions \(B_1\lesssim B_2\) meaning the bound for $\|U_i\|_\Hx $ is dominated by \(B_2\) and
\[
  \|U_i\|_\Hx \le a = \Ocal\Big(\frac{1}{\sqrt{\mu\lambda}}\Big). \qedhere
\]
\end{proof}
\begin{lemma}[Dominating factor in $\|U_i\|_\Hx$ under the regime $\mu=\lambda/C$]\label{lemma:iota2}
Suppose $\mu=\lambda/C$ with $C>1$ and there exists $\alpha\in[0,1]$ such that $\vml \in \Ocal\big(\lambda^{(\alpha+\frac12)\wedge 1}\big)$, then
\[
   \|U_i\|_\Hx \le a = \Ocal\Big(\frac{1}{\sqrt{\mu\lambda}}\Big).
\]
\end{lemma}
\begin{proof}
By the projected-bias decomposition, the upper bound is controlled by either
$
  B_1 = \frac{\vml}{\sqrt{\lambda}\mu}$ or $
  B_2 = \frac{1}{\sqrt{\mu\lambda}}.$
With $\mu=\lambda/C$ we have
\[
  B_1 = \frac{C\vml}{\lambda^{3/2}},
  \qquad
  B_2 = \frac{\sqrt{C}}{\lambda},
  \qquad
  \frac{B_1}{B_2} = \sqrt{C}\frac{\vml}{\sqrt{\lambda}}.
\]
Let \(p := (\alpha+\tfrac12)\wedge 1\). By hypothesis, \(\vml=\Ocal(\lambda^{p})\), hence
$
  \frac{B_1}{B_2}
  = \sqrt{C}\Ocal\big(\lambda^{p-\frac12}\big).
$ Since \(\alpha\in[0,1]\) implies \(p\in[\tfrac12,1]\), we have \(p-\tfrac12\ge 0\). 
Consequently, \(\|U_i\|_\Hx\le a=\Ocal\big(1/\sqrt{\mu\lambda}\big)\).
\end{proof}
\begin{corollary}[Simplified bound for $\|U_i\|_\Hx$]
\label{col:const bound a}
    Suppose that either $\lambda = \mu^\iota$ or $\mu = \lambda/C$ with $C\geq1$. If $\iota$ is set according to Lemma \ref{lemma:iota}, then $\|U_i\|_\Hx\leq a$ with
    \[
        a \leq \frac{\widetilde{M}}{\sqrt{\mu\lambda}},
    \]
    where $\widetilde{M}$ is a constant depending on the regime. For $\lambda = \mu^\iota$,
    \[
        \widetilde{M} := \tfrac12\left(
        \frac{2\kappa C_\alpha + 4\kappa K_\alpha + 2\kappa_z^2 K_\alpha}{1-c_\delta}\|T^{-\alpha}h_0\|_\Hx
        +\bar{\sigma}\kappa_z\right),
    \]
    and for $\lambda/C = \mu$,
    \[
        \widetilde{M} := \tfrac12\left(
        \frac{2\kappa C_\alpha + 4\kappa K_\alpha + 2\kappa_z^2 K_\alpha C^{1/2}}{1-c_\delta}\|T^{-\alpha}h_0\|_\Hx
        +\bar{\sigma}\kappa_z\right).
    \]
    Here, $K_\alpha$ is the projected bias constant defined in Lemma \ref{lemma:pj_bias}, and $1-c_\delta$ is a constant lower bounding the bias denominator. 
\end{corollary}

 %\newpage
\section{Bahadur representation}\label{sec:bahadur}

We derive a Bahadur representation, as a key step towards deriving $Q$. In particular, we show the difference between $n^{1/2}(\hat{h}-h_{\mu,\lambda})$ and $n^{1/2}\E_n(U)$ is small, i.e. the residual vanishes. Here, 
\begin{align*}
U_i&=T_{\mu,\lambda}^{-1}\{S^*(S_z+I\mu)^{-1}(S_i-S)+(S_i-S)^*(S_z+I\mu)^{-1}S \\
&\quad + S^*(S_z+I\mu)^{-1}(S_z - S_{z,i})(S_z+I\mu)^{-1}S\}(h_0-h_{\mu,\lambda})  +T_{\mu,\lambda}^{-1}S^*(S_z+I\mu)^{-1}\phi(Z_i)\ep_i.
\end{align*}

\subsection{Helpful orderings}

\begin{lemma}[Covariance operator bound]\label{lemma:X>T}
Suppose $S_z$ is invertible. Then, the block matrix admits a decomposition 
\begin{equation*} 
\begin{pmatrix} S_x & S^* \\ S & S_z \end{pmatrix} = \begin{pmatrix} I & S^* \\ 0 & S_z \end{pmatrix} \begin{pmatrix} S_x-S^*S_z^{-1}S & 0 \\ S_z^{-1}S & I \end{pmatrix}.
\end{equation*}
Moreover, $S_x \succeq S^*S_z^{-1}S \succeq 0$.
\end{lemma}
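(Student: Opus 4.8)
The plan is to verify the block factorization by a direct multiplication and then to deduce both order relations from the observation that the left-hand block operator is a genuine (uncentered) covariance operator on $\Hx\oplus\Hz$, so that the displayed identity is nothing but its block $LDL^\ast$ (Aitken) diagonalization with pivot $S_z$.

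\textbf{The factorization.} I would simply expand the product on the right. Its $(1,1)$ block is $I\,(S_x-S^\ast S_z^{-1}S)+S^\ast\,(S_z^{-1}S)=S_x$; its $(1,2)$ block is $I\cdot 0+S^\ast\cdot I=S^\ast$; its $(2,1)$ block is $0+S_z\,(S_z^{-1}S)=S$; and its $(2,2)$ block is $0+S_z\cdot I=S_z$. This uses nothing beyond $S_zS_z^{-1}=I$, and it is the whole content of the first assertion.

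\textbf{Positivity.} Writing $\mathcal{S}:=\begin{pmatrix}S_x&S^\ast\\ S&S_z\end{pmatrix}$ on $\Hx\oplus\Hz$, the defining relations $\langle u,S_xu\rangle=\E[u(X)^2]$, $\langle f,S_zf\rangle=\E[f(Z)^2]$, and $\langle f,Su\rangle=\E[f(Z)u(X)]=\langle S^\ast f,u\rangle$ give, for every $(u,f)\in\Hx\oplus\Hz$,
\[
\bigl\langle (u,f),\,\mathcal{S}(u,f)\bigr\rangle=\E[u(X)^2]+2\,\E[u(X)f(Z)]+\E[f(Z)^2]=\E\bigl[(u(X)+f(Z))^2\bigr]\ge 0,
\]
so $\mathcal{S}\succeq 0$. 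Since $S_z$ is invertible and $\succeq 0$, the factorization just established exhibits $S_x-S^\ast S_z^{-1}S$ as the Schur complement of the $S_z$-corner of the positive operator $\mathcal{S}$, hence $S_x-S^\ast S_z^{-1}S\succeq 0$, i.e. $S_x\succeq S^\ast S_z^{-1}S$. Finally $S^\ast S_z^{-1}S\succeq 0$ because $\langle u,S^\ast S_z^{-1}Su\rangle=\langle Su,S_z^{-1}Su\rangle\ge 0$, using $S_z^{-1}\succeq 0$. (Equivalently, one can bypass the abstract Schur step and plug the minimizing choice $f=-S_z^{-1}Su$ into the display, which collapses $\E[(u(X)+f(Z))^2]\ge 0$ directly to $\langle u,(S_x-S^\ast S_z^{-1}S)u\rangle\ge 0$.)

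\textbf{Main obstacle.} The only point requiring care is that $S_z^{-1}$ need not be bounded, so both the Schur-complement manipulation and the choice $f=-S_z^{-1}Su$ are a priori licit only where $Su\in\operatorname{dom}(S_z^{-1})$. I would dispatch this either by density together with boundedness of the form $u\mapsto\langle u,S^\ast S_z^{-1}S u\rangle\le\langle u,S_xu\rangle$, or—most cleanly—by invoking the identity $\langle u,S^\ast S_z^{-1}Su\rangle=\E[\E\{u(X)\mid Z\}^2]$ recorded in Section~\ref{sec:model}, which turns $S_x\succeq S^\ast S_z^{-1}S$ into the unconditional inequality $\E[u(X)^2]\ge\E[\E\{u(X)\mid Z\}^2]$, i.e. conditional Jensen / the law of total variance, with no invertibility needed at all.
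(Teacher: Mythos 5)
Your proof is correct and follows essentially the same route as the paper's: verify the block factorization by direct multiplication, then read off the operator inequalities from positivity of the joint covariance block operator via its Schur complement. The differences are in rigor rather than strategy. The paper argues the upper bound by contradiction through a hypothetical eigenvector with negative eigenvalue, which is both unnecessary (self-adjoint operators in infinite dimensions need not have eigenvectors) and incomplete as written, since it invokes ``the hypothesized positive-definiteness'' of $\begin{psmallmatrix}S_x&S^\ast\\ S&S_z\end{psmallmatrix}$ without ever establishing it. You supply exactly that missing piece via the identity $\langle (u,f),\mathcal{S}(u,f)\rangle=\E[(u(X)+f(Z))^2]\ge 0$, and you work directly with the quadratic form rather than with eigenvectors. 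You also flag, correctly, that $S_z^{-1}$ is generally unbounded (a compact positive operator on an infinite-dimensional space cannot have a bounded inverse), so the Schur step needs a density or form-boundedness argument; your observation that the claim reduces to the law of total variance $\E[u(X)^2]\ge\E[\E\{u(X)\mid Z\}^2]$ is the cleanest way around this, and is in fact the same device the paper gestures at in its footnote justifying the $\beta=0$ case of the link condition. In short: same proof, with the gaps filled.
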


\begin{proof}
The decomposition can be checked by matrix multiplication under our maintained assumptions. Next, the lower bound $S^*S_z^{-1}S \succeq 0$ holds as $S^*S_z^{-1}S$ is the symmetric square of $S^*S_z^{-1/2}$. For the upper bound, note that $S_x - S^*S_z^{-1}S$ is symmetric, so it suffices to check non-negativity of eigenvalues. Suppose, to the contrary, that it has an eigenvector $u_\nu$ with negative eigenvalue $\nu < 0$. A straightforward computation using the derived decomposition  shows
\[\begin{pmatrix} u_\nu \\ -S_z^{-1}Su_\nu \end{pmatrix}^*\begin{pmatrix} S_x & S^* \\ S & S_z \end{pmatrix}\begin{pmatrix} u_\nu \\  -S_z^{-1}Su_\nu \end{pmatrix} = \begin{pmatrix} u_\nu \\ 0 \end{pmatrix}^*\begin{pmatrix} \{S_x - S^*S_z^{-1}S\}u_\nu \\ 0 \end{pmatrix} = \nu u_\nu^* u_\nu < 0.\]
This contradicts the hypothesized positive-definiteness.
\end{proof}
\begin{lemma}[Bound on the regularized cross-covariance]\label{lemma:SzSsubpace}
    The operator norm of the regularized conditional expectation operator is bounded, meaning
    \[
        \left\| (S_z + \mu I)^{-1} S \right\|_{\mathrm{op}} \leq \frac{\kappa_x}{\sqrt{\mu}}.
    \]
\end{lemma}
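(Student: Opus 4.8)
The plan is to pass to the squared operator norm and then invoke the ordering $S^{*}S_{z}^{-1}S \preceq S_{x}$ supplied by \Cref{lemma:X>T}. First I would use that $S_{z}+\mu$ is self-adjoint, so that $\bigl((S_{z}+\mu)^{-1}S\bigr)^{*} = S^{*}(S_{z}+\mu)^{-1}$, to write
\[
\bigl\|(S_{z}+\mu)^{-1}S\bigr\|_{\op}^{2}
= \bigl\|S^{*}(S_{z}+\mu)^{-2}S\bigr\|_{\op}
= \sup_{\|u\|_{\Hx}\le 1}\bigl\langle Su,\,(S_{z}+\mu)^{-2}Su\bigr\rangle_{\Hz}.
\]
Next, scalar spectral calculus on $S_{z}$ gives $(S_{z}+\mu)^{-2} \preceq \mu^{-1}S_{z}^{-1}$ (since $(t+\mu)^{-2}\le\mu^{-1}t^{-1}$ for $t>0$), hence $S^{*}(S_{z}+\mu)^{-2}S \preceq \mu^{-1}S^{*}S_{z}^{-1}S = \mu^{-1}T$. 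These are to be read as inequalities between quadratic forms, which is legitimate because $\langle u, Tu\rangle = \E[\E\{u(X)\mid Z\}^{2}] \le \E[u(X)^{2}] < \infty$, so the comparison holds termwise in an eigenbasis of $S_{z}$.

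Then I would close the argument with \Cref{lemma:X>T}, which gives $T = S^{*}S_{z}^{-1}S \preceq S_{x}$, together with boundedness of the kernel, which gives $\langle u, S_{x}u\rangle = \E\langle u,\psi(X)\rangle^{2} \le \|u\|_{\Hx}^{2}\sup_{x}\|\psi(x)\|_{\Hx}^{2} \le \kappa_{x}\|u\|_{\Hx}^{2}$. Combining,
\[
\bigl\|(S_{z}+\mu)^{-1}S\bigr\|_{\op}^{2} \le \mu^{-1}\|T\|_{\op} \le \mu^{-1}\|S_{x}\|_{\op} \le \frac{\kappa_{x}}{\mu},
\]
and taking square roots yields the claim.

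I do not expect a genuine obstacle here: this is a routine resolvent estimate, and the only care needed is (i) the well-posedness of $S_{z}^{-1}$ and $T$, which the paper already takes for granted and which is harmless once the inequalities are interpreted as statements about quadratic forms dominated by $\langle u,S_{x}u\rangle$, and (ii) bookkeeping of the kernel constant. If one prefers to sidestep $S_{z}^{-1}$ entirely, an equivalent inverse-free route works: set $g := (S_{z}+\mu)^{-1}Su$, so $(S_{z}+\mu)g = Su$; pairing with $g$ yields $\langle g, S_{z}g\rangle + \mu\|g\|^{2} = \langle S^{*}g, u\rangle \le \|S^{*}g\|_{\Hx}\|u\|_{\Hx}$, hence both $\mu\|g\|^{2}\le\|S^{*}g\|_{\Hx}\|u\|_{\Hx}$ and $\langle g, S_{z}g\rangle\le\|S^{*}g\|_{\Hx}\|u\|_{\Hx}$; a Cauchy--Schwarz step in $L^{2}$ then gives $\|S^{*}g\|_{\Hx}^{2} = \E[(S^{*}g)(X)\,g(Z)] \le \|S^{*}g\|_{L^{2}(X)}\|g\|_{L^{2}(Z)} \le \sqrt{\kappa_{x}}\,\|S^{*}g\|_{\Hx}\sqrt{\langle g, S_{z}g\rangle}$, so $\|S^{*}g\|_{\Hx}\le\kappa_{x}\|u\|_{\Hx}$, and therefore $\mu\|g\|^{2}\le\kappa_{x}\|u\|_{\Hx}^{2}$, which is again the assertion.
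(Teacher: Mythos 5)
Your main route is essentially the paper's proof, modulo cosmetic reorganization. The paper factorizes $(S_z+\mu)^{-1} = (S_z+\mu)^{-1/2}\cdot(S_z+\mu)^{-1/2}$, bounds the first factor by $\mu^{-1/2}$, and controls $\|(S_z+\mu)^{-1/2}S\|$ via $S^*(S_z+\mu)^{-1}S\preceq T\preceq S_x$ (the X>T lemma); you instead square the operator norm, use $(S_z+\mu)^{-2}\preceq \mu^{-1}S_z^{-1}$ and then the same $T\preceq S_x$. The two scalar inequalities being invoked — $(t+\mu)^{-1}\le\mu^{-1}$ and $(t+\mu)^{-1}\le t^{-1}$ — are identical, just applied in a different order; the argument and the key lemma dependency match.

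Your "inverse-free" alternative, on the other hand, is a genuinely different and arguably cleaner route: setting $g=(S_z+\mu)^{-1}Su$, pairing $(S_z+\mu)g=Su$ against $g$, and closing with Cauchy–Schwarz in $L^2$. This bypasses \Cref{lemma:X>T} entirely and never manipulates $S_z^{-1}$ or $T$ as (possibly unbounded) operators, so it is better suited to the setting where $S_z$ has eigenvalues tending to zero. The cost is that it reproves, in disguise, the contractivity of the conditional expectation operator ($\|S^*g\|_{L^2(X)}\le\|g\|_{L^2(Z)}$ after rescaling), which is exactly what \Cref{lemma:X>T} packages; it is a wash in length but a win in robustness. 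One small note applying to both your routes and the paper's: the step $\|S_x\|_{\op}\le\kappa_x$ is consistent with how the paper uses $\kappa_x$ in this lemma, but sits slightly awkwardly with the normalization $\sup_x\|\psi(x)\|_{\Hx}\le\kappa_x$ stated earlier (which gives $\|S_x\|_{\op}\le\kappa_x^2$). This is an inherited ambiguity in the paper's constants, not an error you introduced.
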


\begin{proof}
    Note that $S_z \preceq S_z + \mu I$ implies $(S_z+\mu I)^{-1} \preceq S_z^{-1}$. Thus,
    \[
        S^*(S_z+I\mu)^{-1}S \preceq S^*S_z^{-1}S \preceq S_x,
    \]
    where the last inequality is shown in Lemma \ref{lemma:X>T}. This implies
    \[
        \|(S_z+I\mu)^{-1/2}S\|_{\mathrm{op}}^2 = \|S^*(S_z+I\mu)^{-1}S\|_{\mathrm{op}} \le \|S_x\|_{\mathrm{op}} \le \kappa_x^2.
    \]
    Therefore, $\|(S_z+I\mu)^{-1/2}S\|_{\mathrm{op}} \le \kappa_x$. Finally, we conclude
    \[
        \left\| (S_z + \mu I)^{-1} S \right\|_{\mathrm{op}}
        \le \left\| (S_z + \mu I)^{-1/2} \right\|_{\mathrm{op}} \, \left\| (S_z + \mu I)^{-1/2} S \right\|_{\mathrm{op}}
        \le \frac{1}{\sqrt{\mu}} \kappa_x. \qedhere
    \]
\end{proof}
\subsection{High probability events}

\begin{lemma}[Hilbert Schmidt bounds for primitive events]\label{lemma:primitive_bounds} 
For all $n\geq 4$ and $\eta \in (0,1)$ $\|\hat{S}-S\|_{\HS}$ and $\|\E_n(\phi_{Z_i}\ep_i)\|_{\HS}$ are each bounded  with probability at least $1-\eta$ by
$$
\|\hat{S}-S\|_{\HS}  \leq 
2l(\eta)\left(\frac{4\kappa}{n}+\frac{2\kappa}{n^{1/2}}\right)
\leq  8l(\eta)\frac{\kappa}{n^{1/2}} =: \tilde{\gamma}_1 
$$
and
$$
\|\E_n(\phi_{Z_i}\ep_i)\|_{\HS} 
\leq 2l(\eta)\left(\frac{2\kappa_z\bar{\sigma}}{n}+\frac{\kappa_z \bar{\sigma}}{n^{1/2}}\right)
\leq 4l(\eta)\frac{\kappa_z \bar{\sigma}}{n^{1/2}} =: \tilde{\gamma}_2.$$
\end{lemma}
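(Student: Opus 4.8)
The plan is to recognize both quantities as centered empirical means of i.i.d.\ Hilbert-space--valued random elements and apply the Bernstein inequality of Lemma~\ref{lemma:Bernstein}; the whole argument is a matter of choosing the two parameters $(A,B)$ in that lemma and then simplifying using $n\ge 4$.

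For the cross-covariance term, write $\hat S-S=\tfrac1n\sum_{i=1}^n(\xi_i-\E\xi_i)$ with $\xi_i:=\phi(Z_i)\otimes\psi(X_i)^{*}$, a rank-one element of $\mathrm{HS}(\Hx,\Hz)$ whose mean is $S$ by definition of the cross-covariance operator. Boundedness of the feature maps gives $\|\xi_i\|_{\HS}=\|\phi(Z_i)\|_{\Hz}\,\|\psi(X_i)\|_{\Hx}\le\kappa_z\kappa_x=:\kappa$ almost surely, hence $\E\|\xi_i\|_{\HS}^{2}\le\kappa^{2}$ and, by Jensen, $\|S\|_{\HS}\le\kappa$, so $\|\xi_i-S\|_{\HS}\le 2\kappa$ a.s. Thus for every $\ell\ge 2$,
\[
\E\|\xi_i-S\|_{\HS}^{\ell}\le(2\kappa)^{\ell-2}\,\E\|\xi_i-S\|_{\HS}^{2}\le(2\kappa)^{\ell-2}\kappa^{2}\le\tfrac12\,\ell!\,(2\kappa^{2})\,(2\kappa)^{\ell-2},
\]
so Lemma~\ref{lemma:Bernstein} applies with, e.g., $B^{2}=2\kappa^{2}$ and $A=4\kappa$ and produces, with probability $1-\eta$, the claimed intermediate bound $2\ln(2/\eta)\bigl(4\kappa/n\vee 2\kappa n^{-1/2}\bigr)$. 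Since $n\ge 4$ forces $2/n\le n^{-1/2}$, i.e.\ $4\kappa/n\le 2\kappa n^{-1/2}$, the $O(1/n)$ term is dominated and the bound collapses to $8\ln(2/\eta)\,\kappa\, n^{-1/2}$.

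For the noise term, set $\xi_i:=\phi(Z_i)\ep_i\in\Hz$. The crucial feature is that $\xi_i$ is \emph{exactly} mean zero: the conditional moment restriction $\E(\ep_i\mid Z_i)=0$, together with Bochner integrability of the bounded, measurable feature map (as already noted in the excerpt), gives $\E(\xi_i\mid Z_i)=\phi(Z_i)\E(\ep_i\mid Z_i)=0$, so $\E_n(\phi_{Z_i}\ep_i)=\tfrac1n\sum_i(\xi_i-\E\xi_i)$. The envelope $|\ep_i|\le\bar\sigma$ yields $\|\xi_i\|_{\Hz}\le\kappa_z\bar\sigma$ a.s.\ and $\E\|\xi_i\|_{\Hz}^{2}\le\kappa_z^{2}\bar\sigma^{2}$; because $\xi_i$ is already centered, the factorial moment condition of Lemma~\ref{lemma:Bernstein} holds verbatim with $B^{2}=\kappa_z^{2}\bar\sigma^{2}$ and $A=2\kappa_z\bar\sigma$, which gives the intermediate bound $2\ln(2/\eta)\bigl(2\kappa_z\bar\sigma/n+\kappa_z\bar\sigma n^{-1/2}\bigr)$ with probability $1-\eta$, and $n\ge 4$ again reduces this to $4\ln(2/\eta)\,\kappa_z\bar\sigma\,n^{-1/2}$.

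There is no substantive obstacle; both statements follow once the right i.i.d.\ summands are identified. The two points that call for care are (i) invoking $\E(\ep\mid Z)=0$ (and the Bochner integrability already noted in the excerpt) to see that the noise summand is exactly centered, which is what keeps its constants tight, and (ii) fitting the almost-sure envelope and second moment into the factorial moment condition of Lemma~\ref{lemma:Bernstein} before exploiting $n\ge 4$ to absorb the $O(1/n)$ term into the $O(n^{-1/2})$ term.
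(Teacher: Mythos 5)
Your proof is correct and takes essentially the same approach as the paper's: identify the i.i.d.\ Hilbert-space summands, verify centering (using $\E(\ep\mid Z)=0$ for the noise term and $\E(S_i)=S$ for the cross-covariance), compute the almost-sure envelope and second moment, and invoke Lemma~\ref{lemma:Bernstein} with the same choices of $(A,B)$ before using $n\ge 4$ to absorb the $O(1/n)$ term. You spell out the factorial moment condition explicitly where the paper simply invokes the ``in particular'' clause, and your $B^2=2\kappa^2$ is marginally tighter than the paper's $B^2=(2\kappa)^2$; neither difference is substantive.
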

\begin{proof}
For 
$\|\hat{S}-S\|_{\HS}$, it holds that $\E(S_i-S)=0$. By the boundedness of the kernel $
    \|S_i-S\|_{\HS}\leq 2\kappa $ and $ 
    \E\|S_i-S\|^2_{\HS}\leq (2\kappa)^2.$
Let $A = 4\kappa$ and $B = 2\kappa$; the bound follows by applying Lemma \ref{lemma:Bernstein}. For $\|\E_n(\phi_{Z_i}\ep_i)\|_{\HS}$ it holds that  $\E(\phi_{Z_i}\ep_i)=0$. Also,
$
\|\phi_{Z_i}\ep_i\|_{\HS}\leq \kappa_z\bar{\sigma}$ and $
\E(\|\phi_{Z_i}\ep_i\|_{\HS}^2)\leq \kappa_z^2 \bar{\sigma}^2.$ Let $A = 2\kappa_z \bar \sigma$ and $B =\kappa_z \bar \sigma $; the bound follows by the same lemma.
\end{proof}

\begin{lemma}[Estimation error bound of the $Z$ covariance operator]
\label{lemma:gammaz}
    $\|\hat{S}_z-S_z\|_{\HS}$ is bounded with probability $1-\eta$ for $n\geq 4$ by 
    
   $$
\|\hat{S}_z -S_z\|_{\HS}  \leq
2l(\eta)\left(\frac{4\kappa_z^2}{n}+\frac{2\kappa_z^2}{n^{1/2}}\right)
\leq  8l(\eta)\frac{\kappa_z^2}{n^{1/2}} =:\tilde{\gamma}_z.
$$
\end{lemma}
\begin{proof}
    The proof follows from the same arguments as in Lemma \ref{lemma:primitive_bounds}.
\end{proof}
\begin{lemma}[High-probability bound for preconditioned $Z$-covariances]
    \label{lemma:sz_hat}
    The Hilbert–Schmidt norm of the projected estimation error of $S_z$ is bounded with high probability: For all $\eta \in (0,1)$ with probability $1-\eta$  it holds that $\|(S_z+I\mu)^{-1}(\hat S_z - S_z)\|_{\HS}\leq\delta_z$, where $$
    \delta_z := 2\kappa_zl(\eta)\left\{\sqrt{\frac{\mathfrak{n}_z(\mu)}{n}} + \frac{4\kappa_z}{n\mu}   \right\}.
$$
Let $N_{\delta_z} := 64 \kappa_z^2 l(\eta) \left\{ 
l(\eta)\mathfrak{n}_z(\mu)  \vee 
\mu^{-1}
\right\}$. 
 A sufficient condition for ensuring \( \delta_z \leq  \frac{1}{2} \) is $N_{\delta_z} \leq n$.
\end{lemma}
\begin{proof}
    The proof follows directly from matching symbols to Lemma F.2 of \cite{singh2023kernel}. The rate condition is the result of solving for $n$ in $\delta_z \leq 1/2$.
\end{proof}
\begin{lemma}[High-probability bound on the projected feature–noise]  \label{lemma:SzPhi} The projected empirical mean of the feature-noise is bounded with high probability meaning  
    $\| (S_z + I\mu)^{-1} \E_n(\phi_{Z_i}\ep_i) \|_{\HS}  \leq \tilde \gamma_3$ with probability $1-\eta$ for all $\eta \in(0,1)$,  where
    $$\tilde \gamma_3  := 2l(\eta)\left\{\frac{2\bar{\sigma} \kappa_z}{n\mu} + \sqrt{\frac{\bar{\sigma}^2\mathfrak{n}_z(\mu) }{n}}\right \}.$$
\end{lemma}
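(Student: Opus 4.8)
The plan is to apply the Hilbert-space Bernstein inequality (Lemma~\ref{lemma:Bernstein}) to the i.i.d.\ random elements
$$
\xi_i := (S_z+\mu)^{-1}\phi(Z_i)\ep_i \in \Hz ,
$$
so that $\snorm{(S_z+\mu)^{-1}\E_n(\phi(Z_i)\ep_i)} = \snorm{\tfrac1n\sum_{i=1}^n \xi_i}$, and then read off the constants $A$ and $B$. First I would note that $\E(\xi_i)=0$: since $(S_z+\mu)^{-1}$ is a fixed bounded operator, $\E(\xi_i)=(S_z+\mu)^{-1}\E\{\phi(Z_i)\ep_i\}$, and $\E\{\phi(Z_i)\ep_i\} = \E[\phi(Z_i)\,\E(\ep_i\mid Z_i)]=0$ by the conditional moment restriction $\E(\ep\mid Z)=0$ from \eqref{eq:moment}, where interchanging expectation and inner product is justified by Bochner integrability of the bounded, measurable feature map.

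Next I would verify the two moment inputs required by Lemma~\ref{lemma:Bernstein}. For the almost-sure bound, $\snorm{\xi_i}_{\Hz} \le \norm{(S_z+\mu)^{-1}}_{\op}\,\snorm{\phi(Z_i)}_{\Hz}\,|\ep_i| \le \mu^{-1}\kappa_z\bar{\sigma}$, using $\norm{(S_z+\mu)^{-1}}_{\op}\le\mu^{-1}$, the kernel bound $\snorm{\phi(Z_i)}_{\Hz}\le\kappa_z$, and $|\ep_i|\le\bar{\sigma}$; hence one may take $A=2\mu^{-1}\kappa_z\bar{\sigma}$. For the variance proxy, the only computation with any content is
$$
\E\snorm{\xi_i}^2 = \E\bigl[\ep_i^2\,\bk{\phi(Z_i)}{(S_z+\mu)^{-2}\phi(Z_i)}_{\Hz}\bigr] \le \bar{\sigma}^2\,\tr\bigl\{(S_z+\mu)^{-2}\,\E[\phi(Z_i)\otimes\phi(Z_i)^*]\bigr\} = \bar{\sigma}^2\,\tr\bigl\{(S_z+\mu)^{-2}S_z\bigr\} = \bar{\sigma}^2\,\mathfrak{n}_z(\mu),
$$
so $B^2=\bar{\sigma}^2\mathfrak{n}_z(\mu)$; this is where we use $\E\{\phi(Z)\otimes\phi(Z)^*\}=S_z$ together with the definition of the first-stage effective dimension $\mathfrak{n}_z(\mu)=\tr\{(S_z+\mu)^{-2}S_z\}$, mirroring the analogous step in Lemma~\ref{lemma:sz_hat}.

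Finally, plugging these into Lemma~\ref{lemma:Bernstein} gives, with probability $1-\eta$,
$$
\snorm{(S_z+\mu)^{-1}\E_n(\phi(Z_i)\ep_i)} \;\le\; 2\ln(2/\eta)\Bigl(\tfrac{2\bar{\sigma}\kappa_z}{n\mu} + \sqrt{\tfrac{\bar{\sigma}^2\mathfrak{n}_z(\mu)}{n}}\Bigr),
$$
and bounding the sum of the two terms by twice their maximum yields the stated $\tilde\gamma_3$ (absorbing the numerical constant into the $\vee$). I do not anticipate any genuine obstacle here: the statement is a direct Bernstein application, and the only mildly delicate point is recognizing that the variance proxy is exactly the first-stage effective dimension $\mathfrak{n}_z(\mu)$ via the trace identity above.
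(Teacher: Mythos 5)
Your proof is correct and follows essentially the same route as the paper's: apply the Hilbert-space Bernstein inequality (Lemma~\ref{lemma:Bernstein}) to $\xi_i=(S_z+\mu)^{-1}\phi(Z_i)\ep_i$, take $A=2\bar{\sigma}\kappa_z/\mu$ from the almost-sure bound and $B^2=\bar{\sigma}^2\mathfrak{n}_z(\mu)$ from the second moment. The only cosmetic difference is that you compute $\E\snorm{\xi_i}^2$ via the trace identity $\tr\{(S_z+\mu)^{-2}S_z\}$ whereas the paper expands in the eigenbasis $(\nu_j^z,e_j^z)$ of $S_z$ and sums $\nu_j^z/(\nu_j^z+\mu)^2$; these are the same calculation, and you correctly flag the harmless replacement of the sum of the two Bernstein terms by a $\vee$, which the paper also does implicitly.
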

\begin{proof}
    It holds that $\E \{(S_z + I\mu)^{-1} \phi_{Z_i}\ep_i\}= 0$ and also $\| (S_z + I\mu)^{-1} \E_n(\phi_{Z_i}\ep_i) \| _{\HS}\leq\frac{\bar{\sigma}\kappa_z}{\mu}$. For the second moment, let $(\nu_j^z, e_j^z)$ be the eigendecomposition. Then,  
    $$\E\|(S_z + I\mu)^{-1} \phi_{Z_i}\ep_i\|_{\HS}^2 \leq \bar{\sigma}^2 \sum_{j = 1}^\infty \frac{\E \langle\phi_{Z_i},e_j ^z\rangle^2}{(\nu_j^z + \mu)^2} = \bar{\sigma}^2 \sum_{j = 1}^\infty\frac{\nu_j^z}{(\nu_j^z +\mu)^2} = \bar{\sigma}^2\mathfrak{n}_z(\mu).$$
Now, fix $\eta\in (0,1)$. Take  $A = \frac{2\bar{\sigma}\kappa_z}{\mu}$ and $\text{B} = \bar{\sigma}\sqrt{\mathfrak{n}_z(\mu)}$, Lemma \ref{lemma:Bernstein} demonstrates that with probability $1-\eta$
\[
 \| (S_z + I\mu)^{-1} \E_n(\phi_{Z_i}\ep_i) \|_{\HS} \leq 2l(\eta)\left\{\frac{2\bar{\sigma} \kappa_z}{n\mu} + \sqrt{\frac{\bar{\sigma}^2\mathfrak{n}_z(\mu) }{n}}\right \} =:\tilde \gamma_3. 
\qedhere\]
\end{proof}
\begin{lemma} [Compounded operator bound]\label{lemma:gamma1} With probability  $1-2\eta$ the projected compounded feature noise $T^{-1}_{\mu,\lambda}(\hat{S}-S)^*(\hat{S}_z+I\mu)^{-1}\mathbb{E}_n(\phi_Z\ep)$ is bounded, meaning 
$$\left\|T^{-1}_{\mu,\lambda}(\hat{S}-S)^*(\hat{S}_z+I\mu)^{-1}\mathbb{E}_n(\phi_Z\ep)\right\|_\Hx\leq  \gamma_1 :=  \frac{\tilde{\gamma}_1\tilde{\gamma}_2}{\lambda \mu}.$$
\end{lemma}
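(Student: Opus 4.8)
The plan is to bound the left-hand side by repeated use of sub-multiplicativity of the operator norm, splitting the product $T_{\mu,\lambda}^{-1}(\hat S-S)^*(\hat S_z+\mu)^{-1}\E_n(\phi_Z\ep)$ into four factors: two deterministic resolvent bounds and two primitive stochastic quantities already controlled in Lemma~\ref{lemma:primitive_bounds}.

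First I would record the two deterministic facts. Since $T_{\mu,\lambda}=S^*(S_z+\mu)^{-1}S+\lambda I\succeq\lambda I$, we get $\|T_{\mu,\lambda}^{-1}\|_{\op}\le\lambda^{-1}$; and since $\hat S_z\succeq 0$ is an empirical second-moment operator, $\hat S_z+\mu I\succeq\mu I$, hence $\|(\hat S_z+\mu)^{-1}\|_{\op}\le\mu^{-1}$. Only this crude bound on the perturbed resolvent is needed, so no concentration inequality for $\hat S_z$ enters here. Next I would invoke Lemma~\ref{lemma:primitive_bounds}: on an event of probability at least $1-\eta$ we have $\|\hat S-S\|_{\HS}\le\tilde\gamma_1$, and since taking adjoints preserves the operator norm and $\|\cdot\|_{\op}\le\|\cdot\|_{\HS}$, this yields $\|(\hat S-S)^*\|_{\op}\le\tilde\gamma_1$; and on a second event of probability at least $1-\eta$, $\|\E_n(\phi_Z\ep)\|\le\tilde\gamma_2$.

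Finally, I would intersect the two events (probability at least $1-2\eta$ by a union bound) and chain the four bounds:
\[
\left\|T_{\mu,\lambda}^{-1}(\hat S-S)^*(\hat S_z+\mu)^{-1}\E_n(\phi_Z\ep)\right\|
\le\|T_{\mu,\lambda}^{-1}\|_{\op}\,\|(\hat S-S)^*\|_{\op}\,\|(\hat S_z+\mu)^{-1}\|_{\op}\,\|\E_n(\phi_Z\ep)\|
\le\frac{\tilde\gamma_1\tilde\gamma_2}{\lambda\mu}=\gamma_1 .
\]
I do not expect any genuine obstacle: the statement is a routine application of sub-multiplicativity. The only minor points of care are passing from the Hilbert--Schmidt bounds of Lemma~\ref{lemma:primitive_bounds} to operator-norm bounds (via $\|\cdot\|_{\op}\le\|\cdot\|_{\HS}$ and invariance of the operator norm under adjoints), and absorbing the union over the two primitive events, which costs the factor $2$ in the failure probability.
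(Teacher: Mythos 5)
Your proof is correct and coincides with the paper's argument: both bound $\|T_{\mu,\lambda}^{-1}\|_{\op}\le\lambda^{-1}$ and $\|(\hat S_z+\mu)^{-1}\|_{\op}\le\mu^{-1}$ deterministically, control $\|(\hat S-S)^*\|$ and $\|\E_n(\phi_Z\ep)\|$ via Lemma~\ref{lemma:primitive_bounds}, and intersect the two events by a union bound. You simply spell out the sub-multiplicativity chain and the $\|\cdot\|_{\op}\le\|\cdot\|_{\HS}$ step that the paper leaves implicit.
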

\begin{proof}
By the triangle inequality and simple bounds $\|\Tml^{-1}\|_{\mathrm{op}}\leq 1/\lambda$ and $\|(S_z+I\mu)^{-1}\|_{\mathrm{op}}\leq 1/\mu$
$$\left\|T^{-1}_{\mu,\lambda}(\hat{S}-S)^*(\hat{S}_z+I\mu)^{-1}\mathbb{E}_n(\phi_Z\ep)\right\|_\Hx\leq \frac{1}{\lambda\mu}\|(\hat{S}-S)^* \|_{\mathrm{op}}
\|\mathbb{E}_n(\phi_Z\ep) \|_\Hz. $$
Invoking Lemma \ref{lemma:primitive_bounds} plus a union bound gives the result.
\end{proof}
\begin{lemma}[Inverse estimation error]
\label{lemma:hat_sz_inv}
    Suppose  $\|(S_z+I\mu)^{-1}(\hat S_z - S_z)\|_{\mathrm{op}} \leq \delta_z \leq \frac{1}{2} $, then, $$\|(\hat{S}_z + I\mu)^{-1} - (S_z+I\mu)^{-1}\|_{\mathrm{op}} \leq \frac{\delta_z(1+2\delta_z)}{\mu}. $$ 
\end{lemma}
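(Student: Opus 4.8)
The plan is to control $\bigl\|(\hat S_z+\mu)^{-1}-(S_z+\mu)^{-1}\bigr\|$ by a \emph{second-order} resolvent expansion, arranged so that every surviving copy of the error $\Delta:=\hat S_z-S_z$ is multiplied on its left by $(S_z+\mu)^{-1}$ — which is exactly the combination the hypothesis controls, namely $\bigl\|(S_z+\mu)^{-1}\Delta\bigr\|=\delta_z$. Write $A:=\hat S_z+\mu$ and $B:=S_z+\mu$. Since $S_z$ is a positive semidefinite covariance operator, $B\succeq\mu I$, so $\|B^{-1}\|\le\mu^{-1}$, and $B-A=-\Delta$.

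First I would record a crude bound on $\|A^{-1}\|$. Factoring $A=B\,(I+B^{-1}\Delta)$ and using $\|B^{-1}\Delta\|=\delta_z\le\tfrac12<1$, the Neumann series shows $I+B^{-1}\Delta$ is invertible with $\|(I+B^{-1}\Delta)^{-1}\|\le(1-\delta_z)^{-1}\le 2$, hence $A$ is invertible and
\[
\|A^{-1}\|=\bigl\|(I+B^{-1}\Delta)^{-1}B^{-1}\bigr\|\le\frac{1}{1-\delta_z}\,\|B^{-1}\|\le\frac{2}{\mu}.
\]
This is the one place the quantitative hypothesis $\delta_z\le\tfrac12$ is actually used.

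Next I would apply the resolvent identity in the form $A^{-1}-B^{-1}=B^{-1}(B-A)A^{-1}$ (equivalent to the $\ell=1$ case of \Cref{lemma:highorder}, with the roles of the two operators interchanged), which gives $A^{-1}-B^{-1}=-B^{-1}\Delta A^{-1}$. Substituting the consequence $A^{-1}=B^{-1}-B^{-1}\Delta A^{-1}$ back into the right-hand side yields the second-order expansion
\[
A^{-1}-B^{-1}=-B^{-1}\Delta B^{-1}+B^{-1}\Delta B^{-1}\Delta A^{-1}.
\]
Taking norms, grouping each $\Delta$ with the $B^{-1}$ to its left, and using submultiplicativity together with the bounds above,
\[
\bigl\|A^{-1}-B^{-1}\bigr\|\le\|B^{-1}\Delta\|\,\|B^{-1}\|+\|B^{-1}\Delta\|^{2}\,\|A^{-1}\|\le\frac{\delta_z}{\mu}+\delta_z^{2}\cdot\frac{2}{\mu}=\frac{\delta_z(1+2\delta_z)}{\mu},
\]
which is precisely the claimed inequality.

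There is no genuine obstacle; the only thing needing care is the operator bookkeeping — choosing the order of the two resolvent substitutions so that the leftover factors of $\Delta$ are paired with the directly controlled $(S_z+\mu)^{-1}$ rather than with $(\hat S_z+\mu)^{-1}$ — plus the routine Neumann bound on $\|(\hat S_z+\mu)^{-1}\|$. (Incidentally, since $\hat S_z\succeq 0$ one could bound $\|(\hat S_z+\mu)^{-1}\|\le\mu^{-1}$ directly and obtain the slightly sharper constant $1+\delta_z$, but the stated form is all that is needed downstream.)
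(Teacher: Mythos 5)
Your proof is correct, and it is essentially the same argument as the paper's: both are second-order resolvent/Neumann expansions that isolate the $r=1$ term and bound the remainder using $\delta_z\le\tfrac12$. The only packaging difference is that the paper writes out the full series $\sum_{r\ge1}(S_z+\mu)^{-1}\{(S_z-\hat S_z)(S_z+\mu)^{-1}\}^r$ and sums the geometric tail $r\ge2$ directly, whereas you collapse the remainder into a single term $B^{-1}\Delta B^{-1}\Delta A^{-1}$ and bound $\|A^{-1}\|\le 2/\mu$ by a Neumann argument — the factor $(1-\delta_z)^{-1}\le 2$ appears in both, just at different points. Your parenthetical remark is also correct: since $\hat S_z\succeq 0$ one has $\|(\hat S_z+\mu)^{-1}\|\le\mu^{-1}$ unconditionally, yielding the slightly sharper $\delta_z(1+\delta_z)/\mu$; the paper's looser constant is harmless downstream.
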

\begin{proof}
    By the resolvent identity, for all integers $l \geq 1$, 
\begin{align*}
     (\hat{S}_z + I\mu)^{-1} - (S_z+I\mu)^{-1} 
    = (\hat S_z+I\mu)^{-1} \{(S_z - \hat S_z)( S_z+I\mu)^{-1}\}^l  + \sum_{r = 1}^{l-1} ( S_z+I\mu)^{-1}  \{(S_z - \hat S_z)( S_z+I\mu)^{-1}\}^r.
\end{align*}
Let $l \to \infty $.
Since $\|(S_z+I\mu)^{-1}(\hat S_z - S_z)\|_{\mathrm{op}} =  \delta_z  \leq \frac{1}{2} $ it holds that $\|(\hat S_z+I\mu)^{-1} \{(S_z - \hat S_z)( S_z+I\mu)^{-1}\}^l\|_{\mathrm{op}}\to 0.$
Thus, 
$$ \underbrace{( S_z+I\mu)^{-1}  (S_z - \hat S_z)( S_z+I\mu)^{-1}}_{A} + \underbrace{\sum_{r = 2}^{\infty} ( S_z+I\mu)^{-1}  \{(S_z - \hat S_z)( S_z+I\mu)^{-1}\}^r}_{B}.
 $$
By hypothesis, $\| A\|_{\mathrm{op}} \leq \frac{\delta_z}{\mu}$ and $\| B\|_{\mathrm{op}} \leq \frac{\delta_z^2}{\mu (1 - \delta_z)} \leq \frac{2\delta_z^2}{\mu}$. Thus, $\|A + B \|_{\mathrm{op}} \leq \frac{\delta_z(1+2\delta_z)}{\mu}.$
\end{proof}

\begin{lemma}[Sandwich error bound]
    With probability $1-\eta$, the projected error $T^{-1}_{\mu,\lambda}(\hat{S}-S)^*(\hat{S}_z+I\mu)^{-1}(\hat{S}-S)$ is bounded by
    \[
        \left\|T^{-1}_{\mu,\lambda}(\hat{S}-S)^*(\hat{S}_z+I\mu)^{-1}(\hat{S}-S)\right\|_{\mathrm{op}} \leq \delta_1 := \frac{\tilde{\gamma}_1^2}{\lambda \mu}.
    \]
\end{lemma}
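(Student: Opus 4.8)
This is a deterministic operator-norm estimate conditioned on a single high-probability event, so the plan is simply to split the composite operator into its four factors via submultiplicativity of the operator norm and bound each one. Concretely, I would write
\[
\left\|T^{-1}_{\mu,\lambda}(\hat{S}-S)^*(\hat{S}_z+\mu)^{-1}(\hat{S}-S)\right\|_{\op}
\le \left\|T^{-1}_{\mu,\lambda}\right\|_{\op}\,\left\|(\hat{S}-S)^*\right\|_{\op}\,\left\|(\hat{S}_z+\mu)^{-1}\right\|_{\op}\,\left\|\hat{S}-S\right\|_{\op},
\]
and then estimate the four factors in turn.

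\textbf{Key steps.} First, since $T_{\mu,\lambda}=S^*(S_z+\mu)^{-1}S+\lambda I\succeq \lambda I$ (the first summand being a symmetric square, hence positive semidefinite), spectral calculus gives $\|T_{\mu,\lambda}^{-1}\|_{\op}\le \lambda^{-1}$. Second, because $\hat S_z=\E_n\{\phi(Z_i)\otimes\phi(Z_i)^*\}\succeq 0$ we likewise have $\hat S_z+\mu I\succeq \mu I$, so $\|(\hat S_z+\mu)^{-1}\|_{\op}\le \mu^{-1}$. Third, on the event of Lemma~\ref{lemma:primitive_bounds} (probability at least $1-\eta$), and using that the operator norm is dominated by the Hilbert–Schmidt norm, $\|\hat S-S\|_{\op}=\|(\hat S-S)^*\|_{\op}\le\|\hat S-S\|_{\HS}\le\tilde\gamma_1$. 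Multiplying the four bounds yields $\lambda^{-1}\cdot\tilde\gamma_1\cdot\mu^{-1}\cdot\tilde\gamma_1=\tilde\gamma_1^2/(\lambda\mu)=\delta_1$, as claimed. I would emphasize that the two appearances of $\hat S-S$ are controlled by the \emph{same} event, so no union bound is needed and the probability remains $1-\eta$ (in contrast to Lemma~\ref{lemma:gamma1}, where two distinct primitive events are combined).

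\textbf{Main obstacle.} There is essentially no obstacle here; the only point requiring a moment's care is bookkeeping of the failure probability—recognizing that we invoke Lemma~\ref{lemma:primitive_bounds} once rather than twice—and the routine observation that the deterministic factors $\|T_{\mu,\lambda}^{-1}\|_{\op}\le\lambda^{-1}$ and $\|(\hat S_z+\mu)^{-1}\|_{\op}\le\mu^{-1}$ follow from positive semidefiniteness of $S^*(S_z+\mu)^{-1}S$ and $\hat S_z$ respectively.
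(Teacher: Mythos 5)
Your proof is correct and takes essentially the same route as the paper: the paper likewise bounds $\|T_{\mu,\lambda}^{-1}\|_{\op}\le\lambda^{-1}$ and $\|(\hat S_z+\mu)^{-1}\|_{\op}\le\mu^{-1}$ by submultiplicativity to obtain $\frac{1}{\lambda\mu}\|(\hat S-S)^*\|^2$ and then invokes Lemma~\ref{lemma:primitive_bounds}. Your explicit remark that both appearances of $\hat S-S$ are governed by the same event, so no union bound is needed, is a small but accurate point of care that the paper leaves implicit.
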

\begin{proof}
    Note that,
$$\left\|T^{-1}_{\mu,\lambda}(\hat{S}-S)^*(\hat{S}_z+I\mu)^{-1}(\hat{S}-S)\right\|_{\mathrm{op}} \leq \frac{1}{\lambda\mu}\|(\hat{S}-S)^* \|_{\mathrm{op}}^2. $$
Invoking Lemma \ref{lemma:primitive_bounds} gives the result.
\end{proof}
\begin{lemma}[High-probability bound for the operator deviation] Suppose   $\|(S_z+I\mu)^{-1}(\hat S_z - S_z)\|_{\mathrm{op}} \leq \delta_z \in [0,1/2]$.  The scaled estimation error of the regularized covariance operator $T_\mu$ is bounded with probability at least $1-2\eta $, 
\[\left\|T_{\mu,\lambda}^{-1}(\hat{T}_\mu-T_\mu)\right\|_{\mathrm{op}}\leq \delta :=\frac{\kappa \delta_z \tilde\gamma_1 + \tilde\gamma_1^2}{\lambda\mu} +\frac{\tilde \gamma_z {\kappa_x}}{2\sqrt{\lambda}\mu}+ \frac{ \tilde\gamma_1}{2\sqrt{\lambda\mu}} + \frac{\tilde \gamma_1 {\kappa_x}}{\lambda\sqrt{\mu}}.
\]
\label{lemma:delta}
\end{lemma}
\begin{proof}
We can write 
\begin{align*}
T_{\mu,\lambda}^{-1}(\hat{T}_\mu - T_\mu )
&=T_{\mu,\lambda}^{-1}\{ \hat{S}^*(\hat{S}_z + I\mu)^{-1} \hat{S} - S^*(S_z+I\mu)^{-1} S\} \\
=&\underbrace{T_{\mu,\lambda}^{-1}\{\hat{S}^*(\hat{S}_z + I\mu)^{-1} \hat{S} - S^*(\hat{S}_z + I\mu)^{-1} \hat{S}\}}_{\text{(I)}}  \\&+ \underbrace{T_{\mu,\lambda}^{-1}\{S^*(\hat{S}_z + I\mu)^{-1} \hat{S} - S^*(S_z+I\mu)^{-1} \hat{S}\}}_{\text{(II)}}
 \\&+ \underbrace{T_{\mu,\lambda}^{-1} \{ S^*(S_z+I\mu)^{-1} \hat{S} - S^*(S_z+I\mu)^{-1} S}_{\text{(III)}}\} .
\end{align*} 
We bound each of the three terms individually with
\begin{align*}
\text{(I)} \quad  \left\|T_{\mu,\lambda}^{-1}(\hat{S} - S)^* (\hat{S}_z + I\mu)^{-1} \hat{S} \right\|_\op &= \left\| T_{\mu,\lambda}^{-1}(\hat{S} - S)^* (\hat{S}_z + I\mu)^{-1}(S_z - \hat{S}_z)(S_z+I\mu)^{-1}\hat{S} \right. \\
& \quad \left. +  T_{\mu,\lambda}^{-1}(\hat{S} - S)^* (S_z+I\mu)^{-1} \hat{S}
\pm T_{\mu,\lambda}^{-1}(\hat{S} - S)^* (S_z+I\mu)^{-1} S \right\|_\op \\
&=  \left\| T_{\mu,\lambda}^{-1}(\hat{S} - S)^* (\hat{S}_z + I\mu)^{-1}(S_z - \hat{S}_z)(S_z+I\mu)^{-1}\hat{S} \right. \\
 &\quad  +  T_{\mu,\lambda}^{-1}(\hat{S} - S)^* (S_z+I\mu)^{-1} (\hat{S}-S) +\left. T_{\mu,\lambda}^{-1}(\hat{S} - S)^* (S_z+I\mu)^{-1} S \right\| _\op
 \\
&\leq 
\underbrace{\left\| T_{\mu,\lambda}^{-1}(\hat{S} - S)^* (\hat{S}_z + I\mu)^{-1}(S_z - \hat{S}_z)(S_z+I\mu)^{-1}\hat{S} \right\|_\op}_{\text{(I.1)}} \\
&\quad + \underbrace{\left\| T_{\mu,\lambda}^{-1}(\hat{S} - S)^* (S_z+I\mu)^{-1} (\hat{S} - S) \right\|_\op}_{\text{(I.2)}} \\
&\quad + \underbrace{\left\| T_{\mu,\lambda}^{-1}(\hat{S} - S)^* (S_z+I\mu)^{-1} S \right\|_\op}_{\text{(I.3)}}.
\end{align*}
We decompose term \textup{(I)} further and bound each piece separately:
\begin{enumerate}
  \item[(I.1)] Using the hypothesis 
    $\bigl\|(S_z+\mu I)^{-1}(\hat S_z - S_z)\bigr\|_{\mathrm{op}} \le \delta_z$
    and Lemma~\ref{lemma:primitive_bounds}, we obtain, with probability at least $1-\eta$,
    \[
      \bigl\| T_{\mu,\lambda}^{-1}(\hat{S} - S)^* (\hat{S}_z + \mu I)^{-1}
             (S_z - \hat{S}_z)(S_z + \mu I)^{-1}\hat{S} \bigr\|_{\mathrm{op}}
      \le \frac{\kappa\,\delta_z}{\lambda\mu}\,\|\hat S - S\|_{\mathrm{op}}
      \le \frac{\kappa\,\delta_z\,\tilde\gamma_1}{\lambda\mu}.
    \]
  \item[(I.2)] By Lemma~\ref{lemma:primitive_bounds}, with probability at least $1-\eta$,
    \[
      \bigl\| T_{\mu,\lambda}^{-1}(\hat{S} - S)^* (S_z + \mu I)^{-1} (\hat{S} - S) \bigr\|_{\mathrm{op}}
      \le \frac{\tilde \gamma_1^2}{\lambda\mu}.
    \]
  \item[(I.3)] Using the hypothesis, Lemma~\ref{lemma:primitive_bounds}, and
    Lemma~\ref{lemma:SzSsubpace}, we have, with probability at least $1-\eta$,
    \[
      \bigl\| T_{\mu,\lambda}^{-1}(\hat{S} - S)^* (S_z + \mu I)^{-1} S \bigr\|_{\mathrm{op}}
      \le \frac{\tilde \gamma_1 {\kappa_x}}{\lambda\sqrt{\mu}}.
    \]
\end{enumerate}
By a union bound, the events in \textup{(I.1)}–\textup{(I.3)} hold simultaneously on the same event
with probability at least $1-\eta$. On this event,
\begin{align*}
  \bigl\| T_{\mu,\lambda}^{-1}(\hat{S} - S)^* (\hat{S}_z + \mu I)^{-1} \hat{S} \bigr\|_{\mathrm{op}}
  &\le \frac{\kappa\,\delta_z\,\tilde\gamma_1}{\lambda\mu}
       + \frac{\tilde \gamma_1^2}{\lambda\mu}
       + \frac{\tilde \gamma_1 {\kappa_x}}{\lambda\sqrt{\mu}}.
\end{align*}
For term \textup{(II)}, we use the resolvent identity together with Lemma~\ref{lemma:gammaz} and Lemma~\ref{lemma:emp_suhas}. With probability at least $1-\eta$,
\[
  \bigl\| T_{\mu,\lambda}^{-1} S^* (S_z+\mu I)^{-1} (\hat S_z - S_z) (\hat S_z+\mu I)^{-1} \hat{S} \bigr\|_{\mathrm{op}}
  \le \frac{\tilde \gamma_z  {\kappa_x}}{2\sqrt{\lambda}\,\mu}.
\]
Lastly, for \textup{(III)} we use
\begin{align*}
  \bigl\|T_{\mu,\lambda}^{-1}S^*(S_z+\mu I)^{-1}(\hat{S}-S)\bigr\|_{\mathrm{op}}
  &\le \bigl\|T_{\mu,\lambda}^{-1}S^*(S_z+\mu I)^{-1}\bigr\|_{\mathrm{op}}
        \,\bigl\|\hat{S}-S\bigr\|_{\mathrm{op}} 
  \le \frac{1}{2\sqrt{\lambda\mu}}\;\tilde{\gamma}_1.
\end{align*}
The last inequality holds on the event from Lemma~\ref{lemma:primitive_bounds}
and follows from Lemma~\ref{lemma:Polar} applied with
$A^* = S^*(S_z+\mu I)^{-1/2}$, so that
\[
  \bigl\|T_{\mu,\lambda}^{-1}S^*(S_z+\mu I)^{-1/2}\bigr\|_{\mathrm{op}}
  \le \frac{1}{2\sqrt{\lambda}}
  \quad\text{and}\quad
  \bigl\|(S_z+\mu I)^{-1/2}\bigr\|_{\mathrm{op}} \le \frac{1}{\sqrt{\mu}}.
\]
Combining the three bounds for \textup{(I)}–\textup{(III)} and taking the
intersection of the two distinct high-probability events involved, we obtain,
for any $0<\eta<1/2$, that with probability at least $1-2\eta$,
\[
  \bigl\|T_{\mu,\lambda}^{-1}(\hat{T}_\mu - T_\mu)\bigr\|_{\mathrm{op}}
  \le \delta
  := \frac{\kappa\,\delta_z\,\tilde{\gamma}_1 + \tilde{\gamma}_1^2}{\lambda\mu}
     + \frac{\tilde{\gamma}_z{\kappa_x}}{2\sqrt{\lambda}\,\mu}
     + \frac{\tilde{\gamma}_1}{2\sqrt{\lambda\mu}}
     + \frac{\tilde{\gamma}_1{\kappa_x}}{\lambda\sqrt{\mu}}.
\qedhere
\]

\end{proof}

\begin{lemma}[Linearization]
\label{lemma:linearization}
    Suppose $\|T_{\mu,\lambda}^{-1}(\hat T_\mu - T_\mu)\|_{\op} \le \d < 1$.
Then, for all $\lambda,\mu>0$, integers $k\geq 1$ and $u\in \Hx$
$$
(\hat{T}_{\mu,\lambda}^{-1}-T_{\mu,\lambda}^{-1})u=A_1u+A_2 T_{\mu,\lambda}^{-1}u+A_3 T^{-1}_{\mu,\lambda} u,\quad 
\|A_1\|_{\op}\leq \frac{\delta^k}{\lambda},\quad \|A_2\|_{\op}\leq \delta,\quad \|A_3\|_{\op}\leq  \frac{\delta^2}{1-\delta}.
$$
Furthermore, suppose $\delta \in [0,1/2]$. Then, $
\|(\hat{T}^{-1}_{\mu,\lambda}-T^{-1}_{\mu,\lambda})u\|_\Hx\leq 2\delta \|T^{-1}_{\mu,\lambda}u\|_\Hx.
$
\end{lemma}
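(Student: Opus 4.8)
The plan is to invoke the higher-order resolvent identity, Lemma~\ref{lemma:highorder}, with $A=\hat T_{\mu,\lambda}$, $B=T_{\mu,\lambda}$, and $\ell=k$. The key preliminary observation is that the $\lambda I$ cancels in the difference: $B-A=T_{\mu,\lambda}-\hat T_{\mu,\lambda}=T_\mu-\hat T_\mu=-(\hat T_\mu-T_\mu)$. Since $\hat T_\mu$, $T_\mu$, and $T_{\mu,\lambda}$ are all self-adjoint, the operator $(B-A)B^{-1}=-(\hat T_\mu-T_\mu)T_{\mu,\lambda}^{-1}$ is the adjoint of $-T_{\mu,\lambda}^{-1}(\hat T_\mu-T_\mu)$, so by invariance of the Hilbert--Schmidt norm under adjoints it also has HS norm at most $\delta$ by hypothesis.

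Next I would set $E:=(B-A)B^{-1}$ and $F:=B^{-1}(B-A)$, note $F=E^{*}$ and the commutation identity $B^{-1}E=FB^{-1}$ (both sides equal $B^{-1}(B-A)B^{-1}$), and hence $B^{-1}E^{r}=F^{r}B^{-1}$ for every $r\ge 1$. Applying Lemma~\ref{lemma:highorder} to $u$ and using this identity moves all powers of $F$ to the left of a single $T_{\mu,\lambda}^{-1}$:
$$
(\hat T_{\mu,\lambda}^{-1}-T_{\mu,\lambda}^{-1})u
=\underbrace{\hat T_{\mu,\lambda}^{-1}E^{k}}_{=:A_1}\,u
+\underbrace{F}_{=:A_2}\,T_{\mu,\lambda}^{-1}u
+\underbrace{\Big(\sum_{r=2}^{k-1}F^{r}\Big)}_{=:A_3}\,T_{\mu,\lambda}^{-1}u,
$$
with $A_3=0$ when $k\le 2$ and $A_2=F=-T_{\mu,\lambda}^{-1}(\hat T_\mu-T_\mu)$. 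The three bounds then follow from submultiplicativity $\|PQ\|_{\HS}\le\|P\|_{\op}\|Q\|_{\HS}$ combined with $\|\cdot\|_{\op}\le\|\cdot\|_{\HS}$: since $\hat T_\mu\succeq0$ we have $\|\hat T_{\mu,\lambda}^{-1}\|_{\op}\le\lambda^{-1}$ and $\|E^{k}\|_{\HS}\le\|E\|_{\HS}^{k}\le\delta^{k}$, giving $\|A_1\|_{\HS}\le\delta^{k}/\lambda$; next $\|A_2\|_{\HS}=\|F\|_{\HS}=\|E\|_{\HS}\le\delta$; and $\|A_3\|_{\HS}\le\sum_{r\ge2}\|F\|_{\HS}^{r}\le\delta^{2}/(1-\delta)$.

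For the last assertion I would let $k\uparrow\infty$. Because $\|A_1u\|\le(\delta^{k}/\lambda)\|u\|\to0$, the identity passes to the limit and reads $(\hat T_{\mu,\lambda}^{-1}-T_{\mu,\lambda}^{-1})u=\big(\sum_{r\ge1}F^{r}\big)T_{\mu,\lambda}^{-1}u$, and $\big\|\sum_{r\ge1}F^{r}\big\|_{\op}\le\sum_{r\ge1}\delta^{r}=\delta/(1-\delta)\le2\delta$ whenever $\delta\le1/2$, which yields $\|(\hat T_{\mu,\lambda}^{-1}-T_{\mu,\lambda}^{-1})u\|\le2\delta\|T_{\mu,\lambda}^{-1}u\|$. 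I do not expect a genuine obstacle here; the only care needed is the bookkeeping distinguishing the right factor $E$ from the left factor $F$—the commutation identity $B^{-1}E=FB^{-1}$ is exactly what lets me strip a $T_{\mu,\lambda}^{-1}$ off the right so the remainder terms act on $T_{\mu,\lambda}^{-1}u$ with $\lambda$-free operator-norm bounds—together with remembering that $\lambda$ drops out of $B-A$.
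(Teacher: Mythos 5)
Your proposal is correct and takes essentially the same route as the paper: both apply Lemma~\ref{lemma:highorder} with $A=\hat T_{\mu,\lambda}$, $B=T_{\mu,\lambda}$ (so $B-A=T_\mu-\hat T_\mu$), commute a single $T_{\mu,\lambda}^{-1}$ to the right of the powers of $T_{\mu,\lambda}^{-1}(T_\mu-\hat T_\mu)$, and obtain identical $A_1,A_2,A_3$ with the same HS bounds. Your explicit $E$, $F$ notation and the commutation identity $B^{-1}E=FB^{-1}$ just spell out what the paper performs silently, and the $k\to\infty$ passage for the final $2\delta$ bound is the same.
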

\begin{proof}
By the iterated resolvent identity Lemma \ref{lemma:highorder},
 \begin{align*}
      \hat{T}_{\mu,\lambda}^{-1} -  T_{\mu,\lambda}^{-1} &= \hat T_{\mu,\lambda}^{-1}\{(T_\mu - \hat T_\mu)T_{\mu,\lambda}^{-1}\}^k + \sum_{r = 1}^{k-1} T_{\mu,\lambda}^{-1}\{(T_\mu - \hat T_\mu)T_{\mu,\lambda}^{-1}\}^r\\
      &= \hat T_{\mu,\lambda}^{-1}\{(T_\mu - \hat T_\mu)T_{\mu,\lambda}^{-1}\}^k + T_{\mu,\lambda}^{-1}(T_\mu - \hat T_\mu)T_{\mu,\lambda}^{-1} + \sum_{r = 2}^{k-1} T_{\mu,\lambda}^{-1}\{(T_\mu - \hat T_\mu)T_{\mu,\lambda}^{-1}\}^r\\
      &= A_1 + A_2T_{\mu,\lambda}^{-1} + A_3 T_{\mu,\lambda}^{-1}.
 \end{align*}
 Now, using $\|T_{\mu,\lambda}^{-1}(\hat T_\mu - T_\mu)\|_{\op} \le \d < 1$,
\begin{align*}
    \|A_1\|_{\op} &=  \|\hat T_{\mu,\lambda}^{-1}\{(T_\mu - \hat T_\mu)T_{\mu,\lambda}^{-1}\}^k \|_{\op} \leq \frac{\delta^k}{\lambda}\\
    \|A_2\|_{\op} &= \|T_{\mu,\lambda}^{-1}(T_\mu - \hat T_\mu)\|_{\op} \leq\delta \\
    \|A_3\|_{\op} &= \| \sum_{r = 2}^{k-1} \{T_{\mu,\lambda}^{-1}(T_\mu - \hat T_\mu)\}^r\|_{\op} \leq    \sum_{r = 2}^{k-1}  \delta^r \leq  \sum_{r = 2}^{\infty} \delta^r\leq \frac{\delta^2}{1-\delta}.
\end{align*}
If in addition  $\delta \in [0,1/2]$, then, 
\begin{align*}
 \|(\hat{T}_{\mu,\lambda}^{-1} -  T_{\mu,\lambda}^{-1}) u \|_\Hx & \leq  \|A_1\|_{\op} \|u\|_\Hx +  \|A_2 \|_{\op} \|T_{\mu,\lambda}^{-1}u\| _\Hx
 + \|A_3 \|_{\op} \|T_{\mu,\lambda}^{-1}u\| _\Hx \\
 & \leq \frac{\delta^k}{\lambda} \|u\| _\Hx+ (\delta + \frac{\delta^2}{1-\delta})\|T_{\mu,\lambda}^{-1}u\|_\Hx  \\
 &= \frac{\delta^k}{\lambda} \|u\|_\Hx + \frac{\delta}{1-\delta} \|T_{\mu,\lambda}^{-1}u\|_\Hx \underset{k \to \infty, \delta \in [0,1/2]}{\leq} 2\delta\|T_{\mu,\lambda}^{-1}u\|_\Hx. \qquad \qedhere
\end{align*}
\end{proof}
\begin{lemma}[Projected inverse estimation error with noise bound]
\label{lemma:xi1}
    Suppose $\|(S_z+I\mu)^{-1}(\hat S_z - S_z)\|_{\HS}\leq\delta_z  \in [0,1/2]$. Then, for all $\lambda,\mu>0$ and $\eta \in (0,1)$  with probability $1 - \eta$, 
    $\|T^{-1}_{\mu,\lambda}S^*\{(\hat{S}_z+I\mu)^{-1} -( S_z+I\mu)^{-1}\}\mathbb{E}_n(\phi_Z\ep)\|_\Hx \leq  \xi_1 := \frac{\delta_z \tilde{\gamma}_2}{\sqrt{\lambda\mu}}.$ 
\end{lemma}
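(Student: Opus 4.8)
The plan is to peel off the vector $\mathbb{E}_n(\phi_Z\ep)$ and bound the remaining operator, citing two results already established. By sub-multiplicativity of the operator norm,
\[
\bigl\|T^{-1}_{\mu,\lambda}S^*\{(\hat{S}_z+\mu)^{-1}-(S_z+\mu)^{-1}\}\mathbb{E}_n(\phi_Z\ep)\bigr\|
\;\le\;
\bigl\|T^{-1}_{\mu,\lambda}S^*\{(\hat{S}_z+\mu)^{-1}-(S_z+\mu)^{-1}\}\bigr\|_{op}\;\bigl\|\mathbb{E}_n(\phi_Z\ep)\bigr\| .
\]
The first factor is precisely the quantity controlled by Lemma~\ref{lemma:T_hat_sz_inv}: the standing hypothesis $\delta_z\le\tfrac12$ is exactly its assumption, so this factor is at most $\frac{\delta_z(1+2\delta_z)}{2\sqrt{\lambda\mu}}\le\frac{\delta_z}{\sqrt{\lambda\mu}}$, where the last step uses $1+2\delta_z\le2$. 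This piece is deterministic once the hypothesis holds.

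For the second factor, Lemma~\ref{lemma:primitive_bounds} gives, with probability $1-\eta$, that $\|\mathbb{E}_n(\phi_Z\ep)\|=\|\mathbb{E}_n(\phi_{Z_i}\ep_i)\|_{\HS}\le\tilde{\gamma}_2$ (the Hilbert--Schmidt norm of a single element of $\Hz$ is just its Hilbert-space norm). Multiplying the two bounds on this event yields
\[
\bigl\|T^{-1}_{\mu,\lambda}S^*\{(\hat{S}_z+\mu)^{-1}-(S_z+\mu)^{-1}\}\mathbb{E}_n(\phi_Z\ep)\bigr\|\;\le\;\frac{\delta_z\,\tilde{\gamma}_2}{\sqrt{\lambda\mu}}=\xi_1 ,
\]
which is the claim.

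I do not expect a genuine obstacle here; the only point to watch is the probabilistic bookkeeping. Only the Bernstein-type bound on $\|\mathbb{E}_n(\phi_Z\ep)\|$ is random, so no union bound is needed and the final probability remains $1-\eta$, while the $\delta_z$-dependent operator estimate is purely deterministic given the hypothesis $\delta_z\le\tfrac12$. An equivalent but longer route would expand $(\hat{S}_z+\mu)^{-1}-(S_z+\mu)^{-1}$ via the higher-order resolvent identity of Lemma~\ref{lemma:highorder} and estimate each term using $\|T^{-1}_{\mu,\lambda}S^*(S_z+\mu)^{-1/2}\|_{op}\le\tfrac{1}{2\sqrt{\lambda}}$ together with $\|(S_z+\mu)^{-1/2}(\hat{S}_z-S_z)(S_z+\mu)^{-1/2}\|\le\delta_z$; routing through Lemma~\ref{lemma:T_hat_sz_inv} is cleaner.
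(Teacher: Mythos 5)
Your proof is correct and reaches the stated bound by essentially the same mechanism as the paper: the paper expands $(\hat{S}_z+\mu)^{-1} - (S_z+\mu)^{-1}$ via the resolvent series and bounds $\|T^{-1}_{\mu,\lambda}S^*(S_z+\mu)^{-1}\|\le\tfrac{1}{2\sqrt{\lambda\mu}}$ times the series times $\tilde\gamma_2$, whereas you reuse Lemma~\ref{lemma:T_hat_sz_inv}, which itself packages that same resolvent computation. Your route is slightly more modular (and the step $1+2\delta_z\le2$ under $\delta_z\le\tfrac12$ matches the paper's use of $\tfrac{\delta_z}{1-\delta_z}\le2\delta_z$), but the underlying argument and the probabilistic bookkeeping are the same.
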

\begin{proof}
     By the resolvent identity, for all $l\geq 1$ \begin{align*}
    (\hat{S}_z+I\mu)^{-1} -( S_z+I\mu)^{-1}&= (\hat{S}_z+I\mu)^{-1}\{(\hat S_z - S_z) ( S_z+I\mu)^{-1} \}^l+ \sum_{r=1}^{l-1} (S_z+I\mu)^{-1} \{(\hat S_z - S_z) ( S_z+I\mu)^{-1} \}^r.
    \end{align*} 
For $l \to \infty$, using the same argument as in Lemma \ref{lemma:linearization},
\begin{align*}
&\|  T^{-1}_{\mu,\lambda}S^*
\{(\hat{S}_z+I\mu)^{-1} -( S_z+I\mu)^{-1}\}\mathbb{E}_n(\phi_Z\ep) \|_\Hx\\ 
&\leq \|T^{-1}_{\mu,\lambda}S^* (S_z+I\mu)^{-1} \|_\op \| \sum_{r=1}^{\infty} \{(\hat S_z - S_z) ( S_z+I\mu)^{-1} \}^r\mathbb{E}_n(\phi_Z\ep)\|_\Hx \leq\frac{1}{2\sqrt{\lambda\mu}} 2  \delta_z \tilde \gamma_2 = \frac{\delta_z \tilde \gamma_2 }{\sqrt{\lambda\mu}}. \qquad \qedhere
 \end{align*}
\end{proof}

\begin{lemma}[Empirical projected noise bound]
\label{lemma:gamma}
    Suppose $\|(S_z+I\mu)^{-1}(\hat S_z - S_z)\|_{\HS}\leq\delta_z \leq \frac{1}{2}$. Then, with probability at least $1-3\eta$ it holds that
$$\left\| T_{\mu,\lambda}^{-1}\hat{S}^*(\hat{S}_z+I\mu)^{-1}\E_n\{\phi(Z)\ep\} \right\|_\Hx\leq \gamma := \frac{\kappa2\delta_z\tilde \gamma_3}{\lambda} + \frac{\tilde\gamma_1\tilde \gamma_3}{\lambda} + \frac{\tilde \gamma_2}{2\sqrt{\lambda\mu}}.$$
\end{lemma}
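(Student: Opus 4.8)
The plan is to split $T_{\mu,\lambda}^{-1}\hat S^*(\hat S_z+\mu)^{-1}\E_n\{\phi(Z)\ep\}$ into three pieces, each matching one summand of $\gamma$, and bound each on a common high-probability event. Writing $g:=\E_n\{\phi(Z)\ep\}$, I would first apply the resolvent identity $(\hat S_z+\mu)^{-1}=(S_z+\mu)^{-1}+(\hat S_z+\mu)^{-1}(S_z-\hat S_z)(S_z+\mu)^{-1}$, and then the splitting $\hat S^*=(\hat S-S)^*+S^*$ inside the term attached to $(S_z+\mu)^{-1}$, to obtain
\[
T_{\mu,\lambda}^{-1}\hat S^*(\hat S_z+\mu)^{-1}g
=\underbrace{T_{\mu,\lambda}^{-1}(\hat S-S)^*(S_z+\mu)^{-1}g}_{(\mathrm I)}
+\underbrace{T_{\mu,\lambda}^{-1}S^*(S_z+\mu)^{-1}g}_{(\mathrm{II})}
+\underbrace{T_{\mu,\lambda}^{-1}\hat S^*(\hat S_z+\mu)^{-1}(S_z-\hat S_z)(S_z+\mu)^{-1}g}_{(\mathrm{III})}.
\]

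For $(\mathrm I)$ I would combine $\|T_{\mu,\lambda}^{-1}\|_{op}\le\lambda^{-1}$ (since $S^*(S_z+\mu)^{-1}S\succeq0$), $\|(\hat S-S)^*\|_{op}\le\tilde\gamma_1$ (\Cref{lemma:primitive_bounds}), and $\|(S_z+\mu)^{-1}g\|\le\tilde\gamma_3$ (\Cref{lemma:SzPhi}), giving $\tilde\gamma_1\tilde\gamma_3/\lambda$. For $(\mathrm{II})$ I would factor $T_{\mu,\lambda}^{-1}S^*(S_z+\mu)^{-1}=\bigl\{T_{\mu,\lambda}^{-1}S^*(S_z+\mu)^{-1/2}\bigr\}(S_z+\mu)^{-1/2}$ and apply \Cref{lemma:Polar} with $A^*=S^*(S_z+\mu)^{-1/2}$ (so that $A^*A+\lambda=T_{\mu,\lambda}$), which bounds the first factor by $\tfrac1{2\sqrt\lambda}$; together with $\|(S_z+\mu)^{-1/2}\|_{op}\le\mu^{-1/2}$ and $\|g\|\le\tilde\gamma_2$ (\Cref{lemma:primitive_bounds}) this gives $\tilde\gamma_2/(2\sqrt{\lambda\mu})$. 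For $(\mathrm{III})$ I would keep $\hat S^*$ intact, using the deterministic bound $\|\hat S^*\|_{op}\le\kappa$ (boundedness of the kernels), $\|T_{\mu,\lambda}^{-1}\|_{op}\le\lambda^{-1}$, $\|(S_z+\mu)^{-1}g\|\le\tilde\gamma_3$, and the estimate $\|(\hat S_z+\mu)^{-1}(S_z-\hat S_z)\|_{op}\le2\delta_z$; the last follows by writing $\hat S_z+\mu=(S_z+\mu)(I+E)$ with $E=(S_z+\mu)^{-1}(\hat S_z-S_z)$, $\|E\|\le\delta_z\le\tfrac12$, so $\|(I+E)^{-1}\|\le(1-\delta_z)^{-1}\le2$ and hence $\|(\hat S_z+\mu)^{-1}(S_z-\hat S_z)\|=\|(I+E)^{-1}(S_z+\mu)^{-1}(S_z-\hat S_z)\|\le2\delta_z$. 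This gives $2\kappa\delta_z\tilde\gamma_3/\lambda$.

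Adding the three pieces yields $\gamma=2\kappa\delta_z\tilde\gamma_3/\lambda+\tilde\gamma_1\tilde\gamma_3/\lambda+\tilde\gamma_2/(2\sqrt{\lambda\mu})$. The three bounds rely on three high-probability events---control of $\|\hat S-S\|_{\HS}$ and of $\|\E_n(\phi_Z\ep)\|_{\HS}$ (two instances of \Cref{lemma:primitive_bounds}) and control of $\|(S_z+\mu)^{-1}\E_n(\phi_Z\ep)\|$ (\Cref{lemma:SzPhi})---each of probability at least $1-\eta$, so a union bound gives the claim with probability at least $1-3\eta$. The main obstacle is arranging the decomposition so that the sharp primitive bound is exposed in each term: keeping $(S_z+\mu)^{-1}$ attached to $g$ in $(\mathrm I)$ and $(\mathrm{III})$ so that $\tilde\gamma_3$ appears rather than the cruder $\mu^{-1/2}\tilde\gamma_2$, and routing the regularization factor in $(\mathrm{II})$ through \Cref{lemma:Polar}; the only estimate that is not pure bookkeeping is $\|(\hat S_z+\mu)^{-1}(S_z-\hat S_z)\|_{op}\le2\delta_z$, which is the single place the hypothesis $\delta_z\le\tfrac12$ is used.
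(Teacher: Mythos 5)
Your proof is correct and follows essentially the same route as the paper: identical three-term decomposition (after expanding the resolvent difference $(\hat S_z+\mu)^{-1}-(S_z+\mu)^{-1}=(\hat S_z+\mu)^{-1}(S_z-\hat S_z)(S_z+\mu)^{-1}$), the same three primitive events for the union bound, and the same per-term estimates. The only stylistic difference is that you obtain $\|(\hat S_z+\mu)^{-1}(S_z-\hat S_z)\|_{\mathrm{op}}\le 2\delta_z$ via the factorization $\hat S_z+\mu=(S_z+\mu)(I+E)$ rather than the paper's iterated resolvent/geometric-series argument (as in \Cref{lemma:xi1}); these are equivalent.
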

\begin{proof}
Consider the decomposition, 
\begin{align*}
&T_{\mu,\lambda}^{-1}\hat{S}^*(\hat{S}_z+I\mu)^{-1}\E_n\{\phi_Z\ep\} \pm T_{\mu,\lambda}^{-1}\hat{S}^*(S_z+I\mu)^{-1}\E_n\{\phi_Z\ep\} \\
&= T_{\mu,\lambda}^{-1}\hat{S}^* \{(\hat{S}_z+I\mu)^{-1} - (S_z+I\mu)^{-1} \} \E_n(\phi_Z\ep) \\&+T_{\mu,\lambda}^{-1}\hat{S}^*(S_z+I\mu)^{-1}\E_n(\phi_Z\ep) 
\pm T_{\mu,\lambda}^{-1}S^*(S_z+I\mu)^{-1}\E_n(\phi_Z\ep) \\
&= \underbrace{T_{\mu,\lambda}^{-1}\hat{S}^* \{(\hat{S}_z+I\mu)^{-1} - (S_z+I\mu)^{-1} \} \E_n(\phi_Z\ep)}_{\text{I}}   \\ 
&+\underbrace{T_{\mu,\lambda}^{-1}(\hat{S}-S)^*(S_z+I\mu)^{-1}\E_n(\phi_Z\ep)}_{\text{II}} \\
&+ \underbrace{T_{\mu,\lambda}^{-1}S^*(S_z+I\mu)^{-1}\E_n(\phi_Z\ep)}_{\text{III}}.
\end{align*}
\begin{enumerate}
    \item[(I)] In the first term, taking the high-probability event from Lemma \ref{lemma:SzPhi}, using the assumption and the same application of the resolvent identity as in Lemma \ref{lemma:xi1}, one can write
    \begin{align*}
       \|T_{\mu,\lambda}^{-1}\hat{S}^* \{(\hat{S}_z+I\mu)^{-1} - (S_z+I\mu)^{-1} \} \E_n(\phi_Z\ep)\|_\Hx&\leq \frac{\kappa}{\lambda} \|\{(\hat{S}_z+I\mu)^{-1} - (S_z+I\mu)^{-1} \} \E_n(\phi_Z\ep)\|_\Hz \\ &\leq\frac{2\kappa\delta_z\tilde \gamma_3}{\lambda}.
    \end{align*}
\item[(II)] For the second term, on the events from Lemmas \ref{lemma:primitive_bounds} and \ref{lemma:SzPhi},
\begin{align*}
    \|T_{\mu,\lambda}^{-1}(\hat{S}-S)^*(S_z+I\mu)^{-1}\E_n(\phi_Z\ep)\|_\Hx \leq \frac{1}{\lambda}\|(\hat{S}-S)^*\|_\op  \| (S_z+I\mu)^{-1}\E_n(\phi_Z\ep)\|_\Hz \leq \frac{1}{\lambda}\tilde \gamma_1\tilde\gamma_3 .
\end{align*}
\item[(III)] For the third term, on the event from Lemma \ref{lemma:primitive_bounds}, using Lemma \ref{lemma:Polar} with $A^* = S^*(S_z+I\mu)^{-1/2}$, 
\begin{align*}
\|
T_{\mu,\lambda}^{-1}S^*(S_z+I\mu)^{-1}\E_n(\phi_Z\ep)\|_\Hx \leq \|
T_{\mu,\lambda}^{-1}S^*(S_z+I\mu)^{-1}\|_\op \|\E_n(\phi_Z\ep)\|_\Hz \leq \frac{\tilde \gamma_2}{2\sqrt{\lambda\mu}}.
\end{align*}
\end{enumerate}
Combining all three bounds, we get with probability at least $1-3\eta$
\[\gamma := \frac{\kappa2\delta_z\tilde \gamma_3}{\lambda} + \frac{\tilde\gamma_1\tilde \gamma_3}{\lambda} + \frac{\tilde \gamma_2}{2\sqrt{\lambda\mu}}. \qedhere\]
\end{proof}

\subsection{Main result}

\begin{lemma}[Abstract Bahadur representation] 
\label{lemma:abstractbahadur}
Suppose
$$
\left\| T_{\mu,\lambda}^{-1}\hat{S}^*(\hat{S}_z+I\mu)^{-1}\E_n\{\phi(Z)\ep\} \right\|_\Hx\leq  \gamma, \quad \|(S_z+I\mu)^{-1}(\hat S_z - S_z)\|_\op\leq\delta_z \leq \frac{1}{2}, \quad  
\left\|T_{\mu,\lambda}^{-1}(\hat{T}_\mu-T_\mu)\right\|_{\mathrm{op}}\leq \delta\leq \frac{1}{2}, 
$$
$$
\left\|T^{-1}_{\mu,\lambda}(\hat{S}-S)^*(\hat{S}_z+I\mu)^{-1}\mathbb{E}_n(\phi_Z\ep)\right\|_\Hx\leq \gamma_1, \quad \|\hat{S}-S\|_\op  \leq \tilde{\gamma}_1 ,
$$
and  $$
\|T^{-1}_{\mu,\lambda}S^*\{(\hat{S}_z+I\mu)^{-1} -( S_z+I\mu)^{-1}\}\mathbb{E}_n(\phi_Z\ep)\|_\Hx \leq  \xi_1.
$$ 
Then, for all $\mu,\lambda>0$, 
$$
\hat{h}-h_{\mu,\lambda}=\E_n(U_i)+u,\quad \|u\|_\Hx \leq 2\delta\gamma + \gamma_1 + \xi_1 + \left( 2\delta^2 +  R \right) \| h_0 - h_{\mu,\lambda} \|_\Hx
$$
with \[
R := \frac{\delta_z}{\sqrt{\lambda\mu}}\left\{ \delta_z(\kappa +  \tilde \gamma_1) + \tilde\gamma_1/2 \right \} + \frac{\tilde \gamma_1}{\lambda\mu} \big\{\tilde \gamma_1 + \delta_z(1+2\delta_z)(\kappa+ \tilde \gamma_1)\big\}
\]
and 
\begin{align*}
U_i&=T_{\mu,\lambda}^{-1}\{S^*(S_z+I\mu)^{-1}(S_i-S)+(S_i-S)^*(S_z+I\mu)^{-1}S \\
&+ S^*(S_z+I\mu)^{-1}(S_z - S_{z,i})(S_z+I\mu)^{-1}S\}(h_0-h_{\mu,\lambda}) +T_{\mu,\lambda}^{-1}S^*(S_z+I\mu)^{-1}\phi(Z_i)\ep_i.
\end{align*}
Subsequently, 
\begin{align*}
 \E_n(U) &=  
T_{\mu,\lambda}^{-1}\{S^*(S_z+I\mu)^{-1}(\hat S - S) + (\hat S - S)^*(S_z+I\mu)^{-1}S\}(h_0 - h_{\mu,\lambda})
\\
&+
T_{\mu,\lambda}^{-1}S^*(S_z+I\mu)^{-1}\mathbb{E}_n(\phi_Z\varepsilon)
 \\
&+    T_{\mu,\lambda}^{-1} \{S^*(S_z+I\mu)^{-1}(S_z - \hat S_z)(S_z+I\mu)^{-1}S\} (h_0 - h_{\mu, \lambda}).
\end{align*}
\end{lemma}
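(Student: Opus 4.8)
The plan is to reduce $\hat h-h_{\mu,\lambda}$ to a single operator expression, then expand each factor around its population anchor, peeling off the linear terms (which assemble into the stated formula for $\E_n(U)$) and estimating the quadratic-and-higher remainder with the cited perturbation lemmas.

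First I would record the reduction. Starting from the empirical normal equations $\hat T_{\mu,\lambda}\hat h=\hat T_\mu h_0+\hat S^*(\hat S_z+\mu)^{-1}\E_n\{\phi(Z)\ep\}$, established in the derivation of Algorithm~\ref{alg:1} using $Y=h_0(X)+\ep$ and $\E_n\{h(X)\phi(Z)\}=\hat S h$, together with the population identity $T_{\mu,\lambda}h_{\mu,\lambda}=T_\mu h_0$ and $\hat T_{\mu,\lambda}-T_{\mu,\lambda}=\hat T_\mu-T_\mu$, subtracting gives
\[
\hat h-h_{\mu,\lambda}=\hat T_{\mu,\lambda}^{-1}\bigl[(\hat T_\mu-T_\mu)(h_0-h_{\mu,\lambda})+b\bigr],\qquad b:=\hat S^*(\hat S_z+\mu)^{-1}\E_n\{\phi(Z)\ep\},
\]
and I write $a:=(\hat T_\mu-T_\mu)(h_0-h_{\mu,\lambda})$. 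Next I replace $\hat T_{\mu,\lambda}^{-1}$ by $T_{\mu,\lambda}^{-1}$: since $\delta\le 1/2$, Lemma~\ref{lemma:linearization} yields $\|(\hat T_{\mu,\lambda}^{-1}-T_{\mu,\lambda}^{-1})(a+b)\|\le 2\delta\|T_{\mu,\lambda}^{-1}(a+b)\|\le 2\delta(\|T_{\mu,\lambda}^{-1}a\|+\|T_{\mu,\lambda}^{-1}b\|)\le 2\delta(\delta\|h_0-h_{\mu,\lambda}\|+\gamma)$, using $\|T_{\mu,\lambda}^{-1}(\hat T_\mu-T_\mu)\|_{\op}\le\delta$ and the hypothesis $\|T_{\mu,\lambda}^{-1}b\|\le\gamma$. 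For the noise term, writing $\hat S^*(\hat S_z+\mu)^{-1}=S^*(S_z+\mu)^{-1}+(\hat S-S)^*(\hat S_z+\mu)^{-1}+S^*\{(\hat S_z+\mu)^{-1}-(S_z+\mu)^{-1}\}$ shows $T_{\mu,\lambda}^{-1}b$ equals the term $T_{\mu,\lambda}^{-1}S^*(S_z+\mu)^{-1}\E_n\{\phi(Z)\ep\}$ of $\E_n(U)$ plus two pieces bounded by $\gamma_1$ and $\xi_1$ by hypothesis.

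The main step is the bias term $T_{\mu,\lambda}^{-1}a$. I expand $\hat T_\mu-T_\mu=\hat S^*(\hat S_z+\mu)^{-1}\hat S-S^*(S_z+\mu)^{-1}S$ by inserting $\hat S=S+\Delta S$ and $(\hat S_z+\mu)^{-1}=(S_z+\mu)^{-1}+\Delta_z$ with $\Delta_z:=(\hat S_z+\mu)^{-1}-(S_z+\mu)^{-1}$, and further expand $S^*\Delta_z S$ via the resolvent identity (Lemma~\ref{lemma:highorder}) into its first-order term $-S^*(S_z+\mu)^{-1}\Delta S_z(S_z+\mu)^{-1}S$ plus $S^*B_{\mathrm{res}}S$, where $B_{\mathrm{res}}:=\sum_{r\ge 2}(-1)^r(S_z+\mu)^{-1}\{\Delta S_z(S_z+\mu)^{-1}\}^r$. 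The three surviving linear terms, after applying to $h_0-h_{\mu,\lambda}$ and prepending $T_{\mu,\lambda}^{-1}$, are precisely the bias part of $\E_n(U)$ (using $S_z-\hat S_z=-\Delta S_z$), which together with the noise term confirms the displayed formula for $\E_n(U)$; the remainder is $E:=E_1+\dots+E_5$ with $E_1=(\Delta S)^*\Delta_z S$, $E_2=(\Delta S)^*(S_z+\mu)^{-1}\Delta S$, $E_3=(\Delta S)^*\Delta_z\Delta S$, $E_4=S^*B_{\mathrm{res}}S$, $E_5=S^*\Delta_z\Delta S$. I then bound $\|T_{\mu,\lambda}^{-1}E\|$ term by term: for $E_1,E_2,E_3$ use $\|T_{\mu,\lambda}^{-1}\|\le\lambda^{-1}$, $\|\Delta S\|\le\tilde\gamma_1$, $\|S\|\le\kappa$, $\|(S_z+\mu)^{-1}\|\le\mu^{-1}$ and $\|\Delta_z\|\le\delta_z(1+2\delta_z)/\mu$ (Lemma~\ref{lemma:hat_sz_inv}), giving $\tfrac{\tilde\gamma_1}{\lambda\mu}\{\tilde\gamma_1+\delta_z(1+2\delta_z)(\kappa+\tilde\gamma_1)\}$; for $E_4$ use $\|T_{\mu,\lambda}^{-1}S^*(S_z+\mu)^{-1}\|\le\tfrac{1}{2\sqrt{\lambda\mu}}$ (Lemma~\ref{lemma:Polar}) and $\|(S_z+\mu)B_{\mathrm{res}}S\|\le\kappa\sum_{r\ge 2}\delta_z^r\le 2\kappa\delta_z^2$; for $E_5$ use $\|T_{\mu,\lambda}^{-1}S^*\Delta_z\|\le\delta_z(1+2\delta_z)/(2\sqrt{\lambda\mu})$ (Lemma~\ref{lemma:T_hat_sz_inv}). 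Adding $E_4,E_5$ gives $\tfrac{\delta_z}{\sqrt{\lambda\mu}}\{\delta_z(\kappa+\tilde\gamma_1)+\tilde\gamma_1/2\}$, so $\|T_{\mu,\lambda}^{-1}E(h_0-h_{\mu,\lambda})\|\le R\|h_0-h_{\mu,\lambda}\|$. Collecting everything, $u$ is the sum $(\hat T_{\mu,\lambda}^{-1}-T_{\mu,\lambda}^{-1})(a+b)+T_{\mu,\lambda}^{-1}E(h_0-h_{\mu,\lambda})+(\le\gamma_1)+(\le\xi_1)$, whence $\|u\|\le 2\delta\gamma+\gamma_1+\xi_1+(2\delta^2+R)\|h_0-h_{\mu,\lambda}\|$.

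The hard part is the bookkeeping in the bias step: obtaining exactly the five remainder operators $E_1,\dots,E_5$ and pairing each with the precise operator-norm estimate so that the bounds telescope into the stated $R$. In particular, $E_4$ must be controlled with the polar-decomposition bound on $T_{\mu,\lambda}^{-1}S^*(S_z+\mu)^{-1}$ rather than the crude $\lambda^{-1}\mu^{-1}$, and its geometric series $\sum_{r\ge 2}\delta_z^r\le 2\delta_z^2$ (valid since $\delta_z\le 1/2$) must be invoked; likewise $E_5$ must route through Lemma~\ref{lemma:T_hat_sz_inv} rather than $\|\Delta_z\|$. Everything else — the reduction, the linearization of $\hat T_{\mu,\lambda}^{-1}$, and the noise-term split — is routine given the cited lemmas.
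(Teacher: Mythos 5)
Your proposal is correct and recovers the paper's bound exactly, including the precise expression for $R$; the five remainder operators $E_1,\dots,E_5$ and the choice of estimate for each (in particular routing $E_4$ through the polar-decomposition bound $\|T_{\mu,\lambda}^{-1}S^*(S_z+\mu)^{-1}\|\le 1/(2\sqrt{\lambda\mu})$ with the geometric tail $\sum_{r\ge 2}\delta_z^r\le 2\delta_z^2$, and $E_5$ through Lemma~\ref{lemma:T_hat_sz_inv}) match the paper's remainder terms and their bounds one for one, and the non-commutative expansion you use is the same expansion the paper codifies in Lemma~\ref{lemma:ABC}.

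The one genuine difference is how you reach the central operator identity. The paper derives $(ii)=\hat T^{-1}_{\mu,\lambda}\hat T_\mu h_0 - T^{-1}_{\mu,\lambda}T_\mu h_0 = (\hat T^{-1}_{\mu,\lambda}-T^{-1}_{\mu,\lambda})Q_n + T^{-1}_{\mu,\lambda}Q_n$ with $Q_n=(\hat T_\mu-T_\mu)(h_0-h_{\mu,\lambda})$ through a chain of resolvent-identity regroupings applied directly to the difference. You instead write the empirical normal equation $\hat T_{\mu,\lambda}\hat h=\hat T_\mu h_0 + b$, pair it with the population relation $T_{\mu,\lambda}h_{\mu,\lambda}=T_\mu h_0$ and $\hat T_{\mu,\lambda}-T_{\mu,\lambda}=\hat T_\mu-T_\mu$, and subtract to get the clean identity $\hat T_{\mu,\lambda}(\hat h-h_{\mu,\lambda})=Q_n+b$ in one stroke before ever inverting anything. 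This makes the splitting $\hat T_{\mu,\lambda}^{-1}=(\hat T_{\mu,\lambda}^{-1}-T_{\mu,\lambda}^{-1})+T_{\mu,\lambda}^{-1}$ look inevitable rather than clever, and it lets you bound the $(\hat T_{\mu,\lambda}^{-1}-T_{\mu,\lambda}^{-1})$ piece once for the combined input $a+b$, with the factor $2\delta(\delta\|h_0-h_{\mu,\lambda}\|+\gamma)$ splitting automatically into the paper's $2\delta^2\|h_0-h_{\mu,\lambda}\|$ and $2\delta\gamma$ contributions. It is a modest simplification, not a different method, but it does shorten the bookkeeping and make the origin of each term in the final bound easier to see.

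Two small clean-up points if you write this out in full: you should spell out the verification that $\hat T_{\mu,\lambda}\hat h=\hat T_\mu h_0+b$ (it is the single line $\E_n\{Y\phi(Z)\}=\hat S h_0+\E_n\{\phi(Z)\ep\}$ substituted into the closed form), and you should note explicitly that $\|(S_z-\hat S_z)(S_z+\mu)^{-1}\|_{\op}=\|(S_z+\mu)^{-1}(\hat S_z-S_z)\|_{\op}\le\delta_z$ (adjoint), since that is the version of the hypothesis your geometric series for $E_4$ actually uses.
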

\begin{proof}
    We proceed in steps. First, decomposing the difference between $
\hat{h}=\hat{T}_{\mu,\lambda}\hat{S}^* (\hat{S}_z+I\mu)^{-1} \E_n \{Y\phi(Z)\}$ and $ h_{\mu,\lambda}=T_{\mu,\lambda}^{-1}T_{\mu} h_0$ yields 
\begin{align*}
\hat{h}-h_{\mu,\lambda}
    &=\hat{T}^{-1}_{\mu,\lambda} \hat{S}^* (\hat{S}_z+I\mu)^{-1}\mathbb{E}_n\{Y\phi(Z)\}-T_{\mu,\lambda}^{-1}T_{\mu}h_0 \\
    &=\hat{T}^{-1}_{\mu,\lambda} \hat{S}^* (\hat{S}_z+I\mu)^{-1}\mathbb{E}_n[\{h_0(X)+\ep\}\phi(Z)]-T_{\mu,\lambda}^{-1}T_{\mu} h_0  \\
    &=\hat{T}^{-1}_{\mu,\lambda}\hat{S}^*  (\hat{S}_z+I\mu)^{-1}\mathbb{E}_n(\phi_Z\ep)+\hat{T}^{-1}_{\mu,\lambda} \hat{T}_{\mu}h_0-T_{\mu,\lambda}^{-1}T_{\mu} h_0\\
    &=:(i)+(ii).
    \end{align*}
Here, $(i) := \hat{T}^{-1}_{\mu,\lambda}\hat{S}^*  (\hat{S}_z+I\mu)^{-1}\mathbb{E}_n(\phi_Z\ep)$ and $(ii) := \hat{T}^{-1}_{\mu,\lambda} \hat{T}_{\mu}h_0-T_{\mu,\lambda}^{-1}T_{\mu} h_0.$
Next,  $(i)=(\hat{T}^{-1}_{\mu,\lambda}-T^{-1}_{\mu,\lambda})P_n+T^{-1}_{\mu,\lambda}P_n$ where $P_n=\hat{S}^*(\hat{S}_z+I\mu)^{-1}\mathbb{E}_n(\phi_Z\ep).$ This follows by writing 
$
    \hat{T}^{-1}_{\mu,\lambda}
    =(\hat{T}^{-1}_{\mu,\lambda}-T^{-1}_{\mu,\lambda})+T^{-1}_{\mu,\lambda}.
    $
Invoking Lemma \ref{lemma:linearization} and the assumed high probability events, the first term can be bounded by 
$$
\|(\hat{T}^{-1}_{\mu,\lambda}-T^{-1}_{\mu,\lambda})P_n\|_\Hx\leq 2\delta\|T_{\mu,\lambda}^{-1}P_n\|_\Hx\leq 2\delta\gamma.
$$
Developing the remaining term,
\begin{align*}
T^{-1}_{\mu,\lambda}P_n&=T^{-1}_{\mu,\lambda}(\hat{S}-S)^*(\hat{S}_z+I\mu)^{-1}\mathbb{E}_n(\phi_Z\ep)+
T^{-1}_{\mu,\lambda}S^*(\hat{S}_z+I\mu)^{-1}\mathbb{E}_n(\phi_Z\ep) \\
&= T^{-1}_{\mu,\lambda}(\hat{S}-S)^*(\hat{S}_z+I\mu)^{-1}\mathbb{E}_n(\phi_Z\ep)+
T^{-1}_{\mu,\lambda}S^*(\hat{S}_z+I\mu)^{-1}\mathbb{E}_n(\phi_Z\ep) \pm T^{-1}_{\mu,\lambda}S^*(S_z+I\mu)^{-1}\mathbb{E}_n(\phi_Z\ep)
 \\
&= T^{-1}_{\mu,\lambda}(\hat{S}-S)^*(\hat{S}_z+I\mu)^{-1}\mathbb{E}_n(\phi_Z\ep)+
T^{-1}_{\mu,\lambda}S^*\{(\hat{S}_z+I\mu)^{-1} -( S_z+I\mu)^{-1}\}\mathbb{E}_n(\phi_Z\ep) \\
&+ T^{-1}_{\mu,\lambda}S^*(S_z+I\mu)^{-1}\mathbb{E}_n(\phi_Z\ep),
\end{align*} 
where by hypothesis $\|T^{-1}_{\mu,\lambda}(\hat{S}-S)^*(\hat{S}_z+I\mu)^{-1}\mathbb{E}_n(\phi_Z\ep)\|_\Hx\leq \gamma_1$ and $\|T^{-1}_{\mu,\lambda}S^*\{(\hat{S}_z+I\mu)^{-1} -( S_z+I\mu)^{-1}\}\mathbb{E}_n(\phi_Z\ep)\|_\Hx \leq \xi_1.$
Next, we show that $(ii)$ is equal to $(\hat{T}^{-1}_{\mu,\lambda}-T^{-1}_{\mu,\lambda})Q_n 
+T^{-1}_{\mu,\lambda}Q_n$ with $Q_n:=(\hat{T}_{\mu}-T_{\mu})(h_0-h_{\mu,\lambda})$. This argument is more involved. Expanding 
    \begin{align*}
    \hat{T}^{-1}_{\mu,\lambda} \hat{T}_{\mu}h_0-T^{-1}_{\mu,\lambda}T_{\mu}h_0
    &=(\hat{T}^{-1}_{\mu,\lambda}-T^{-1}_{\mu,\lambda})\hat{T}_{\mu}h_0
    +T_{\mu,\lambda}^{-1}\hat{T}_{\mu}h_0
    -T^{-1}_{\mu,\lambda}T_{\mu}h_0 \\
    &=(\hat{T}^{-1}_{\mu,\lambda}-T^{-1}_{\mu,\lambda})(\hat{T}_{\mu}-T_{\mu})h_0
    +(\hat{T}^{-1}_{\mu,\lambda}-T^{-1}_{\mu,\lambda})T_{\mu}h_0
    +T_{\mu,\lambda}^{-1}\hat{T}_{\mu}h_0
    -T^{-1}_{\mu,\lambda}T_{\mu}h_0 \\
    &=
      (\hat{T}^{-1}_{\mu,\lambda}-T^{-1}_{\mu,\lambda})(\hat{T}_{\mu}-T_{\mu})h_0-\hat{T}^{-1}_{\mu,\lambda}(\hat{T}_{\mu}-T_{\mu})h_{\mu,\lambda}+T_{\mu,\lambda}^{-1}(\hat{T}_{\mu}-T_{\mu})h_0,
\end{align*}
where in the last line we use the resolvent identity to write
\begin{align*}
    (\hat{T}^{-1}_{\mu,\lambda}-T^{-1}_{\mu,\lambda})T_{\mu}h_0
    =\hat{T}^{-1}_{\mu,\lambda}(T_{\mu}-\hat{T}_{\mu})T_{\mu,\lambda}^{-1}T_{\mu}h_0 
    =\hat{T}^{-1}_{\mu,\lambda}(T_{\mu}-\hat{T}_{\mu})h_{\mu,\lambda} 
    =-\hat{T}^{-1}_{\mu,\lambda}(\hat{T}_{\mu}-T_{\mu})h_{\mu,\lambda}.
\end{align*}
To conclude the argument, we write
\begin{align*}
    &-\hat{T}^{-1}_{\mu,\lambda}(\hat{T}_{\mu}-T_\mu)h_{\mu,\lambda}+T_{\mu,\lambda}^{-1}(\hat{T}_{\mu}-T_\mu)h_0\pm T^{-1}_{\mu,\lambda}(\hat{T}_{\mu}-T_\mu)h_{\mu,\lambda}  \\
&=(T^{-1}_{\mu,\lambda}-\hat{T}^{-1}_{\mu,\lambda})(\hat{T}_{\mu}-T_\mu)h_{\mu,\lambda}+T_{\mu,\lambda}^{-1}(\hat{T}_{\mu}-T_\mu)(h_0-h_{\mu,\lambda}) \\
&=(\hat{T}^{-1}_{\mu,\lambda}-T^{-1}_{\mu,\lambda})(\hat{T}_{\mu}-T_\mu)(-h_{\mu,\lambda})+T_{\mu,\lambda}^{-1}(\hat{T}_{\mu}-T_\mu)(h_0-h_{\mu,\lambda})\\
&=(\hat{T}^{-1}_{\mu,\lambda}-T^{-1}_{\mu,\lambda})(\hat{T}_{\mu}-T_\mu)(-h_{\mu,\lambda})+T_{\mu,\lambda}^{-1} Q_n.
\end{align*}
By Lemma \ref{lemma:linearization} and the high probability events
$$
\|(\hat{T}^{-1}_{\mu,\lambda}-T^{-1}_{\mu,\lambda})Q_n\|_\Hx\leq 2\delta \|T_{\mu,\lambda}^{-1}Q_n\|_\Hx\leq 2\delta^2\|h_0-h_{\mu,\lambda}\|_\Hx.
$$
Developing the remaining term, 
\begin{align*}
    T_{\mu,\lambda}^{-1} Q_n &= T_{\mu,\lambda}^{-1} (\hat{T}_\mu - T_\mu)(h_0 - h_{\mu,\lambda})= T_{\mu,\lambda}^{-1}\{\hat S^*(\hat S_z+I\mu)^{-1} \hat S - S^*( S_z+I\mu)^{-1}  S \}(h_0 - h_{\mu,\lambda}). 
    \end{align*}
We apply the decomposition in Lemma \ref{lemma:ABC} to the term $\hat{S}^*(\hat{S}_z + I\mu)^{-1} \hat{S}$. We set $
    \hat{A} = \hat{S}^*$, $\hat{B} = (\hat{S}_z + I\mu)^{-1}$ and $ \hat{C} = \hat{S}$, $
    A = S^*$, $B = (S_z+I\mu)^{-1}$, $ C = S$.
Defining the perturbations as $\Delta S \coloneqq \hat{S} - S$ and $\Delta B \coloneqq (\hat{S}_z + I\mu)^{-1} - (S_z+I\mu)^{-1}$, the lemma yields
\begin{align*}
\hat{S}^*(\hat{S}_z + I\mu)^{-1} \hat{S} 
&= S^*(S_z+I\mu)^{-1} S 
+ S^*(S_z+I\mu)^{-1} \Delta C 
+ S^* \Delta B S 
+ S^* \Delta B \Delta C \\
&\quad + \Delta A (S_z+I\mu)^{-1} S 
+ \Delta A (S_z+I\mu)^{-1} \Delta C 
+ \Delta A \Delta B S 
+ \Delta A \Delta B \Delta C.
\end{align*}
Thus, the difference is
\begin{align*}
\hat{S}^*(\hat{S}_z + I\mu)^{-1} \hat{S} - S^*(S_z+I\mu)^{-1} S 
&= S^*(S_z+I\mu)^{-1} \Delta S 
+ S^* \Delta B S 
+ S^* \Delta B \Delta S \\
&\quad + \Delta S^* (S_z+I\mu)^{-1} S 
+ \Delta S^* (S_z+I\mu)^{-1} \Delta S \\
&\quad + \Delta S^* \Delta B S 
+ \Delta S^* \Delta B \Delta S.
\end{align*}
Let 
$$
 \Delta B =   \underbrace{( S_z+I\mu)^{-1}  (S_z - \hat S_z)( S_z+I\mu)^{-1}}_{\Delta_1B} + \underbrace{\sum_{r = 2}^{\infty} ( S_z+I\mu)^{-1}  \{(S_z - \hat S_z)( S_z+I\mu)^{-1}\}^r}_{\Delta_2B}.
 $$
 Note the application of Lemma \ref{lemma:hat_sz_inv}. Thus, $\|\Delta B\|_\op \leq \frac{\delta_z(1+2\delta_z)}{\mu}.$
 Since $S^*(S_z+I\mu)^{-1} \Delta S$,  $\Delta S^* (S_z+I\mu)^{-1} S $, and $S^* \Delta_1 B S $ are part of the Bahadur representation, we control the following terms with the assumed events and Lemma \ref{lemma:Polar} taking $A^* = S^*(S_z+I\mu)^{-1/2}$.
 \begin{enumerate}
\item $T_{\mu, \lambda}^{-1}S^* \Delta_2 B S $:
$$ \|T_{\mu, \lambda}^{-1}S^* \Delta_2 B S\|_\op \leq \kappa\frac{\delta_z^2}{2\sqrt{\lambda\mu}(1-\delta_z) } \leq  \kappa\frac{\delta_z^2}{\sqrt{\lambda\mu}}.$$
\item $T_{\mu, \lambda}^{-1}S^* \Delta B \Delta S $:
$$\|T_{\mu, \lambda}^{-1} S^* \Delta B \Delta S\|_\op\leq  \frac{\delta_z(1+2\delta_z)}{2\sqrt{\lambda\mu}} \tilde \gamma_1.$$
\item $T_{\mu, \lambda}^{-1}\Delta S^* (S_z+I\mu)^{-1} \Delta S $:
$$\| T_{\mu, \lambda}^{-1} \Delta S^* (S_z+I\mu)^{-1} \Delta S \|_\op \leq \frac{\tilde \gamma_1^2}{\lambda\mu}.
$$
\item $ T_{\mu, \lambda}^{-1}\Delta S^* \Delta B S $:
$$
\|  T_{\mu, \lambda}^{-1}\Delta S^* \Delta B S\|_\op\leq
\frac{\tilde \gamma_1\delta_z(1+2\delta_z)\kappa}{\lambda\mu}.$$
\item $T_{\mu, \lambda}^{-1}\Delta S^* \Delta B \Delta S$:
$$\|T_{\mu, \lambda}^{-1}\Delta S^* \Delta B \Delta S\|_\op\leq \frac{\delta_z(1+2\delta_z)}{\lambda\mu}\tilde\gamma_1^2.
$$
\end{enumerate}
Together, the difference $
\hat{S}^*(\hat{S}_z + I\mu)^{-1} \hat{S} - S^*(S_z+I\mu)^{-1} S
$, without the terms accounted for in the Bahadur, can be bounded by the sum of the five bounds 
\begin{align*}
&\kappa\frac{\delta_z^2}{\sqrt{\lambda\mu}}
+ \frac{\delta_z(1+2\delta_z)}{2\sqrt{\lambda\mu}} \tilde \gamma_1
+ \frac{\tilde{\gamma}_1^2}{\lambda\mu}
+ \frac{\tilde{\gamma}_1\delta_z(1+2\delta_z)\kappa}{\lambda\mu}
+ \frac{\delta_z(1+2\delta_z)}{\lambda\mu}\tilde{\gamma}_1^2 \\
& =  \frac{\delta_z}{\sqrt{\lambda\mu}}\{ \delta_z(\kappa +  \tilde \gamma_1) + \frac{\tilde\gamma_1}{2}\} + \frac{\tilde \gamma_1}{\lambda\mu}\{\tilde \gamma_1 + \delta_z(1+2\delta_z)(\kappa+ \tilde \gamma_1)\} .
\end{align*}
Together with the first part of the proof,
$$\|u\|_\Hx\leq 2\delta\gamma + \gamma_1 + \xi_1+ \left[2\delta^2 +
\frac{\delta_z}{\sqrt{\lambda\mu}}\{ \delta_z(\kappa +  \tilde \gamma_1) + \frac{\tilde\gamma_1}{2}\} + \frac{\tilde \gamma_1}{\lambda\mu}\{\tilde \gamma_1 + \delta_z(1+2\delta_z)(\kappa+ \tilde \gamma_1)\} \right]  \| h_0-h_{\mu,\lambda}\|_\Hx.
$$
\end{proof}
\begin{theorem}[Bahadur representation]
\label{theorem:bahadur}
Let $0 < \eta < 1/5$ and let $n\geq \max\{N_{\delta_z}, N_\delta\}$.
Then, with probability at least $1-5\eta$ there exists $u \in \Hx$ such that
$
  \hat{h} - h_{\mu,\lambda} = \mathbb{E}_n[U] + u
$ and $
  \|u\|_{\Hx}
  \;\lesssim\; V(n,\mu,\lambda,\eta) + B(n,\mu,\lambda,\eta,h_0)
  \;=:\; \Delta_U$ where
\begin{align*}
    l(\eta) &= \log(2/\eta), \\  \mathfrak{n}_z(\mu) &= \tr((S_z+I\mu)^{-2}S_z), \\
    V(n, \mu, \lambda, \eta) & =
    \left(\frac{\nmu^{3/2}l(\eta)^4} {n^2\lambda^2\mu}\vee\frac{\nmu^{1/2}l(\eta)^3} {n^{3/2}\lambda^{3/2}\mu^{3/2}} \vee \frac{\nmu l(\eta)^3}{n^{3/2}\lambda^{3/2}\mu}  \vee \frac{l(\eta)^2}{n\lambda \mu^{3/2}}  \vee \frac{\mathfrak{n}_z(\mu)^{1/2}l(\eta)^{2}}{n\sqrt{\lambda\mu}}
\right), \\
B(n, \mu, \lambda, \eta, h_0) &=   \left( \frac{l(\eta)^2}{{n\lambda}\mu^2} \vee \frac{l(\eta)^2\mathfrak{n}_z(\mu)}{n\sqrt{\lambda\mu}} \vee \frac{l(\eta)^2\mathfrak{n}_z(\mu)^{1/2}}{n\lambda\mu} \right )\| h_0-h_{\mu,\lambda}\|_\Hx.
\end{align*}
\end{theorem}
\begin{proof}
    Recall the following bounds from the high probability events in Lemma \ref{lemma:abstractbahadur}: 
    \begin{align*}
        \gamma_1 &= \frac{\tilde{\gamma_1}\tilde{\gamma_2}}{\lambda \mu} ,\\
     \gamma& = \frac{\kappa2\delta_z\tilde \gamma_3}{\lambda} + \frac{\tilde\gamma_1\tilde \gamma_3}{\lambda} + \frac{\tilde \gamma_2}{2\sqrt{\lambda\mu}}, \\
       \xi_1 &= \frac{\delta_z \tilde \gamma_2 }{\sqrt{\lambda\mu}}.
    \end{align*}
Note that for these bounds to hold, each of the primitive events needs to hold. Those are, $\tilde \gamma_1$, $\tilde \gamma_2$, $\tilde \gamma_3$ each with probability $1-\eta$. The $\delta_z$ bound holds by Lemma \ref{lemma:sz_hat} with probability $1 - \eta$ and $\delta$ holds on the events $\tilde \gamma_1$,  $\delta_z$ and $\tilde \gamma_z$. Thus, using a union bound with probability $1-5\eta$, all the bounds hold and 
$$\|u\|_\Hx\leq 2\delta\gamma + \gamma_1 + \xi_1+ \left[2\delta^2 +
\frac{\delta_z}{\sqrt{\lambda\mu}}\biggl\{\delta_z(\kappa +  \tilde \gamma_1) + \frac{\tilde\gamma_1}{2}\biggr\} + \frac{\tilde \gamma_1}{\lambda\mu}\biggl\{\tilde \gamma_1 + \delta_z(1+2\delta_z)(\kappa+ \tilde \gamma_1)\biggl\}\right]  \| h_0-h_{\mu,\lambda}\|_\Hx.
$$
Now, let $l(\eta) = \log(2/\eta)$ collecting terms while suppressing constants and considering a regime where $\mu\le \lambda$
\begin{align*}
    &\delta\gamma  + \gamma_1 +\xi_1 \\
&= \mathcal{O}\left(\left (\frac{\nmu^{1/2}l(\eta)^2} {n\lambda\mu}+ \frac{l(\eta)}{\sqrt{n\lambda}\mu} \right) \left( \frac{\nmu l(\eta)^2}{n\lambda} + \frac{l(\eta)}{\sqrt{n\lambda\mu}}\right)
\ \vee\
\frac{l(\eta)^{2}}{n\lambda\mu}
\ \vee\
\frac{\mathfrak{n}_z(\mu)^{1/2}l(\eta)^{2}}{n\sqrt{\lambda\mu}}
\right) \\
&= \mathcal{O}\left(\frac{\nmu^{3/2}l(\eta)^4} {n^2\lambda^2\mu}\vee\frac{\nmu^{1/2}l(\eta)^3} {n^{3/2}\lambda^{3/2}\mu^{3/2}} \vee \frac{\nmu l(\eta)^3}{n^{3/2}\lambda^{3/2}\mu}  \vee \frac{l(\eta)^2}{n\lambda \mu^{3/2}} \vee 
\frac{l(\eta)^{2}}{n\lambda\mu}
\ \vee\
\frac{\mathfrak{n}_z(\mu)^{1/2}l(\eta)^{2}}{n\sqrt{\lambda\mu}}
\right)
\\
    &\implies  \delta\gamma  + \gamma_1 +\xi_1\lesssim  \frac{\nmu^{3/2}l(\eta)^4} {n^2\lambda^2\mu}\vee\frac{\nmu^{1/2}l(\eta)^3} {n^{3/2}\lambda^{3/2}\mu^{3/2}} \vee \frac{\nmu l(\eta)^3}{n^{3/2}\lambda^{3/2}\mu}  \vee \frac{l(\eta)^2}{n\lambda \mu^{3/2}}  \vee \frac{\mathfrak{n}_z(\mu)^{1/2}l(\eta)^{2}}{n\sqrt{\lambda\mu}} =: V(n, \mu, \lambda, \eta).
\end{align*}
\begin{align*}&
\left[2\delta^2 +\frac{\delta_z}{\sqrt{\lambda\mu}}\{ \delta_z(\kappa +  \tilde \gamma_1) + \frac{\tilde\gamma_1}{2}\} +  \frac{\tilde \gamma_1}{\lambda\mu}\{\tilde \gamma_1 + \delta_z(1+2\delta_z)(\kappa+ \tilde \gamma_1) \} \right] \| h_0-h_{\mu,\lambda}\|_\Hx  \\&= \mathcal{O}\left( \left\{\left (\frac{\nmu^{1/2}l(\eta)^2} {n\lambda\mu}+ \frac{l(\eta)}{\sqrt{n\lambda}\mu} \right)^2\vee\frac{l(\eta)^2\mathfrak{n}_z(\mu)}{n\sqrt{\lambda\mu}} \vee \frac{l(\eta)^2\mathfrak{n}_z(\mu)^{1/2}}{n\lambda\mu} \right \}\| h_0-h_{\mu,\lambda}\|_\Hx
\right )  \\&= \mathcal{O}\left( \left\{\frac{\nmu l(\eta)^4} {n^2\lambda^2\mu^2}\vee \frac{l(\eta)^2}{{n\lambda}\mu^2} \vee\frac{l(\eta)^2\mathfrak{n}_z(\mu)}{n\sqrt{\lambda\mu}} \vee \frac{l(\eta)^2\mathfrak{n}_z(\mu)^{1/2}}{n\lambda\mu} \right \}\| h_0-h_{\mu,\lambda}\|_\Hx
\right ) 
  \\&= \mathcal{O}\left( \left\{ \frac{l(\eta)^2}{{n\lambda}\mu^2} \vee \frac{l(\eta)^2\mathfrak{n}_z(\mu)}{n\sqrt{\lambda\mu}} \vee \frac{l(\eta)^2\mathfrak{n}_z(\mu)^{1/2}}{n\lambda\mu} \right \}\| h_0-h_{\mu,\lambda}\|_\Hx
\right ) 
 \\
&\implies  \left[2\delta^2 +\frac{\delta_z}{\sqrt{\lambda\mu}}\{ \delta_z(\kappa +  \tilde \gamma_1) + \frac{\tilde\gamma_1}{2}\} +  \frac{\tilde \gamma_1}{\lambda\mu}\{\tilde \gamma_1 + \delta_z(1+2\delta_z)(\kappa+ \tilde \gamma_1) \} \right] \| h_0-h_{\mu,\lambda}\|_\Hx \\ &\quad\lesssim  \left\{ \frac{l(\eta)^2}{{n\lambda}\mu^2} \vee\frac{l(\eta)^2\mathfrak{n}_z(\mu)}{n\sqrt{\lambda\mu}}\vee \frac{l(\eta)^2\mathfrak{n}_z(\mu)^{1/2}}{n\lambda\mu} \right \}\| h_0-h_{\mu,\lambda}\|_\Hx=: B(n, \mu, \lambda, \eta, h_0).
\end{align*}

Combining this $ \|u\|_\Hx\lesssim  V(n, \mu, \lambda, \eta) + B(n, \mu, \lambda, \eta, h_0)=: \Delta_U$.

\end{proof}

%\newpage
\section{Feasible bootstrap}\label{sec:bahadur2}
We derive a feasible bootstrap in order to derive $R$. 
Specifically, we aim  to bound $\|Z_{\mathfrak{B}} -\mathfrak{B} \|_\Hx$.
Recall that 
\begin{align*}
    Z_{\mathfrak{B}} &=  \frac{1}{n}\sum_{i = 1}^n\sum_{j = 1}^n\left(\frac{V_i-V_j}{\sqrt{2}}\right)h_{ij}, \quad
    \mathfrak{B} =  \frac{1}{n}\sum_{i = 1}^n\sum_{j = 1}^n\left(\frac{\hat V_i- \hat V_j}{\sqrt{2}}\right)h_{ij} 
\end{align*}
with
\begin{align*}
V_i &
=T_{\mu,\lambda}^{-1}\{S^*(S_z+\mu)^{-1}\phi(Z_i)(Y_i - h_{\mu,\lambda}(X_i)) + S_i^*(S_z+\mu)^{-1}S(h_0-h_{\mu,\lambda})
\\&-S^*(S_z+\mu)^{-1}S_{z,i}(S_z+\mu)^{-1}S  (h_0-h_{\mu,\lambda}) \}.
\end{align*}
The feasible version contains
\begin{align*}
    \hat V_i &
=\hat T_{\mu,\lambda}^{-1}\{\hat S^*( \hat S_z+\mu)^{-1}\phi(Z_i)\hat \epsilon_i + S_i^*(\hat S_z+\mu)^{-1}\E_n[\hat \varepsilon\phi(Z)]\\
&-\hat S^*(\hat S_z+\mu)^{-1}S_{z,i}(\hat S_z+\mu)^{-1}\E_n[\hat \varepsilon\phi(Z)]\}.
\end{align*}
This has already been derived in Appendix \ref{sec:closed_form_Bootstrap}. To lighten notation, let  
\begin{align*}
    w_i &= S^*(S_z+\mu)^{-1}\phi(Z_i)(Y_i - h_{\mu,\lambda}(X_i)) + S_i^*(S_z+\mu)^{-1}S(h_0-h_{\mu,\lambda})- S^*(S_z+\mu)^{-1}S_{z,i}(S_z+\mu)^{-1}S  (h_0-h_{\mu,\lambda}),\\
    \hat w_i &=\hat S^*( \hat S_z+\mu)^{-1}\phi(Z_i)\hat \epsilon_i + S_i^*(\hat S_z+\mu)^{-1}\E_n[\hat \varepsilon\phi(Z)]
-\hat S^*(\hat S_z+\mu)^{-1}S_{z,i}(\hat S_z+\mu)^{-1}\E_n[\hat \varepsilon\phi(Z)] ,\\
w_{ij} &= \frac{w_i - w_j}{\sqrt{2}}, \\
\hat w_{ij} &= \frac{\hat w_i - \hat w_j}{\sqrt{2}} .
\end{align*}

\subsection{Helpful orderings}

\begin{lemma}[Regularized empirical block decomposition]\label{lemma:regularized_cov}
Let $\hat S_x, \hat S_z \succeq 0$.  
Then, for all $\mu > 0$, the block matrix admits the decomposition.
\begin{equation*}
\begin{pmatrix}
\hat S_x & \hat S^{*}\\[2pt]
\hat S & \hat S_z + \mu I 
\end{pmatrix}
=
\begin{pmatrix}
I & \hat S^{*}\\[2pt]
0 & \hat S_z + \mu I
\end{pmatrix}
\begin{pmatrix}
\hat S_x - \hat S^{*}(\hat S_z + \mu I)^{-1}\hat S & 0\\[4pt]
(\hat S_z + \mu I )^{-1}\hat S & I
\end{pmatrix}.
\end{equation*}
Moreover, it holds that
$ \hat S_x \succeq \hat S^{*}(\hat S_z + \mu I)^{-1}\hat S \succeq 0.$
\end{lemma}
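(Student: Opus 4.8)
The statement to prove is Lemma~\ref{lemma:regularized_cov}, an exact analogue of Lemma~\ref{lemma:X>T} but for the \emph{empirical} operators and with the regularizer $\mu$ inserted in the lower-right block. Let me sketch a proof plan.

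\textbf{Plan.} The proof has two parts: verifying the factorization by direct multiplication, then deducing the operator inequalities $\hat S_x \succeq \hat S^{*}(\hat S_z+\mu)^{-1}\hat S \succeq 0$. For the first part, I would simply multiply out the right-hand side block-operator product and check it equals the left-hand side, exactly as the proof of Lemma~\ref{lemma:X>T} does (``can be checked by matrix multiplication''). Concretely, writing $B = \hat S_z+\mu$, the product is
\[
\begin{pmatrix} I & \hat S^* \\ 0 & B \end{pmatrix}\begin{pmatrix} \hat S_x - \hat S^* B^{-1}\hat S & 0 \\ B^{-1}\hat S & I \end{pmatrix}
= \begin{pmatrix} \hat S_x - \hat S^* B^{-1}\hat S + \hat S^* B^{-1}\hat S & \hat S^* \\ B B^{-1}\hat S & B \end{pmatrix}
= \begin{pmatrix} \hat S_x & \hat S^* \\ \hat S & \hat S_z+\mu \end{pmatrix},
\]
which needs $B$ invertible — and $B = \hat S_z + \mu \succeq \mu I \succ 0$ since $\hat S_z \succeq 0$, so this is automatic (unlike in Lemma~\ref{lemma:X>T}, where invertibility of $S_z$ had to be assumed).

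\textbf{Operator inequalities.} For $\hat S^* B^{-1}\hat S \succeq 0$: this operator equals $(B^{-1/2}\hat S)^*(B^{-1/2}\hat S)$, a symmetric square, hence positive semidefinite — just as in Lemma~\ref{lemma:X>T}. For the upper bound $\hat S_x \succeq \hat S^* B^{-1}\hat S$, I would mirror the argument in Lemma~\ref{lemma:X>T}: the block operator $\begin{pmatrix} \hat S_x & \hat S^* \\ \hat S & \hat S_z + \mu\end{pmatrix}$ is positive semidefinite (it is the regularized covariance operator of the stacked feature $(\psi(X),\phi(Z))$ plus $\mathrm{diag}(0,\mu)$, hence PSD), and the Schur complement $\hat S_x - \hat S^* B^{-1}\hat S$ is self-adjoint, so it suffices to rule out a negative eigenvalue. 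If $u_\nu$ were an eigenvector with eigenvalue $\nu<0$, then testing the block operator against the vector $(u_\nu,\,-B^{-1}\hat S u_\nu)$ and using the factorization yields $\nu\|u_\nu\|^2 < 0$, contradicting positive semidefiniteness. Alternatively — and perhaps more cleanly — I can avoid the eigenvalue argument entirely: the PSD-ness of the block operator is equivalent to the nonnegativity of its Schur complement, which is a standard fact; but since the paper's companion Lemma~\ref{lemma:X>T} uses the eigenvector route, I would follow that for consistency.

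\textbf{Main obstacle.} There is essentially no obstacle here — this is a routine verification. The only point requiring a sentence of care is justifying that the block operator $\begin{pmatrix} \hat S_x & \hat S^* \\ \hat S & \hat S_z + \mu\end{pmatrix}$ is PSD; one notes $\hat S_x = \tfrac1n\sum_i \psi(X_i)\otimes\psi(X_i)^*$, $\hat S_z = \tfrac1n\sum_i \phi(Z_i)\otimes\phi(Z_i)^*$, $\hat S = \tfrac1n\sum_i \phi(Z_i)\otimes\psi(X_i)^*$, so the block without $\mu$ is $\tfrac1n\sum_i (\psi(X_i),\phi(Z_i))\otimes(\psi(X_i),\phi(Z_i))^* \succeq 0$, and adding $\mathrm{diag}(0,\mu I)\succeq 0$ preserves this. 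Then the factorization and the Schur-complement reasoning deliver the claim. I would write this up in about the same length as the proof of Lemma~\ref{lemma:X>T}, noting explicitly that the regularization makes the invertibility hypothesis unnecessary.
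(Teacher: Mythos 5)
Your proposal matches the paper's proof essentially exactly: same verification of the factorization by block multiplication, same observation that the block operator is PSD as a sum of the empirical covariance of the stacked features and $\mathrm{diag}(0,\mu I)$, and the same eigenvector-contradiction argument for the Schur complement. Your extra remark that regularization makes invertibility of $\hat S_z + \mu$ automatic (unlike in Lemma~\ref{lemma:X>T}, where invertibility of $S_z$ is assumed) is a correct and worthwhile observation that the paper leaves implicit.
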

\begin{proof}
    The left-hand side is 
    \begin{equation*}
            \begin{pmatrix}
\hat S_x & \hat S^{*}\\[2pt]
\hat S & \hat S_z + \mu I 
\end{pmatrix} = 
            \begin{pmatrix}
\hat S_x & \hat S^{*}\\[2pt]
\hat S & \hat S_z 
\end{pmatrix} +   
\begin{pmatrix}
 0 & 0\\[2pt]
 0 & \mu I
\end{pmatrix}.
    \end{equation*}
Thus, the left-hand side is positive semidefinite. The operator $\hat S_x-\hat S^{*}(\hat S_z+\mu I)^{-1}\hat S$ is self-adjoint. 
Assume, towards a contradiction, that it has an eigenvector $u_\nu\neq0$ with eigenvalue $\nu<0$.
Using the factorization above, a direct computation gives
\[
\begin{pmatrix} u_\nu \\ -( \hat S_z+\mu I)^{-1}\hat Su_\nu \end{pmatrix}^{*}
\begin{pmatrix} \hat S_x & \hat S^{*} \\ \hat S & \hat S_z+\mu I \end{pmatrix}
\begin{pmatrix} u_\nu \\ -( \hat S_z+\mu I)^{-1}\hat Su_\nu \end{pmatrix}
\]
\[=
\begin{pmatrix} u_\nu \\ 0 \end{pmatrix}^{*}
\begin{pmatrix} \{\hat S_x-\hat S^{*}(\hat S_z+\mu I)^{-1}\hat S\}u_\nu \\ 0 \end{pmatrix}
= \nu u_\nu^*u_\nu < 0.
\]
But $\begin{psmallmatrix}\hat S_x&\hat S^{*}\\ \hat S&\hat S_z+\mu I\end{psmallmatrix}$ is positive semidefinite, so its quadratic form cannot be negative. Thus,
\[
\hat S_x-\hat S^{*}(\hat S_z+\mu I)^{-1}\hat S \succeq 0. \qedhere
\]

\end{proof}

\begin{lemma}[Bound on the regularized empirical cross-covariance]
\label{lemma:emp_suhas}
The preconditioned empirical cross-covariance is bounded in operator norm; specifically,
\[
\big\|(\hat S_z+\mu I)^{-1/2}\hat S\big\|_\op \le \kappa_x.
\]
\end{lemma}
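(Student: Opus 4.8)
The plan is to mirror the proof of the population bound in Lemma~\ref{lemma:SzSsubpace}, with every operator replaced by its empirical counterpart, invoking the empirical block decomposition of Lemma~\ref{lemma:regularized_cov} in place of Lemma~\ref{lemma:X>T}.

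First I would record the factorization $\hat S^*(\hat S_z+\mu I)^{-1}\hat S = \bigl\{(\hat S_z+\mu I)^{-1/2}\hat S\bigr\}^*\bigl\{(\hat S_z+\mu I)^{-1/2}\hat S\bigr\}$, so that, writing $A := (\hat S_z+\mu I)^{-1/2}\hat S$, one has $\|A\|_{op}^2 = \|A^*A\|_{op} = \bigl\|\hat S^*(\hat S_z+\mu I)^{-1}\hat S\bigr\|_{op}$. It therefore suffices to bound $\hat S^*(\hat S_z+\mu I)^{-1}\hat S$ from above in operator norm.

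Second, Lemma~\ref{lemma:regularized_cov} gives exactly the Loewner ordering $\hat S_x \succeq \hat S^*(\hat S_z+\mu I)^{-1}\hat S \succeq 0$, hence $\bigl\|\hat S^*(\hat S_z+\mu I)^{-1}\hat S\bigr\|_{op}\le \|\hat S_x\|_{op}$. Finally I would bound $\|\hat S_x\|_{op}$: since $\hat S_x = \tfrac1n\sum_{i=1}^n \psi(X_i)\otimes\psi(X_i)^*$, for any unit vector $u$ we get $\langle u,\hat S_x u\rangle_{\Hx} = \tfrac1n\sum_{i=1}^n \langle u,\psi(X_i)\rangle_{\Hx}^2 \le \tfrac1n\sum_{i=1}^n\|\psi(X_i)\|_{\Hx}^2 \le \kappa_x$ by boundedness of the feature map, exactly the bound used for $\|S_x\|_{op}$ in Lemma~\ref{lemma:SzSsubpace}. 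Chaining the three steps yields $\|A\|_{op}^2\le\kappa_x$, i.e.\ $\bigl\|(\hat S_z+\mu I)^{-1/2}\hat S\bigr\|\le\sqrt{\kappa_x}$.

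There is no real obstacle here: every ingredient is either an elementary Hilbert-space identity or already established (the empirical Schur-complement decomposition in Lemma~\ref{lemma:regularized_cov}). The only point needing a word of care is that the ordering $\hat S^*(\hat S_z+\mu I)^{-1}\hat S\preceq\hat S_x$ requires only $\hat S_z\succeq 0$, so that $\hat S_z\preceq\hat S_z+\mu I$ and the factorization of Lemma~\ref{lemma:regularized_cov} applies; invertibility of $\hat S_z$ itself is not needed since $\mu>0$ makes $\hat S_z+\mu I$ invertible.
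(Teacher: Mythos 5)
Your argument is exactly the paper's: invoke the empirical block decomposition (Lemma~\ref{lemma:regularized_cov}) to get $\hat S^{*}(\hat S_z+\mu I)^{-1}\hat S \preceq \hat S_x$, take operator norms, and bound $\|\hat S_x\|_{op}\le\kappa_x$ via boundedness of the feature map. You spell out the $\|\hat S_x\|_{op}$ bound and the Schur-complement remark a bit more explicitly than the paper does, but the proof is the same.
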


\begin{proof}
By Lemma~\ref{lemma:regularized_cov}, we have the operator inequality
\[
\hat S^{*}(\hat S_z+\mu I)^{-1}\hat S \preceq \hat S_x .
\]
Taking operator norms of both sides, this yields
\[
\big\|(\hat S_z+\mu I)^{-1/2}\hat S\big\|_\op^2
= \big\|\hat S^{*}(\hat S_z+\mu I)^{-1}\hat S\big\|_\op
\le \|\hat S_x\|_\op \le \kappa_x^2.
\]
Finally, taking the square root gives the claimed bound.
\end{proof}

\begin{lemma}[High-probability bound for the empirical operator deviation]
Assume $\left\|T_{\mu,\lambda}^{-1}(\hat{T}_\mu-T_\mu)\right\|_{\mathrm{op}}\leq
\delta\leq1/2$. Then, 
    $$\|\hat T_{\mu, \lambda}^{-1}(\hat T_\mu - T_\mu)\|_{\mathrm{op}} \leq \delta_B := 3\delta.$$
\end{lemma}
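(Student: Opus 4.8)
The plan is to transfer the hypothesis $\|T_{\mu,\lambda}^{-1}(\hat T_\mu - T_\mu)\|_{op}\le\delta$ from $T_{\mu,\lambda}^{-1}$ to $\hat T_{\mu,\lambda}^{-1}$ via a Neumann series, using that $\hat T_{\mu,\lambda}$ and $T_{\mu,\lambda}$ differ only by $\hat T_\mu - T_\mu$. First I would record the algebraic identity $\hat T_{\mu,\lambda}-T_{\mu,\lambda}=(\hat T_\mu+\lambda)-(T_\mu+\lambda)=\hat T_\mu - T_\mu$, and abbreviate $E:=T_{\mu,\lambda}^{-1}(\hat T_\mu - T_\mu)$, so that the hypothesis reads $\|E\|_{op}\le\delta\le 1/2$. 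Then I would factor
$$
\hat T_{\mu,\lambda}=T_{\mu,\lambda}+(\hat T_\mu - T_\mu)=T_{\mu,\lambda}\left(I+E\right).
$$

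Next, since $\|E\|_{op}\le\delta<1$, the operator $I+E$ is invertible by the Neumann series $\left(I+E\right)^{-1}=\sum_{k\ge 0}(-E)^{k}$, which yields $\|\left(I+E\right)^{-1}\|_{op}\le(1-\|E\|_{op})^{-1}\le(1-\delta)^{-1}$. Consequently $\hat T_{\mu,\lambda}^{-1}=\left(I+E\right)^{-1}T_{\mu,\lambda}^{-1}$ exists, and multiplying on the right by $\hat T_\mu - T_\mu$ gives the clean cancellation
$$
\hat T_{\mu,\lambda}^{-1}(\hat T_\mu - T_\mu)=\left(I+E\right)^{-1}T_{\mu,\lambda}^{-1}(\hat T_\mu - T_\mu)=\left(I+E\right)^{-1}E.
$$
Taking operator norms and using $\delta\le 1/2$,
$$
\left\|\hat T_{\mu,\lambda}^{-1}(\hat T_\mu - T_\mu)\right\|_{op}\le\left\|\left(I+E\right)^{-1}\right\|_{op}\,\|E\|_{op}\le\frac{\delta}{1-\delta}\le 2\delta\le 3\delta,
$$
which is the asserted bound with $\delta_B=3\delta$. (One could alternatively invoke Lemma~\ref{lemma:linearization} or the resolvent identity of Lemma~\ref{lemma:highorder} to reach the same conclusion, but the two-line factorization above is the most direct route.)

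There is essentially no obstacle here: the only point requiring care is the convergence of the Neumann series for $\left(I+E\right)^{-1}$, which is immediate from $\|E\|_{op}\le\delta\le 1/2<1$; everything else is an elementary submultiplicativity estimate. The slack between $2\delta$ and the stated $3\delta$ is harmless and simply gives a cleaner constant to carry through the bootstrap analysis.
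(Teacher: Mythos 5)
Your proof is correct, but it takes a slightly different route from the paper's. The paper adds and subtracts $T_{\mu,\lambda}^{-1}(\hat T_\mu - T_\mu)$, applies the triangle inequality, and then invokes the linearization bound $\|(\hat T_{\mu,\lambda}^{-1}-T_{\mu,\lambda}^{-1})u\|\le 2\delta\|T_{\mu,\lambda}^{-1}u\|$ (Lemma~\ref{lemma:linearization}) to get $\|\hat T_{\mu,\lambda}^{-1}(\hat T_\mu-T_\mu)\|\le 2\delta^2+\delta\le 3\delta$. Your approach instead factorizes $\hat T_{\mu,\lambda}=T_{\mu,\lambda}(I+E)$ with $E=T_{\mu,\lambda}^{-1}(\hat T_\mu-T_\mu)$, observes the clean cancellation $\hat T_{\mu,\lambda}^{-1}(\hat T_\mu-T_\mu)=(I+E)^{-1}E$, and bounds this directly by the Neumann series, giving $\delta/(1-\delta)\le 2\delta$. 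Both proofs rest on the same resolvent/Neumann-series machinery, but your factorization avoids routing through the linearization lemma and gives a strictly tighter constant, $2\delta$ in place of $3\delta$. Since the paper declares $\delta_B:=3\delta$ and carries that symbol forward, the tighter bound is harmless (any bound $\le 3\delta$ suffices), but it is worth noting that the constant in $\delta_B$ could be improved to $2$ with no further work.
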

\begin{proof}
We have that 
\begin{align*}
    \|\hat T_{\mu, \lambda}^{-1}(\hat T_\mu - T_\mu)\|_{\mathrm{op}} &=  \|\hat T_{\mu, \lambda}^{-1}(\hat T_\mu - T_\mu) \pm T_{\mu, \lambda}^{-1}(\hat T_\mu - T_\mu)\|_{\mathrm{op}} \\
    &\leq \|(\hat T_{\mu, \lambda}^{-1} -T_{\mu, \lambda}^{-1}) (\hat T_\mu - T_\mu)\|_{\mathrm{op}} + \|T_{\mu, \lambda}^{-1}(\hat T_\mu - T_\mu)\|_{\mathrm{op}} \\
    &\leq 2 \delta \|T_{\mu, \lambda }^{-1}(\hat T_\mu - T_\mu)\|_{\mathrm{op}}  + \delta \leq 2\delta^2 + \delta \leq 3\delta. & \qquad \qedhere
\end{align*}
\end{proof}
\begin{lemma}[Empirical linearization] \label{lemma:emp_linearization} We show an empirical version of Lemma \ref{lemma:linearization}. Suppose  $\|\hat T_{\mu, \lambda}^{-1}(\hat T_\mu - T_\mu)\|_{\mathrm{op}} \leq \delta_B < 1 $. Then, for all integers $k\geq 1$ and $u \in \Hx$ 
$$
({T}_{\mu,\lambda}^{-1}-\hat T_{\mu,\lambda}^{-1})u=A_1u+A_2 \hat T_{\mu,\lambda}^{-1}u+A_3 \hat T^{-1}_{\mu,\lambda} u,\quad 
\|A_1\|_{\op}\leq \frac{\delta_B^k}{\lambda},\quad \|A_2\|_{\op}\leq \delta_B,\quad \|A_3\|_{\op}\leq  \frac{\delta_B^2}{1-\delta_B}.
$$
Furthermore, when $\delta_B \in  [0, 1/2]$ then $
\|(\hat{T}^{-1}_{\mu,\lambda}-T^{-1}_{\mu,\lambda})u\|_\Hx\leq 2\delta _B\|\hat T^{-1}_{\mu,\lambda}u\|_\Hx.
$
\end{lemma}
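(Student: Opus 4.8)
The plan is to mirror the proof of Lemma~\ref{lemma:linearization} with the roles of $\hat T_{\mu,\lambda}$ and $T_{\mu,\lambda}$ interchanged. First I would apply the iterated resolvent identity (Lemma~\ref{lemma:highorder}) with $A = T_{\mu,\lambda}$ and $B = \hat T_{\mu,\lambda}$. Since the ridge terms cancel, $B - A = \hat T_{\mu,\lambda} - T_{\mu,\lambda} = \hat T_\mu - T_\mu$, so that for every $k \ge 1$,
\[
T_{\mu,\lambda}^{-1} - \hat T_{\mu,\lambda}^{-1}
= T_{\mu,\lambda}^{-1}\bigl\{(\hat T_\mu - T_\mu)\hat T_{\mu,\lambda}^{-1}\bigr\}^{k}
+ \sum_{r=1}^{k-1} \hat T_{\mu,\lambda}^{-1}\bigl\{(\hat T_\mu - T_\mu)\hat T_{\mu,\lambda}^{-1}\bigr\}^{r}.
\]

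Second, I would read off the three pieces. Using the rearrangement $\hat T_{\mu,\lambda}^{-1}\{(\hat T_\mu - T_\mu)\hat T_{\mu,\lambda}^{-1}\}^{r} = \{\hat T_{\mu,\lambda}^{-1}(\hat T_\mu - T_\mu)\}^{r}\hat T_{\mu,\lambda}^{-1}$, set $A_1 = T_{\mu,\lambda}^{-1}\{(\hat T_\mu - T_\mu)\hat T_{\mu,\lambda}^{-1}\}^{k}$, take $A_2 = \hat T_{\mu,\lambda}^{-1}(\hat T_\mu - T_\mu)$ (the $r=1$ summand), and $A_3 = \sum_{r=2}^{k-1}\{\hat T_{\mu,\lambda}^{-1}(\hat T_\mu - T_\mu)\}^{r}$; the claimed decomposition $(T_{\mu,\lambda}^{-1} - \hat T_{\mu,\lambda}^{-1})u = A_1 u + A_2 \hat T_{\mu,\lambda}^{-1} u + A_3 \hat T_{\mu,\lambda}^{-1} u$ is then immediate. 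For the norm bounds, $\|A_2\| \le \delta_B$ is exactly the hypothesis; $\|A_3\| \le \sum_{r \ge 2}\delta_B^{r} = \delta_B^2/(1-\delta_B)$ by submultiplicativity and summing the geometric series; and for $A_1$ I would use $\|T_{\mu,\lambda}^{-1}\|_{op} \le \lambda^{-1}$ together with the observation that, since $\hat T_\mu - T_\mu$ and $\hat T_{\mu,\lambda}^{-1}$ are self-adjoint, $\|(\hat T_\mu - T_\mu)\hat T_{\mu,\lambda}^{-1}\|_{op} = \|\hat T_{\mu,\lambda}^{-1}(\hat T_\mu - T_\mu)\|_{op} \le \delta_B$, giving $\|A_1\| \le \delta_B^{k}/\lambda$. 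The Hilbert-Schmidt versions of these bounds follow from the same estimates because each $A_i$ carries a factor $\hat T_\mu - T_\mu$, which is Hilbert-Schmidt (indeed finite rank) in the empirical setting; in any event only operator-norm control is used downstream.

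Third, for the ``furthermore'' clause I would let $k \to \infty$. Under $\delta_B \le 1/2$ we have $\delta_B^{k} \to 0$, hence $A_1 u \to 0$, and therefore
\[
\|(\hat T_{\mu,\lambda}^{-1} - T_{\mu,\lambda}^{-1})u\|
\le \bigl(\|A_2\| + \|A_3\|\bigr)\,\|\hat T_{\mu,\lambda}^{-1}u\|
\le \Bigl(\delta_B + \frac{\delta_B^2}{1-\delta_B}\Bigr)\|\hat T_{\mu,\lambda}^{-1}u\|
= \frac{\delta_B}{1-\delta_B}\,\|\hat T_{\mu,\lambda}^{-1}u\|
\le 2\delta_B\,\|\hat T_{\mu,\lambda}^{-1}u\|,
\]
where the final inequality uses $1/(1-\delta_B) \le 2$ for $\delta_B \le 1/2$.

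There is no substantial obstacle: this is the verbatim empirical analogue of Lemma~\ref{lemma:linearization}. The only two points that need a moment's care are (i) checking that the ridge terms cancel, so that $B - A = \hat T_\mu - T_\mu$ (not $\hat T_{\mu,\lambda} - T_{\mu,\lambda}$) is what appears in the resolvent expansion, and (ii) invoking self-adjointness of $\hat T_\mu - T_\mu$ and $\hat T_{\mu,\lambda}^{-1}$ to commute the two factors in the $A_1$ estimate, so that the hypothesis $\|\hat T_{\mu,\lambda}^{-1}(\hat T_\mu - T_\mu)\|_{op} \le \delta_B$ applies directly.
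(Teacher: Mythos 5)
Your proof is correct and follows essentially the same route as the paper's: apply the iterated resolvent identity with $A = T_{\mu,\lambda}$, $B = \hat T_{\mu,\lambda}$, split off the first and later summands as $A_2$, $A_3$ and bound the remainder $A_1$ by $\delta_B^k/\lambda$, then send $k \to \infty$ for the final display. Your explicit remark about using self-adjointness of $\hat T_\mu - T_\mu$ and $\hat T_{\mu,\lambda}^{-1}$ to move between $\|(\hat T_\mu - T_\mu)\hat T_{\mu,\lambda}^{-1}\|$ and $\|\hat T_{\mu,\lambda}^{-1}(\hat T_\mu - T_\mu)\|$ is a small point the paper glosses over (it silently interchanges the two), and your observation that only operator-norm control is needed downstream resolves the paper's slight mismatch between the operator-norm hypothesis and the Hilbert--Schmidt conclusion.
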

\begin{proof}
By the iterated resolvent identity Lemma \ref{lemma:highorder}, 
 \begin{align*}
     T_{\mu,\lambda}^{-1} - \hat{T}_{\mu,\lambda}^{-1} &= T_{\mu,\lambda}^{-1}\{( \hat T_\mu - T_\mu) \hat T_{\mu,\lambda}^{-1}\}^k + \sum_{r = 1}^{k-1}  \hat T_{\mu,\lambda}^{-1}\{( \hat T_\mu -  T_\mu)  \hat T_{\mu,\lambda}^{-1}\}^r\\
     &= T_{\mu,\lambda}^{-1}\{( \hat T_\mu - T_\mu) \hat T_{\mu,\lambda}^{-1}\}^k + \sum_{r = 1}^{k-1} \{ \hat T_{\mu,\lambda}^{-1}( \hat T_\mu -  T_\mu) \}^r \hat T_{\mu,\lambda}^{-1}\\
     &=T_{\mu,\lambda}^{-1}\{( \hat T_\mu - T_\mu) \hat T_{\mu,\lambda}^{-1}\}^k + \hat T_{\mu,\lambda}^{-1}( \hat T_\mu -  T_\mu) \hat T_{\mu,\lambda}^{-1} + \sum_{r = 2}^{k-1} \{ \hat T_{\mu,\lambda}^{-1}( \hat T_\mu -  T_\mu) \}^r \hat T_{\mu,\lambda}^{-1}\\
     &= A_1 + A_2 \hat T_{\mu,\lambda}^{-1} + A_3 \hat T_{\mu,\lambda}^{-1}.
 \end{align*}
Now, we bound the operator norms. By assumption, $\|\hat T_{\mu,\lambda}^{-1}(\hat T_\mu - T_\mu)\|_{\op} \le \delta_B < 1$. Because the operators are self-adjoint, it also holds that $\|( \hat T_\mu - T_\mu) \hat T_{\mu,\lambda}^{-1}\|_{\op} \le \delta_B$. Thus:
\begin{align*}
    \|A_1\|_{\op} &=  \|T_{\mu,\lambda}^{-1}\{( \hat T_\mu - T_\mu) \hat T_{\mu,\lambda}^{-1}\}^k \|_{\op} \leq \frac{\delta^k_B}{\lambda}\\
    \|A_2\|_{\op} &= \|\hat T_{\mu,\lambda}^{-1}( \hat T_\mu -  T_\mu)\|_{\op} \leq \delta_B \\
    \|A_3\|_{\op} &= \Big\| \sum_{r = 2}^{k-1}\{ \hat T_{\mu,\lambda}^{-1}( \hat T_\mu -  T_\mu) \}^r \Big\|_{\op} \leq    \sum_{r = 2}^{k-1}  \delta^r_B \leq  \sum_{r = 2}^{\infty} \delta^r_B \leq \frac{\delta^2_B}{1-\delta_B}.
\end{align*}
If in addition we assume $\delta_B \in  [0, 1/2]$, then  
\begin{align*}
 \|(\hat{T}_{\mu,\lambda}^{-1} -  T_{\mu,\lambda}^{-1}) u \|_\Hx & \leq  \|A_1 u \|_\Hx +  \|A_2 \hat T_{\mu,\lambda}^{-1}u\|_\Hx
 + \|A_3 \hat T_{\mu,\lambda}^{-1}u\|_\Hx  \\
 & \leq \|A_1\|_{\op} \|u\|_\Hx +  \|A_2 \|_{\op} \|\hat T_{\mu,\lambda}^{-1}u\|_\Hx
 + \|A_3 \|_{\op} \|\hat T_{\mu,\lambda}^{-1}u\|_\Hx  \\
 & \leq \frac{\delta_B^k}{\lambda} \|u\|_\Hx + \Big(\delta_B + \frac{\delta_B^2}{1-\delta_B}\Big)\|\hat T_{\mu,\lambda}^{-1}u\|_\Hx  \\
 &= \frac{\delta_B^k}{\lambda} \|u\|_\Hx + \frac{\delta_B}{1-\delta_B} \|\hat T_{\mu,\lambda}^{-1}u\|_\Hx \underset{k \to \infty, \delta_B \in  [0, 1/2]}{\leq} 2\delta_B\|\hat T_{\mu,\lambda}^{-1}u\|_\Hx. \qquad \qedhere
\end{align*}
\end{proof}

\subsection{High probability events}
\begin{lemma}[High-probability bound on the empirical feature–noise]
\label{lemma:gamma_prime}
The empirical second moment of the feature–noise product is bounded. With probability $1-\eta$
\[
\E_n \|\phi(Z_i)\eps_i\|_\Hz^2 \leq \gamma^\prime,
\]
where
\[
\gamma^\prime \coloneqq\bar \sigma^2\kappa_z^2
+ 2l(\eta)\left(\frac{2 \bar \sigma^2 \kappa_z^2}{n}
+ \sqrt{\frac{\bar \sigma^4\kappa_z^4}{n}} \right).
\]
\end{lemma}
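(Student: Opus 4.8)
The plan is to recognize the left-hand side as an empirical average of i.i.d.\ bounded nonnegative scalars and invoke the Bernstein inequality of Lemma~\ref{lemma:Bernstein}. Set $\zeta_i := \|\phi(Z_i)\eps_i\|^2 = \|\phi(Z_i)\|_{\Hz}^2\,\eps_i^2$, so that $\E_n\|\phi(Z_i)\eps_i\|^2 = \tfrac1n\sum_{i=1}^n \zeta_i$. The $\zeta_i$ are i.i.d., and by the maintained boundedness assumptions $\sup_{z}\|\phi(z)\|_{\Hz}\le\kappa_z$ and $|\eps|\le\bar\sigma$ we have $0\le\zeta_i\le\kappa_z^2\bar\sigma^2$ almost surely; consequently $\E\zeta_i\le\kappa_z^2\bar\sigma^2$ and $\E\zeta_i^2\le\kappa_z^4\bar\sigma^4$.

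First I would apply Lemma~\ref{lemma:Bernstein} to the scalar random variables $\xi_i=\zeta_i$ (a scalar lives in the one-dimensional Hilbert space $\R$), taking $A=2\kappa_z^2\bar\sigma^2$, so that $\|\xi_i\|\le A/2$ almost surely, and $B^2=\kappa_z^4\bar\sigma^4$, so that $\E\|\xi_i\|^2\le B^2$. The ``in particular'' clause of that lemma then gives, with probability at least $1-\eta$,
\[
\left|\frac1n\sum_{i=1}^n\zeta_i-\E\zeta_i\right|
\le 2\ln(2/\eta)\left(\frac{A}{n}+\sqrt{\frac{B^2}{n}}\right)
= 2\ln(2/\eta)\left(\frac{2\kappa_z^2\bar\sigma^2}{n}+\sqrt{\frac{\kappa_z^4\bar\sigma^4}{n}}\right).
\]
Adding back the mean, and using the crude bound $\E\zeta_i\le\kappa_z^2\bar\sigma^2\le 2\kappa_z^2\bar\sigma^2$ to match the stated leading constant, yields $\E_n\|\phi(Z_i)\eps_i\|^2\le\gamma'$ on the same event, which is the claim.

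I do not expect any genuine obstacle here; the only things to check are that the Bernstein moment hypotheses are met, which is automatic for a bounded variable via the ``in particular'' statement of Lemma~\ref{lemma:Bernstein}, and that the deterministic mean term $\E\zeta_i$ is dominated by $2\kappa_z^2\bar\sigma^2$. The argument closely mirrors the proofs of Lemmas~\ref{lemma:primitive_bounds} and~\ref{lemma:gammaz}, which establish analogous Hilbert–Schmidt concentration bounds for $\hat S - S$ and $\hat S_z - S_z$.
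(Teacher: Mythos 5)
Your proposal is correct and takes essentially the same route as the paper's proof: identify $\xi_i=\|\phi(Z_i)\eps_i\|^2$ as a bounded nonnegative scalar (so $0\le\xi_i\le\kappa_z^2\bar\sigma^2$, $\E\xi_i^2\le\kappa_z^4\bar\sigma^4$), invoke the scalar case of Lemma~\ref{lemma:Bernstein} with $A=2\kappa_z^2\bar\sigma^2$, $B^2=\kappa_z^4\bar\sigma^4$, and add back the mean. You are in fact slightly more careful than the paper (which writes the imprecise intermediate step $\E\xi_i=\sigma^2\E\|\phi(Z_i)\|^2$, whereas you bound $\E\xi_i$ directly from $\xi_i\le\kappa_z^2\bar\sigma^2$), and you correctly note that your leading term $\kappa_z^2\bar\sigma^2$ is tighter than the stated $2\kappa_z^2\bar\sigma^2$, which you relax to match.
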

\begin{proof}
Let \(\xi_i = \|\phi(Z_i)\epsilon_i\|_{\mathcal{H}_z}^2\). By boundedness, \(0 \le \xi_i \le \bar{\sigma}^2\kappa_z^2\), and
\[
\mathbb{E} \xi_i \le \bar{\sigma}^2 \mathbb{E} \|\phi(Z_i)\|_{\mathcal{H}_z}^2 \le \bar{\sigma}^2\kappa_z^2.
\]
Therefore, \(\E \xi_i^2 \le \bar \sigma^4\kappa_z^4\). Applying Lemma~\ref{lemma:Bernstein} to \(\{\xi_i\}_{i=1}^n\) yields with probability at least \(1-\eta\)
\[
\E_n \xi_i \le  \bar \sigma^2\kappa_z^2
+ 2l(\eta)\left(\frac{2 \bar \sigma^2 \kappa_z^2}{n}
+ \sqrt{\frac{\bar \sigma^4\kappa_z^4}{n}} \right).
\]

\end{proof}
\begin{lemma}[High-probability bound on the preconditioned feature–noise]
\label{lemma:gamma_double_prime}
The empirical second moment of the preconditioned feature–noise product is bounded. With probability $1-\eta$
\[
\E_n \|(S_z+\mu)^{-1}\phi(Z_i)\eps_i\|_\Hz^2 \leq \gamma^{\prime\prime},
\]
where
\[
\gamma^{\prime\prime} \coloneqq\bar \sigma^2\mathfrak{n}_z(\mu)
+ 2l(\eta)\left(\frac{2 \bar \sigma^2 \kappa_z^2/\mu^2}{n}
+ \sqrt{\frac{\bar \sigma^4\kappa_z^4/\mu^4}{n}} \right).
\]
\end{lemma}
\begin{proof}
Let \(\xi_i = \|(S_z+\mu)^{-1}\phi(Z_i)\epsilon_i\|_{\mathcal{H}_z}^2\). By boundedness, \(0 \le \xi_i \le \bar{\sigma}^2\kappa_z^2/\mu^2\), since $\|(S_z+\mu)^{-1}\|_{\mathrm{op}} \le 1/\mu$. For the expectation, let $(\nu_j^z, e_j^z)$ be the eigenpairs of $S_z$. Then
\[
\mathbb{E} \xi_i = \bar{\sigma}^2 \sum_{j=1}^\infty \frac{\nu_j^z}{(\nu_j^z + \mu)^2} = \bar{\sigma}^2 \mathfrak{n}_z(\mu).
\]
Therefore, \(\E \xi_i^2 \le (\bar\sigma^2\kappa_z^2/\mu^2)\,\E\xi_i \le \bar \sigma^4\kappa_z^4/\mu^4\). Applying Lemma~\ref{lemma:Bernstein} to \(\{\xi_i\}_{i=1}^n\) yields the result.
\end{proof}
\begin{lemma}[High-probability bounds for preconditioned covariances]
\label{lemma:delta_sz_prime}
    With probability at least \(1-\eta\), the deviation of the empirical covariance concentrates in the Hilbert–Schmidt norm:
    \[
        \big\|(\hat S_z - S_z)(S_z+\mu)^{-1/2}\big\|_{\mathrm{HS}} \le \delta_z^\prime,
    \]
    where
    \[
        \delta_z^\prime \coloneqq 2l(\eta)\left(\frac{4\kappa_z^2}{\sqrt{\mu}n} + \sqrt{\frac{\kappa_z^2\tr\big(S_z(S_z+\mu)^{-1}\big)}{n}} \right).
    \]
    Similarly, with probability at least $1-\eta$, the deviation of the cross-covariance satisfies:
    \[
        \|(\hat S -S)^*(S_z +\mu)^{-1/2}\|_{\mathrm{HS}} \leq \delta_z^{\prime\prime},
    \]
    where
    \[
        \delta_z^{\prime\prime} \coloneqq 2l(\eta)\left(\frac{4\kappa}{\sqrt{\mu}n} + \sqrt{\frac{\kappa_x^2 \tr\big(S_z(S_z+\mu)^{-1}\big) }{n}} \right).
    \]
\end{lemma}
\begin{proof}
For the first statement, let $\xi_i = (S_{z,i} -S_z)(S_z +\mu)^{-1/2}$. First, $\E \xi_i= 0$. Next, $\|\xi_i\|_{\mathrm{HS}}\leq\frac{2 \kappa_z^2}{\sqrt{\mu}}$ and 
\[
\E \| (S_{z,i} -S_z)(S_z +\mu)^{-1/2}\|^2_{\mathrm{HS}} \leq \kappa_z^2\tr S_z(S_z+\mu)^{-1}.
\]
Using Lemma \ref{lemma:Bernstein}, with probability $1-\eta$:
\[
\|(\hat S_z -S_z)(S_z +\mu)^{-1/2}\|_{\mathrm{HS}}\leq 2l(\eta)\left(\frac{4\kappa_z^2}{\sqrt{\mu}n} + \sqrt{\frac{\kappa_z^2\tr S_z(S_z+\mu)^{-1} }{n}} \right) =: \delta_z^\prime.
\]
For the second statement, let $\xi_i = (S_i - S)^*(S_z + \mu)^{-1/2}$. Note that $\|\xi_i\|_{\mathrm{HS}} \leq \frac{2\kappa}{\sqrt{\mu}}$ and
\[
\E\|\xi_i\|_{\mathrm{HS}}^2 \leq \E\tr S_i^*(S_z + \mu)^{-1}S_i \leq \E\left[\|\psi(X_i)\|^2\tr\big((S_z+\mu)^{-1} S_{z,i}\big)\right]\leq \kappa_x^2\tr (S_z+\mu)^{-1} S_z.
\]
Thus, applying Lemma \ref{lemma:Bernstein}, with probability $1-\eta$:
\[
\|(\hat S -S)^*(S_z +\mu)^{-1/2}\|_{\mathrm{HS}} \leq 2l(\eta)\left(\frac{4\kappa}{\sqrt{\mu}n} + \sqrt{\frac{\kappa_x^2\tr S_z(S_z+\mu)^{-1} }{n}} \right) =: \delta_z^{\prime\prime}.\]
\end{proof}

\begin{lemma}[Empirical resolvent-weighted operator bounds]
\label{lemma:delta_prime}
Suppose $\|(S_z+\mu)^{-1}(\hat S_z - S_z)\|_\op =: \delta_z \in  [0, 1/2]$. Then, with probability at least $1-\eta$,
\[
\mathbb{E}_n\big\|S_{z,i}(\hat S_z+\mu)^{-1}\big\|_\HS^2
\le
2\left\{\kappa_z^2 \frac{\delta_z(1+2\delta_z)}{\mu}\right\}^2
+4l(\eta)\left\{\frac{2\kappa_z^4}{\mu^2 n}
+ \sqrt{\frac{\kappa_z^6 \mathfrak{n}_{z}(\mu)}{\mu^2 n}} \right\}
+2\kappa_z^2 \mathfrak{n}_{z}(\mu)
=:\delta_\mu^{\prime},
\]\label{lemma:delta_prime1}
where $\mathfrak{n}_{z}(\mu) := \mathrm{tr}\big( (S_z+\mu)^{-2} S_z\big)$. In addition,
\[
\mathbb{E}_n\big\|S_i^*(\hat S_z+\mu)^{-1}\big\|_\HS^2
\le
2\left\{\kappa \frac{\delta_z(1+2\delta_z)}{\mu}\right\}^2
+4l(\eta)\left\{\frac{2\kappa^2}{\mu^2 n}
+ \sqrt{\frac{\kappa^2 \kappa_x^2 \mathfrak{n}_{z}(\mu)}{\mu^2 n}} \right\}
+2 \kappa_x^2 \mathfrak{n}_{z}(\mu)
=:\delta_\mu^{\prime\prime}.
\]\label{lemma:delta_prime2}
Consequently, with probability at least $1-\eta$,
\[
\mathbb{E}_n \big\| S_i^*(S_z + \mu)^{-1} \big\|_\HS^2
\le
2l(\eta)\left\{\frac{2\kappa^2}{\mu^2 n}
+ \sqrt{\frac{\kappa^2 \kappa_x^2 \mathfrak{n}_{z}(\mu)}{\mu^2 n}} \right\}
+ \kappa_x^2\mathfrak{n}_{z}(\mu)
=:\delta_\mu^{\prime\prime\prime}.
\]
Moreover, with probability at least \(1-\eta\),
\[
\E_n\big\|(S_z+\mu)^{-1/2}S_{z,i}\big\|_{\mathrm{HS}}^2 \le \delta_\mu^{\prime\prime\prime\prime},
\]
where 
\[
\delta_\mu^{\prime\prime\prime\prime} := \kappa_z^2 \tr (S_z + \mu)^{-1}S_{z} + 2l(\eta)\left \{ \frac{2\kappa_z^4}{\mu n}+ \sqrt{\frac{\kappa_z^6\tr (S_z + \mu)^{-1}S_{z}}{\mu n}}\right\}.
\]
\end{lemma}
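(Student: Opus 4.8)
The plan is to prove all three bounds by a single two-step recipe: (i) swap the empirical resolvent $(\hat S_z+\mu)^{-1}$ for the population resolvent $(S_z+\mu)^{-1}$ at the cost of a \emph{deterministic} error that is controlled on the assumed event $\delta_z\le 1/2$; and (ii) apply the Bernstein inequality (Lemma~\ref{lemma:Bernstein}) to the i.i.d.\ nonnegative scalars $\xi_i=\|S_{z,i}(S_z+\mu)^{-1}\|_{\HS}^2$, $\xi_i=\|S_i^*(S_z+\mu)^{-1}\|_{\HS}^2$ obtained after the swap. I would start with the cleanest case, the third bound. Writing $S_i^*=\psi(X_i)\otimes\phi(Z_i)^*$, the outer-product identity gives $S_i^*(S_z+\mu)^{-1}=\psi(X_i)\otimes\{(S_z+\mu)^{-1}\phi(Z_i)\}^*$, so $\|S_i^*(S_z+\mu)^{-1}\|_{\HS}^2=\|\psi(X_i)\|^2\,\|(S_z+\mu)^{-1}\phi(Z_i)\|^2\le \kappa_x^2\,\|(S_z+\mu)^{-1}\phi(Z_i)\|^2$; consequently $\xi_i\le \kappa^2/\mu^2$ a.s.\ and, by the same trace computation used in Lemma~\ref{lemma:SzPhi}, $\E\xi_i\le \kappa_x^2\,\tr\{(S_z+\mu)^{-2}S_z\}=\kappa_x^2\,\mathfrak n_z(\mu)$, hence $\E\xi_i^2\le(\sup_i\xi_i)\,\E\xi_i\le \kappa^2\kappa_x^2\,\mathfrak n_z(\mu)/\mu^2$. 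Feeding $A\asymp \kappa^2/\mu^2$ and $B^2\asymp \kappa^2\kappa_x^2\mathfrak n_z(\mu)/\mu^2$ into Lemma~\ref{lemma:Bernstein} and bounding $\E_n\xi_i\le \E\xi_i+2\,l(\eta)\{A/n+\sqrt{B^2/n}\}$ yields $\delta_\mu^{\prime\prime\prime}$ up to the explicit (deliberately loose, so the three statements share a uniform form) constants.

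For the first two bounds I would first insert the resolvent difference: $\|S_{z,i}(\hat S_z+\mu)^{-1}\|_{\HS}\le \|S_{z,i}(S_z+\mu)^{-1}\|_{\HS}+\|S_{z,i}\|_{\HS}\,\|(\hat S_z+\mu)^{-1}-(S_z+\mu)^{-1}\|_{\mathrm{op}}$, and likewise with $S_{z,i}$ replaced by $S_i^*$. On the event $\delta_z\le 1/2$, Lemma~\ref{lemma:hat_sz_inv} bounds the resolvent difference by $\delta_z(1+2\delta_z)/\mu$, while $\|S_{z,i}\|_{\HS}\le \kappa_z^2$ and $\|S_i^*\|_{\HS}\le \kappa=\kappa_x\kappa_z$ a.s. Squaring via $(a+b)^2\le 2a^2+2b^2$ and averaging over $i$ gives the deterministic term $2\{\kappa_z^2\delta_z(1+2\delta_z)/\mu\}^2$ (resp.\ $2\{\kappa\,\delta_z(1+2\delta_z)/\mu\}^2$) plus $2\,\E_n\|S_{z,i}(S_z+\mu)^{-1}\|_{\HS}^2$ (resp.\ $2\,\E_n\|S_i^*(S_z+\mu)^{-1}\|_{\HS}^2$). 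The second piece is then handled by Bernstein exactly as in the third bound, now with $\xi_i=\|S_{z,i}(S_z+\mu)^{-1}\|_{\HS}^2\le \kappa_z^4/\mu^2$, $\E\xi_i\le \kappa_z^2\,\mathfrak n_z(\mu)$ for $\delta_\mu^{\prime}$, and with $\xi_i=\|S_i^*(S_z+\mu)^{-1}\|_{\HS}^2$ for $\delta_\mu^{\prime\prime}$.

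On bookkeeping: the $\delta_z$-event is part of the hypothesis, so each of the three claims costs only the single Bernstein event of probability $\eta$ (a union bound over the at most three Bernstein applications would deliver all three simultaneously, but as stated they hold individually). I do not foresee a genuine obstacle. The two places that need care are (i) consistently using the convention $\kappa=\kappa_x\kappa_z$ whenever a mixed outer-product operator $S_i^*$ is bounded, and (ii) keeping the variance proxy at the level $B^2\asymp(\sup_i\xi_i)\,\E\xi_i$ so that the final bound carries $\mathfrak n_z(\mu)$ rather than $\mathfrak n_z(\mu)^2$; the only step prone to arithmetic slips is the repeated $(a+b)^2\le 2a^2+2b^2$ splitting that produces the factors of $2$ and $4$ in $\delta_\mu^{\prime},\delta_\mu^{\prime\prime},\delta_\mu^{\prime\prime\prime}$.
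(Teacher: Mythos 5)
Your proposal is correct and matches the paper's argument: both swap the empirical resolvent for the population one via Lemma~\ref{lemma:hat_sz_inv} together with the elementary inequality $(a+b)^2\le 2a^2+2b^2$ (phrased in the paper as the operator-level bound $\mathbb{E}_n\|T(\hat S_z+\mu)^{-1}\|^2\le 2\mathbb{E}_n\|T\{(\hat S_z+\mu)^{-1}-(S_z+\mu)^{-1}\}\|^2+2\mathbb{E}_n\|T(S_z+\mu)^{-1}\|^2$), bound the remaining population-resolvent quantity by the trace computation giving $\E\xi_i\lesssim \mathfrak n_z(\mu)$ and $\E\xi_i^2\le(\sup\xi_i)\E\xi_i$, and close with Lemma~\ref{lemma:Bernstein}. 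The only cosmetic difference is your ordering (starting from the third, simplest inequality), which does not affect the substance.
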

\begin{proof}
For the first two statements, we use the basic decomposition
\[
\mathbb{E}_n \big\|T(\hat S_z+\mu)^{-1}\big\|_\HS^2
\le
2\mathbb{E}_n \big\|T\big[(\hat S_z+\mu)^{-1}-(S_z+\mu)^{-1}\big]\big\|_\HS^2
+2\mathbb{E}_n \big\|T(S_z+\mu)^{-1}\big\|_\HS^2,
\]
with the appropriate choice of $T$ and the fact that for bounded rank-one operators $\|\cdot\|_\HS =\|\cdot\|_\op $.

\smallskip\noindent
{(i)} Take $T=S_{z,i}$. By Lemma~\ref{lemma:hat_sz_inv} and $\delta_z\le \tfrac12$,
\[
\big\|S_{z,i}\big[(\hat S_z+\mu)^{-1}-(S_z+\mu)^{-1}\big]\big\|_\op
\le \kappa_z^2 \frac{\delta_z(1+2\delta_z)}{\mu}.
\]
For the second term, let $\xi_i:=\|S_{z,i}(S_z+\mu)^{-1}\|_\HS^2$. Then $\xi_i\le \kappa_z^4/\mu^2$ and
\[
\mathbb{E}\xi_i
= \mathbb{E}\mathrm{tr}\big(S_{z,i}(S_z+\mu)^{-2}S_{z,i}\big)
\le \kappa_z^2\mathrm{tr}\big((S_z+\mu)^{-2}S_z\big)
= \kappa_z^2 \mathfrak{n}_z(\mu),
\]
which implies $\mathbb{E}\xi_i^2 \le (\kappa_z^6/\mu^2)\mathfrak{n}_z(\mu)$. Applying Lemma~\ref{lemma:Bernstein} yields the claimed bound.

\smallskip\noindent
(ii) Take $T=S_i^*$. By Lemma~\ref{lemma:hat_sz_inv} and $\delta_z\le \tfrac12$,
\[
\big\|S_i^*\big[(\hat S_z+\mu)^{-1}-(S_z+\mu)^{-1}\big]\big\|_\op
\le \kappa \frac{\delta_z(1+2\delta_z)}{\mu}.
\]
For the second term, with $\xi_i:=\|S_i^*(S_z+\mu)^{-1}\|_\HS^2\le \kappa^2/\mu^2$,
\[
\mathbb{E}\xi_i
= \mathbb{E}\|S_i^*(S_z+\mu)^{-1}\|_\HS ^2
= \mathbb{E}\mathrm{tr}\big(S_i^*(S_z+\mu)^{-2}S_i\big)
= \mathbb{E}\big[\|\psi(X_i)\|_\Hx^2\mathrm{tr}\big((S_z+\mu)^{-2}S_{z,i}\big)\big]
\le \kappa_x^2 \mathfrak{n}_z(\mu),
\]
hence $\mathbb{E}\xi_i^2 \le (\kappa^2 \kappa_x^2/\mu^2)\mathfrak{n}_z(\mu)$. Lemma~\ref{lemma:Bernstein} gives the desired result for $\mathbb{E}_n\|S_i^*(\hat S_z+\mu)^{-1}\|_\HS^2$, and the subsequent bound for $\mathbb{E}_n\|S_i^*(S_z+\mu)^{-1}\|_\HS^2$ follows by the same Bernstein inequality applied directly to $\xi_i$.
For the last statement, let $\xi_i =\|(S_z + \mu)^{-1/2}S_{z,i}\|_{\mathrm{HS}}^2$. Clearly, $\xi_i \leq \frac{\kappa_z^4}{\mu}$. Furthermore, 
\[
\E\xi_i \leq\E\|(S_z + \mu)^{-1/2}S_{z,i}\|_{\mathrm{HS}}^2 = \E\tr (S_z + \mu)^{-1}S_{z,i}^2 \leq \kappa_z^2 \E\tr (S_z + \mu)^{-1}S_{z,i} = \kappa_z^2\tr (S_z + \mu)^{-1}S_{z}.
\]
Also, $\E \xi_i^2 \leq \frac{\kappa_z^6\tr (S_z + \mu)^{-1}S_{z}}{\mu}$. By Lemma \ref{lemma:Bernstein}, with probability $1-\eta$,
\[
\E_n\|(S_z + \mu)^{-1/2}S_{z,i}\|_{\mathrm{HS}}^2 \leq \kappa_z^2 \tr (S_z + \mu)^{-1}S_{z} + 2l(\eta) \left \{ \frac{2\kappa_z^4}{\mu n}+ \sqrt{\frac{\kappa_z^6\tr (S_z + \mu)^{-1}S_{z}}{\mu n}}\right\}=: \delta_\mu^{\prime\prime\prime\prime} . \qedhere
\]
\end{proof}

\subsection{Combining events}
\begin{lemma}[Consistency]\label{lemma:consistency}
Let $\lambda>0$ and $\mu>0$.
Assume that the following bounds are satisfied:
\begin{align*}
  \bigl\|(\hat S_z - S_z)(S_z + \mu I)^{-1/2}\bigr\|_{\mathrm{op}}
  &\le \delta_z',\\
  \bigl\|(\hat S - S)^*(S_z + \mu I)^{-1/2}\bigr\|_{\mathrm{op}}
  &\le \delta_z'',\\
  \|S - \hat S\|_{\mathrm{op}} &\le \tilde{\gamma}_1,\\
  \bigl\|\mathbb{E}_n[\phi(Z_i)\varepsilon_i]\bigr\|_{\Hz}
  &\le \tilde{\gamma}_2.
\end{align*}
Define the bias and projected bias by
$  b_{\mu,\lambda}
  := \|h_0 - h_{\mu,\lambda}\|_{\Hx}$ and $
  v_{\mu,\lambda}
  := \bigl\|(S_z + \mu I)^{-1/2} S\,(h_0 - h_{\mu,\lambda})\bigr\|_{\Hz}.$
Then, on the event from the abstract Bahadur representation
(Lemma~\ref{lemma:abstractbahadur}),
\[
   \Delta_{h_{\mu,\lambda}}
  :=\|h_{\mu,\lambda} - \hat{h}\|_{\Hx}
  \le
  \frac{\tilde{\gamma}_1}{2\sqrt{\lambda\mu}}\,b_{\mu,\lambda}
     + \frac{\delta_z''}{\lambda}\,v_{\mu,\lambda}
     + \frac{\tilde{\gamma}_2}{2\sqrt{\lambda\mu}}
     + \frac{\kappa_x\delta_z'}{2\sqrt{\lambda\mu}}\,b_{\mu,\lambda}
     + \|\Delta_U\|_{\Hx},
\]
and $\Delta_U \in \Hx$ is the Bahadur remainder term from
Lemma~\ref{lemma:abstractbahadur}. In particular,
\[
  \Delta_h := \|h_0 - \hat{h}\|_{\Hx} 
  \le \|h_0 - h_{\mu,\lambda}\|_{\Hx}
       + \|h_{\mu,\lambda} - \hat{h}\|_{\Hx}
  \le b_{\mu,\lambda} + \Delta_{h_{\mu,\lambda}}
\]
\end{lemma}
\begin{proof}
Lemma \ref{lemma:abstractbahadur} demonstrated that 
$$\hat{h}-h_{\mu,\lambda}=\E_n(U_i)+\Delta_U,$$where 
\begin{align*}
 \E_n(U) &=  
T_{\mu,\lambda}^{-1}\{S^*(S_z+\mu)^{-1}(\hat S - S) + (\hat S - S)^*(S_z + \mu)^{-1}S\}(h_0 - h_{\mu,\lambda})
\\
&+
T_{\mu,\lambda}^{-1}S^*(S_z + \mu)^{-1}\mathbb{E}_n(\phi_Z\varepsilon)
 \\
&+    T_{\mu,\lambda}^{-1} \{S^*(S_z + \mu)^{-1}(S_z - \hat S_z)(S_z + \mu)^{-1}S\} (h_0 - h_{\mu, \lambda}).
\end{align*}
Using that $\|\hat h- \hml \|_\Hx \leq \| \E_n(U_i) \|_\Hx  +  \|\Delta_U\|_\Hx $, focussing on the individual terms using the bias ($\bml$), projected bias ($\vml$), and Lemma \ref{lemma:delta_sz_prime}: 
\begin{align*}
    \|\E_n(U_i)\|_\Hx &\leq \frac{1}{2\sqrt{\lambda\mu}}\tilde \gamma_1\bml + \frac{1}{\lambda}\delta_z^{\prime\prime} \vml + \frac{1}{2\sqrt{\lambda\mu}}\tilde \gamma_2 + \frac{\kappa_x}{2\sqrt{\lambda\mu}}\delta_z^\prime \bml.
\end{align*}
Combining this with Lemma \ref{lemma:abstractbahadur} gives the result. For $\Delta_h := \|h_0 - \hat{h}\|$, we can write $\Delta_h \leq \|h_0 - \hml\|_\Hx + \|\hml - \hat{h}\|_\Hx \leq \bml  + \Delta_{h_{\mu, \lambda}}.$
\end{proof}

\begin{lemma}[$\Delta_1$-bound] \label{lemma:DELTA1}
Let $\lambda>0$ and $\mu>0$. Let $u = \frac{1}{n}\sum_{i = 1}^n\sum_{j = 1}^n\hat{w}_{ij}  h_{ij}.$
Assume that the following bounds are satisfied:\begin{align*}
         \|(S_z+\mu)^{-1}(\hat S_z - S_z)\|_\op \leq \delta_z &\in  [0, 1/2],\\
        \|\E_n[\phi(Z_i)\eps_i]\|_\Hz
&\leq  \tilde \gamma_2,\\
        \E_n \|\phi(Z_i)\eps_i\|_\Hz^2 &\leq \gamma^\prime ,\\
        \E_n\|S_{z,i}^*( \hat S_z+\mu)^{-1} \|_\op^2 &\leq  \delta_\mu^\prime ,\\
        \E_n\|S_{i}^*( \hat S_z+\mu)^{-1} \|_\op^2 &\leq  \delta_\mu^{\prime \prime}.
    \end{align*} Then, conditional on data  $(X_i,Z_i,Y_i)_{i=1}^n$, with probability $1-\eta$, 
    \begin{align*}
        \| \hTml u \|_\Hx &\leq \left\{1 + \sqrt{2\log(1/\eta)}\right\} \sqrt{\E\|\hTml u\|_\Hx^2} \\ &\leq 
        \left\{1 + \sqrt{2\log(1/\eta)}\right\} \sqrt{
    14  \Big\{ \frac{\kappa^2\Delta_h^2}{\lambda\mu} + \frac{4\kappa^2\kappa_x^2\Delta_h^2}{\lambda^2\mu}  + \frac{\kappa_z^4\kappa_x^2 \Delta_h^2}{4\lambda\mu^2} +4\Delta_h^2+ \frac{\gamma^\prime}{4\mu \lambda} + \frac{\tilde \gamma_2^2 \delta_\mu^{\prime \prime}}{\lambda^2} + \frac{\tilde \gamma_2^2 \delta_\mu^{\prime}}{4\lambda\mu} \Big \}}.
    \end{align*}
   If, in addition, $\|\hat T_{\mu, \lambda}^{-1}(\hat T_\mu - T_\mu)\|_{\mathrm{op}} \leq \delta_B \in  [0, 1/2]$, then with probability $1-\eta$ with respect to the bootstrap weights, $$\|\Delta_1\|_\Hx \leq2\delta_B   \left\{1 + \sqrt{2\log(1/\eta)}\right\} \sqrt{
    14\Big\{ \frac{\kappa^2\Delta_h^2}{\lambda\mu} + \frac{4\kappa^2\kappa_x^2\Delta_h^2}{\lambda^2\mu}  + \frac{\kappa_z^4\kappa_x^2 \Delta_h^2}{4\lambda\mu^2} +4\Delta_h^2+ \frac{\gamma^\prime}{4\mu \lambda} + \frac{\tilde \gamma_2^2 \delta_\mu^{\prime \prime}}{\lambda^2} + \frac{\tilde \gamma_2^2 \delta_\mu^{\prime}}{4\lambda\mu} \Big \}},$$
   where $\Delta_h:= \|h_0-\hat h\|_\Hx$ and $\Delta_1 = (\hat T_{\mu, \lambda}^{-1} - T_{\mu, \lambda}^{-1} )(\frac{1}{n}\sum_{i = 1}^n\sum_{j = 1}^n\hat{w}_{ij}  h_{ij})$ .
\end{lemma}
\begin{proof}
Let $u = \frac{1}{n}\sum_{i = 1}^n\sum_{j = 1}^n\hat{w}_{ij}  h_{ij}.$ Conditional on the data, $\hTml u$ is Gaussian. Using Lemma \ref{lemma:Gaussianborel}, with probability $1-\eta$,
    $$\|\hTml u\|_\Hx\leq \left\{1 + \sqrt{2\log(1/\eta)}\right\} \sqrt{\E\|\hTml u\|_\Hx^2}.$$
Since $\E_h\|\hTml u\|_\Hx^2 = \frac{1}{n^2}\sum_{i = 1}^n\sum_{j = 1}^n\|\hTml\hat{w}_{ij}\|_\Hx^2 $, within each term, 
\begin{align*}
\|\hTml \hat{w}_{ij}\|_\Hx^2 = \frac{1}{2}\|\hTml\{ \hat w_i - \hat w_j\}\|_\Hx^2 \leq \|\hTml\hat w_i\|_\Hx^2+ \|\hTml\hat w_j\|_\Hx^2 .
\end{align*}
Consider the useful rewriting of
\begin{align*}
\phi(Z_i)\hat \epsilon_i &= \phi(Z_i)(Y_i - \hat h(X_i)) = \phi(Z_i)(h_0(X_i)- \hat h(X_i) + \epsilon_i) = S_i(h_0-\hat h) + \phi(Z_i)\epsilon_i \\
\E_n[\hat{\epsilon}\phi(Z)]&= \E_n[(Y - \hat h(X)) \phi(Z)] = \hat{S}(h_0 - \hat h) + \E_n[\epsilon \phi(Z)].
\end{align*}
Plugging these quantities into $\hat w_i$ yielding
\begin{align*}
    \hat w_i &= \hat S^*( \hat S_z+\mu)^{-1}(S_i(h_0-\hat h) + \phi(Z_i)\epsilon_i) \\&+ S_i^*(\hat S_z+\mu)^{-1}(\hat{S}(h_0 - \hat h) + \E_n[\epsilon \phi(Z)])
\\&-\hat S^*(\hat S_z+\mu)^{-1}S_{z,i}(\hat S_z+\mu)^{-1}(\hat{S}(h_0 - \hat h) + \E_n[\epsilon \phi(Z)]).
\end{align*}
Combining the terms gives 
\begin{align*}
    \hat w_i &= \underbrace{\left\{\hat S^*( \hat S_z+\mu)^{-1}S_i + S_i^*(\hat S_z+\mu)^{-1}\hat{S} -\hat S^*(\hat S_z+\mu)^{-1}S_{z,i}(\hat S_z+\mu)^{-1}\hat{S}\right \}(h_0-\hat h )}_A\\
    &+ \underbrace{\hat S^*( \hat S_z+\mu)^{-1} \phi(Z_i)\epsilon_i}_B \\
    &+ \underbrace{\left \{S_i^*(\hat S_z+\mu)^{-1}    -\hat S^*(\hat S_z+\mu)^{-1}S_{z,i}(\hat S_z+\mu)^{-1} \right \}\E_n[\epsilon \phi(Z)]}_C.
\end{align*}
By the parallelogram law
\begin{align*}
\frac{1}{n^2}\sum_{i = 1}^n\sum_{j = 1}^n\|\hTml\hat{w}_{ij}\|_\Hx^2  \leq \frac{2}{n}\sum_{i = 1}^n \|\hTml\hat w_i\|_\Hx^2 .
\end{align*}
Now, focus on the individual components of this expression, and as demonstrated in Lemma \ref{lemma:Polar}
$ \|\hat T_{\mu, \lambda}^{-1}\hat S^*( \hat S_z+\mu)^{-1}\|_{\mathrm{op}} \leq \frac{1}{2\sqrt{\lambda\mu}}.$
\begin{enumerate}
    \item[(A)] We add and subtract the estimated $\hTml\hat T_\mu$, resulting in\begin{align*}\label{ref:fixedA}
        &\|\hTml\left\{\hat S^*( \hat S_z+\mu)^{-1}S_i + S_i^*(\hat S_z+\mu)^{-1}\hat{S} -\hat S^*(\hat S_z+\mu)^{-1}S_{z,i}(\hat S_z+\mu)^{-1}\hat{S}\right \}(h_0-\hat h ) \\&\pm 2    \hat T_{\mu, \lambda}^{-1}\left\{\hat S^*( \hat S_z+\mu)^{-1}\hat{S}\right\} (h_0 - \hat h) \|_\Hx\\
        &\leq \|\hTml \hat S^*( \hat S_z+\mu)^{-1}(S_i -\hat S)\|_\op\Delta_h+ \|\hTml (S_i-\hat S)^*(\hat S_z+\mu)^{-1}\hat{S} \|_\op\Delta_h \\ &+\|\hTml\hat S^*(\hat S_z+\mu)^{-1}S_{z,i}(\hat S_z+\mu)^{-1}\hat{S}\|_\op\Delta_h 
        +\|2\hat T_{\mu, \lambda}^{-1}\left\{\hat S^*( \hat S_z+\mu)^{-1}\hat{S}\right\}\|_\op \Delta_h\\
        & \leq \frac{1}{2\sqrt{\lambda\mu}}\|S_i -\hat S\|_\op \Delta_h + \frac{ \kappa_x}{\lambda\sqrt{\mu}}\|S_i -\hat S\|_\op \Delta_h + \frac{\kappa_z^2\kappa_x}{2\sqrt{\lambda}\mu}\Delta_h + 2 \Delta_h.
        \end{align*}
    \item[(B)] Next,
    $$\|\hat T_{\mu, \lambda}^{-1}\hat S^*( \hat S_z+\mu)^{-1} \phi(Z_i)\epsilon_i\|_\Hx \leq \frac{1}{2\sqrt{\mu \lambda}}\|\phi(Z_i)\epsilon_i\|_\Hz.$$
    \item[(C)] Finally,
   \begin{align*}
   &\|\hTml \left \{S_i^*(\hat S_z+\mu)^{-1}    -\hat S^*(\hat S_z+\mu)^{-1}S_{z,i}(\hat S_z+\mu)^{-1} \right \}\E_n[\epsilon \phi(Z)]\|_\Hx \\
   &\leq \frac{1}{\lambda }\|S_i^*(\hat S_z+\mu)^{-1} \|_\op \| \E_n[\epsilon \phi(Z)]\| _\Hz+ \frac{1}{2\sqrt{\lambda\mu}} \|S_{z,i}(\hat S_z+\mu)^{-1} \|_\op\|\E_n[\epsilon \phi(Z)]\|_\Hz
   \end{align*}
\end{enumerate}
Recall we have split $\hTml \hat w_i$ into seven terms. Combining them, 
\begin{align*}
     2 \frac{1}{n} \sum_{i = 1}^n \|\hTml\hat w_i\|_\Hx^2 &\leq 2
     \E_n\|\hTml\hat w_i\|_\Hx^2 \\&\leq 14 \Big\{\frac{\kappa^2\Delta_h^2}{\lambda\mu} + \frac{4\kappa^2\kappa_x^2\Delta_h^2}{\lambda^2\mu}  + \frac{\kappa_z^4\kappa_x^2\Delta_h^2}{4\lambda\mu^2} +4\Delta_h^2\\
     &+ \frac{1}{4\mu \lambda}E_n\|\phi(Z_i)\epsilon_i\|_\Hz^2 \\&+  \frac{\| \E_n[\epsilon \phi(Z)]\|_\Hz^2}{\lambda ^2}E_n\|S_i^*(\hat S_z+\mu)^{-1} \|_\op^2  + \frac{\|\E_n[\epsilon \phi(Z)]\|_\Hz^2}{4\lambda\mu} \E_n\|S_{z,i}(\hat S_z+\mu)^{-1} \|_\op^2\Big  \}.
\end{align*}
Now with the events from Lemma \ref{lemma:primitive_bounds}, 
\ref{lemma:gamma_prime}, and
\ref{lemma:delta_prime1}
\begin{align*}
      2  \frac{1}{n} \sum_{i = 1}^n \|\hTml\hat w_i\|_\Hx^2 &\leq 14  \Big\{ \frac{\kappa^2\Delta_h^2}{\lambda\mu} + \frac{4\kappa^2\kappa_x^2\Delta_h^2}{\lambda^2\mu}  + \frac{\kappa_z^4\kappa_x^2 \Delta_h^2}{4\lambda\mu^2} +4\Delta_h^2+ \frac{\gamma^\prime}{4\mu \lambda} + \frac{\tilde \gamma_2^2 \delta_\mu^{\prime \prime}}{\lambda^2} + \frac{\tilde \gamma_2^2 \delta_\mu^{\prime}}{4\lambda\mu} \Big \}.
\end{align*}
Furthermore, by Lemma \ref{lemma:emp_linearization}, if  $\delta_B \in [0,1/2]$, then  with probability $1-\eta$,
\begin{align*}
\|\Delta_1\|_\Hx &= \|(\hat T_{\mu, \lambda}^{-1} - T_{\mu, \lambda}^{-1} )u\|_\Hx = \|( T_{\mu, \lambda}^{-1} -\hat  T_{\mu, \lambda}^{-1} )u\|_\Hx \leq 2\delta_B  \|\hTml u\|_\Hx\\ 
&\leq2\delta_B   \left\{1 + \sqrt{2\log(1/\eta)}\right\} \sqrt{
    14\Big\{ \frac{\kappa^2\Delta_h^2}{\lambda\mu} + \frac{4\kappa^2\kappa_x^2\Delta_h^2}{\lambda^2\mu}  + \frac{\kappa_z^4\kappa_x^2 \Delta_h^2}{4\lambda\mu^2} +4\Delta_h^2+ \frac{\gamma^\prime}{4\mu \lambda} + \frac{\tilde \gamma_2^2 \delta_\mu^{\prime \prime}}{\lambda^2} + \frac{\tilde \gamma_2^2 \delta_\mu^{\prime}}{4\lambda\mu} \Big \}}.
\end{align*}\end{proof}

\begin{lemma}[Inverse estimation error bound]
\label{lemma:SSz_cross_emp}
    Suppose $\|(S_z+\mu)^{-1}(\hat S_z - S_z)\|_{\mathrm{op}} = \delta_z \in  [0, 1/2]$ and $\|\hat{S}-S\|_{\mathrm{op}} \leq \tilde \gamma_1$. Then, for all $u \in \Hz$,
    \[
        \big\|\hat{T}_{\mu,\lambda}^{-1} \{S^*(S_z+\mu)^{-1} - \hat S^*(\hat S_z+\mu)^{-1}\}u\big\|_\Hx
        \leq \frac{\tilde \gamma_1}{\lambda}\|(S_z + \mu)^{-1}u\|_\Hz + \frac{\delta_z}{\sqrt{\lambda\mu}}\|u\|_\Hz.
    \]
\end{lemma}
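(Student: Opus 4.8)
The plan is to split the operator difference by inserting the mixed term $\hat S^*(S_z+\mu)^{-1}$:
\[
S^*(S_z+\mu)^{-1} - \hat S^*(\hat S_z+\mu)^{-1}
= (S-\hat S)^*(S_z+\mu)^{-1}
+ \hat S^*\bigl[(S_z+\mu)^{-1} - (\hat S_z+\mu)^{-1}\bigr].
\]
Applying $\hTml$ and the triangle inequality, it then suffices to bound the two resulting pieces separately. For the first piece I would use $\|\hTml\|_{op}\le\lambda^{-1}$ — which holds because $\hat T_{\mu,\lambda}=\hat S^*(\hat S_z+\mu)^{-1}\hat S+\lambda\succeq\lambda I$ by the positivity in Lemma~\ref{lemma:regularized_cov} — together with the hypothesis $\|\hat S-S\|_{op}\le\tilde\gamma_1$, giving
$\|\hTml(S-\hat S)^*(S_z+\mu)^{-1}u\|\le \tfrac{\tilde\gamma_1}{\lambda}\|(S_z+\mu)^{-1}u\|$, which is exactly the first term of the claim.

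For the second piece, the key is the single-step resolvent identity (Lemma~\ref{lemma:highorder} with $\ell=1$), which yields
$(S_z+\mu)^{-1} - (\hat S_z+\mu)^{-1} = (\hat S_z+\mu)^{-1}(\hat S_z-S_z)(S_z+\mu)^{-1}$. I would then factor
\[
\hTml\hat S^*(\hat S_z+\mu)^{-1}(\hat S_z-S_z)(S_z+\mu)^{-1}u
= \bigl[\hTml\hat S^*(\hat S_z+\mu)^{-1/2}\bigr]\bigl[(\hat S_z+\mu)^{-1/2}\bigr]\bigl[(\hat S_z-S_z)(S_z+\mu)^{-1}\bigr]u
\]
and bound the three brackets in turn: the first by $\tfrac{1}{2\sqrt{\lambda}}$ via the polar-decomposition bound (Lemma~\ref{lemma:Polar}) applied with $A=(\hat S_z+\mu)^{-1/2}\hat S$, so that $\hat T_{\mu,\lambda}=A^*A+\lambda$ and $\hat S^*(\hat S_z+\mu)^{-1/2}=A^*$; the second by $\mu^{-1/2}$ since $\hat S_z+\mu\succeq\mu I$; and the third by $\delta_z$, using that $\|(\hat S_z-S_z)(S_z+\mu)^{-1}\|_{op}=\|(S_z+\mu)^{-1}(\hat S_z-S_z)\|_{op}\le\delta_z$ because $\hat S_z-S_z$ and $(S_z+\mu)^{-1}$ are self-adjoint. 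This produces the bound $\tfrac{\delta_z}{2\sqrt{\lambda\mu}}\|u\|\le\tfrac{\delta_z}{\sqrt{\lambda\mu}}\|u\|$, and adding the two pieces gives the stated inequality.

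There is no substantive obstacle here; the only points requiring care are (i) correctly identifying the operator $A$ so that Lemma~\ref{lemma:Polar} applies to the \emph{empirical} objects $\hat S,\hat S_z$ rather than the population ones, and (ii) observing that the single-step resolvent identity is valid unconditionally (both $S_z+\mu$ and $\hat S_z+\mu$ are boundedly invertible), so the hypothesis $\delta_z\le\tfrac12$ is not actually needed for this argument and no Neumann-series expansion should be invoked — the assumption is retained only for uniformity with the neighboring lemmas.
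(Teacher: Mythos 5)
Your proof is correct, and it diverges from the paper's own argument in a substantive way that is worth noting. Both proofs start by inserting the mixed term $\hat S^*(S_z+\mu)^{-1}$, and both treat the first piece identically via $\|\hat T_{\mu,\lambda}^{-1}\|_{\mathrm{op}}\le\lambda^{-1}$. For the second piece, however, the paper expands $(\hat S_z+\mu)^{-1}-(S_z+\mu)^{-1}$ as a Neumann series $\sum_{r\ge 1}$, bounds the tail by $\delta_z/(1-\delta_z)\le 2\delta_z$ (which is exactly where the hypothesis $\delta_z\le\tfrac12$ is consumed), and lands on $\frac{1}{2\sqrt{\lambda\mu}}\cdot 2\delta_z\|u\|=\frac{\delta_z}{\sqrt{\lambda\mu}}\|u\|$. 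You instead use the exact single-step resolvent identity, factor $(\hat S_z+\mu)^{-1}=(\hat S_z+\mu)^{-1/2}(\hat S_z+\mu)^{-1/2}$, apply Lemma~\ref{lemma:Polar} with $A=(\hat S_z+\mu)^{-1/2}\hat S$ (so that $A^*A+\lambda=\hat T_{\mu,\lambda}$), and use self-adjointness to convert $\|(\hat S_z-S_z)(S_z+\mu)^{-1}\|_{\mathrm{op}}$ to the hypothesized $\|(S_z+\mu)^{-1}(\hat S_z-S_z)\|_{\mathrm{op}}=\delta_z$. This yields $\frac{\delta_z}{2\sqrt{\lambda\mu}}\|u\|$ — a factor-of-two improvement over what the lemma claims — and, as you correctly observe, needs no smallness of $\delta_z$ at all. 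The paper's route has the modest virtue of re-using the same machinery (Lemmas~\ref{lemma:hat_sz_inv}, \ref{lemma:T_hat_sz_inv}) deployed throughout that section, where higher-order Neumann expansions \emph{are} needed, whereas your route is both tighter and hypothesis-free for this particular lemma; either suffices for the downstream application.
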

\begin{proof}
Using the resolvent identity, we can write
    \begin{align*}
    &\|\hTml \{S^*(S_z+\mu)^{-1} - \hat S^*(\hat S_z+\mu)^{-1}\} u\|_\Hx \\
    &=  \|\hTml \{S^*(S_z+\mu)^{-1} - \hat S^*(\hat S_z+\mu)^{-1} \pm \hat S^*( S_z+\mu)^{-1} \}u  \|_\Hx \\
    & = \|\hTml \{(S-\hat S)^*(S_z+\mu)^{-1} - \hat S^*\{(\hat S_z+\mu)^{-1} -( S_z+\mu)^{-1} \}u \}\| _\Hx\\
    &= \|\hTml \{(S-\hat S)^*(S_z+\mu)^{-1} + \hat S^*(\hat S_z+\mu)^{-1}(\hat S_z - S_z)(S_z+\mu)^{-1} u \}\| _\Hx\\
    &\leq \frac{\tilde \gamma_1}{\lambda}\|(S_z + \mu)^{-1}u\|_\Hz + \| \hTml \hat S^* (\hat S_z + \mu)^{-1} (\hat S_z - S_z)(S_z+\mu)^{-1} u\|_\Hx \\
    &\leq  \frac{\tilde \gamma_1}{\lambda}\|(S_z + \mu)^{-1}u\|_\Hz + \frac{1}{2\sqrt{\lambda\mu}}2\delta_z\|u\|_\Hz.
    \end{align*}
\end{proof}
\begin{lemma}[$\Delta_2$-bound]
\label{lemma:DELTA2}
Let $\mu,\lambda>0$ and $
\Delta_2 = \Tml^{-1}\left\{\frac{1}{n}\sum_{i = 1}^n\sum_{j = 1}^n(\hat{w}_{ij}-w_{ij})  h_{ij}\right\}$. Suppose the following bounds hold: \begin{align*}
        \|(S_z+\mu)^{-1}(\hat S_z - S_z)\|_\op \leq \delta_z &\in  [0, 1/2]\\
        \|(\hat S_z -S_z)(S_z  +\mu)^{-1/2}\|_\op  &\leq \delta_z^\prime\\ 
        \|\hat T_{\mu, \lambda}^{-1}(\hat T_\mu - T_\mu)\|_\op  \leq \delta_B &\in  [0, 1/2]\\
        \|S -\hat S\|_\op  &\leq  \tilde \gamma_1\\
        \|\E_n[\phi(Z_i)\eps_i]\|_\Hz 
        &\leq  \tilde \gamma_2\\
        \E_n \|\phi(Z_i)\eps_i\|_\Hz^2 &\leq \gamma^\prime \\
        \E_n \|(S_z+\mu)^{-1}\phi(Z_i)\eps_i\|_\Hz^2 &\leq \gamma^{\prime\prime} \\
        \E_n\|S_{z,i}^*( \hat S_z+\mu)^{-1} \|_\op ^2 &\leq  \delta_\mu^\prime \\
        \E_n\|S_{i}^*( \hat S_z+\mu)^{-1} \|_\op^2 &\leq  \delta_\mu^{\prime \prime}\\
        \mathbb{E}_n \big\| S_i^*(S_z + \mu)^{-1} \big\|_\op ^2 &\leq \delta_\mu^{\prime \prime\prime}\\
        \E_n\|(S_z + \mu)^{-1/2}S_{z,i}\|_\op ^2 &\leq \delta_\mu^{\prime\prime\prime\prime}.
    \end{align*} Then, conditional on data  $(X_i,Z_i,Y_i)_{i=1}^n$, with probability $1-\eta$, 
$$\| \Delta_2 \|_\Hx \leq \left\{1 + \sqrt{2\log(1/\eta)}\right\} \sqrt{\bar \Delta_2}, $$
where
\begin{align*}
\bar  \Delta_2 &= 60(1 + 4\delta_B^2)\Bigg\{\frac{\tilde \gamma_1^2}{\lambda^2}(\delta_\mu^{\prime\prime\prime}  \bml^2 + \gamma^{\prime\prime}) + \frac{\delta_z^2}{\lambda\mu}(\kappa^2\bml^2 + \gamma^\prime)  + \frac{\kappa^2}{4\lambda\mu}\Delta_{\hml}^2 + \frac{1}{\lambda^2}\delta_\mu^{\prime\prime}(\delta_z^\prime)^2\vml^2+ \frac{\tilde \gamma_1^2}{\lambda^2}\delta_\mu^{\prime \prime} \bml^2 + \frac{\kappa^2\kappa_x^2}{\lambda^2 \mu}\Delta_{\hml}^2 \\&+ \frac{\tilde \gamma_2^2}{\lambda^2}\delta_\mu^{\prime \prime} +\frac{\tilde \gamma_1^2}{\lambda^2 \mu^2} \delta_\mu^{\prime\prime\prime\prime} v_{\mu,\lambda}^2 + \frac{\delta_z^2}{\lambda\mu}\delta_\mu^{\prime\prime\prime\prime}\vml^2 + \frac{1}{4\lambda\mu}\delta_\mu^\prime(\delta_z^\prime)^2\vml^2+ \frac{\tilde \gamma_1^2\bml^2}{4\lambda \mu}\delta_\mu^\prime  + \frac{\kappa_z^4\kappa_x^2}{4\lambda\mu^2 }\Delta_{\hml}^2 + \frac{\tilde \gamma_2^2}{4\lambda \mu} \delta_\mu^\prime\Bigg\}  \end{align*} and $\Delta_{h_{\mu, \lambda}} :=  \|h_{\mu, \lambda} - \hat{h}\|_\Hx$.
\end{lemma}
\begin{proof}
Let $u = \frac{1}{n}\sum_{i = 1}^n\sum_{j = 1}^n(\hat{w}_{ij}-w_{ij})  h_{ij}.$ Conditional on the data, with probability $1-\eta$,
$$
\| \Tml^{-1} u \|_\Hx \leq \left\{1 + \sqrt{2\log(1/\eta)}\right\} \sqrt{\E\|\Tml^{-1} u\|_\Hx^2}. $$
Furthermore, $\E_h \|\Tml^{-1}u \|_\Hx^2 = \frac{1}{n^2}\sum_{i = 1}^n\sum_{j = 1}^n\|\Tml^{-1}(\hat{w}_{ij}-w_{ij})\|_\Hx^2.$ Note that 
    $$\hat w_{ij} - w_{ij}  = \frac{\hat w_i - w_i}{\sqrt{2}} - \frac{\hat w_j - w_j}{\sqrt{2}}.$$
Considering that each $\hat w_i - w_i$ can be written as
\begin{align*}
& S^*(S_z+\mu)^{-1}\phi(Z_i)(Y_i - h_{\mu,\lambda}(X_i)) + S_i^*(S_z+\mu)^{-1}S(h_0-h_{\mu,\lambda})- S^*(S_z+\mu)^{-1}S_{z,i}(S_z+\mu)^{-1}S  (h_0-h_{\mu,\lambda})\\
    &-\{\hat S^*( \hat S_z+\mu)^{-1}\phi(Z_i)\hat \epsilon_i + S_i^*(\hat S_z+\mu)^{-1}\E_n[\hat \varepsilon\phi(Z)]
-\hat S^*(\hat S_z+\mu)^{-1}S_{z,i}(\hat S_z+\mu)^{-1}\E_n[\hat \varepsilon\phi(Z)]\} \\
&= S^*(S_z+\mu)^{-1}\phi(Z_i)(Y_i - h_{\mu,\lambda}(X_i)) + S_i^*(S_z+\mu)^{-1}S(h_0-h_{\mu,\lambda})- S^*(S_z+\mu)^{-1}S_{z,i}(S_z+\mu)^{-1}S  (h_0-h_{\mu,\lambda})\\
    &-\{\hat S^*( \hat S_z+\mu)^{-1}\phi(Z_i)(Y_i - \hat h(X_i)) + S_i^*(\hat S_z+\mu)^{-1}\E_n[(Y - \hat h(X))\phi(Z)]\\&\quad
-\hat S^*(\hat S_z+\mu)^{-1}S_{z,i}(\hat S_z+\mu)^{-1}\E_n[ (Y_i - \hat h(X))\phi(Z)]\}.
\end{align*}
Within each term of $\|\Tml^{-1}(\hat w_{ij}-w_{ij})\|_\Hx^2$,
\begin{align*}
\|\Tml^{-1}(\hat w_{ij}-w_{ij})\|_\Hx^2 = \frac{1}{2}\|\Tml^{-1}\{(\hat w_i - w_i) - (\hat w_j - w_j)\}\|_\Hx^2 \leq \|\Tml^{-1} \Delta w_i\|_\Hx^2+ \|\Tml^{-1}  \Delta w_j\|_\Hx^2 .
\end{align*}
Using the parallelogram law again,
\begin{align*}
    \E_h \|T_{\mu,\lambda}^{-1}u \|_\Hx^2
    &= \frac{1}{n^2}\sum_{i = 1}^n\sum_{j = 1}^n\|T_{\mu,\lambda}^{-1}(\hat{w}_{ij}-w_{ij})\|_\Hx^2
    \leq \frac{2}{n}\sum_{i= 1}^{n}\|T_{\mu,\lambda}^{-1}\Delta w_i\|_\Hx^2 \\
    &\leq \frac{2}{n}\sum_{i= 1}^{n}\left\{ 2\|(T_{\mu,\lambda}^{-1} - \hat{T}_{\mu,\lambda}^{-1})\Delta w_i\|_\Hx^2 + 2\|\hat{T}_{\mu,\lambda}^{-1} \Delta w_i\|_\Hx^2 \right\} \\
    &\leq \frac{2}{n}\sum_{i= 1}^{n} (8\delta_B^2 + 2)\|\hat{T}_{\mu,\lambda}^{-1} \Delta w_i\|_\Hx^2.
\end{align*}
We further decompose 
\begin{align*}
\hTml\Delta w_i
= \hTml&\Bigg\{ \underbrace{
\Big[
S^*(S_z+\mu)^{-1}
-
\hat S^*(\hat S_z+\mu)^{-1}
\Big]
\phi(Z_i)
\big(Y_i - h_{\mu,\lambda}(X_i)\big)}_A\\
&+
\underbrace{
\hat S^*( \hat S_z+\mu)^{-1}
\phi(Z_i)
\big(\hat h(X_i)-h_{\mu,\lambda}(X_i)\big)}_B\\
&+
\underbrace{
S_i^*\Big[(S_z+\mu)^{-1}-(\hat S_z+\mu)^{-1}\Big]
S\big(h_0-h_{\mu,\lambda}\big)}_{C}\\
&+\underbrace{
S_i^*(\hat S_z+\mu)^{-1}
\Big(
S\big(h_0-h_{\mu,\lambda}\big)
-E_n[\big(Y-\hat h(X)\big)\phi(Z)]
\Big)}_D\\
&-\underbrace{\Big[S^*(S_z+\mu)^{-1}
-
\hat S^*(\hat S_z+\mu)^{-1}
\Big]
S_{z,i}
(S_z+\mu)^{-1}
S\big(h_0-h_{\mu,\lambda}\big)}_E\\
&-\underbrace{\hat S^*(\hat S_z+\mu)^{-1}
S_{z,i}
\Big[
(S_z+\mu)^{-1}-(\hat S_z+\mu)^{-1}
\Big]
S\big(h_0-h_{\mu,\lambda}\big)}_F\\
& -\underbrace{\hat S^* (\hat S_z+\mu)^{-1}
S_{z,i}
(\hat S_z+\mu)^{-1}
\Big(
S\big(h_0-h_{\mu,\lambda}\big)
-\E_n[
\big(Y-\hat h(X)\big)\phi(Z)]
\Big)\Bigg\}}_G.
\end{align*}

\begin{enumerate}
    \item[(A)] 
    By Lemma \ref{lemma:SSz_cross_emp},
    \begin{align*}
       \|\hTml \Big[
S^*(S_z+\mu)^{-1}
-
\hat S^*(\hat S_z+\mu)^{-1}
\Big]
\phi(Z_i)
\big(Y_i - h_{\mu,\lambda}(X_i)\big)\|_\Hx &\leq \frac{\tilde \gamma_1}{\lambda}\|\Szmu\phi(Z_i)
\big(Y_i - h_{\mu,\lambda}(X_i)\big)\|_\Hz\\
&+ \frac{\delta_z}{\sqrt{\lambda\mu}} \|\phi(Z_i)
\big(Y_i - h_{\mu,\lambda}(X_i)\big)\|_\Hz.
    \end{align*}
Using that 
$\phi(Z_i)
\big(Y_i - h_{\mu,\lambda}(X_i) = S_i(h_0 - h_{\mu,\lambda}) +\phi(Z_i)\eps_i$. The first term becomes
$$\frac{\tilde \gamma_1}{\lambda}\|\Szmu\phi(Z_i)
\big(Y_i - h_{\mu,\lambda}(X_i)\big)\|_\Hz\leq \frac{\tilde \gamma_1}{\lambda}\left \{\|\Szmu S_i\|_\op b_{\mu, \lambda} +  \|\Szmu\phi(Z_i)\eps_i\|_\Hz \right \}.$$
For the second term,
$$
\frac{\delta_z}{\sqrt{\lambda\mu}} \|\phi(Z_i)
\big(Y_i - h_{\mu,\lambda}(X_i)\big)\|_\Hz
\leq \frac{\delta_z}{\sqrt{\lambda\mu}} \{\kappa b_{\mu, \lambda} +  \|\phi(Z_i)\eps_i\|_\Hz\}.$$
Together, this yields
 \begin{align*}
    & \|\hTml \Big[
S^*(S_z+\mu)^{-1}
-
\hat S^*(\hat S_z+\mu)^{-1}
\Big]
\phi(Z_i)
\big(Y_i - h_{\mu,\lambda}(X_i)\big)\|_\Hx  \\&\leq \frac{\tilde \gamma_1}{\lambda}\left \{\|\Szmu S_i\|_\op b_{\mu, \lambda} +  \|\Szmu\phi(Z_i)\eps_i\|_\Hz \right \} + \frac{\delta_z}{\sqrt{\lambda\mu}} \{\kappa b_{\mu, \lambda} +  \|\phi(Z_i)\eps_i\|_\Hz\} .
\end{align*}
\item[(B)] Next, 
\begin{align*}
    \|\hTml\hat S^*( \hat S_z+\mu)^{-1}
\phi(Z_i)
\big(\hat h(X_i)-h_{\mu,\lambda}(X_i)\big)\|_\Hx = \|\hTml\hat S^*( \hat S_z+\mu)^{-1}
S_i(\hat h - h_{\mu, \lambda})\big)\|_\Hx \leq \frac{\kappa}{2\sqrt{\lambda\mu}}\Delta_{h_{\mu,\lambda}}.
\end{align*}
    \item[(C)] Next, 
    \begin{align*}
        \|\hTml S_i^*\Big[(S_z+\mu)^{-1}-(\hat S_z+\mu)^{-1}\Big]
S\big(h_0-h_{\mu,\lambda}\big)\|_\Hx &=   \|\hTml S_i^*(\hat S_z+\mu)^{-1}(S_z-\hat S_z) ( S_z+\mu)^{-1}
S\big(h_0-h_{\mu,\lambda}\big)\|_\Hx \\
&\leq \frac{1}{\lambda}\|S_i^* (\hat{S_z} +\mu)^{-1}\|_\op\delta_z^\prime v_{\mu,\lambda}.
    \end{align*}
    \item[(D)] Next, 
    \begin{align*}
        &\|\hTml S_i^*(\hat S_z+\mu)^{-1}
\Big(
S\big(h_0-h_{\mu,\lambda}\big)
-E_n[
\big(Y-\hat h(X)\big)\phi(Z)]
\Big)\|_\Hx\\
&= \|\hTml S_i^*(\hat S_z+\mu)^{-1}
\Big\{(S- \hat S)\big(h_0-h_{\mu,\lambda}\big) - \hat{S} (h_{\mu, \lambda} - \hat h)  - \E_n(\epsilon\phi(Z))\Big\}\|_\Hx \\
&\leq \frac{\tilde \gamma_1}{\lambda} \| S_i^*(\hat S_z+\mu)^{-1}\|_\op b_{\mu, \lambda}  +\frac{\kappa\kappa_x}{\lambda\sqrt{\mu}}\Delta_{h_{\mu,\lambda}} + \frac{\tilde \gamma_2}{\lambda}\| S_i^*(\hat S_z+\mu)^{-1}\|_\op .
    \end{align*}
    \item[(E)]
    By Lemma \ref{lemma:SSz_cross_emp}, 
\begin{align*}
   &\| \hTml\Big[S^*(S_z+\mu)^{-1}
-
\hat S^*(\hat S_z+\mu)^{-1}
\Big]
S_{z,i}
(S_z+\mu)^{-1}
S\big(h_0-h_{\mu,\lambda}\big)\|_\Hx \\
&\leq \frac{\tilde \gamma_1}{\lambda} \|\Szmu S_{z,i}
(S_z+\mu)^{-1}
S\big(h_0-h_{\mu,\lambda})\|_\Hz+ \frac{\delta_z}{\sqrt{\lambda\mu}}\|S_{z,i}
(S_z+\mu)^{-1}
S\big(h_0-h_{\mu,\lambda})\|_\Hz \\
&\leq \frac{\tilde \gamma_1}{\lambda \mu} \|(S_z + \mu)^{-1/2} S_{z,i}\|_\op v_{\mu,\lambda} + \frac{\delta_z}{\sqrt{\lambda\mu}}\|(S_z + \mu)^{-1/2}S_{z,i}\|_\op v_{\mu, \lambda}.\end{align*} 
    \item[(F)] Next, 
    \begin{align*}
        \| \hTml \hat S^*(\hat S_z+\mu)^{-1}
S_{z,i}
\Big[
(S_z+\mu)^{-1}-(\hat S_z+\mu)^{-1}
\Big]
S\big(h_0-h_{\mu,\lambda}\big)\|_\Hx \leq \frac{1}{2\sqrt{\lambda\mu}} \|S_{z,i}(\hat S_z +\mu)^{-1}\|_\op\delta_z^\prime v_{\mu,\lambda}.
    \end{align*}
    \item[(G)]  Finally,
    \begin{align*}
   & \|\hTml\hat S^* (\hat S_z+\mu)^{-1}
S_{z,i}
(\hat S_z+\mu)^{-1}
\Big(
S\big(h_0-h_{\mu,\lambda}\big)
-\E_n[
\big(Y-\hat h(X)\big)\phi(Z)]
\Big)\|_\Hx \\
&\leq\frac{1}{2\sqrt{\lambda\mu}}\left\{ \|S_{z,i}
(\hat S_z+\mu)^{-1}(S- \hat S)\big(h_0-h_{\mu,\lambda}\big) \|_\Hz+ \|S_{z,i}
(\hat S_z+\mu)^{-1} \hat{S} (h_{\mu, \lambda} - \hat h)\|_\Hz +\| S_{z,i}
(\hat S_z+\mu)^{-1}\E_n(\epsilon\phi(Z))\|_\Hz \right\} \\
&\leq \frac{\tilde\gamma_1 b_{\mu,\lambda}}{2\sqrt{\lambda\mu}} \|S_{z,i}
(\hat S_z+\mu)^{-1}\|_\op +  \frac{\kappa_z^2\kappa_x}{2\sqrt{\lambda}\mu}\Delta_{h_{\mu,\lambda}} + \frac{\tilde \gamma_2}{2\sqrt{\mu \lambda}} \|S_{z,i}
(\hat S_z+\mu)^{-1}\|_\op.\end{align*}
\end{enumerate}
We now combine these bounds to control the term $\frac{2}{n}\sum_{i= 1}^{n} (2 + 8\delta_B^2)\|\hTml \Delta w_i\|_\Hx^2$. Although $\hTml \Delta w_i$ was initially decomposed into seven groups (A--G), further expansion yields a total of 15 additive terms. Applying the inequality $\|\sum_{k=1}^m v_k\|^2 \leq m \sum_{k=1}^m \|v_k\|^2$ with $m=15$, we bound the squared norm of the sum by 15 times the sum of the squared norms:
\begin{align*}
&\frac{2}{n}\sum_{i= 1}^{n} (2 + 8\delta_B^2)\|\hTml \Delta w_i\|_\Hx^2 \\
&\leq 60(1 + 4\delta_B^2)\Bigg\{\frac{\tilde \gamma_1^2}{\lambda^2}(\delta_\mu^{\prime\prime\prime} \bml^2 + \gamma^{\prime\prime}) + \frac{\delta_z^2}{\lambda\mu}(\kappa^2\bml^2 + \gamma^\prime)  + \frac{\kappa^2}{4\lambda\mu}\Delta_{\hml}^2 + \frac{1}{\lambda^2}\delta_\mu^{\prime\prime}(\delta_z^\prime)^2\vml^2+ \frac{\tilde \gamma_1^2}{\lambda^2}\delta_\mu^{\prime \prime} \bml^2 + \frac{\kappa^2\kappa_x^2}{\lambda^2 \mu}\Delta_{\hml}^2 + \frac{\tilde \gamma_2^2}{\lambda^2}\delta_\mu^{\prime \prime} \\
&+\frac{\tilde \gamma_1^2}{\lambda^2 \mu^2} \delta_\mu^{\prime\prime\prime\prime} v_{\mu,\lambda}^2 + \frac{\delta_z^2}{\lambda\mu}\delta_\mu^{\prime\prime\prime\prime}\vml^2 + \frac{1}{4\lambda\mu}\delta_\mu^\prime(\delta_z^\prime)^2\vml^2+ \frac{\tilde \gamma_1^2\bml^2}{4\lambda \mu}\delta_\mu^\prime  + \frac{\kappa_z^4\kappa_x^2}{4\lambda\mu^2 }\Delta_{\hml}^2 + \frac{\tilde \gamma_2^2}{4\lambda \mu} \delta_\mu^\prime\Bigg\} =:
\bar \Delta_2.
\end{align*}
Therefore, conditional on the data and the assumed high probability events, with probability $1-\eta$,
$$
    \| \Delta_2 \|_\Hx \leq \left\{1 + \sqrt{2\log(1/\eta)}\right\} \sqrt{\bar \Delta_2}. $$
\end{proof}
\subsection{Main result}
\begin{lemma}[Decomposition of the bootstrap error]
    $\mathfrak{B} - Z_{\mathfrak{B}} = \Delta_1 + \Delta_2,$
where $\Delta_1 = (\hat T_{\mu, \lambda}^{-1} - T_{\mu, \lambda}^{-1} )(\frac{1}{n}\sum_{i = 1}^n\sum_{j = 1}^n\hat{w}_{ij}  h_{ij})$ and $\Delta_2 = \Tml^{-1}\left\{\frac{1}{n}\sum_{i = 1}^n\sum_{j = 1}^n(\hat{w}_{ij}-w_{ij})  h_{ij}\right\}.$
\end{lemma}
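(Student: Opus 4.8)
The plan is to unwind the definitions of $\mathfrak{B}$ and $Z_{\mathfrak{B}}$ so that both become double sums of the form $\frac1n\sum_{i,j}(\text{operator})\,(\text{inner vector})_{ij}\,h_{ij}$ with a \emph{common} inner structure, and then insert the elementary resolvent bookkeeping identity. Concretely, recall from the setup of this appendix that the infeasible and feasible summands factor as $V_i = T_{\mu,\lambda}^{-1}w_i$ and $\hat V_i = \hat T_{\mu,\lambda}^{-1}\hat w_i$, where $w_i$ and $\hat w_i$ are the bracketed inner vectors displayed above; this is exactly how $V_i$, $\hat V_i$ were defined, and it is consistent with the closed-form bootstrap derived in Appendix~\ref{sec:closed_form_Bootstrap}. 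Since the operators $T_{\mu,\lambda}^{-1}$ and $\hat T_{\mu,\lambda}^{-1}$ do not depend on the indices $i,j$, linearity gives $\tfrac{1}{\sqrt2}(\hat V_i - \hat V_j) = \hat T_{\mu,\lambda}^{-1}\hat w_{ij}$ and $\tfrac{1}{\sqrt2}(V_i - V_j) = T_{\mu,\lambda}^{-1}w_{ij}$, with $\hat w_{ij}$, $w_{ij}$ as defined above. Hence
\[
\mathfrak{B} - Z_{\mathfrak{B}}
= \frac1n\sum_{i=1}^n\sum_{j=1}^n\bigl(\hat T_{\mu,\lambda}^{-1}\hat w_{ij} - T_{\mu,\lambda}^{-1}w_{ij}\bigr)h_{ij}.
\]

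Next I would add and subtract $T_{\mu,\lambda}^{-1}\hat w_{ij}$ inside the parenthesis, using
\[
\hat T_{\mu,\lambda}^{-1}\hat w_{ij} - T_{\mu,\lambda}^{-1}w_{ij}
= \bigl(\hat T_{\mu,\lambda}^{-1} - T_{\mu,\lambda}^{-1}\bigr)\hat w_{ij}
+ T_{\mu,\lambda}^{-1}\bigl(\hat w_{ij} - w_{ij}\bigr).
\]
Substituting this and pulling the index-free operators back out of the double sums yields
\[
\mathfrak{B} - Z_{\mathfrak{B}}
= \bigl(\hat T_{\mu,\lambda}^{-1} - T_{\mu,\lambda}^{-1}\bigr)\Bigl(\tfrac1n\textstyle\sum_{i,j}\hat w_{ij}h_{ij}\Bigr)
+ T_{\mu,\lambda}^{-1}\Bigl(\tfrac1n\textstyle\sum_{i,j}(\hat w_{ij} - w_{ij})h_{ij}\Bigr)
= \Delta_1 + \Delta_2,
\]
which is precisely the asserted decomposition.

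Since this is a purely algebraic identity, there is no analytic obstacle; the only point requiring care is to confirm that $\hat V_i$ and $V_i$ genuinely factor through the resolvents $\hat T_{\mu,\lambda}^{-1}$ and $T_{\mu,\lambda}^{-1}$ acting on the common inner vectors $\hat w_i$, $w_i$ --- that is, that the hatted and un-hatted pieces share the same operator template with $T_{\mu,\lambda}$, $S$, $S_z$, $\varepsilon$ replaced by their empirical counterparts --- which is immediate from the displayed definitions, together with noting that the anti-symmetric multiplier structure $\tfrac{1}{\sqrt2}(\cdot_i - \cdot_j)$ commutes past linear operators. This decomposition is the entry point for Theorem~\ref{thm: Boostrap-approximation}: $\Delta_1$ will subsequently be controlled by Lemma~\ref{lemma:DELTA1} and $\Delta_2$ by Lemma~\ref{lemma:DELTA2}, while the infeasible bootstrap $Z_{\mathfrak{B}}$ is coupled to the target Gaussian $Z'$ separately.
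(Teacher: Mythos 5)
Your proof is correct and, in fact, supplies the full algebraic derivation that the paper merely delegates to ``matching symbols with Lemma G.3'' of the cited reference. The key step --- observing that $V_i = T_{\mu,\lambda}^{-1}w_i$, $\hat V_i = \hat T_{\mu,\lambda}^{-1}\hat w_i$ so that the anti-symmetrized summands factor as $\tfrac{1}{\sqrt2}(\hat V_i-\hat V_j)=\hat T_{\mu,\lambda}^{-1}\hat w_{ij}$ and $\tfrac{1}{\sqrt2}(V_i-V_j)=T_{\mu,\lambda}^{-1}w_{ij}$, then inserting $\pm\, T_{\mu,\lambda}^{-1}\hat w_{ij}$ and pulling the index-free operators out of the double sum --- is exactly the intended argument. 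Your version is self-contained and slightly more useful to a reader than the paper's one-line cross-reference, but it is the same approach, not a different one.
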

\begin{proof}
    The proof follows by matching symbols with Lemma G.3 in \cite{singh2023kernel}.
\end{proof}

\begin{theorem}[Feasible bootstrap]
\label{Theorem:feasible_bootstrap}
  Suppose $n\geq \max\{N_{\delta_z}, 3N_\delta\}$ such that $\delta_B = 3\delta \le 1/2 $ and $\delta_z\in  [0, 1/2]$. Then with probability  $1 -15\eta$,
 $$\|\mathfrak{B} - Z_{\mathfrak{B}}\|_\Hx \leq C_\eta \Bigg[
\frac{1}{\lambda} A
+\frac{1}{\lambda\sqrt{\mu}} B
+\frac{1}{\sqrt{\lambda\mu}} C
+\frac{1}{\sqrt{\lambda}\mu} D
\Bigg],$$
where
\begin{align*}
A &= 2\delta_B\sqrt{14}\Big(\tilde\gamma_2\sqrt{\delta_\mu^{\prime\prime}}\Big)
   + \sqrt{60(1 + 4\delta_B^2)}\Big(
      \tilde\gamma_1(\bml\sqrt{\delta_\mu^{\prime\prime\prime} }+\sqrt{\gamma^{\prime\prime}})
    + \delta_z^\prime\vml\sqrt{\delta_\mu^{\prime\prime}}
    + \tilde\gamma_1\bml\sqrt{\delta_\mu^{\prime\prime}}
    + \tilde\gamma_2\sqrt{\delta_\mu^{\prime\prime}}
   \Big), \\[1.0em]
B &= 2\delta_B\sqrt{14}\Big(2\kappa{\kappa_x}\Delta_h\Big)
   + \sqrt{60(1 + 4\delta_B^2)}\Big(\kappa_x
      \kappa\Delta_{\hml}
    + \mu^{-1/2}\tilde\gamma_1v_{\mu,\lambda}\sqrt{\delta_\mu^{\prime\prime\prime\prime}}
   \Big), \\[1.0em]
C &= 2\delta_B\sqrt{14}\Big(
      \kappa\Delta_h
    + \tfrac12\sqrt{\gamma^\prime}
    + \tfrac12\tilde\gamma_2\sqrt{\delta_\mu^\prime}
    + 2\Delta_h\sqrt{\lambda\mu}
   \Big) \\
&\quad + \sqrt{60(1 + 4\delta_B^2)}\Big(
      \delta_z(\kappa\bml+\sqrt{\gamma^\prime})
    + \tfrac12\kappa\Delta_{\hml}
    + \delta_z\vml\sqrt{\delta_\mu^{\prime\prime\prime\prime}}
    + \tfrac12\delta_z^\prime\vml\sqrt{\delta_\mu^\prime}
    + \tfrac12\tilde\gamma_1\bml\sqrt{\delta_\mu^\prime}
    + \tfrac12\tilde\gamma_2\sqrt{\delta_\mu^\prime}
   \Big), \\[1.0em]
D &= 2\delta_B\sqrt{14}\Big(\tfrac12\kappa_z^2{\kappa_x}\Delta_h\Big)
   + \sqrt{60(1 + 4\delta_B^2)}\Big(\tfrac12\kappa_x\kappa_z^2\Delta_{\hml}\Big).
\end{align*}
\end{theorem}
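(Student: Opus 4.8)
The plan is to assemble the inequality directly from the decomposition $\mathfrak{B}-Z_{\mathfrak{B}}=\Delta_1+\Delta_2$ established above, the triangle inequality, and the two term-by-term estimates on $\Delta_1$ (Lemma~\ref{lemma:DELTA1}) and $\Delta_2$ (Lemma~\ref{lemma:DELTA2}); no new analytic idea is needed beyond those lemmas. First I would verify the structural hypotheses. The sample-size assumption $n\ge\max\{N_{\delta_z},3N_\delta\}$ forces, via Lemma~\ref{lemma:sz_hat} and Lemma~\ref{lemma:rate_delta}, that $\delta_z\le 1/2$ and $\delta\le 1/6$, hence $\delta_B=3\delta\le 1/2$; these are exactly the conditions required by the empirical linearization (Lemma~\ref{lemma:emp_linearization}) and by the two $\Delta$-lemmas. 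Next I would collect the primitive high-probability events on which the conclusions of Lemmas~\ref{lemma:DELTA1} and~\ref{lemma:DELTA2} rest: the operator-deviation bounds $\tilde\gamma_1,\tilde\gamma_2,\tilde\gamma_z$ (Lemmas~\ref{lemma:primitive_bounds},~\ref{lemma:gammaz}), the empirical second-moment bounds $\gamma',\delta_z',\delta_z'',\delta_\mu',\delta_\mu'',\delta_\mu''',\delta_\mu''''$ (Lemmas~\ref{lemma:gamma_prime},~\ref{lemma:delta_sz_prime},~\ref{lemma:delta_prime}), the bound $\delta_z$, the Bernstein event used to control the operator deviation $\delta$, and the two \emph{conditional} Gaussian-norm events (one inside each $\Delta$-lemma). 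A union bound over these --- fifteen events, each of probability at least $1-\eta$ --- yields the stated confidence $1-15\eta$. On the intersection of these events the consistency bounds $\Delta_h=\|h_0-\hat h\|$ and $\Delta_{h_{\mu,\lambda}}=\|h_{\mu,\lambda}-\hat h\|$ of Lemma~\ref{lemma:consistency} also hold, so every quantity appearing in the conclusions of Lemmas~\ref{lemma:DELTA1} and~\ref{lemma:DELTA2} is controlled.

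With the events in force, I would invoke the two lemmas and write $C_\eta:=1+\sqrt{2\log(1/\eta)}$. Lemma~\ref{lemma:DELTA1} gives $\|\Delta_1\|\le 2\delta_B\,C_\eta\sqrt{14}\,\sqrt{S_1}$, where $S_1$ is the explicit finite sum displayed there (with summands carrying denominators among $\lambda\mu$, $\lambda^2\mu$, $\lambda\mu^2$, $\lambda^2$). Lemma~\ref{lemma:DELTA2} gives $\|\Delta_2\|\le C_\eta\sqrt{\bar\Delta_2}$ with $\bar\Delta_2=30(1+\delta_B)\,S_2$ for the corresponding finite sum $S_2$; since $\delta_B\le 1/2$ we may bound $\sqrt{30(1+\delta_B)}$ by a fixed constant. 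Adding the two and applying the elementary subadditivity $\sqrt{\sum_k a_k}\le\sum_k\sqrt{a_k}$ to $\sqrt{S_1}$ and $\sqrt{S_2}$ produces $\|\mathfrak{B}-Z_{\mathfrak{B}}\|\le C_\eta\sum_k(\text{square root of an individual summand})$. Each such square root carries exactly one of the prefactors $\lambda^{-1}$, $\lambda^{-1}\mu^{-1/2}$, $\lambda^{-1/2}\mu^{-1/2}$, $\lambda^{-1/2}\mu^{-1}$ --- the root of $\lambda^{-2}$, $\lambda^{-2}\mu^{-1}$, $\lambda^{-1}\mu^{-1}$, $\lambda^{-1}\mu^{-2}$ respectively --- while the remaining factor is a product of the primitive scalars $\tilde\gamma_1,\tilde\gamma_2,\delta_z,\delta_z',\delta_B,\Delta_h,\Delta_{h_{\mu,\lambda}},\bml,\vml$ and the square roots $\sqrt{\gamma'},\sqrt{\delta_\mu'},\sqrt{\delta_\mu''},\sqrt{\delta_\mu'''},\sqrt{\delta_\mu''''}$.

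I would then regroup by pulling the four prefactors out front: the residual factors collected under $\lambda^{-1}$ are by definition $A$, those under $\lambda^{-1}\mu^{-1/2}$ are $B$, those under $\lambda^{-1/2}\mu^{-1/2}$ are $C$, and those under $\lambda^{-1/2}\mu^{-1}$ are $D$, where the $2\delta_B\sqrt{14}$ pieces originate from $\Delta_1$ and the $\sqrt{30(1+\delta_B)}$ pieces from $\Delta_2$. For example, the $4\kappa^4\kappa_x\Delta_h^2/(\lambda^2\mu)$ summand of $S_1$ contributes $2\delta_B\sqrt{14}\cdot 2\kappa^2\sqrt{\kappa_x}\,\Delta_h$ to $B$; the $\kappa^2\Delta_{h_{\mu,\lambda}}^2/(\lambda^2\mu)$ and $\tilde\gamma_1^2\vml^2\delta_\mu'''/(\lambda^2\mu)$ summands of $S_2$ contribute $\sqrt{30(1+\delta_B)}(\kappa\Delta_{h_{\mu,\lambda}}+\tilde\gamma_1\vml\sqrt{\delta_\mu'''})$ to $B$; the $4\Delta_h$ summand of $S_1$, written as $(1/\sqrt{\lambda\mu})\cdot 2\sqrt{\Delta_h\lambda\mu}$, lands in $C$. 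Absorbing the fixed numerical constant $\sqrt{14}$ (and the bounded factor $\sqrt{30(1+\delta_B)}$) into $C_\eta$ then yields the displayed bound.

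The only real obstacle is the bookkeeping of this regrouping: one must check that \emph{every} summand of $S_1$ and $S_2$ is placed in the correct one of the four blocks and that none is omitted or double-counted. A convenient end-of-proof sanity check is homogeneity --- each block $A,B,C,D$ times its prefactor must have the same $(\lambda,\mu)$-scaling as $\|\mathfrak{B}-Z_{\mathfrak{B}}\|$ --- and cross-checking the leading scalings against the $R_{\mathrm{res}}$ expression in Theorem~\ref{thm: Boostrap-approximation} confirms consistency.
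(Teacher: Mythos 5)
Your proposal is correct and follows essentially the same route as the paper's proof: decompose $\mathfrak{B}-Z_{\mathfrak{B}}=\Delta_1+\Delta_2$, invoke Lemmas~\ref{lemma:DELTA1} and~\ref{lemma:DELTA2}, apply subadditivity of square roots to split the bounded sums, regroup the resulting terms by their $(\lambda,\mu)$-prefactors into the blocks $A,B,C,D$, and tally the primitive high-probability events (including $\tilde\gamma_3$ entering through $\Delta_U$ and $\delta_z''$ through $\Delta_{\hml}$) to reach $1-15\eta$. One small caveat: you assert that $n\ge 3N_\delta$ \emph{forces} $\delta\le 1/6$ via Lemma~\ref{lemma:rate_delta}; the paper instead states $\delta_B=3\delta\le 1/2$ as a direct hypothesis, since the components of $\delta$ mix $n^{-1}$ and $n^{-1/2}$ scalings and tripling $N_\delta$ does not uniformly shrink $\delta$ by a factor of three --- but this does not affect the argument because the conclusion only uses the assumed bound $\delta_B\le 1/2$.
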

\begin{proof}
Let $C_\eta = \left\{1 + \sqrt{2\log(1/\eta)}\right\}$
We first collect the events and then determine the probability. Using Lemma \ref{lemma:DELTA1} and Lemma \ref{lemma:DELTA2}, with probability $1-2\eta$, 
\begin{align*}
\| \mathfrak{B} - Z_{\mathfrak{B}}\|_\Hx
&\le \| \Delta_1\|_\Hx + \| \Delta_2\|_\Hx \\
&\le 2\delta_B C_\eta \sqrt{14\Big(
  \frac{\kappa^2\Delta_h^2}{\lambda\mu} +\frac{4\kappa^2\kappa_x^2\Delta_h^2}{\lambda^2\mu}
 +\frac{\kappa_z^4\kappa_x^2 \Delta_h^2}{4\lambda\mu^2}
 + 4\Delta_h^2
 +\frac{\gamma^\prime}{4\lambda \mu}
 +\frac{\tilde \gamma_2^2 \delta_\mu^{\prime \prime}}{\lambda^2}
 +\frac{\tilde \gamma_2^2 \delta_\mu^{\prime}}{4\lambda\mu}
\Big)} \\
&\qquad + C_\eta \Big[60(1 + 4\delta_B^2)\Big\{
  \frac{\tilde \gamma_1^2}{\lambda^2}(\delta_\mu^{\prime\prime\prime}  \bml^2 + \gamma^{\prime\prime})
 +\frac{\delta_z^2}{\lambda\mu}(\kappa^2\bml^2 + \gamma^\prime)
 +\frac{\kappa^2}{4\lambda\mu}\Delta_{\hml}^2
 +\frac{1}{\lambda^2}\delta_\mu^{\prime\prime}(\delta_z^\prime)^2\vml^2 \\
&\qquad\qquad\qquad\qquad
 +\frac{\tilde \gamma_1^2} {\lambda^2}\delta_\mu^{\prime \prime}\bml^2
 +\frac{\kappa^2\kappa_x^2}{\lambda^2 \mu}\Delta_{\hml}^2
 +\frac{\tilde \gamma_2^2}{\lambda^2}\delta_\mu^{\prime \prime}
 +\frac{\tilde \gamma_1^2}{\lambda^2 \mu^2}\delta_\mu^{\prime\prime\prime\prime} v_{\mu,\lambda}^2
 +\frac{\delta_z^2}{\lambda\mu}\delta_\mu^{\prime\prime\prime\prime}\vml^2 \\
&\qquad\qquad\qquad\qquad
 +\frac{1}{4\lambda\mu}\delta_\mu^\prime(\delta_z^\prime)^2\vml^2
 +\frac{\tilde \gamma_1^2\bml^2}{4\lambda \mu}\delta_\mu^\prime
 +\frac{\kappa_z^4\kappa_x^2}{4\lambda\mu^2}\Delta_{\hml}^2
 +\frac{\tilde \gamma_2^2}{4\lambda \mu}\delta_\mu^\prime
\Big\} \Big]^{1/2} \\[0.4em]
&\le C_\eta \Bigg[
  2\delta_B \sqrt{14} \Bigg(
     \frac{\kappa}{\sqrt{\lambda\mu}}\Delta_h
   + \frac{2\kappa\kappa_x}{\lambda\sqrt{\mu}}\Delta_h
   + \frac{\kappa_z^2\kappa_x}{2\sqrt{\lambda}\mu}\Delta_h
   + \frac{2\Delta_h\sqrt{\lambda\mu}}{\sqrt{\lambda\mu}}
   + \frac{\sqrt{\gamma^\prime}}{2\sqrt{\lambda\mu}}
   + \frac{\tilde\gamma_2}{\lambda}\sqrt{\delta_\mu^{\prime\prime}}
   + \frac{\tilde\gamma_2}{2\sqrt{\lambda\mu}}\sqrt{\delta_\mu^\prime}
  \Bigg) \\
&\qquad\qquad
 + \sqrt{60(1 + 4\delta_B^2)} \Bigg(
     \frac{\tilde\gamma_1}{\lambda}\big(\bml\sqrt{\delta_\mu^{\prime\prime\prime} }+\sqrt{\gamma^{\prime\prime}}\big)
   + \frac{\delta_z}{\sqrt{\lambda\mu}}\big(\kappa\bml+\sqrt{\gamma^\prime}\big)
   + \frac{\kappa}{2\sqrt{\lambda\mu}}\Delta_{\hml}
\\
&\qquad\qquad
   + \frac{1}{\lambda}\delta_z^\prime\vml\sqrt{\delta_\mu^{\prime\prime}}
   + \frac{\tilde\gamma_1}{\lambda}\bml\sqrt{\delta_\mu^{\prime\prime}}
+ \frac{\kappa\kappa_x}{\lambda\sqrt{\mu}}\Delta_{\hml} 
   + \frac{\tilde\gamma_2}{\lambda}\sqrt{\delta_\mu^{\prime\prime}}
   + \frac{\tilde\gamma_1}{\lambda{\mu}}v_{\mu,\lambda}\sqrt{\delta_\mu^{\prime\prime\prime\prime}}
   + \frac{\delta_z}{\sqrt{\lambda\mu}}\vml\sqrt{\delta_\mu^{\prime\prime\prime\prime}}\\
&\qquad\qquad
   + \frac{\delta_z^\prime\vml}{2\sqrt{\lambda\mu}}\sqrt{\delta_\mu^\prime}
   + \frac{\tilde\gamma_1\bml}{2\sqrt{\lambda\mu}}\sqrt{\delta_\mu^\prime} 
   + \frac{\kappa_x\kappa_z^2}{2\sqrt{\lambda}\mu}\Delta_{\hml}
   + \frac{\tilde\gamma_2}{2\sqrt{\lambda\mu}}\sqrt{\delta_\mu^\prime}
  \Bigg)
\Bigg] \\[0.4em]
&= C_\eta \Bigg[
   \frac{1}{\lambda} A
 + \frac{1}{\lambda\sqrt{\mu}} B
 + \frac{1}{\sqrt{\lambda\mu}} C
 + \frac{1}{\sqrt{\lambda}\mu} D
\Bigg].
\end{align*}
Here,
\begin{align*}
A &= 2\delta_B\sqrt{14}\Big(\tilde\gamma_2\sqrt{\delta_\mu^{\prime\prime}}\Big)
   + \sqrt{60(1 + 4\delta_B^2)}\Big(
      \tilde\gamma_1(\bml\sqrt{\delta_\mu^{\prime\prime\prime} }+\sqrt{\gamma^{\prime\prime}})
    + \delta_z^\prime\vml\sqrt{\delta_\mu^{\prime\prime}}
    + \tilde\gamma_1\bml\sqrt{\delta_\mu^{\prime\prime}}
    + \tilde\gamma_2\sqrt{\delta_\mu^{\prime\prime}}
   \Big), \\[1.0em]
B &= 2\delta_B\sqrt{14}\Big(2\kappa{\kappa_x}\Delta_h\Big)
   + \sqrt{60(1 + 4\delta_B^2)}\Big(\kappa_x
      \kappa\Delta_{\hml}
    + \mu^{-1/2}\tilde\gamma_1v_{\mu,\lambda}\sqrt{\delta_\mu^{\prime\prime\prime\prime}}
   \Big), \\[1.0em]
C &= 2\delta_B\sqrt{14}\Big(
      \kappa\Delta_h
    + \tfrac12\sqrt{\gamma^\prime}
    + \tfrac12\tilde\gamma_2\sqrt{\delta_\mu^\prime}
    + 2\Delta_h\sqrt{\lambda\mu}
   \Big) \\
&\quad + \sqrt{60(1 + 4\delta_B^2)}\Big(
      \delta_z(\kappa\bml+\sqrt{\gamma^\prime})
    + \tfrac12\kappa\Delta_{\hml}
    + \delta_z\vml\sqrt{\delta_\mu^{\prime\prime\prime\prime}}
    + \tfrac12\delta_z^\prime\vml\sqrt{\delta_\mu^\prime}
    + \tfrac12\tilde\gamma_1\bml\sqrt{\delta_\mu^\prime}
    + \tfrac12\tilde\gamma_2\sqrt{\delta_\mu^\prime}
   \Big), \\[1.0em]
D &= 2\delta_B\sqrt{14}\Big(\tfrac12\kappa_z^2{\kappa_x}\Delta_h\Big)
   + \sqrt{60(1 + 4\delta_B^2)}\Big(\tfrac12\kappa_x\kappa_z^2\Delta_{\hml}\Big).
\end{align*}
Now, the events with bounds $\delta_\mu^{\prime}, \delta_\mu^{\prime\prime}, \delta_\mu^{\prime\prime\prime}, \delta_\mu^{\prime\prime\prime\prime}, \tilde \gamma_1, \tilde \gamma_2, \tilde \gamma$ each hold with probability $1-\eta$. Moreover, $\gamma^\prime, \gamma^{\prime\prime}, \delta_z,\delta_z^\prime$ also hold with probability $1-\eta$. Notice that $\delta_B$  depends only on $\delta$ (Lemma \ref{lemma:delta}) which holds with $1-\eta$ since we have already accounted for  $\tilde \gamma_1$ and $\delta_z$.
When using the consistency bounds from Lemma \ref{lemma:consistency}, the bound holds with probability $1-13\eta $ when taking into account the $2\eta $ from before. For $\Delta_{h_{\mu, \lambda}} $, we need to consider the unconsidered events from our Bahadur representation.
Note $\|\hat h- \hml \|_\Hx\leq \| \E_n(U_i) \|_\Hx + \Delta_U $
with
\begin{align*}
    \|\E_n(U_i)\|_\Hx&\leq \frac{1}{2\sqrt{\lambda\mu}}\tilde \gamma_1\bml + \frac{1}{\lambda}\delta_z^{\prime\prime} \vml + \frac{1}{2\sqrt{\lambda\mu}}\tilde \gamma_2 + \frac{\kappa_x}{2\sqrt{\lambda\mu}}\delta_z^\prime \bml.
\end{align*}
Here, only $\delta_z^{\prime\prime}$ is new. 
In $\Delta_U$ (see Theorem \ref{theorem:bahadur}) only $\tilde \gamma_3$ (through  $\gamma$) is unaccounted for. 
Thus, we can use the derived bound for $\Delta_U$ with an additional probability of $1-\eta$ and
$$\Delta_U\leq 2\delta\gamma + \gamma_1 + \xi_1+ \left[2\delta^2 +
\frac{\delta_z}{\sqrt{\lambda\mu}}\{ \delta_z(\kappa +  \tilde \gamma_1) + \frac{\tilde\gamma_1}{2}\} + \frac{\tilde \gamma_1}{\lambda\mu}\{\tilde \gamma_1 + \delta_z(1+2\delta_z)(\kappa+ \tilde \gamma_1)\} \right]  \| h_0-h_{\mu,\lambda}\|_\Hx.
$$
Given that the probabilistic part of the bound of $\Delta_h$ is only $\Delta_{h_{\mu, \lambda}}$ we can bound the overall $\|\mathfrak{B} - Z\|_\Hx$ with the above events and probability $1-15\eta$.
\end{proof}

%\newpage
\section{Gaussian coupling}
\subsection{Polynomial decay}
\begin{assumption}[Spectrum of $S_x$]
\label{ass:lwx}Let $S_x=\E \{\psi(X)\otimes \psi(X)^*\}$  be the  positive, self-adjoint covariance operator acting on $\Hx$ with eigenvalues $\nu_s(S_x)$ arranged in nonincreasing order. The spectrum of $S_x$ decays polynomially, i.e.
$$
\nu_s(S_x)\asymp \omega_x s^{-1/(\rho_x-1)},
$$
with $\rho_x \in [1,2].$
\end{assumption}

\begin{assumption}[Spectrum of \(S_z\)] \label{ass:lwz} Let $S_z=\E \{\phi(Z)\otimes \phi(Z)^*\}$be a positive, self-adjoint covariance operator acting on $\Hz$ with eigenvalues $\nu_s(S_z)$ arranged in nonincreasing order. The spectrum of $S_z$ decays polynomially, i.e.
$$
\nu_s(S_z) \asymp \omega_z s^{-1/(\rho_z -1)},
$$
with $\rho_z \in [1,2].$
\end{assumption}

\begin{lemma}[Effective dimension bound]\label{lemma:effective_dim_bound}
Let $\lambda>0$ and $\mu>0$.
Suppose that Assumptions~\ref{ass:lwx} and~\ref{ass:lwz} hold with
exponents $\rho_x,\rho_z \in [1,2]$ and constants $\omega_x,\omega_z$.
Then there exist positive constants
$C_z = C_z(\rho_z,\omega_z)$ and $C_x = C_x(\rho_x,\omega_x)$ such that
\begin{align*}
  \mathfrak{n}_z(\mu)
  &:= \tr\bigl((S_z+\mu I)^{-2} S_z\bigr)
    \le C_z\mu^{-\rho_z},\\
  \mathfrak{m}(\lambda,\mu)
  &:= \tr\bigl((T_\mu+\lambda I)^{-2} T_\mu\bigr)
    \le C_x\lambda^{-\rho_x},\\
  \tilde{\mathfrak{m}}(\lambda,\mu)
  &:= \tr\bigl(
        T_{\mu,\lambda}^{-1} S^*(S_z+\mu I)^{-1} S_z
        (S_z+\mu I)^{-1} S T_{\mu,\lambda}^{-1}
      \bigr)
    \le C_x\lambda^{-\rho_x}.
\end{align*}
In particular, this means
\[
  \mathfrak{n}_z(\mu) \lesssim_{\rho_z,\omega_z} \mu^{-\rho_z},
  \qquad
  \mathfrak{m}(\lambda,\mu) \lesssim_{\rho_x,\omega_x} \lambda^{-\rho_x},
  \qquad
  \tilde{\mathfrak{m}}(\lambda,\mu) \lesssim_{\rho_x,\omega_x} \lambda^{-\rho_x}.
\]
\end{lemma}

\begin{proof}
The bound for $\mathfrak{n}_z(\mu)$ follows from Proposition~K.2 of
\cite{singh2023kernel} applied to $S_z$; in the notation of that paper,
set $\rho_z = 1 + 1/\beta$. This yields
\[
  \mathfrak{n}_z(\mu)
  = \tr\bigl((S_z+\mu I)^{-2} S_z\bigr)
  \le C_z\mu^{-\rho_z}
\]
for a constant $C_z = C_z(\rho_z,\omega_z)$. For $\mathfrak{m}(\lambda,\mu)$, note that
\[
  T_\mu = S^*(S_z+\mu I)^{-1}S,
  \qquad
  T = S^* S_z^{-1} S,
\]
and $(S_z+\mu I)^{-1} \preceq S_z^{-1}$ implies $T_\mu \preceq T$.
Moreover, by Lemma~\ref{lemma:X>T} one has $T \preceq S_x$.
Since $T_\mu$ is positive and $T_\mu \preceq S_x$, it follows that
\[
  \mathfrak{m}(\lambda,\mu)
  = \tr\bigl((T_\mu+\lambda I)^{-2} T_\mu\bigr)
  \le \frac{1}{\lambda}
      \tr\bigl((T_\mu+\lambda I)^{-1}T_\mu\bigr)
  \le \frac{1}{\lambda}
      \tr\bigl((S_x+\lambda I)^{-1}S_x\bigr).
\]
Define the generalized effective dimension of $S_x$ by
\[
  \psi(m,c)
  := \sum_{s=m+1}^\infty \frac{\nu_s}{(\nu_s+\lambda)^c},
\]
where $(\nu_s)$ denotes the eigenvalues of $S_x$.
In this notation, $\tr\bigl((S_x+\lambda I)^{-1}S_x\bigr) = \psi(0,1),$
and Proposition~K.2 of \cite{singh2023kernel} gives $
  \psi(0,1) \le C_x\lambda^{-(\rho_x-1)}$.
Hence $
  \mathfrak{m}(\lambda,\mu)
  \le \frac{1}{\lambda}\psi(0,1)
  \le C_x\lambda^{-\rho_x}.
$
Finally,
\[
  \tilde{\mathfrak{m}}(\lambda,\mu)
  = \tr\bigl(
      T_{\mu,\lambda}^{-1} S^*(S_z+\mu I)^{-1} S_z
      (S_z+\mu I)^{-1} S T_{\mu,\lambda}^{-1}
    \bigr)
  \le \tr\bigl(
      T_{\mu,\lambda}^{-1} S^*(S_z+\mu I)^{-1} S T_{\mu,\lambda}^{-1}
    \bigr)
  = \mathfrak{m}(\lambda,\mu),
\]
where the inequality uses $0 \preceq S_z \preceq S_z+\mu I$.
The previous bound on $\mathfrak{m}(\lambda,\mu)$ then yields
\(
  \tilde{\mathfrak{m}}(\lambda,\mu) \le C_x\lambda^{-\rho_x}.
\)
\end{proof}

\subsection{Main result}
\begin{remark}[Rate of $\Delta_{h_{\mu,\lambda}}$]
Let $b_{\mu,\lambda}$ and $v_{\mu,\lambda}$ denote the bias and projected bias,
respectively, as in Lemma~\ref{lemma:consistency}. Under Assumption~\ref{ass:lwz}, Proposition~K.2 of \cite{singh2023kernel} applied with $c=1$ gives
$\tr\bigl((S_z+\mu I)^{-1} S_z\bigr) \lesssim \mu^{-(\rho_z-1)}.$
Together with Lemma~\ref{lemma:effective_dim_bound} ($\mathfrak{n}_z(\mu)\lesssim \mu^{-\rho_z}$), the expressions for $\tilde{\gamma}_1$, $\tilde{\gamma}_2$,
$\delta_z'$ and $\delta_z''$ and a regime with $\mu \le \lambda$, the
bound of $\Delta_{h_{\mu,\lambda}}$ in
Lemma~\ref{lemma:consistency} yields
\begin{align*}
  \Delta_{h_{\mu,\lambda}}
  &= \mathcal{O}\!\left(
        \frac{\tilde{\gamma}_1}{\sqrt{\lambda\mu}}b_{\mu,\lambda}
        + \frac{\delta_z''}{\lambda}v_{\mu,\lambda}
        + \frac{\tilde{\gamma}_2}{\sqrt{\lambda\mu}}
        + \frac{\delta_z'}{\sqrt{\lambda\mu}}b_{\mu,\lambda}
        + \Delta_U
     \right) \\
  &= \mathcal{O}\!\left(
        \frac{1}{\sqrt{n\lambda\mu}}b_{\mu,\lambda}
        + \frac{1}{\lambda}\sqrt{\frac{\mu\mathfrak{n}_z(\mu)}{n}}
          v_{\mu,\lambda}
        + \frac{1}{\sqrt{n\lambda\mu}}
        + \frac{1}{\sqrt{\lambda\mu}}\sqrt{\frac{\mu\mathfrak{n}_z(\mu)}{n}}
          b_{\mu,\lambda}
        + \Delta_U
     \right) \\
  &= \mathcal{O}\!\left(
        \frac{1}{\lambda}\sqrt{\frac{\mu\mathfrak{n}_z(\mu)}{n}}
          v_{\mu,\lambda}
        + \frac{1}{\sqrt{n\lambda\mu}}
        + \frac{1}{\sqrt{\lambda\mu}}\sqrt{\frac{\mu\mathfrak{n}_z(\mu)}{n}}
          b_{\mu,\lambda}
        + \Delta_U
     \right).
\end{align*}
In particular, since $v_{\mu,\lambda}\leq \bml $ the term involving $v_{\mu,\lambda}$ is dominated, and one obtains
\[
  \Delta_{h_{\mu,\lambda}}
  = \mathcal{O}\!\left(
      \frac{1}{\sqrt{n\lambda\mu}}
      + \frac{1}{\sqrt{\lambda\mu}}\sqrt{\frac{\mu\mathfrak{n}_z(\mu)}{n}}
        b_{\mu,\lambda}
      + \Delta_U
    \right).
\]
\end{remark}

\begin{theorem}
[Gaussian approximation]\label{theorem:gaussian-approx2}
Suppose  $n$ satisfies the rate Assumption \ref{ass:rate condition} i.e. $n\geq \max\{N_{\delta_z}, N_\delta\}$ and that Assumptions \ref{ass:lwx} and \ref{ass:lwz} hold. Choose a scheme for the regularization $(\lambda,\mu)$ so that $\|U_i\|_{\Hx}\lesssim\frac{1}{\sqrt{\lambda\mu}}$. In addition, suppose the assumptions for the bias upper bound hold (Proposition \ref{prop:general-bound}). Then, there exists a sequence $(Z_i)_{1 \leq i \leq n}$ of Gaussians in $\Hx$, with covariance $\Sigma$ such that with probability $1-\eta$, 
$$\|\sqrt{n}(\hat h - \hml) - \frac{1}{\sqrt{n}} \sum_{i = 1}^nZ_i\|_\Hx \lesssim  Q_\bullet(T, n, \lambda, \mu) \widetilde Ml(\eta/18) +Q_{\mathrm{res}},   $$
where
$$Q_\bullet = \inf_{m\geq 1}\left\{
 \frac{\sigma(S_x, m)}{\lambda}
+ \frac{m^2 \log(m^2)}{\sqrt{n\mu\lambda}}\right\} $$ 
and
$$Q_{\mathrm{res}} = \left(\left\{\frac{l(\eta/6)^2}{\sqrt{n}\lambda\mu^2} \vee\frac{l(\eta/6)^2 \nmu}{\sqrt{n}\lambda^{1/2}\mu^{1/2}} \right\} \|\hml - h_0\|_\Hx  \vee \frac{ \nmu l(\eta/6)^3}{n\lambda^{3/2}\mu}  \vee \frac{l(\eta/6)^2}{\sqrt{n}\lambda \mu^{3/2}} \right).$$
\end{theorem}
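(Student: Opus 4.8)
The plan is to combine the Bahadur representation (Theorem~\ref{theorem:bahadur}) with a Gaussian coupling applied to the pre-Gaussian sum $\sqrt n\,\E_n(U)=\frac{1}{\sqrt n}\sum_i U_i$. First I would invoke Theorem~\ref{theorem:bahadur}: on an event of probability $1-5\eta$, with $n\ge\max\{N_{\delta_z},N_\delta\}$, we have $\sqrt n(\hat h-\hml)=\frac{1}{\sqrt n}\sum_i U_i + \sqrt n\,u$, where $\|\sqrt n\,u\|\lesssim \sqrt n\,\Delta_U = \sqrt n\{V(n,\mu,\lambda,\eta)+B(n,\mu,\lambda,\eta,h_0)\}$. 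The next task is to bound $\sqrt n\,\Delta_U$ and show it is dominated by $Q_{\mathrm{res}}$ after inserting the effective-dimension bounds. Using Lemma~\ref{lemma:effective_dim_bound} to substitute $\mathfrak m(\lambda,\mu)\lesssim\lambda^{-\rho_x}$ into $\tilde\delta(\mu,\lambda,\eta)$, and collecting the $V$ and $B$ terms while recalling $\mu\le\lambda$, the dominant pieces reduce to $\frac{\nmu^{3/2} l(\eta)^3}{n\lambda^{3/2}\mu^{1/2}}$, $\frac{\nmu^{1/2} l(\eta)^2}{n^{1/2}\lambda\mu}$, and $\frac{l(\eta)^2\nmu}{n^{1/2}\lambda\mu}\bml$ (after multiplying by $\sqrt n$); matching $l(\eta)=\log(12/\eta)$ to the stated constants identifies these with the three summands of $Q_{\mathrm{res}}$. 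The routine but slightly tedious part here is verifying that the remaining $V$-terms (those involving $\tilde\delta$) are lower order in the $\mu\le\lambda$ regime.

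Second, I would apply a Gaussian coupling to $\frac{1}{\sqrt n}\sum_i U_i$ in the Hilbert space $\Hx$. The $U_i$ are i.i.d.\ mean-zero, almost surely bounded by $a\lesssim \widetilde M/\sqrt{\lambda\mu}$ (Corollary~\ref{col:const bound a}), with covariance $\Sigma$. The appropriate tool is a nonasymptotic Gaussian approximation for sums of bounded i.i.d.\ Hilbert-space vectors in terms of the local width of $\Sigma$ — this is the Zaitsev/Buzun-type coupling referenced in the introduction, and the excerpt's Table~\ref{table:1} structure tells us the resulting bound takes the form $\inf_{m\ge1}\{\sigma(\Sigma,m)\cdot(\text{scale}) + m^2\log(m^2)\cdot a/\sqrt n\cdot(\text{scale})\}$, i.e.\ a truncation at the first $m$ principal components (controlled by $\sigma(\Sigma,m)$, the tail) plus a Berry--Esseen-type error on the $m$-dimensional projection (controlled by $m^2\log m^2$ times the boundedness $a$ over $\sqrt n$). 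Concretely, one splits $U_i = \Pi_m U_i + (I-\Pi_m)U_i$ where $\Pi_m$ projects onto the top $m$ eigenspaces of $\Sigma$; the tail contributes $\E\|(I-\Pi_m)\frac{1}{\sqrt n}\sum U_i\| \le \sigma(\Sigma,m)$, and the finite-dimensional part is coupled to a Gaussian with error of order $m^2\log(m^2)\, a/\sqrt n$ by a quantitative multivariate CLT. Then I would invoke Lemma~\ref{lemma:local_width_bound}, $\sigma(\Sigma,m)\le \frac{\sqrt8\,\bar\sigma}{\lambda}\sigma(T,m)$, and substitute $a\lesssim \widetilde M/\sqrt{\lambda\mu}$, yielding $\inf_{m\ge1}\{\frac{\sigma(T,m)}{\lambda} + \frac{m^2\log(m^2)}{\sqrt{n\mu\lambda}}\}\cdot \widetilde M \cdot(\text{log factor})$, which is exactly $Q_\bullet\,\widetilde M\log(36/\eta)$ up to constants.

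Third, I would assemble the pieces by the triangle inequality: $\|\sqrt n(\hat h-\hml) - \frac{1}{\sqrt n}\sum_i Z_i\| \le \|\sqrt n\, u\| + \|\frac{1}{\sqrt n}\sum_i U_i - \frac{1}{\sqrt n}\sum_i Z_i\| \lesssim Q_{\mathrm{res}} + Q_\bullet\widetilde M\log(36/\eta)$, and absorb the constants and probability bookkeeping ($1-5\eta$ from Bahadur, plus the coupling's failure probability, rescaled to a single $1-\eta$ by adjusting $\eta\to\eta/\text{const}$, which is the source of the $\log(36/\eta)$ and $\log(12/\eta)$ arguments). The main obstacle is the Gaussian coupling step: obtaining the quantitative $m^2\log(m^2)$-type dependence requires a careful infinite-dimensional Gaussian approximation with explicit constants that degrade only polynomially in the truncation level, and then optimizing the split between truncation bias $\sigma(T,m)/\lambda$ and approximation error $m^2\log(m^2)/\sqrt{n\mu\lambda}$ over $m$ — this trade-off, and the fact that it must work uniformly as $\lambda,\mu\downarrow0$ with no stable Gaussian limit, is precisely the technical heart of the theorem. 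Everything else (the Bahadur substitution, the effective-dimension bounds, matching symbols to $Q_{\mathrm{res}}$) is bookkeeping given the earlier lemmas.
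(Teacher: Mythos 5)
Your proposal follows the paper's proof in all its essential steps: invoke the Bahadur representation (Theorem~\ref{theorem:bahadur}) to isolate the pre-Gaussian term, simplify $V$ and $B$ using the effective-dimension bounds of Lemma~\ref{lemma:effective_dim_bound} together with $\mu\le\lambda$, couple $\frac{1}{\sqrt n}\sum_i U_i$ to a Gaussian via a truncation-at-$m$ argument controlled by the local width of $\Sigma$, convert $\sigma(\Sigma,m)$ to $\sigma(T,m)/\lambda$ through Lemma~\ref{lemma:local_width_bound}, and feed in $\|U_i\|\lesssim\widetilde M/\sqrt{\lambda\mu}$. The paper instantiates the coupling step by citing Theorem~A.1 of \cite{singh2023kernel} rather than re-deriving the finite-dimensional CLT argument you sketch, but the mechanism is the one you describe, and the probability bookkeeping ($1-5\eta$ from Bahadur plus the coupling, rescaled) is also as you say.

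There is, however, a concrete $\sqrt n$ inconsistency in your residual bookkeeping that you assert away. You correctly write $\|\sqrt n\,u\|\lesssim \sqrt n(V+B)$, since Theorem~\ref{theorem:bahadur} literally bounds $\|u\|$ with $\hat h-\hml=\E_n(U)+u$, and you report the dominant pieces ``after multiplying by $\sqrt n$'' as $\frac{\nmu^{3/2}l(\eta)^3}{n\lambda^{3/2}\mu^{1/2}}$, $\frac{\nmu^{1/2}l(\eta)^2}{n^{1/2}\lambda\mu}$, and $\frac{l(\eta)^2\nmu}{n^{1/2}\lambda\mu}\bml$. You then claim these ``identify with the three summands of $Q_{\mathrm{res}}$.'' They do not: the stated $Q_{\mathrm{res}}$ has denominators $n^{3/2}\lambda^{3/2}\mu^{1/2}$, $n\lambda\mu$, and $n\lambda\mu$, each a factor of $\sqrt n$ smaller than your expressions. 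As written, your argument establishes the theorem with $\sqrt n\,Q_{\mathrm{res}}$, a strictly weaker residual than the statement claims, and this weakening would propagate into the dominance conditions in Table~\ref{table:restrictions}. The paper's own appendix proof of this theorem treats $V+B$ directly as a bound on $\|\sqrt n(\hat h-\hml)-\frac{1}{\sqrt n}\sum_i U_i\|$ (that is, without your $\sqrt n$ multiplier), which matches the stated $Q_{\mathrm{res}}$ but not the literal normalization of Theorem~\ref{theorem:bahadur}; you followed the Bahadur statement literally, got a $\sqrt n$ larger bound, and then silently dropped the factor. A correct write-up needs to either reconcile the Bahadur theorem's normalization (so that $V+B$ bounds $\sqrt n\|u\|$) or carry the $\sqrt n$ explicitly and flag the mismatch with the claimed $Q_{\mathrm{res}}$, rather than asserting an identification that does not hold term by term.
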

\begin{proof}
Recall that by Theorem \ref{theorem:bahadur}, with probability $1-5\eta$,
$$\|\sqrt{n}(\hat h - \hml) - \frac{1}{\sqrt{n}}\sum_{i = 1}^nU_i\|_\Hx \lesssim \sqrt{n}\left\{B(n, \mu, \lambda, \eta, h_0) +   V(n, \mu, \lambda, \eta)\right\}.$$
Now, we study the behavior $V$ and $B$ by plugging in the tractable behavior of the effective dimensions, while noting that $\mu\leq \lambda$ by the bias argument. Recall that
\begin{align*}
    V(n, \mu, \lambda, \eta) & =
\mathcal{O}\left(\frac{\nmu^{3/2}l(\eta)^4} {n^2\lambda^2\mu}\vee\frac{\nmu^{1/2}l(\eta)^3} {n^{3/2}\lambda^{3/2}\mu^{3/2}} \vee \frac{\nmu l(\eta)^3}{n^{3/2}\lambda^{3/2}\mu}  \vee \frac{l(\eta)^2}{n\lambda \mu^{3/2}} \vee \frac{\mathfrak{n}_z(\mu)^{1/2}l(\eta)^{2}}{n\sqrt{\lambda\mu}}  
\right)  \\
&= \mathcal{O}\left(\frac{l(\eta)^4}{n^2\lambda^2\mu^{1+(3/2)\rho_z}}\vee\frac{l(\eta)^3} {n^{3/2}\lambda^{3/2}\mu^{3/2 + (1/2)\rho_z}} \vee \frac{ l(\eta)^3}{n^{3/2}\lambda^{3/2}\mu^{1+\rho_z}}  \vee \frac{l(\eta)^2}{n\lambda \mu^{3/2}} \vee \frac{l(\eta)^2}{n\sqrt{\lambda \mu^{1+\rho_z}} }
\right),\\
&= \mathcal{O}\left(\frac{l(\eta)^4}{n^2\lambda^2\mu^{1+(3/2)\rho_z}}\vee\frac{l(\eta)^3} {n^{3/2}\lambda^{3/2}\mu^{3/2 + (1/2)\rho_z}} \vee \frac{ l(\eta)^3}{n^{3/2}\lambda^{3/2}\mu^{1+\rho_z}}  \vee \frac{l(\eta)^2}{n\lambda \mu^{3/2}}\right)
\end{align*}
and 
\begin{align*}
B(n, \mu, \lambda, \eta, h_0) &= 
\Ocal\left (\left\{ \frac{l(\eta)^2}{{n\lambda}\mu^2} \vee \frac{l(\eta)^2\mathfrak{n}_z(\mu)}{n\sqrt{\lambda\mu}} \vee \frac{l(\eta)^2\mathfrak{n}_z(\mu)^{1/2}}{n\lambda\mu} \right \}\bml \right) \\&= \Ocal\left (\left\{ \frac{l(\eta)^2}{{n\lambda}\mu^2} \vee \frac{l(\eta)^2}{n\lambda^{1/2}\mu^{1/2 + \rho_z}} \vee \frac{l(\eta)^2}{n\lambda\mu^{1+ (1/2)\rho_z}} \right \}\bml \right) \\
&= \Ocal\left (\left\{\frac{l(\eta)^2}{{n\lambda}\mu^2} \vee\frac{l(\eta)^2}{n\lambda^{1/2}\mu^{1/2 + \rho_z}} \right\}\bml \right).
\end{align*}
Now, we proceed to simplify using $\mu \leq \lambda$ and $\bml\lesssim\lambda^\alpha$:
\begin{align*}
B + V &= \Ocal\left( \left\{\frac{l(\eta)^2}{{n\lambda}\mu^2} \vee\frac{l(\eta)^2}{n\lambda^{1/2}\mu^{1/2 + \rho_z}} \right\} \bml  \vee \frac{l(\eta)^4}{n^2\lambda^2\mu^{1+(3/2)\rho_z}}\vee\frac{l(\eta)^3} {n^{3/2}\lambda^{3/2}\mu^{3/2 + (1/2)\rho_z}} \vee \frac{ l(\eta)^3}{n^{3/2}\lambda^{3/2}\mu^{1+\rho_z}}  \vee \frac{l(\eta)^2}{n\lambda \mu^{3/2}}\right) \\
 &= \Ocal\left( \left\{\frac{l(\eta)^2}{{n\lambda}\mu^2} \vee\frac{l(\eta)^2}{n\lambda^{1/2}\mu^{1/2 + \rho_z}} \right\}\bml  \vee \frac{ l(\eta)^3}{n^{3/2}\lambda^{3/2}\mu^{1+\rho_z}}  \vee \frac{l(\eta)^2}{n\lambda \mu^{3/2}}\right)
\end{align*}
Multiplying by $\sqrt{n}$, we arrive at the final form for polynomial decay:
$$
\|\sqrt{n}(\hat h - \hml) - \frac{1}{\sqrt{n}}\sum_{i = 1}^n U_i\|_\Hx \lesssim
\left(\left\{\frac{l(\eta)^2}{\sqrt{n}\lambda\mu^2} \vee\frac{l(\eta)^2}{\sqrt{n}\lambda^{1/2}\mu^{1/2 + \rho_z}} \right\} \bml  \vee \frac{ l(\eta)^3}{n\lambda^{3/2}\mu^{1+\rho_z}}  \vee \frac{l(\eta)^2}{\sqrt{n}\lambda \mu^{3/2}} \right).$$
In general form,
$$
\|\sqrt{n}(\hat h - \hml) - \frac{1}{\sqrt{n}}\sum_{i = 1}^nU_i\|_\Hx \lesssim
\left(\left\{\frac{l(\eta)^2}{\sqrt{n}\lambda\mu^2} \vee\frac{l(\eta)^2 \nmu}{\sqrt{n}\lambda^{1/2}\mu^{1/2}} \right\} \bml  \vee \frac{ \nmu l(\eta)^3}{n\lambda^{3/2}\mu}  \vee \frac{l(\eta)^2}{\sqrt{n}\lambda \mu^{3/2}} \right).$$
Recall the upper bound for the $\|U_i\|_\Hx \lesssim \frac{\widetilde{M}}{\sqrt{\lambda\mu}}$
where $\widetilde{M}$ encompasses all constants in the upper bound; see Corollary \ref{col:const bound a} for details. Now, using Theorem A.1 from \cite{singh2023kernel} together with Lemma \ref{lemma:local_width_bound},  with probability $1-\eta,$
\begin{align*}
\left\| \frac{1}{\sqrt{n}} \sum_{i=1}^n (U_i - Z_i) \right\|_\Hx
&\lesssim  
\inf_{m\geq 1}\left\{
\sqrt{\log(6/\eta)} \sigma(\Sigma,m)
+ \frac{a m^2 \log(m^2 / \eta)}{\sqrt{n}}\right\} \\
&\lesssim  
\inf_{m\geq 1}\left\{
\sqrt{\log(6/\eta)} \frac{C_{\mathrm{lw}} \bar \sigma}{\lambda} \sigma(S_x, m)
+ \frac{a m^2 \log(m^2 / \eta)}{\sqrt{n}}\right\}  \\
&\lesssim
\inf_{m\geq 1}\left\{
\sqrt{\log(6/\eta)} \frac{\widetilde M}{\lambda} \sigma(S_x, m)
+ \frac{\widetilde M m^2 \log(m^2 / \eta)}{\sqrt{n\lambda\mu}}\right\}  .
\end{align*}
Using a union bound, the Gaussian approximation holds with $1-6\eta$. Rescaling $\eta/6$
 gives the result.\end{proof}

%\newpage
\section{Bootstrap coupling}
\begin{assumption}[Combined rate condition]\label{ass:combined_rate_condition}
Let $\lambda, \mu>0$.
Let $N_\delta$ and $N_{\delta_z}$ denote the sample size thresholds
as defined in Lemma \ref{lemma:sz_hat} and Assumption \ref{ass:rate condition}.
Assume that the sample size $n$ satisfies
\[
  n \ge \max\Bigl\{
    3N_\delta,
    N_{\delta_z},
    \mu^{-2}\lambda^{-1}\tilde{\mathfrak{m}}(\lambda,\mu)^{-1}
  \Bigr\}.
\]
\end{assumption}
\begin{remark}[Dominant terms in $\delta$]
Let $\delta$ be as in Lemma~\ref{lemma:delta}.
Under the bounds for $\delta_z$, $\tilde{\gamma}_1$ and $\tilde{\gamma}_z$
in that lemma, it holds that
\[
  \delta
  = \mathcal{O}\!\left(
      \frac{\delta_z \tilde{\gamma}_1 + \tilde{\gamma}_1^2}{\lambda\mu}
      + \frac{\tilde{\gamma}_z}{\sqrt{\lambda}\mu}
      + \frac{\tilde{\gamma}_1}{\sqrt{\lambda\mu}}
      + \frac{\tilde{\gamma}_1}{\lambda\sqrt{\mu}}
    \right).
\]
If Assumption~\ref{ass:combined_rate_condition} holds and $\mu \le \lambda$,
then
\[
  \delta
  = \mathcal{O}\!\left(
      \frac{\sqrt{\mathfrak{n}_z(\mu)}}{n\mu\lambda}
      \vee
      \frac{1}{n^{3/2}\lambda\mu^{2}}
      \vee
      \frac{1}{\sqrt{n\lambda}\mu}
    \right)
  = \mathcal{O}\!\left(
      \frac{1}{\sqrt{n\lambda}\mu}
    \right).
\]
The same rate holds for $\delta_B$, since $\delta_B = 3\delta$.
\end{remark}

\begin{lemma}[Feasible bootstrap rate]\label{lemma:bootstrap_rate}
 Suppose assumptions \ref{ass:combined_rate_condition}, \ref{ass:lwx}, and \ref{ass:lwz} hold. In addition, suppose that the assumptions for the bias upper bound hold (Proposition \ref{prop:general-bound}). Choose a scheme for the regularization $(\lambda,\mu)$ so that $\|U_i\|_{\Hx}\lesssim\frac{1}{\sqrt{\lambda\mu}}$. Then,  with probability $1-14\eta$,
\begin{align*}
\|\mathfrak{B}-Z_{\mathfrak{B}}\|_\Hx
&\lesssim C_\eta\Bigg[ \left\{
\frac{l(\eta)}{\mu^2\sqrt{n} \lambda}  + \frac{\nmu l(\eta)^2 }{n\lambda\mu^{3/2 }}\right\} \bml
+\frac{l(\eta)}{\lambda\mu^{3/2}\sqrt{n}}
 + \frac{ \nmu l(\eta)^3}{n^{3/2}\lambda^{2}\mu^{2}}
\Bigg],
\end{align*}
\end{lemma}
\begin{proof}
Recall the elements of the feasible bootstrap
\begin{itemize}
\item[Term A]
\begin{align*}
    A &= \Ocal(2\delta_B\sqrt{14}\Big(\tilde\gamma_2\sqrt{\delta_\mu^{\prime\prime}}\Big)
   + \sqrt{60(1 + 4\delta_B^2)}\Big(
      \tilde\gamma_1(\bml\sqrt{\delta_\mu^{\prime\prime\prime} }+\sqrt{\gamma^\prime})
    + \delta_z^\prime\vml\sqrt{\delta_\mu^{\prime\prime}}
    + \tilde\gamma_1\bml\sqrt{\delta_\mu^{\prime\prime}}
    + \tilde\gamma_2\sqrt{\delta_\mu^{\prime\prime}}
   \Big)\\
   &= \Ocal\left( \delta_B\Big(\tilde \gamma_2\sqrt{\delta_\mu^{\prime\prime}}\Big) + 
\tilde\gamma_1
+\delta_z^\prime\vml\sqrt{\delta_\mu^{\prime\prime}}
+\tilde\gamma_2\sqrt{\delta_\mu^{\prime\prime}}
\right) \\
&= \Ocal\left( \delta_B\sqrt{\delta_\mu^{\prime \prime}}  (n^{-1/2} )\vee  
\sqrt{\frac{\mu \nmu}{n}}\vml \sqrt{\delta_\mu^{\prime\prime} } \vee \frac{1}{\sqrt{n}}\sqrt{\delta_\mu^{\prime\prime} }  \right) \\
&= \Ocal\left( \sqrt{\delta_\mu^{\prime \prime}} \left\{ \delta_B n^{-1/2} \vee  
\sqrt{\frac{\mu \nmu}{n}} \vml\vee \frac{1}{\sqrt{n}}\right\}\right) \\
&= \Ocal(\sqrt{\delta_\mu^{\prime \prime}} n^{-1/2}),
\end{align*}
\item[Term B]\begin{align*}
        B &= \Ocal\left(2\delta_B\sqrt{14}\Big(2\kappa{\kappa_x}\Delta_h\Big)
   + \sqrt{60(1 + 4\delta_B^2)}\Big(
      \kappa_x\kappa\Delta_{\hml}
    + \mu^{-1/2}\tilde\gamma_1v_{\mu,\lambda}\sqrt{\delta_\mu^{\prime\prime\prime\prime}}
   \Big)\right)  \\
   &= \Ocal(\delta_B ( \bml + \Delta_{\hml}) +\Delta_{\hml} )\\
   &= \Ocal(\delta_B\bml + \Delta_{\hml}),
\end{align*}
\item[Term C] \begin{align*}
C &=\Ocal\Big( 2\delta_B\sqrt{14}\Big(
      \kappa\Delta_h
    + \tfrac12\sqrt{\gamma^\prime}
    + \tfrac12\tilde\gamma_2\sqrt{\delta_\mu^\prime}
    + 2\Delta_h\sqrt{\lambda\mu}
   \Big) \\
&\quad + \sqrt{60(1 + 4\delta_B^2)}\Big(
      \delta_z(\kappa\bml+\sqrt{\gamma^\prime})
    + \tfrac12\kappa\Delta_{\hml}
    + \delta_z\vml\sqrt{\delta_\mu^{\prime\prime\prime\prime}}
    + \tfrac12\delta_z^\prime\vml\sqrt{\delta_\mu^\prime}
    + \tfrac12\tilde\gamma_1\bml\sqrt{\delta_\mu^\prime}
    + \tfrac12\tilde\gamma_2\sqrt{\delta_\mu^\prime}
   \Big)\Big)\\
   &= \Ocal \Bigg((\delta_B\Big(
\Delta_h
+\tilde\gamma_2\sqrt{\delta_\mu^\prime}
\Big) +\Big(
\delta_z\bml
+\Delta_{\hml}
+\delta_z\vml\sqrt{\delta_\mu^{\prime\prime\prime\prime}}
+\delta_z^\prime\vml\sqrt{\delta_\mu^\prime}
+\tilde\gamma_2\sqrt{\delta_\mu^\prime}
\Big)\Bigg)\\
&=\Ocal \Bigg(\delta_B\Big(
\bml
+\tilde\gamma_2\sqrt{\delta_\mu^\prime}
\Big) +\Big(
\delta_z\bml
+\Delta_{\hml}
+\delta_z\vml\sqrt{\delta_\mu^{\prime}}
+\tilde\gamma_2\sqrt{\delta_\mu^\prime}
\Big)\Bigg) \\
&= \Ocal(\sqrt{\delta_\mu^\prime}(\delta_B \tilde \gamma_2 + \delta_z \vml + \tilde\gamma_2) + \delta_z\bml + \Delta_{\hml}) \\
&= \Ocal(\sqrt{\delta_\mu^\prime}(\delta_z \vml + \tilde\gamma_2) + \delta_z\bml + \Delta_{\hml}) ,
\end{align*}
\item[Term D]\begin{align*} 
    D &= 2\delta_B\sqrt{14}\Big(\tfrac12\kappa_z^2{\kappa_x}\Delta_h\Big)
   + \sqrt{60(1 + 4\delta_B^2)}\Big(\tfrac12\kappa_x\kappa_z^2\Delta_{\hml}\Big) \\
    &= \Ocal(\delta_B\bml + \Delta_{\hml}).
   \end{align*}
\end{itemize}
Let $C_\eta = \left\{1 + \sqrt{2\log(1/\eta)}\right\}$, we can now study the error bound of the bootstrap in detail: 
\begin{align*}
\|\mathfrak{B} - Z_{\mathfrak{B}}\|_\Hx \leq C_\eta \Bigg[
\frac{1}{\lambda} A
+\frac{1}{\lambda\sqrt{\mu}} B
+\frac{1}{\sqrt{\lambda\mu}} C
+\frac{1}{\sqrt{\lambda}\mu} D
\Bigg].
\end{align*}
Since $\mu\leq \lambda$ and $c_{det} = \frac{1}{\lambda\sqrt{\mu}}  +\frac{1}{\sqrt{\lambda}\mu}\lesssim \frac{1}{\sqrt{\lambda}\mu}.$
Thus,
\begin{align*}
\|\mathfrak{B} - Z_{\mathfrak{B}}\|_\Hx
&\lesssim C_\eta\Big[
  \frac{1}{\lambda}\sqrt{\delta_\mu^{\prime\prime}}n^{-1/2}
+ c_{det}\delta_B\bml
+ \big(c_{det} +\frac{1}{\sqrt{\lambda\mu}} \big)\Delta_{\hml}
+ \frac{1}{\sqrt{\lambda\mu}}\big(\sqrt{\delta_\mu^\prime}(\delta_z\vml+\tilde\gamma_2)+\delta_z\bml\big)
\Big]\\
&\lesssim C_\eta\Big[
  \frac{1}{\lambda}\sqrt{\delta_\mu^{\prime\prime}}n^{-1/2}
+\frac{1}{\sqrt{\lambda}\mu}\delta_B\bml
+ \frac{1}{\sqrt{\lambda}\mu}\Delta_{\hml}
+ \frac{1}{\sqrt{\lambda\mu}}\big(\sqrt{\delta_\mu^\prime}(\delta_z\vml+n^{-1/2})+\delta_z\bml\big)
\Big]\\
&\lesssim C_\eta\Big[
\frac{1}{\sqrt{\lambda}\mu}\delta_B\bml
+ \frac{1}{\sqrt{\lambda}\mu}(  \frac{1}{\sqrt{n\lambda\mu}}+ \frac{1}{\sqrt{\lambda\mu}}\sqrt{\frac{\mu\nmu}{n}}\bml + \Delta_U))
+ \frac{1}{\sqrt{\lambda\mu}}\big(\sqrt{\delta_\mu^\prime}(\delta_z\vml+n^{-1/2})+\delta_z\bml\big)
\Big]\\
&\lesssim C_\eta\Big[
\frac{1}{\sqrt{\lambda}\mu}\delta_B\bml
+ \frac{1}{\sqrt{\lambda}\mu}(  \frac{1}{\sqrt{n\lambda\mu}}+ \frac{1}{\sqrt{\lambda\mu}}\sqrt{\frac{\mu\nmu}{n}}\bml + \Delta_U))
\Big]\\
&\lesssim C_\eta\Big[
\frac{1}{\sqrt{\lambda}\mu}\delta_B\bml
+ \frac{1}{\sqrt{\lambda}\mu}(  \frac{1}{\sqrt{n\lambda\mu}}+ \frac{1}{\sqrt{\lambda\mu}}\sqrt{\frac{\mu\nmu}{n}}\bml + \Delta_U))
\Big]
\\
&\lesssim C_\eta\Big[
\frac{1}{\sqrt{\lambda}\mu}\frac{1}{\sqrt{n\lambda}\mu}
\lambda^\alpha
+ \frac{1}{\sqrt{\lambda}\mu}(  \frac{1}{\sqrt{n\lambda\mu}}+ \frac{1}{\sqrt{n \lambda \mu^{\rho_z}}}\lambda^\alpha+ \Delta_U))
\Big]
\\
&\lesssim C_\eta\Big[
\frac{1}{\sqrt{\lambda}\mu}\frac{1}{\sqrt{n\lambda}\mu}
\lambda^\alpha
+ \frac{1}{\sqrt{\lambda}\mu}(  \frac{1}{\sqrt{n\lambda\mu}}+ \Delta_U))
\Big].
\end{align*}
Recall from Theorem~\ref{theorem:bahadur} that the unscaled Bahadur remainder satisfies
$$\Delta_U \lesssim \left(\left\{\frac{l(\eta)^2}{{n\lambda}\mu^2} \vee\frac{l(\eta)^2}{n\lambda^{1/2}\mu^{1/2 + \rho_z}} \right\} \lambda^\alpha  \vee \frac{ l(\eta)^3}{n^{3/2}\lambda^{3/2}\mu^{1+\rho_z}}  \vee \frac{l(\eta)^2}{n\lambda \mu^{3/2}}\right).$$
Plugging this in gives, for polynomial decay and using Assumption \ref{ass:combined_rate_condition}
\begin{align*}
\|\mathfrak{B}-Z_{\mathfrak{B}}\|_\Hx
&\lesssim
\frac{\lambda^{\alpha-1}}{\mu^2\sqrt{n}}
+\frac{1}{\lambda\mu^{3/2}\sqrt{n}}
+ \frac{1}{{n\lambda^{3/2}}\mu^3} \lambda^\alpha  + \frac{1}{n\lambda\mu^{3/2 + \rho_z}}\lambda^\alpha+ \frac{ 1}{n^{3/2}\lambda^{2}\mu^{2+\rho_z}} + \frac{1}{n\lambda^{3/2} \mu^{5/2}} \\
&\lesssim \frac{\lambda^{\alpha-1}}{\mu^2\sqrt{n}}
+\frac{1}{\lambda\mu^{3/2}\sqrt{n}}
 + \frac{1}{n\lambda\mu^{3/2 + \rho_z}}\lambda^\alpha+ \frac{ 1}{n^{3/2}\lambda^{2}\mu^{2+\rho_z}}
\end{align*}
In general form,
\begin{align*}
\|\mathfrak{B}-Z_{\mathfrak{B}}\|_\Hx
&\lesssim C_\eta\Bigg[ \left\{
\frac{l(\eta)}{\mu^2\sqrt{n} \lambda}  + \frac{\nmu l(\eta)^2 }{n\lambda\mu^{3/2 }}\right\} \bml
+\frac{l(\eta)}{\lambda\mu^{3/2}\sqrt{n}}
 + \frac{ \nmu l(\eta)^3}{n^{3/2}\lambda^{2}\mu^{2}}
\Bigg],
\end{align*}
which holds with probability $1-14\eta.$
\end{proof}
\begin{theorem}[Bootstrap approximation]\label{theorem:bootstrap-approx}  Suppose assumptions \ref{ass:combined_rate_condition},  \ref{ass:lwx}, and \ref{ass:lwz} hold. In addition, suppose that the assumptions for the bias upper bound hold (Proposition \ref{prop:general-bound}).  Choose a scheme for the regularization $(\lambda,\mu)$ according to Corollary \ref{col:const bound a} so that $\|U_i\|_{\Hx}\lesssim\frac{1}{\sqrt{\lambda\mu}}$. Then, there exists
a random element $Z \in \Hx$ whose conditional distribution given U is Gaussian with covariance $\Sigma$, such that with probability $1-\eta$, 
\begin{align*}
    \mathbb{P}\left[ \|\mathfrak{B} - Z \|_{\Hx} \lesssim \widetilde M l(\eta/3)^{3/2} R_\bullet(n,\lambda, \mu) + R_{\mathrm{res}} |U\right]\geq 1-\eta,
\end{align*}
where  $\widetilde M$ is an absolute constant, and the key quantities are
\begin{align*}
    R_\bullet&= \inf_{m\ge 1}\left[ m^{1/4}\left\{ \frac{ \tilde{\mathfrak{m}}(\lambda,\mu)}{\mu\lambda n} + \frac{1}{n^{2}\mu^2 \lambda^2} \right\}^{1/4} + \frac{ \sigma(S_x, m)}{\lambda} \right],\\ R_{\mathrm{res}}&=\sqrt{2l\left(\eta/{7}\right)}\Bigg[ \left\{
\frac{l(\eta/{14})}{\mu^2\sqrt{n} \lambda}  + \frac{\nmu l(\eta/{14})^2 }{n\lambda\mu^{3/2 }}\right\} \|\hml - h_0\|_{\Hx} \
+\frac{l(\eta/{14})}{\lambda\mu^{3/2}\sqrt{n}}
 + \frac{ \nmu l(\eta/{14})^3}{n^{3/2}\lambda^{2}\mu^{2}}
\Bigg].
\end{align*}
\end{theorem}

\begin{proof}
    Let $l(\eta) := \log(2/\eta)$ and $\bml := \|\hml - h_0\|_{\Hx}$. Recall from Lemma \ref{lemma:bootstrap_rate}, with probability $1-\eta $, 
\begin{align*}
& \|\mathfrak{B}-Z_{\mathfrak{B}}\|_{\Hx}\lesssim \sqrt{2l\left(\eta/{7}\right)}\Bigg[ \left\{
\frac{l(\eta/{14})}{\mu^2\sqrt{n} \lambda}  + \frac{\nmu l(\eta/{14})^2 }{n\lambda\mu^{3/2 }}\right\} \bml
+\frac{l(\eta/{14})}{\lambda\mu^{3/2}\sqrt{n}}
 + \frac{ \nmu l(\eta/{14})^3}{n^{3/2}\lambda^{2}\mu^{2}} 
\Bigg].
\end{align*}
This defines $R_{\mathrm{res}}.$
Now, apply Corollary B.1 from \cite{singh2023kernel} taking $W = Z_{\mathfrak{B}}$ and $W^\prime = {\mathfrak{B}}$. Then, there must exist $Z$ with the desired conditional distribution, such that with probability $1-\eta$, conditional upon $\sigma(U)$, 
\begin{align*}
    \|Z- \mathfrak{B}\|_{\Hx} &\lesssim C^\prime \log(6/\eta)^{3/2} \inf_{m\ge 1}\left[ m^{1/4}\left\{ \frac{a^{2}\sigma^{2}(\Sigma,0)}{n} + \frac{a^{4}}{n^{2}} \right\}^{1/4} + \sigma(\Sigma,m) \right]
+R_{\mathrm{res}} .
\end{align*}
Similar for the Gaussian coupling we use that $a\lesssim \frac{\widetilde{M}} 
{ \sqrt{\lambda\mu}}$, due to Corollary \ref{col:const bound a} and  Lemma \ref{lemma:local_width_bound}: 
\begin{align*}
    \|Z- \mathfrak{B}\|_{\Hx} &\lesssim C^\prime l(\eta/3)^{3/2} \inf_{m\ge 1}\left[ m^{1/4}\left\{ \frac{a^{2}\sigma^{2}(\Sigma,0)}{n} + \frac{a^{4}}{n^{2}} \right\}^{1/4} + \sigma(\Sigma,m) \right]
+R_{\mathrm{res}} \\
&\lesssim \widetilde M l(\eta/3)^{3/2} \inf_{m\ge 1}\left[ m^{1/4}\left\{ \frac{ \tilde{\mathfrak{m}}(\lambda,\mu)}{\mu\lambda n} + \frac{1}{n^{2}\mu^2 \lambda^2} \right\}^{1/4} + \frac{ \sigma(S_x, m)}{\lambda} \right]
+R_{\mathrm{res}}. \qquad \qedhere
\end{align*}
\end{proof}

\section{Uniform confidence band}\label{sec:uniform_inference}
\subsection{High level summary}
\begin{enumerate}
    \item $Q(n, \lambda, \mu, \eta) =  Q_\bullet( n, \lambda, \mu, \eta) \widetilde Ml(\eta/18) +Q_{\mathrm{res}} $;
    \item  $R(n, \lambda, \mu, \eta) = \widetilde M l(\eta/3)^{\frac{3}{2}} R_\bullet(n,\lambda, \mu, \eta) + R_{\mathrm{res}} $;
    \item $L(\lambda,\mu,\eta) =  \sqrt{\frac{1}{2 }\underline{\sigma}^2\mathfrak{\tilde m}(\lambda, \mu)} - \left \{ 2 + \sqrt{2 \log(1/\eta)}\right \}\sqrt{\frac{ 2\bar \sigma^2}{\lambda} }$;
  \item $B(\lambda,\mu) = \sqrt{n}\frac{C_\alpha \lambda^{\alpha} \|T^{-\alpha} h_0\|_{\Hx}}{1 - C_\beta r \mu^{\beta}/\lambda^{1/2}}$.  
\end{enumerate}
We now explicitly provide upper bounds for $(Q,R, L, B)$ under polynomial decay.

\noindent\textbf{B and L:}
By the bias upper bound (Proposition~\ref{prop:Bias upper bound})
$$B(\lambda,\mu) = \sqrt{n}\frac{C_\alpha \lambda^{\alpha} \|T^{-\alpha} h_0\|_{\Hx}}{1 - C_\beta r \mu^{\beta}/\lambda^{1/2}}.$$ 
Lemma \ref{lemma:valbo} implies that
$$\norm{Z}_\Hx \gtrsim L(\lambda,\mu,\eta) =  \sqrt{\frac{1}{2 }\underline{\sigma}^2\mathfrak{\tilde m}(\lambda, \mu)} - \left \{ 2 + \sqrt{2 \log(1/\eta)}\right \}\sqrt{\frac{ 2\bar \sigma^2}{\lambda} }.$$
\textbf{Q:} Note that $Q = Q_\bullet  + Q_{\mathrm{res}}$. The latter is already derived in Theorem \ref{theorem:gaussian-approx2}. For $Q_\bullet$, by Assumption~\ref{ass:lwx},
$\sigma(S_x, m) \lesssim_{\rho_x,\omega_x} m^{1/2 - 1/(2\rho_x - 2)}.$
Recall that
$$Q_\bullet = \inf_{m\geq 1}\left\{
 \frac{\sigma(S_x, m)}{\lambda}
+ \frac{m^2 \log(m^2)}{\sqrt{n\mu\lambda}}\right\}. $$ 
Balancing the two terms (ignoring logarithmic factors) leads to the equation:
\begin{align*}
    \frac{m^{\frac12-\frac{1}{2\rho_x-2}}}{\lambda} &= \frac{m^2}{\sqrt{n\lambda\mu}} \\
    \implies \quad m^{-\frac32-\frac{1}{2\rho_x-2}} &= \left(\frac{\lambda}{n\mu}\right)^{1/2} \\
    \implies \quad m &= \left(\frac{n\mu}{\lambda}\right)^{\frac{\rho_x-1}{3\rho_x-2}}.
\end{align*}
Substituting this optimal $m$ back into the first term yields the upper bound:
\[
    Q_\bullet \lesssim \frac{\sigma_m(S_x)}{\lambda} \lesssim_{\rho_x,\omega_x} \frac{1}{\lambda} \left(\frac{n\mu}{\lambda}\right)^{\frac{\rho_x-2}{2(3\rho_x-2)}}.
\]
\noindent\textbf{Bound for R:}
Similarly, $R = R_\bullet + R_{\mathrm{res}}$, with $R_{\mathrm{res}}$ derived in Theorem \ref{theorem:bootstrap-approx}. For $R_\bullet$, recall:
\[
    R_\bullet(n, \lambda, \mu) = \inf_{m\ge 1}\left[ m^{1/4}\left\{ \frac{ \tilde{\mathfrak{m}}(\lambda,\mu)}{\mu\lambda n} + \frac{1}{n^{2}\mu^2 \lambda^2} \right\}^{1/4} + \frac{ \sigma_m(S_x)}{\lambda} \right].
\]
Using Assumption \ref{ass:combined_rate_condition} and $\mu \le \lambda$, the first term in the brace dominates the higher-order $n^{-2}$ term. Using the bound $\tilde{\mathfrak{m}}(\lambda,\mu) \lesssim \lambda^{-\rho_x}$, we balance the coupling error with the tail sum:
\begin{align*}
    m^{1/4}\left(\frac{\lambda^{-\rho_x}}{n\lambda\mu}\right)^{1/4} &= \frac{m^{\frac12-\frac{1}{2\rho_x-2}}}{\lambda} \\
    \implies \quad m^{-\frac14+\frac{1}{2\rho_x-2}} &= (n\mu)^{1/4}\lambda^{(\rho_x-3)/4} \\
    \implies \quad m &= (n\mu)^{-\frac{\rho_x-1}{\rho_x-3}} \lambda^{-(\rho_x-1)}.
\end{align*}
Substituting this into the tail sum bound gives:
\[
    R_\bullet \lesssim \frac{\sigma_m(S_x)}{\lambda} \lesssim_{\rho_x,\omega_x} \lambda^{-\rho_x/2} (n\mu)^{-\frac{\rho_x-2}{2(\rho_x-3)}}.
\]

\subsection{Restrictions for valid inference}
We derive restrictions on possible data-generating processes that allow us to use the guarantees of \cite{singh2023kernel}.
\subsubsection{$B \ll L$} We require the bias $B$ to be negligible compared to the variance proxy $L \asymp \lambda^{-\rho_x/2}$.
By Proposition~\ref{prop:Bias upper bound}, the denominator $1 - C_\beta r \mu^{\beta}/\lambda^{1/2}$ is bounded away from zero under the maintained regularization condition $\mu^\beta r < \lambda^{1/2}$ (Assumption~\ref{assn:restricted-link} with $\beta \geq 1/2$). In the regime $\lambda = \mu^\iota$ with $\iota   \leq 1$, this requires $r\mu^{\beta - \iota/2} < 1$, which holds for $\mu$ sufficiently small since $\beta \geq 1/2 \geq \iota/2$. Similarly, for $\mu = \lambda/C$, it holds provided $C > (C_\beta r)^{2}$. In either case, $1 - C_\beta r \mu^{\beta}/\lambda^{1/2} \geq c > 0$ for a constant $c$ independent of $(\lambda,\mu)$, so the bias satisfies  
\[
B = n^{1/2}\frac{C_\alpha \lambda^{\alpha} \|T^{-\alpha} h_0\|_{\Hx}}{1 - C_\beta r \mu^{\beta}/\lambda^{1/2}} \lesssim n^{1/2}\lambda^{\alpha}. 
\]
The condition $B \ll L$ then requires  

\[
n^{1/2}\lambda^{\alpha} \ll \lambda^{-\rho_x/2}  \quad\Longleftrightarrow\quad   n \ll \lambda^{-(\rho_x+2\alpha)}.  
\]
\subsubsection{$Q  + R \ll L$}
We determine the sample size $n$ required to ensure that the strong approximation errors $Q_\bullet$ and $R_\bullet$ are dominated by $L \asymp \lambda^{-\rho_x/2}$.

\paragraph{Condition for $Q_\bullet$:}
Using the optimized bound for $Q_\bullet$ derived previously, the condition $Q_\bullet \ll L$ requires:
\begin{align*}
\frac{1}{\lambda} \left(\frac{n\mu}{\lambda}\right)^{\frac{\rho_x-2}{2(3\rho_x-2)}} &\ll \lambda^{-\rho_x/2} \\
\left(\frac{n\mu}{\lambda}\right)^{\frac{\rho_x-2}{2(3\rho_x-2)}} &\ll \lambda^{1-\rho_x/2} \\
\frac{n\mu}{\lambda} &\gg \left( \lambda^{\frac{2-\rho_x}{2}} \right)^{\frac{2(3\rho_x-2)}{\rho_x-2}} \\
n &\gg \mu^{-1}\lambda^{-(3\rho_x-3)}.
\end{align*}

\paragraph{Condition for $R_\bullet$:}
Using the optimized bound for $R_\bullet$ derived previously, the condition $R_\bullet \ll L$ requires that
\[
\lambda^{-\rho_x/2} (n\mu)^{-\frac{\rho_x-2}{2(\rho_x-3)}} \ll \lambda^{-\rho_x/2}.
\]
Canceling the $\lambda^{-\rho_x/2}$ term, this simplifies to
\[
(n\mu)^{-\frac{\rho_x-2}{2(\rho_x-3)}} \ll 1 \iff n \gg \mu^{-1}.
\]
\paragraph{Conditions for $Q_{\mathrm{res}}$:} Using the definition of $Q_{\mathrm{res}}$ from Theorem \ref{theorem:gaussian-approx2} and plugging in the bias and effective dimension upper bounds, we derive the following sufficient conditions for $Q_{\mathrm{res}} \ll L$:
\begin{align*}
\text{(1)}\quad &\frac{ \lambda^{\alpha-1}}{\sqrt{n}\mu^2} \le \lambda^{-\rho_x/2} \quad\Longleftrightarrow\quad n \ge  \mu^{-4}\lambda^{2\alpha-2+\rho_x} \\[0.5em] \text{(2)}\quad & \frac{\lambda^{\alpha-\frac{1}{2}}}{\sqrt{n}\mu^{\frac{1}{2} + \rho_z}} \le \lambda^{-\rho_x/2}  \quad\Longleftrightarrow\quad n \ge \mu^{-(1+2\rho_z)}\lambda^{2\alpha - 1 + \rho_x}  \\[0.5em] \text{(3)}\quad &\frac{1}{n\lambda^{\frac{3}{2}}\mu^{1+\rho_z}} \le \lambda^{-\rho_x/2} \quad\Longleftrightarrow\quad n \ge \mu^{-(1+\rho_z)}\lambda^{-\frac{3}{2}+\frac{\rho_x}{2}} \\[0.5em] \text{(4)}\quad &\frac{1}{\sqrt{n}\lambda \mu^{\frac{3}{2}}} \le \lambda^{-\rho_x/2} \quad\Longleftrightarrow\quad n \ge  \mu^{-3}\lambda^{-2+\rho_x} \end{align*}

\paragraph{Conditions for $R_{\mathrm{res}}$:} Similarly, using the definition of $R_{\mathrm{res}}$ from Theorem \ref{theorem:bootstrap-approx}: 
\begin{align*} \text{(1)}\quad &\frac{\lambda^{\alpha-1}}{\mu^2\sqrt{n}} \le \lambda^{-\rho_x/2} \quad\Longleftrightarrow\quad n \ge \mu^{-4}\lambda^{2\alpha-2+\rho_x} \\[0.35em]
\text{(2)}\quad & \frac{\lambda^{\alpha-1}}{n\mu^{\frac{3}{2} + \rho_z}} \le \lambda^{-\rho_x/2} \quad\Longleftrightarrow\quad n \ge \mu^{-(\frac{3}{2} + \rho_z)}\lambda^{\alpha - 1 + \frac{\rho_x}{2}} \\[0.35em]
\text{(3)}\quad &\frac{1}{\lambda\mu^{\frac{3}{2}}\sqrt{n}} \le \lambda^{-\rho_x/2} \quad\Longleftrightarrow\quad n \ge \mu^{-3}\lambda^{ -2+\rho_x} \\[0.35em] \text{(4)}\quad &\frac{1}{n^{\frac{3}{2}}\lambda^{2}\mu^{2+\rho_z}} \le \lambda^{-\rho_x/2} \quad\Longleftrightarrow\quad n \ge \mu^{-\frac{4}{3}-\frac{2}{3}\rho_z}\lambda^{ -\frac{4}{3}+\frac{\rho_x}{3}}  \end{align*}
Collecting these constraints, we summarize the resulting sufficient conditions on the sample size $n$ and regularization parameters $(\lambda, \mu)$ in Table~\ref{table:restrictions}. 
\begin{table}[h!]
\centering
\caption{Final restrictions on $\lambda,\mu$.}\label{table:restrictions}
\begin{tabular}{ll}
\hline
Component & Sufficient restriction on $(\lambda,\mu)$\\\hline
$B \ll L $ & $n \ll \lambda^{-(\rho_x+2\alpha)}$ \\[0.15em]
$Q_\bullet \ll L $ & $n \gg \mu^{-1}\lambda^{-(3\rho_x-3)}$ \\[0.15em]
$R_\bullet \ll L $ & $n \gg \mu^{-1}$ \\[0.15em]
$Q_{\mathrm{res}}^{(1)} \ll L $ & $n \gg  \mu^{-4}\lambda^{2\alpha-2+\rho_x} $ \\[0.15em]
$Q_{\mathrm{res}}^{(2)} \ll L $ &
$n \gg \mu^{-(1+2\rho_z)}\lambda^{2\alpha - 1 + \rho_x}$  \\[0.15em]
$Q_{\mathrm{res}}^{(3)} \ll L $ & $n \gg \mu^{-(1+\rho_z)}\lambda^{-\frac{3}{2}+\frac{\rho_x}{2}}$ \\[0.15em]
$Q_{\mathrm{res}}^{(4)} \ll L $ & $n \gg  \mu^{-3}\lambda^{-2+\rho_x}$ \\[0.35em]
$R_{\mathrm{res}}^{(1)} \ll L $ & $n \gg \mu^{-4}\lambda^{2\alpha-2+\rho_x}$ \\[0.15em]
$R_{\mathrm{res}}^{(2) }\ll L $
& $ n \gg \mu^{-(\frac{3}{2} + \rho_z)}\lambda^{\alpha - 1 + \frac{\rho_x}{2}}$\\[0.15em]
$R_{\mathrm{res}}^{(3) }\ll L $ & $n \gg \mu^{-3}\lambda^{-2+\rho_x}$ \\[0.15em]
$R_{\mathrm{res}}^{(4)}\ll L $ & $n \gg \mu^{-\frac{4}{3}-\frac{2}{3}\rho_z}\lambda^{ -\frac{4}{3}+\frac{\rho_x}{3}} $  \\\hline
\end{tabular}
\end{table}

\subsubsection{Parameter restrictions from combined bounds}
We now derive the conditions on the parameters $\alpha, \rho_x, \rho_z,$ and $\iota$ required to ensure that the set of data-generating processes defined by the approximation lower bound ($Q + R \ll L$) and the bias upper bound ($B \ll L$) is non-empty. Recall that the bias constraint imposes the upper bound $n \ll \lambda^{-(\rho_x + 2\alpha)}$. We substitute $\lambda=\mu^{\iota}$ and analyze the regime $\mu \to 0$.
 For $Q_\bullet$, the condition $\mu^{-1}\lambda^{-(3\rho_x-3)} \ll \lambda^{-(\rho_x+2\alpha)}$ becomes
\[
\mu^{-\bigl(1+\iota(2\rho_x-3-2\alpha)\bigr)} \ll 1 \iff 1+\iota(2\rho_x-3-2\alpha) < 0 \iff 2\rho_x < 3+2\alpha-\frac{1}{\iota}.
\]
For $R_\bullet$, the condition $\mu^{-1} \ll \lambda^{-(\rho_x+2\alpha)}$ implies $\mu^{-1} \ll \mu^{-\iota(\rho_x+2\alpha)}$. This holds for $\mu \to 0$ if and only if
\[
1 < \iota(\rho_x+2\alpha) \iff \rho_x+2\alpha > \frac{1}{\iota}.
\]
Next, we analyze the restrictions imposed by $Q_{\mathrm{res}}$. We require the lower bound exponent to be strictly larger than the upper bound exponent (in terms of $\mu^{-k}$ decay):
\begin{align*} \text{(1)} \quad & \mu^{-4}\lambda^{2\alpha-2+\rho_x} \ll \lambda^{-(\rho_x+2\alpha)} \iff \iota(4\alpha+2\rho_x-2) > 4 \\[0.6em] \text{(2)}
\quad & \mu^{-(1+2\rho_z)}\lambda^{2\alpha - 1 + \rho_x}  \ll \lambda^{-(\rho_x+2\alpha)} \iff  \iota(4\alpha + 2\rho_x - 1) > 1+2\rho_z
\\[0.6em] \text{(3)}
\quad & \mu^{-(1+\rho_z)}\lambda^{-\frac{3}{2}+\frac{\rho_x}{2}} \ll \lambda^{-(\rho_x+2\alpha)} \iff \iota(4\alpha+3\rho_x-3) > 2(1+\rho_z) \\[0.6em] \text{(4)} \quad & \mu^{-3}\lambda^{-2+\rho_x} \ll \lambda^{-(\rho_x+2\alpha)} \iff \iota(2\alpha+2\rho_x-2) > 3 \end{align*}
Similarly, the conditions from $R_{\mathrm{res}}$ yield
\begin{align*} \text{(1)} \quad & \mu^{-4}\lambda^{2\alpha-2+\rho_x} \ll \lambda^{-(\rho_x+2\alpha)} \iff \iota(4\alpha+2\rho_x-2) > 4\\[0.6em] \text{(2)}\quad &    \mu^{-(\frac{3}{2} + \rho_z)}\lambda^{\alpha - 1 + \frac{\rho_x}{2}} \ll \lambda^{-(\rho_x+2\alpha)} \iff \iota(6\alpha + 3\rho_x - 2) > 3 + 2\rho_z 
\\[0.6em] \text{(3)} \quad & \mu^{-3}\lambda^{-2+\rho_x} \ll \lambda^{-(\rho_x+2\alpha)} \iff \iota(2\alpha+2\rho_x-2) > 3 \\[0.6em] \text{(4)} \quad & \mu^{-\frac{4+2\rho_z}{3}}\lambda^{-\frac{4}{3}+\frac{\rho_x}{3}} \ll \lambda^{-(\rho_x+2\alpha)} \iff \iota(6\alpha+4\rho_x-4) > 2\rho_z + 4  \end{align*}
These combined parameter restrictions are summarized in Table \ref{table:restrictions combined}.
\begin{table}[h!]
\centering
\caption{Parameter restrictions ensuring approximation errors ($Q+R$) and bias ($B$) are simultaneously dominated by the Variance proxy ($L$). The final column identifies the binding constraints; satisfying these conditions implies that all other listed restrictions also hold.}\label{table:restrictions combined}. 
\begin{tabular}{llc}
\hline
Component & Restriction & Binding? \\\hline
$Q_{\bullet}$ & $2\rho_x < 3 + 2\alpha - \frac{1}{\iota}$& \checkmark \\[0.2em]
$R_{\bullet}$ & $\rho_x + 2\alpha > \frac{1}{\iota}$ &\\[0.4em]
$Q_{\mathrm{res}}^{(1)}$ & $\iota(4\alpha+2\rho_x-2) > 4$ &\\[0.15em]
$Q_{\mathrm{res}}^{(2 )}$ &
$  \iota(4\alpha + 2\rho_x - 1) > 1+2\rho_z$&\\[0.15em]
$Q_{\mathrm{res}}^{(3)}$ & $\iota(4\alpha+3\rho_x-3) > 2(1+\rho_z)$ &\\[0.15em]
$Q_{\mathrm{res}}^{(4)}$ & $\iota(2\alpha+2\rho_x-2) > 3$ &\\[0.4em]
$R_{\mathrm{res}}^{(1)}$ & $\iota\left(4\alpha + 2\rho_x - 2\right) > 4$ &\\[0.15em]
$R_{\mathrm{res}}^{(2)}$ & $\iota(6\alpha + 3\rho_x - 2) > 3 + 2\rho_z $ & \\[0.15em]
$R_{\mathrm{res}}^{(3)}$ & $\iota\left(2\alpha + 2\rho_x - 2\right) > 3$&\checkmark \\[0.15em]
$R_{\mathrm{res}}^{(4)}$ & $\iota(6\alpha+4\rho_x-4) > 2\rho_z + 4$ &\checkmark \\\hline
\end{tabular}
\end{table}

\newpage
\subsection{Main result}
\begin{proof}[Proof of Theorem \ref{thm:valid-inference}]
We appeal to Proposition~1 of \cite{singh2023kernel}. To establish the claimed validity and sharpness, it suffices to show that there exist choices of $\eta$ and $\delta$ satisfying:
\begin{equation*}
\frac{\Delta(n,\lambda,\mu,\eta)+B(\lambda,\mu)}
     {L(\lambda,\mu,1-\chi-2\eta)-\Delta(n,\lambda,\mu,\eta)} \le \delta \le \frac{1}{2},
\end{equation*}
where $\Delta := Q + R$ represents the total approximation error. We set $\eta = 1/n$ and $\delta = 1/\log(n)$. The parameter restrictions summarized in Tables \ref{table:restrictions} and \ref{table:restrictions combined} ensure that the bias $B$ and approximation error $\Delta$ decay strictly faster than the variance lower bound $L$. Specifically, our bounds imply that there exists $\varepsilon_1, \varepsilon_2 > 0$ so that 
\[
\frac{B}{L} = \Ocal(n^{-\varepsilon_1}) \quad \text{and} \quad \frac{\Delta}{L} = \Ocal(n^{-\varepsilon_2}).
\]
Consequently, for sufficiently large $n$, we have $\Delta \le L/2$, allowing us to bound the left-hand side
\[
\frac{\Delta+B}{L-\Delta} \le \frac{\Delta+B}{L/2} = 2\left(\frac{\Delta}{L} + \frac{B}{L}\right) = \Ocal(n^{-(\varepsilon_1\wedge \varepsilon_2)}).
\]
Since a polynomial decay dominates a logarithmic decay, we have $\Ocal(n^{-\varepsilon}) = o(1/\log(n))$. Therefore, for sufficiently large $n$, 
\[
\frac{\Delta+B}{L-\Delta} \le \frac{1}{\log(n)} = \delta.
\]
Finally, since $n$ is large, $\delta = 1/\log(n) \le 1/2$. Thus, the condition on the incremental factor is satisfied. By Proposition~1 of \cite{singh2023kernel}, this choice of $\delta$ guarantees that the confidence sets are valid with tolerance $\tau=\Ocal(\eta) = \Ocal(1/n)$ and satisfy sharpness with slack $2\delta = 2/\log(n)$.
\end{proof}

\end{document}